\theoremstyle{plain}
\newtheorem{Th}{Theorem}[section]
\newtheorem{Lemma}[Th]{Lemma}
\newtheorem{Cor}[Th]{Corollary}
\newtheorem{Prop}[Th]{Proposition}
\theoremstyle{definition}
\newtheorem{Def}[Th]{Definition}
\newtheorem{rem}[Th]{Remark}
\newtheorem{Conj}[Th]{Conjecture}
\newtheorem{Rem}[Th]{Remark}
 \DeclareOldFontCommand{\rm}{\normalfont\rmfamily}{\mathrm}
\newcommand{\C}{{\rm{CR}}}
\newcommand{\FZZ}{{\rm{FZZ}}}
\newcommand{\diam}{{\rm{diam}}}
\newcommand{\disk}{\mathrm{disk}}
\newcommand{\qs}{\mathrm{QS}}
\newcommand{\LF}{\mathrm{LF}}
\newcommand{\sle}{\mathrm{SLE}}
\newcommand{\qd}{\mathrm{QD}}
\newcommand{\qt}{\mathrm{QT}}
\newcommand{\lo}{{\rm{loop}}}
\newcommand{\bubble}{\mathrm{Bubble}}
\newcommand{\bub}{\mathrm{bubble}}
\newcommand{\conf}{\mathrm{conf}}
\newcommand{\weld}{\mathrm{Weld}}
\newcommand{\var}{\mathrm{Var}}
\newcommand{\leb}{\mathrm{Leb}}
\newcommand{\rad}{\mathrm{rad}}
\newcommand{\ms}{\mathrm{MS}}
\newcommand{\saw}{\mathrm{SAW}}
\newcommand{\md}{\mathrm{MD}}
\newcommand{\sab}{\mathrm{SAB}}
\newcommand{\Rad}{\mathrm{Rad}}
\numberwithin{equation}{subsection}
\title{The SLE Bubble Measure via Conformal Welding of Quantum Surfaces}
\author{Da Wu}
\begin{document}
\begin{abstract}
We showed that the $\sle_{\kappa}(\rho)$ bubble measure recently constructed by Zhan arises naturally from the conformal welding of two Liouville quantum gravity (LQG) disks. The proof relies on (1) a ``quantum version'' of the limiting construction of the SLE bubble,  (2) the conformal welding between quantum triangles and quantum disks due to Ang, Sun and Yu, and (3) the uniform embedding techniques of Ang, Holden and Sun. As a by-product of our proof, we obtained a decomposition formula of the $\sle_\kappa(\rho)$ bubble measure. Furthermore, we provided two applications of our conformal welding results. First, we computed the moments of the conformal radius of the $\sle_\kappa(\rho)$ bubble on $\mathbb H$ conditioning on surrounding $i$. The second application concerns the bulk-boundary correlation function in the Liouville Conformal Field Theory (LCFT). Within probabilistic frameworks, we derived a formula linking the bulk-boundary correlation function in the LCFT to the joint law of left \& right quantum boundary lengths and the quantum area of the two-pointed quantum disk. This relation is used by Ang, Remy, Sun and Zhu in a concurrent work to verify the formula of two-pointed bulk-boundary correlation function in physics predicted by Hosomichi (2001). 
\end{abstract}
\maketitle
\tableofcontents
\section{Introduction}
The Schramm-Loewner evolution ($\sle$) and Liouville quantum gravity (LQG) are central objects in Random Conformal Geometry and it was shown in \cite{She16} and \cite{DMS14} that SLE curves arise naturally as the interfaces of LQG surfaces under conformal welding. Conformal welding results in \cite{She16,DMS14} mainly focus on the infinite volume LQG surfaces. Recently, Ang, Holden and Sun \cite{Weldisk} showed that conformal welding of finite-volume quantum surfaces called two-pointed quantum disks can give rise to canonical variants of $\sle$ curves with two marked points. Later, it was shown by Ang, Holden and Sun \cite{SLEloopviawelding} that another canonical invariant of $\sle$ called $\sle$ Loop is the natural welding interface of two quantum disks without marked points. The resulting LQG surface is called the \textit{quantum sphere}, which describes the scaling limit of classical planar map models with spherical topology.
\par 
As will be reviewed in Section \ref{Sec: SLE bubble}, the rooted $\sle_\kappa(\rho)$ bubble measure on $\mathbb H$ is an important one parameter family of random Jordan curves constructed by Zhan \cite{SLEbubble} for all $\kappa>0$ and $\rho>-2$. When $\kappa>4$ and $\rho\in (-2,\frac{\kappa}{2}-4]$, the law of the bubble is a probability measure and satisfies conformal invariance property (\cite[Theorem 3.10]{SLEbubble}). When $\rho>(-2)\vee(\frac{\kappa}{2}-4)$, the law of the bubble is a $\sigma$-finite infinite measure and satisfies conformal covariance property (\cite[Theorem 3.16]{SLEbubble}). In both cases, an instance $\eta$ of $\sle_\kappa(\rho)$ bubble is characterized by the following Domain Markov Property (DMP): suppose $\tau$ is a positive stopping time for $\eta$, then conditioning on $\eta[0,\tau]$ and the event that $\eta$ is not completed at $\tau$, the rest of $\eta$ is a chordal $\sle_\kappa(\rho)$ on $\mathbb H\backslash\eta[0,\tau]$ (\cite[Theorem 3.16]{SLEbubble}). Moreover, it was shown that $\sle_\kappa(\rho)$ bubble measure can be viewed as the weak limit of chordal $\sle_\kappa(\rho)$ on $\mathbb H$ from $0$ to $\varepsilon$ as $\varepsilon\to 0^+$ (with force point at $0^-$) after suitable rescaling (\cite[Theorem 3.20]{SLEbubble}).
\par 
On the other hand, it was shown in \cite[Section 4]{FZZ} that a particular $\sle_\kappa(\rho)$ bubble curve can be obtained from  conformally welding two Liouville quantum gravity surfaces of the disk topology. This  was used to derive the \textit{Fateev-Zamolodchikov-Zamolodchikov} (FZZ) formula in Liouville theory, which serves as a crucial input to the proof of the imaginary DOZZ formula for  conformal loop ensemble (CLE) on the Riemann sphere \cite{AS21}. This paper generalizes the conformal welding result in \cite{FZZ} to all $\rho>-2$; see Remark \ref{rem:relation to FZZ} for the precise relation between our result and the one in  \cite{FZZ}.
\par
The rest of the paper is organized as follows. We state our main conformal welding results, including Theorem \ref{Conditional Welding} and Theorem \ref{Main welding theorem}, in Section \ref{sec:main theorem} and Theorem \ref{welding of generalized weight} with its applications in Section \ref{sec:generalization and application}. All the necessary backgrounds on Random Conformal Geometry will be reviewed in Section \ref{sec:preliminaries}. We first prove Theorem \ref{Conditional Welding} in Sections \ref{sec: curve law} and \ref{sec:field law} and then prove Theorem \ref{Main welding theorem} in Section \ref{sec:proof of main based on conditional welding} based on Theorem \ref{Conditional Welding} and the \textit{uniform embedding} of LQG surfaces. Next, we prove Theorem \ref{welding of generalized weight}, which is the generalization of Theorem \ref{Conditional Welding} to the case when the bulk insertion of the quantum surface has generic weight. In Section \ref{sec:computation of conformal radius}, we discuss two applications of Theorem \ref{welding of generalized weight}: The first application concerns the computation of the moments of the conformal radius of the $\sle_\kappa(\rho)$ bubble on $\mathbb H$ conditioning on surrounding $i$; Secondly, we derive a formula linking the bulk-boundary correlation function in LCFT to the joint law of left \& right quantum boundary lengths and the quantum area of the two-pointed quantum disk. Finally, in Section $\ref{sec:open problems and future research}$, we will discuss several conjectures that arise naturally from the contexts of this paper, including a generalization of the $\sle_\kappa(\rho)$ bubble  and the scaling limit of bubble-decorated disk quadrangulations.
 
\subsection{$\sle_\kappa(\rho)$ bubble measures via conformal welding of quantum disks}\label{sec:main theorem}
Let $\bubble_{\mathbb H}(p)$ be the space of rooted simple loops on $\mathbb H$ with root $p\in \mathbb R$. Precisely, an oriented simple closed loop $\eta$ is in $\bubble_{\mathbb H}(p)$ if and only if $p\in \eta,(\eta\backslash\lbrace p\rbrace)\subseteq\overline{\mathbb H}$. Throughout this paper, for $\eta\in \bubble_{\mathbb H}(p)$, let $D_\eta(p)$ be the connected component of $\mathbb H\backslash\eta$ which is encircled by $\eta$ and let $D_\eta(\infty)$ be the domain $\mathbb H\backslash(\eta\cup D_\eta(p))$ containing $\infty$. The point $p$ corresponds to two pseudo boundary marked points $p^-$ and $p^+$ on $D_\eta(\infty)$. Let $\sle_{\kappa,0}^{\bub}(\rho)$ denote the rooted $\sle_\kappa(\rho)$ bubble measure with root $0$ studied in \cite{SLEbubble} (see Definition \ref{Def of rooted SLE bubble}) and this is a $\sigma$-finite infinite measure on the space $\bubble_{\mathbb H}(0)$.
\par
For each $\gamma\in (0,2)$, there is a family of LQG surfaces with disk topology called \textit{quantum disks}. There is also a weight parameter $W>0$ associated with the family of quantum disks. Let $\mathcal M^{\disk}_{0,2}(W)$ denote the two-pointed weight-$W$ quantum disk; both marked points are on the boundary, each with weight $W$ (see Definition \ref{Def: thick quantum disk} and \ref{Def: thin quantum disk} for two regimes in terms of $W$). When $W=2$, the two marked points in quantum disk $\mathcal M_{0,2}^{\disk}(2)$ are \emph{quantum typical} w.r.t. the quantum boundary length measure (\cite[Proposition $A.8$]{Weldisk}) and we denote the $\mathcal M_{0,2}^{\disk}(2)$ by $\qd_{0,2}$. Let $\qd_{0,1}$ and $\qd_{1,1}$ denote the typical quantum disks with one boundary marked point and with one bulk $\&$ one boundary marked point respectively (see Definition \ref{qd with arbitrary marked points} for the class of typical quantum disks and its variants).
\par 
Let $\qd_{0,1}(\ell)$ and $\qd_{1,1}(\ell)$ be the disintegration of $\qd_{0,1}$ and $\qd_{1,1}$ over its quantum boundary length respectively, i.e., $\qd_{0,1} = \int_0^{\infty}\qd_{0,1}(\ell)d\ell$ and $\qd_{1,1} = \int_0^{\infty}\qd_{1,1}(\ell)d\ell$, and both $\qd_{0,1}(\ell)$ and $\qd_{1,1}(\ell)$ should be understood as $\qd_{0,1}$ and $\qd_{1,1}$ restricted to having total boundary length $\ell$ respectively. Similarly, let $\mathcal M^{\disk}_{0,2}(W;\cdot,\ell)$ be the disintegration of $\mathcal M^{\disk}_{0,2}(W)$ over its right boundary, i.e., $\mathcal M^{\disk}_{0,2}(W) = \int_0^{\infty}\mathcal M^{\disk}_{0,2}(W;\cdot,\ell)d\ell$, and the $\mathcal M^{\disk}_{0,2}(W;\cdot,\ell)$ again represents the $\mathcal M^{\disk}_{0,2}(W)$ restricted to having the right boundary length $\ell$. Let $\int_0^\infty\mathcal{M}^{\disk}_{0,2}(W;\cdot,\ell)\times \qd_{0,1}(\ell)d\ell$ be the curve-decorated quantum surface obtained by conformally welding the right boundary of $\mathcal M^{\disk}_{0,2}(W)$ and total boundary of $\qd_{0,1}$. Similarly, $\int_0^\infty \mathcal M^{\disk}_{0,2}(W;\cdot,\ell)\times \qd_{1,1}(\ell) d\ell$ is the quantum surface obtained by welding the right boundary of $\mathcal M^{\disk}_{0,2}(W)$ and the total boundary of $\qd_{1,1}$.
\par 
 In theoretical physics, LQG originated in A. Polyakov's seminal work \cite{Pol81} where he proposed a theory of summation over the space of Riemannian metrics on fixed two dimensional surface. The fundamental building block of his framework is the Liouville conformal field theory (LCFT), which describes the law of the conformal factor of the metric tensor in a surface of fixed complex structure. The LCFT was made rigorous in probability theory in various different topologies; see \cite{DKRV16} and \cite{HRV18} for the case of Riemann sphere and of simply connected domain with boundary respectively, and \cite{DRV16,Rem18,GRV19} for the case of other topologies. 
\par
To be precise, let $P_{\mathbb H}$ be the probability measure corresponding to the law of the free-boundary Gaussian free field (GFF) on $\mathbb H$ normalized to having average zero on the unit circle in upper half plane unit circle $\partial\mathbb D\cap\mathbb H$. The infinite measure $\LF_{\mathbb H}(d\phi)$ is defined by first sampling $(h,\mathbf c)$ according to $P_{\mathbb H}\times \left[e^{-Qc}dc\right]$ and then letting $\phi(z)=h(z)-2Q\log|z|_{+}+\mathbf c$, where $Q=\frac{2}{\gamma}+\frac{\gamma}{2}$ and $|z|_{+} = \max \lbrace |z|,1\rbrace$. We can further define the Liouville field with bulk or/and boundary insertion(s), e.g., $\LF_{\mathbb H}^{(\beta,p)}$ and $\LF_{\mathbb H}^{(\alpha,z),(\beta,p)}$, where $p\in \mathbb R$ and $z\in \mathbb H$. To make sense of $\LF_{\mathbb H}^{(\beta,p)}$, where $p\in \partial\mathbb H$, let $\LF_{\mathbb H}^{(\beta,p)}:=\lim_{\varepsilon\to 0}\varepsilon^{\beta^2/4}e^{\frac{\beta}{2}\phi_\varepsilon(p)}\LF_{\mathbb H}(d\phi)$, $\phi_\varepsilon$ being a suitable regularization at scale $\varepsilon$ of $\phi$. In terms of $\LF_{\mathbb H}^{(\alpha,z),(\beta,p)}$ with $z\in \mathbb H$ and $p\in \partial\mathbb H$, we use the similar limiting procedure. Let $\LF_{\mathbb H}^{(\beta,p),(\alpha,z)}:=\lim_{\varepsilon\to 0}\varepsilon^{\alpha^2/2}e^{\alpha\phi_\varepsilon(z)}\LF_{\mathbb H}^{(\beta,p)}(d\phi)$, $\phi_\varepsilon(z)$ being some suitable renormalization at scale $\varepsilon$. By Cameron-Martin shift (a.k.a. Girsanov's theorem), the $\LF_{\mathbb H}^{(\beta,p)}$ represents a sample from $\LF_{\mathbb H}$ plus a $\beta$-$\log$ singularity at boundary marked point $p$ locally. Similarly, $\LF_{\mathbb H}^{(\alpha,z),(\beta,p)}$ should be viewed as $\LF_{\mathbb H}$ plus one boundary $\beta$-$\log$ singularity at $p$ and one bulk $\alpha$-$\log$ singularity at $z$.
\par
For $q\in \mathbb H$ and $p\in \partial\mathbb H$, let $\bubble_{\mathbb H}(p,q)$ be the space of rooted simple loops on $\mathbb H$ rooted at $p$ and surrounding $q$. Precisely, an oriented simple closed loop $\eta$ is in $\bubble_{\mathbb H}(p,q)$ if and only if $p\in \eta,(\eta\backslash\lbrace p\rbrace)\subseteq\overline{\mathbb H}$ and $q\in D_\eta(p)$. Let $\sle_{\kappa,0}^{\bub}(\rho)[d\eta|{i\in D_\eta(0)}]$ denote the conditional law of $\sle_{\kappa,0}^{\bub}(\rho)$ on surrounding $i$ and this is a probability measure on $\bubble_{\mathbb H}(0,i)$.
\begin{figure}[ht]
	\includegraphics[scale=0.8]{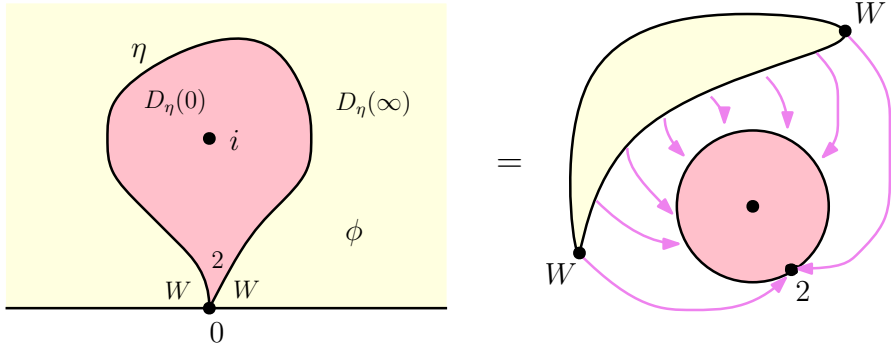}
	\centering
	\caption{Illustration of Theorem \ref{Conditional Welding} when $W\geq\frac{\gamma^2}{2}$: suppose $(\phi,\eta)$ is sampled from $C\cdot\LF_{\mathbb H}^{(\gamma,i),\beta_{2W+2},0)}(d\phi)\times \sle_{\kappa,0}^{\bub}(\rho)[d\eta|{i\in D_\eta(0)}]$ as shown on the left above, then the law of $(D_\eta(0), \phi, i,0)$ and $(D_\eta(\infty), \phi, 0^-, 0^+)$ viewed as a pair of marked quantum surfaces is equal to $\int_0^\infty \mathcal M^{\disk}_{0,2}(W;\cdot,\ell)\times \qd_{1,1}(\ell) d\ell$, i.e., the quantum surface obtained by conformally welding the total boundary of a sample from $\qd_{1,1}$ with the right boundary of a sample from $\mathcal M^{\disk}_{0,2}(W)$.}
	\label{fig: Main_welding_thm}
\end{figure} 
\begin{Th}\label{Conditional Welding}
	Fix $\gamma\in (0,2)$. For $W>0$, let $\rho=W-2$ and $\beta_{2W+2}=\gamma-\frac{2W}{\gamma}$. There exists some constant $C\in (0,\infty)$ such that suppose $(\phi,\eta)$ is sampled from
	\begin{equation}
		C\cdot\LF_{\mathbb H}^{(\gamma,i),(\beta_{2W+2},0)}(d\phi)\times \sle_{\kappa,0}^{\bub}(\rho)[d\eta|{i\in D_\eta(0)}],
	\end{equation}
	 then the law of $(D_\eta(0), \phi, i,0)$ and $(D_\eta(\infty), \phi, 0^-, 0^+)$ viewed as a pair of marked quantum surfaces is equal to
	\begin{equation}\label{qs:conditionalwelding}
		\int_0^\infty \mathcal M^{\disk}_{0,2}(W;\cdot,\ell)\times \qd_{1,1}(\ell) d\ell.
	\end{equation}
\end{Th}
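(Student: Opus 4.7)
The plan is to reduce Theorem \ref{Conditional Welding} to existing welding results for chordal $\sle_\kappa(\rho)$ by exploiting Zhan's limiting construction of the bubble from \cite[Theorem 3.20]{SLEbubble}. For each $\varepsilon > 0$, the welding of $\mathcal M^{\disk}_{0,2}(W)$ with its two boundary marked points at $0$ and $\varepsilon$ along its right boundary with $\qd_{1,1}$ should produce a curve-decorated quantum surface whose interface is a chordal $\sle_\kappa(\rho)$ from $0$ to $\varepsilon$ (with force point at $0^-$); this is the quantum-triangle/quantum-disk welding of Ang-Sun-Yu. Taking $\varepsilon \to 0^+$, the interface converges to the conditioned bubble measure on the SLE side, while on the LQG side the two boundary marked points of $\mathcal M^{\disk}_{0,2}(W)$ merge at $0$ and the two corresponding boundary insertions fuse into a single insertion. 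This is the ``quantum version of the limiting construction'' alluded to in the abstract.

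Concretely, I would split the proof into two parts, corresponding to Sections \ref{sec: curve law} and \ref{sec:field law}. I would embed the welded surface onto $(\mathbb H, i, 0)$ via the unique conformal map sending the $\qd_{1,1}$ bulk marked point to $i$ and the welded triple point to $0$, using the Ang-Holden-Sun uniform embedding machinery. Under this embedding, the \emph{curve law} is identified by running the chordal welding above for $\eta_\varepsilon$ = chordal $\sle_\kappa(\rho)$ from $0$ to $\varepsilon$ and applying Zhan's $\varepsilon \to 0^+$ limit (together with the conformal covariance from \cite[Theorems 3.10, 3.16]{SLEbubble}) to recover $\sle_{\kappa,0}^{\bub}(\rho)[\,\cdot\,|\,i\in D_\eta(0)]$ as the marginal of $\eta$. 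For the \emph{field law}: conditional on the interface $\eta$ and the common quantum length $\ell$, the two welded components are independent samples from $\mathcal M^{\disk}_{0,2}(W;\cdot,\ell)$ and $\qd_{1,1}(\ell)$, which can be rewritten as Liouville fields with appropriate boundary and bulk insertions; gluing these via $\eta$ and integrating over $\ell$ will reproduce a sample from $\LF_{\mathbb H}^{(\gamma,i),(\beta_{2W+2},0)}$, up to an overall normalization constant $C$.

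The main obstacle will be controlling the $\varepsilon \to 0^+$ limit on the LQG side and, in particular, verifying that three distinct $\varepsilon$-power factors balance correctly: the rescaling built into Zhan's bubble limit, the fusion factor for the two boundary $\beta_W$-insertions of $\mathcal M^{\disk}_{0,2}(W)$ colliding at $0$, and the conformal factor produced when the uniform embedding is applied to the pinching configuration. These must combine to produce exactly the boundary insertion parameter $\beta_{2W+2} = \gamma - \frac{2W}{\gamma}$, and in particular a finite, nonzero limiting constant $C$; that balancing is the main nontrivial calculation. Once it is in place, combining the curve law and the field law and absorbing all finite multiplicative constants into $C$ gives the identification claimed in Theorem \ref{Conditional Welding}.
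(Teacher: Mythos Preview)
Your high-level strategy for the curve law --- take a chordal welding with two close boundary marked points and send them together using Zhan's limit --- matches the paper in spirit, but the implementation differs in an important way. The paper does not merge the two boundary marked points of $\mathcal M^{\disk}_{0,2}(W)$ in the Euclidean sense or perform any LCFT fusion on that side. Instead, it welds $\mathcal M^{\disk}_{0,2}(W)$ to $\qd_{1,2}$ (i.e.\ $\mathcal M^{\disk}_{1,2}(2)$), embeds so the two boundary marked points sit at $\mathbf x$ and $\infty$, and then \emph{conditions on the quantum length of the short arc of $\qd_{1,2}$ lying in $(\delta,2\delta)$}. A coupling argument (Lemma~\ref{Key coupling lemma}) then shows this picture agrees with the target welding of $\mathcal M^{\disk}_{0,2}(W)$ and $\qd_{1,1}$ with probability $1-o_\delta(1)$, and Corollary~\ref{conditional convergence lemma} identifies the limiting curve. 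Shrinking a quantum length and coupling the truncated surfaces sidesteps the ``three $\varepsilon$-powers must balance'' problem you flagged: no rescaling constant needs to be tracked because you work throughout with the conditional (probability) laws.

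Your proposed derivation of the field law is where there is a genuine gap. You suggest fusing the two boundary $\beta_{W+2}$-insertions into a single $\beta_{2W+2}$-insertion as $\varepsilon\to 0$ and checking the powers cancel. Making that rigorous in the presence of a random curve ending at the merging points is delicate, and the paper does \emph{not} attempt it. Instead, the field law is obtained independently of the limiting argument: for $0<W<\gamma^2/2$ one adds auxiliary boundary marked points, uses the thin-disk decomposition (Lemma~\ref{Lemma decomp of thin disk}) to rewrite the welding as three separate weldings of a $\qd_{0,3}(\gamma,\alpha)$ with two thin disks and one thick disk, applies the Ang--Sun--Yu quantum-triangle welding (Theorem~\ref{qt+qd,v2}) three times, and then de-weights the extra marked points (Proposition~\ref{thin+qd11: field law}). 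The extension to all $W>0$ is by induction, writing a thick disk as a welding of $n$ thin disks (Theorem~\ref{Disk welding theorem}). This route never requires any fusion limit or $\varepsilon$-power bookkeeping; it reduces everything to finitely many applications of existing welding identities at positive separation.
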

\begin{Rem}\label{rem:relation to FZZ}
    In \cite{FZZ}, the authors considered the same type of conformal welding with $W=\frac{\gamma^2}{2}-2$ (\cite[Theorem 4.1]{FZZ}). This particular conformal welding result was used to obtained the so-called FZZ formula proposed in \cite{FZZ00}. However in \cite[Theorem 4.1]{FZZ}, the law of the welding interface was not explicitly specified. Here in the above Theorem \ref{Conditional Welding}, we generalized the \cite[Theorem 4.1]{FZZ} to all $W>0$, and furthermore identified the law of the welding interface to be the $\sle_\kappa(W-2)$ bubble constructed in \cite{SLEbubble}.
\end{Rem}
The proof of Theorem \ref{Conditional Welding} is separated into two parts. In Section \ref{sec: curve law}, we show that the law of welding interface of curve-decorated quantum surface (\ref{qs:conditionalwelding}) is the $\sle_{\kappa,0}^{\bub}(\rho)$ conditioning on surrounding $i$ and moreover, it is independent of the underlying random field. To identify the law of the welding interface, we essentially use the ``quantum version'' of the limiting construction of the $\sle_\kappa(\rho)$ bubble; see Corollary \ref{conditional convergence lemma} for the statement on the Euclidean case. More precisely, we first consider the conformal welding of $\mathcal M^{\disk}_{0,2}(W)$ and $\qd_{1,2}$, i.e., the typical quantum disk with two boundary and one bulk marked points, whose welding interface is the chordal $\sle_\kappa(\rho)$ conditioning on passing to the left of some fixed point in $\mathbb H$ (Lemma \ref{Lem:02+12}). Then conditioning on the quantum boundary length of $\qd_{1,2}$ between two boundary marked points shrinks to zero, we can construct a coupling with (\ref{qs:conditionalwelding}). Under such coupling, these two welding interfaces will match with high probability (Lemma \ref{Key coupling lemma}). The independence of curve with the underlying random field follows from the coupling argument and Corollary \ref{conditional convergence lemma} on the deterministic convergence of chordal $\sle_\kappa(\rho)$. 
\par The proof of the law of the underlying random field after conformal welding of two quantum disks, i.e., the quantum surface (\ref{qs:conditionalwelding}), is done in two steps. In Section \ref{sec:field law}, we first consider (\ref{qs:conditionalwelding}) when $0<W<\frac{\gamma^2}{2}$, i.e., when the two-pointed disk is thin. By Lemma \ref{Lemma decomp of thin disk},  the thin quantum disk of weight $W$ with one additional typical boundary marked point can be viewed as the concatenation of three independent disks: two thin disks of weight $W$ and one thick disk of weight $\gamma^2-W$ with one typical boundary marked point. Therefore, we can first sample one typical boundary marked point on $\mathcal M^{\disk}_{0,2}(W)$ and then sample two typical boundary marked points on $\qd_{1,1}(\gamma,\alpha)$, i.e., the quantum disk with one generic boundary insertion (Definition \ref{Def: general QD}). The field law after conformally welding $\mathcal M^{\disk}_{2,\bullet}(W)$ and $\qd_{1,3}(\gamma,\alpha)$ can be derived from conformal welding results for quantum triangles in \cite{QT22}. After de-weighting all the additional marked points, we solve the case when $0<W<\frac{\gamma^2}{2}$. To extend to the full range $W>0$, we inductively weld thin disks outside $\qd_{1,1}(\gamma,\alpha)$. By Theorem \ref{Disk welding theorem}, a thick disk can be obtained by welding  multiple thin disks. This concludes the outline of the proof of Theorem \ref{Conditional Welding}. 
\par 
Next, we use the techniques of \emph{uniform embedding} of quantum surfaces from \cite{InteofSLE} to remove the bulk insertion in Theorem \ref{Conditional Welding} so that the welding interface is the $\sle_{\kappa}(\rho)$ bubble without conditioning. In order to introduce Theorem \ref{Main welding theorem}, we quickly recall the setups of the uniform embedding of upper half plane $\mathbb H$. Let $\conf(\mathbb H)$ be the group of conformal automorphisms of $\mathbb H$ where the group multiplication $\cdot$ is the function composition $f\cdot g = f\circ g$. Let $\mathbf m_{\mathbb H}$ be a \emph{Haar measure} on $\conf(\mathbb H)$, which is both left and right invariant. Suppose $\mathfrak{f}$ is sampled from $\mathbf m_{\mathbb H}$ and $\phi\in C^\infty_0(\mathbb H)'$, i.e., $\phi$ is a generalized function, then we call the random function
\begin{equation}\label{eqn:equiv trans rule}
	\mathfrak{f}\bullet_\gamma\phi=\phi\circ \mathfrak{f}^{-1}+Q|\log(\mathfrak{f}^{-1})'|
\end{equation}
 the \textit{uniform embedding} of $(\mathbb H,\phi)$ via $\mathbf{m}_{\mathbb H}$. By invariance property of Haar measure, the law of $\mathfrak{f}\bullet_\gamma\phi$ only depends on $(\mathbb H,\phi)$ as quantum surface. We write $\mathbf{m}_{\mathbb H}\ltimes \left(\int_0^\infty\mathcal{M}^{\disk}_{0,2}(W;\cdot,\ell)\times \qd_{0,1}(\ell)d\ell\right)$ as the law of $(\mathfrak f\bullet_{\gamma} h,f(\eta), f(r))$, where $(\mathbb H,h,\eta,r)$ is an embedding of a sample from curve-decorated quantum surface $\int_0^\infty\mathcal{M}^{\disk}_{0,2}(W;\cdot,\ell)\times \qd_{0,1}(\ell)d\ell$, and $\mathfrak f$ is sampled independently from $\mathbf m_{\mathbb H}$. Notice that here the $\mathbf m_{\mathbb H}$ does not fix our boundary marked point $r$, which initially is the root of $\eta$.
 \par 
 The equation (\ref{eqn:equiv trans rule}) also provides a natural equivalence relation $\sim_\gamma$ over curve-decorated quantum surfaces; two curve-decorated quantum surfaces $(D_1,\phi_1,\eta_1,\omega_1,\ldots,\omega_n)$ with $\omega_i\in D_1\cup\partial D_1$ and $(D_2,\phi_2,\eta_2,z_1\ldots,z_n)$ with $z_i\in D_2\cup\partial D_2$ are equivalent as quantum surfaces, denoted by
 \begin{equation}
     (D_1,\phi_1,\eta_1,\omega_1,\ldots,\omega_n)\sim_\gamma (D_2,\phi_2,\eta_2,z_1\ldots,z_n),
 \end{equation}
if there is a conformal map $\psi:D_1\to D_2$ such that $\phi_2=\psi\bullet_\gamma \phi_1$, $\eta_2=\psi(\eta_1)$, and $\psi(\omega_i)=z_i,1\leq i\leq n$. 
\par
 We can also consider the case when the marked points are fixed under the action of Haar measure. For fixed $p\in \partial\mathbb H$, let $\conf(\mathbb H,p)$ be the subgroup of $\conf(\mathbb H)$ fixing $p$ and let $\mathbf m_{\mathbb H,p}$ be a Haar measure on $\conf(\mathbb H,p)$. The curve-decorated quantum surface $\int_0^\infty \mathcal M^{\disk}_{0,2}(W;\cdot,\ell)\times \qd_{0,1}(\ell) d\ell$ can be identified as a measure on the product space $\left(C_0^{\infty}(\mathbb H)'/\conf(\mathbb H,p)\right)\times \bubble_{\mathbb H}(p)$. Therefore, the measure $\mathbf{m}_{\mathbb H,p}\ltimes\left(\int_0^\infty \mathcal M^{\disk}_{0,2}(W;\cdot,\ell)\times \qd_{0,1}(\ell) d\ell\right)$ can be defined in the exact same way as $\mathbf{m}_{\mathbb H}\ltimes \left(\int_0^\infty\mathcal{M}^{\disk}_{0,2}(W;\cdot,\ell)\times \qd_{0,1}(\ell)d\ell\right)$ for fixed $p\in \partial\mathbb H$. 
 \par
 For any fixed $p\in \mathbb H$, let $\sle_{\kappa,p}^{\bub}(\rho)$ denote the $\sle_\kappa(\rho)$ bubble measure rooted at $p\in \mathbb R$. It is easily defined as the image of $\sle_{\kappa,0}^{\bub}(\rho)$ under the shifting map $f_p:z\mapsto z+p$. 
 \begin{figure}[H]
	\includegraphics[scale=0.9]{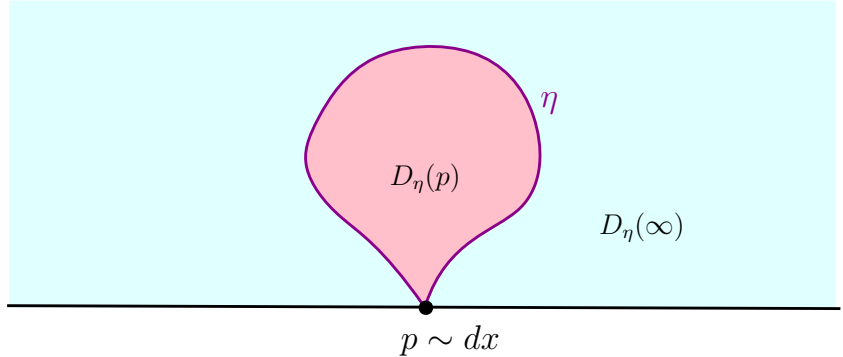}
	\centering
	\caption{Illustration of the welding equation (\ref{eqn:ue in main}) in Theorem \ref{Main welding theorem}: first sample a root point $p$ according to Lebesgue measure $dx$ on $\mathbb R$, then sample $(\phi,\eta)$ according to the product measure $\LF^{(\beta_{2W+2},p)}_{\mathbb H}(d\phi)\times\sle_{\kappa,p}^{\bub}(W-2)(d\eta)$. The resulting quantum surface $(\mathbb H,\phi,\eta,p)/\sim_\gamma$ has the law of $C\int_0^\infty\mathcal{M}^{\disk}_{0,2}(W;\cdot,\ell)\times \qd_{0,1}(\ell)d\ell$ after uniform embedding.}
	\label{fig:main_welding}
\end{figure}
\begin{Th}\label{Main welding theorem}
Fix $\gamma\in (0,2)$. For $W>0$, let $\rho=W-2$ and $\beta_{2W+2}=\gamma-\frac{2W}{\gamma}$. There exists some constant $C\in (0,\infty)$ such that 
\begin{equation}\label{eqn:ue in main}
    \mathbf{m}_{\mathbb H}\ltimes \left(\int_0^\infty\mathcal{M}^{\disk}_{0,2}(W;\cdot,\ell)\times \qd_{0,1}(\ell)d\ell\right) = C\cdot\LF^{(\beta_{2W+2},p)}_{\mathbb H}(d\phi)\times\sle_{\kappa,p}^{\bub}(\rho)(d\eta)dp,
 \end{equation}
 where $\mathbf{m}_{\mathbb H}$ is a Haar measure on $\conf(\mathbb H)$, i.e., the group of conformal automorphisms of $\mathbb H$. Furthermore, there exists some constant $C\in (0,\infty)$ such that 
\begin{equation}\label{eqn:ue in main fixed root}
    \mathbf{m}_{\mathbb H,0}\ltimes\left(\int_0^\infty \mathcal M^{\disk}_{0,2}(W;\cdot,\ell)\times \qd_{0,1}(\ell) d\ell\right) = C\cdot \LF_{\mathbb H}^{(\beta_{2W+2},0)}(d\phi)\times\sle_{\kappa,0}^{\bub}(\rho)(d\eta),
\end{equation}
where $\mathbf{m}_{\mathbb H,0}$ is a Haar measure on $\conf(\mathbb H,0)$, i.e., the group of conformal automorphisms of $\mathbb H$ fixing $0$.
\end{Th}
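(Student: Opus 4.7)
My plan is to deduce Theorem \ref{Main welding theorem} from Theorem \ref{Conditional Welding} by means of the uniform embedding technique of \cite{InteofSLE}: apply the Haar action $\mathbf m_{\mathbb H,0}$ to both sides of the conditional identity in order to integrate out the bulk marked point on the quantum disk side and, in parallel, remove the bulk insertion together with the conditioning on the field--curve side. This yields (\ref{eqn:ue in main fixed root}), from which (\ref{eqn:ue in main}) follows by a further averaging over horizontal translations.

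Concretely, after acting by $\mathbf m_{\mathbb H,0}$ on Theorem \ref{Conditional Welding}, the bulk point $i$ becomes a random point $z=f(i)\in\mathbb H$ since $\conf(\mathbb H,0)$ acts freely and transitively on $\mathbb H$, with pushforward of Haar measure under $f\mapsto f(i)$ a constant multiple of the hyperbolic area measure. On the RHS of Theorem \ref{Conditional Welding}, $\qd_{1,1}$ is by definition the disintegration of $\qd_{0,1}$ against the LQG area measure $\mu_\phi$, so marginalising over the position of the bulk marked point turns the RHS into $\mathbf m_{\mathbb H,0}\ltimes \int_0^\infty \mathcal M^{\disk}_{0,2}(W;\cdot,\ell)\times \qd_{0,1}(\ell)\, d\ell$ weighted by $\mu_\phi(D_\eta(0))$. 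On the LHS, the uniform embedding identity for the Liouville field with one bulk insertion---analogous to the corresponding identities in \cite{InteofSLE}---gives $\mathbf m_{\mathbb H,0}\ltimes \LF_{\mathbb H}^{(\gamma,i),(\beta_{2W+2},0)}(d\phi)\propto \LF_{\mathbb H}^{(\beta_{2W+2},0)}(d\phi)\,\mu_\phi(dz)$, while the conditioning $[\,\cdot\mid i\in D_\eta(0)]$ is transported by the Haar action to $[\,\cdot\mid z\in D_\eta(0)]$; combining the corresponding Bayes normalisations with the conformal covariance of $\sle_{\kappa,0}^{\bub}(\rho)$ under $\conf(\mathbb H,0)$, the LHS after integration over $z$ yields $\LF_{\mathbb H}^{(\beta_{2W+2},0)}(d\phi)\times\sle_{\kappa,0}^{\bub}(\rho)(d\eta)$ again weighted by $\mu_\phi(D_\eta(0))$. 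Cancelling the common weight gives (\ref{eqn:ue in main fixed root}). To obtain (\ref{eqn:ue in main}), I would then integrate over translations $z\mapsto z+p$: Haar measure on $\conf(\mathbb H)$ decomposes as Haar on $\conf(\mathbb H,0)$ times Lebesgue $dp$ on $\mathbb R$ (the orbit of $0$), and translation invariance of both the Liouville field and the bubble measure moves the base point from $0$ to a generic $p\in\mathbb R$.

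The main obstacle I anticipate is the careful book-keeping of Jacobian and conformal-weight factors. In particular one must verify that the hyperbolic pushforward of Haar on $\conf(\mathbb H,0)$, the conformal weight of the bulk insertion in $\LF_{\mathbb H}^{(\gamma,z),(\beta_{2W+2},0)}$, and the conformal covariance weight of the rooted bubble $\sle_{\kappa,0}^{\bub}(\rho)$ combine to match the Lebesgue measure $dz$ required by the Liouville Girsanov identity $\LF_{\mathbb H}^{(\gamma,z),(\beta_{2W+2},0)}(d\phi)\,dz\propto \LF_{\mathbb H}^{(\beta_{2W+2},0)}(d\phi)\,\mu_\phi(dz)$. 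Verifying these cancellations---which parallel the analogous computations in \cite{InteofSLE}---is the technical heart of the argument, after which the absolute constant in the statement is absorbed without explicit calculation.
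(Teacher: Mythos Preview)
Your proposal is correct and follows essentially the same route as the paper, which likewise applies the Haar action to Theorem~\ref{Conditional Welding}, uses the Girsanov identity to trade the bulk $\gamma$-insertion for a quantum-area weight, and invokes Zhan's conformal covariance of $\sle^{\bub}_{\kappa,0}(\rho)$ to undo the conditioning; the paper's only organizational difference is that it packages the Jacobian factors into an auxiliary measure $\widetilde{\sle^{\bub}_{\kappa,p}(\rho)}$ (equation~(\ref{SLE bubble tilt})) and proves separately in Lemma~\ref{rela bet SLE and SLE tilt} that $\widetilde{\sle^{\bub}_{\kappa,p}(\rho)} = C\cdot\sle^{\bub}_{\kappa,p}(\rho)$. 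One caution on your sketch: $\conf(\mathbb H,0)$ is not unimodular, so both the claim that its Haar measure pushes forward under $f\mapsto f(i)$ to hyperbolic area and the decomposition of $\mathbf m_{\mathbb H}$ as $dp\times\mathbf m_{\mathbb H,0}$ need care---this is precisely the bookkeeping you already flag, and the paper sidesteps it by computing directly with the full $\mathbf m_{\mathbb H}$ (Lemmas~\ref{Joint law} and~\ref{derivative}) and reading off~(\ref{eqn:ue in main fixed root}) as the disintegration of~(\ref{eqn:ue in main}) over the root $p$.
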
 
The proof of Theorem \ref{Main welding theorem} is presented in Section \ref{sec:proof of main based on conditional welding}. The equation (\ref{eqn:ue in main fixed root}) should be viewed as the disintegration of equation (\ref{eqn:ue in main}) over its boundary root point. Unlike the case of Theorem \ref{Conditional Welding}, where there are two marked points:one in the bulk and one on the boundary, there is only one marked point in curve-decorated quantum surface $\int_0^\infty\mathcal{M}^{\disk}_{0,2}(W;\cdot,\ell)\times \qd_{0,1}(\ell)d\ell$. Therefore, we do not have enough marked points to fix a conformal structure of $\mathbb H$. In this case, the LCFT describes the law of quantum surface $\int_0^\infty\mathcal{M}^{\disk}_{0,2}(W;\cdot,\ell)\times \qd_{0,1}(\ell)d\ell$ after uniform embedding, whereas in Theorem \ref{Conditional Welding}, the LCFT describes the law of the quantum surface (\ref{qs:conditionalwelding}) under a fixed embedding.
\par
Another way of stating Theorem \ref{Main welding theorem} without using uniform embedding is to fix a particular embedding on the right hand side of equations (\ref{eqn:ue in main}) and (\ref{eqn:ue in main fixed root}). For instance, we can first sample $(\phi,\eta)$ from $C\cdot\LF_{\mathbb H}^{(\beta_{2W+2},0)}(d\phi)\times \sle_{\kappa,0}^{\bub}(\rho)(d\eta)$ and then fix the embedding by requiring $\nu_{\phi}(0,1)=\nu_{\phi}(1,\infty) = \nu_{\phi}(\infty,0)$, i.e., the quantum boundary lengths between $0,1$ and $\infty$ are all equal. By doing this, the law of $(D_\eta(0), \phi, 0)$ and $(D_\eta(\infty), \phi, 0^-, 0^+)$ viewed as a pair of marked quantum surfaces is equal to $\int_0^\infty \mathcal M^{\disk}_{0,2}(W;\cdot,\ell)\times \qd_{0,1}(\ell) d\ell.$
\par
As a by-product of the uniform embedding, we also obtain the following decomposition formula (Lemma \ref{rela bet SLE and SLE tilt} and Corollary \ref{Cor:root p equiv}) on the rooted $\sle$ bubble measure $\sle^{\bub}_{\kappa,p}(\rho)$:
\begin{align}\label{eqn:decomp of bubble in the main}
\begin{split}
    &\sle_{\kappa,p}^{\bub}(\rho)(d\eta)= C\cdot\frac{1}{|D_\eta(p)|}\int_{\mathbb H} |q-p|^{W-\frac{2W(W+2)}{\gamma^2}}\left(\Im q\right)^{\frac{W(W+2)}{\gamma^2}-\frac{W}{2}}\sle_{\kappa,p}^{\bub}(\rho)[d\eta|{q\in D_\eta(p)}] d^2q,
\end{split}
\end{align}
where $C\in (0,\infty)$, $|D_\eta(p)|$ is the Euclidean area of $D_\eta(p)$, $\kappa=\gamma^2$, and $\rho = W-2$. The (\ref{eqn:decomp of bubble in the main}) also tells us that 
\begin{equation}\label{eqn:law of bubble surrounds q}
	\sle^{\bub}_{\kappa,p}(\rho)[q\in D_\eta(p)] \propto |q-p|^{W-\frac{2W(W+2)}{\kappa}}(\Im q)^{-\frac{W}{2}+\frac{W(W+2)}{\kappa}}.
\end{equation}
In other words, for fixed $p\in \mathbb R$, the ``probability'' that $\sle_{\kappa,p}^{\bub}(\rho)$ surrounds $q$ is proportional to $|q-p|^{W-\frac{2W(W+2)}{\kappa}}(\Im q)^{-\frac{W}{2}+\frac{W(W+2)}{\kappa}}$. As we will see in Section \ref{sec:proof of main based on conditional welding}, it is the Haar measure together with ``uniform symmetries'' of the underlying Liouville field, or more concretely, the conformal covariance property of the LCFT, that give us equation (\ref{eqn:law of bubble surrounds q}). The equation (\ref{eqn:decomp of bubble in the main}) provides a concrete relationship between the ordinary infinite bubble measure $\sle^{\bub}_{\kappa,p}(\rho)$ and the probability measure $\sle^{\bub}_{\kappa,p}(\rho)[d\eta|i\in D_\eta(p)]$ after conditioning, which builds the bridge between Theorem \ref{Main welding theorem} and Theorem \ref{Conditional Welding}.
\begin{Rem}[Scaling limits of random planar maps decorated by self-avoiding bubbles]
	Motivated by \cite[Theorem 1.2]{SLEloopviawelding}, we conjecture that the scaling limit of the quadrangulated disk decorated by the self-avoiding discrete bubble converges in law to one-pointed quantum disk decorated by $\sle$ bubble, i.e., the $\int_0^\infty\mathcal{M}^{\disk}_{0,2}(2;\cdot,\ell)\times \qd_{0,1}(\ell)d\ell$ in Theorem \ref{Main welding theorem}, for $\kappa=\gamma^2=\frac{8}{3}$ in the so-called \emph{Gromov-Hausdorff-Prokhorov-Uniform topology} (GHPU topology). For the precise definition of GHPU topology, see \cite[Subsection 2.6]{SLEloopviawelding}. The precise conjectures regarding the scaling limit of bubble-decorated quadrangulated disks will be presented in Subsection \ref{sec:conj on scaling limits}.
\end{Rem}
\subsection{SLE bubble zippers with a generic insertion and applications}\label{sec:generalization and application}
\subsubsection{Moments of the conformal radius of $\sle_\kappa(\rho)$ bubbles}
Next, we consider the generalization of Theorem \ref{Conditional Welding} to the case when the bulk insertion of $\qd_{1,1}$ has generic weight. To generalize Theorem \ref{Conditional Welding}, we first define the twisted $\sle_\kappa(\rho)$ bubble measure on $\bubble_{\mathbb H}(0,i)$ corresponding to weight-$\alpha$ bulk insertions of the quantum disk. Given $\eta\in \bubble_{\mathbb H}(0,i)$, let $\psi_\eta:\mathbb H\to D_\eta(0)$ be the unique conformal map fixing $i$ and $0$. Let $\mathbf m$ denote the probability law of $\sle_{\kappa,0}^{\bub}(\rho)[d\eta|{i\in D_\eta(0)}]$ as in Theorem $\ref{Conditional Welding}$ and $\Delta_\alpha:=\frac{\alpha}{2}(Q-\frac{\alpha}{2})$ is known as the \textit{scaling dimension}. Define $\mathbf m_\alpha$ to be the \emph{non-probability} measure on $\bubble_{\mathbb H}(0,i)$ such that 
\begin{equation}\label{twisted SLE bubble: RN derivative}
	\frac{d\mathbf m_\alpha}{d\mathbf m}(\eta)=|\psi'_\eta(i)|^{2\Delta_\alpha-2}.
\end{equation}
Fix $p\in \mathbb R, q\in \mathbb H$ and let $\LF_{\mathbb H}^{(\beta,p),(\alpha,q)}(\ell)$ be the disintegration of $\LF_{\mathbb H}^{(\beta,p),(\alpha,q)}$ over its total boundary length, i.e., $\LF_{\mathbb H}^{(\beta,p),(\alpha,q)} = \int_0^\infty \LF_{\mathbb H}^{(\beta,p),(\alpha,q)}(\ell)d\ell$. Like before, the measure $\LF_{\mathbb H}^{(\beta,p),(\alpha,q)}(\ell)$ represents the Liouville field $\LF_{\mathbb H}^{(\beta,p),(\alpha,q)}$ restricted to having total boundary length $\ell$. The quantum surface $\qd_{1,1}(\alpha,\gamma)$ is the simple generalization of $\qd_{1,1}$ and has the LCFT description of $\LF_{\mathbb H}^{(\alpha,i),(\gamma,0)}$ under the particular embedding $(\mathbb H,\phi,0,i)$; see Definition \ref{def:general qd}. Again, $\qd_{1,1}(\alpha,\gamma;\ell)$ is the disintegration of $\qd_{1,1}(\alpha,\gamma)$ over its total boundary length, i.e., $\qd_{1,1}(\alpha,\gamma) = \int_0^{\infty}\qd_{1,1}(\alpha,\gamma;\ell)d\ell$. We generalize Theorem \ref{Conditional Welding} to Theorem \ref{welding of generalized weight} in order to compute the moments of conformal radius of the $\sle_\kappa(\rho)$ bubble conditioning on surrounding $i$.
\begin{Th}\label{welding of generalized weight}
For $\alpha\in\mathbb R$ and $W>0$, there exists some constant $C_W\in (0,\infty)$ such that the following holds: Suppose $(\phi,\eta)$ is sampled from $\LF_{\mathbb H}^{(\beta_{2W+2},0),(\alpha,i)}(1)\times \mathbf m_\alpha$, then the law of $(D_\eta(0),\phi,i,0)$ and $(D_\eta(\infty),\phi,0^-,0^+)$ viewed as a pair of marked quantum surfaces is given by $C_{W}\cdot\int_0^\infty \qd_{1,1}(\alpha,\gamma;\ell)\times \mathcal M^{\disk}_{0,2}(W;1,\ell)d\ell$. In other words, 
 \begin{equation}\label{general welding equation}
 	\LF_{\mathbb H}^{(\beta_{2W+2},0),(\alpha,i)}(1)\times \mathbf m_\alpha=C_{W}\cdot\int_0^\infty \qd_{1,1}(\alpha,\gamma;\ell)\times \mathcal M^{\disk}_{0,2}(W;1,\ell)d\ell.
 \end{equation}	
\end{Th}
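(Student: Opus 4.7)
The plan is to reduce Theorem \ref{welding of generalized weight} to Theorem \ref{Conditional Welding} via a Cameron--Martin (Girsanov) change of measure that transforms the canonical bulk insertion $(\gamma,i)$ into the generic insertion $(\alpha,i)$. The guiding observation is that $\Delta_\gamma=1$, so $\mathbf m_\gamma=\mathbf m$; hence the $\alpha=\gamma$ instance of (\ref{general welding equation}) is precisely Theorem \ref{Conditional Welding} after disintegrating the total quantum boundary length of $\mathbb H$ at the value $1$. The generic-$\alpha$ identity will then be produced by tilting both sides of this $\alpha=\gamma$ identity by the same Radon--Nikodym factor.

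First I would disintegrate (\ref{qs:conditionalwelding}) by the total boundary length of the welded surface (which, after welding, coincides with the left boundary length of $\mathcal M_{0,2}^{\disk}(W)$) and restrict to length $1$, obtaining
\[
C\cdot \LF_{\mathbb H}^{(\beta_{2W+2},0),(\gamma,i)}(1)\times \mathbf m \;=\; \int_0^\infty \qd_{1,1}(\gamma,\gamma;\ell)\times \mathcal M_{0,2}^{\disk}(W;1,\ell)\,d\ell.
\]
Next, using the limiting definition $\LF_{\mathbb H}^{(\beta,p),(\alpha,z)}=\lim_{\varepsilon\to 0}\varepsilon^{\alpha^2/2}e^{\alpha\phi_\varepsilon(z)}\LF_{\mathbb H}^{(\beta,p)}$, I would compute the formal Radon--Nikodym derivative
\[
\frac{d\LF_{\mathbb H}^{(\beta_{2W+2},0),(\alpha,i)}}{d\LF_{\mathbb H}^{(\beta_{2W+2},0),(\gamma,i)}}(\phi)\;=\;\lim_{\varepsilon\to 0}\varepsilon^{(\alpha^2-\gamma^2)/2}e^{(\alpha-\gamma)\phi_\varepsilon(i)},
\]
and the analogous ratio $d\qd_{1,1}(\alpha,\gamma;\ell)/d\qd_{1,1}(\gamma,\gamma;\ell)$ using the LCFT description of $\qd_{1,1}(\alpha,\gamma)$ in the embedding $(\mathbb H,\hat\phi,0,i)$.

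The crucial step is to transport the $\qd_{1,1}$-side Girsanov factor back to the ambient $\mathbb H$ through the unique conformal map $\psi_\eta:\mathbb H\to D_\eta(0)$ that fixes $0$ and $i$. Writing $\hat\phi=\phi\circ\psi_\eta+Q\log|\psi_\eta'|$ for the pulled-back field and using that an $\varepsilon$-regularization at $i$ on the embedded side corresponds to an $\varepsilon|\psi_\eta'(i)|$-regularization at $i$ in $D_\eta(0)$, one finds
\[
\lim_{\varepsilon\to 0}\varepsilon^{(\alpha^2-\gamma^2)/2}e^{(\alpha-\gamma)\hat\phi_\varepsilon(i)}=|\psi_\eta'(i)|^{(\alpha-\gamma)Q-(\alpha^2-\gamma^2)/2}\cdot\lim_{\tilde\varepsilon\to 0}\tilde\varepsilon^{(\alpha^2-\gamma^2)/2}e^{(\alpha-\gamma)\phi_{\tilde\varepsilon}(i)}.
\]
The exponent of $|\psi_\eta'(i)|$ simplifies to $2\Delta_\alpha-2\Delta_\gamma=2\Delta_\alpha-2$, which matches exactly the Radon--Nikodym weight appearing in the definition (\ref{twisted SLE bubble: RN derivative}) of $\mathbf m_\alpha$. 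Consequently, tilting both sides of the $\alpha=\gamma$ identity by the common ambient factor $\lim \varepsilon^{(\alpha^2-\gamma^2)/2}e^{(\alpha-\gamma)\phi_\varepsilon(i)}$ converts the LHS into $\LF_{\mathbb H}^{(\beta_{2W+2},0),(\alpha,i)}(1)\times \mathbf m_\alpha$ and the RHS, via the computation above, into $C_W\cdot\int_0^\infty \qd_{1,1}(\alpha,\gamma;\ell)\times \mathcal M_{0,2}^{\disk}(W;1,\ell)\,d\ell$, which is exactly (\ref{general welding equation}).

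The main obstacle will be making this formal Girsanov argument rigorous, particularly commuting the $\varepsilon\to 0$ limits with the conformal welding and with the disintegration at fixed boundary length. I would circumvent this by first performing the Cameron--Martin tilt at the level of the undisintegrated measures $\LF_{\mathbb H}^{(\beta_{2W+2},0)}(d\phi)$ and $\qd_{1,1}(\cdot,\gamma)$, where the limiting insertions are already built into the construction, and only then passing to the boundary-length restriction. The coordinate-change formula $\hat\phi=\psi_\eta\bullet_\gamma \phi$ on $D_\eta(0)$, together with the invariance of the welding under Cameron--Martin shifts of the field, should then deliver (\ref{general welding equation}) with a specific constant $C_W$ obtained by tracking the LCFT normalization constants throughout.
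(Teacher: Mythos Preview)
Your proposal is correct and follows essentially the same route as the paper: start from the $\alpha=\gamma$ case (Theorem~\ref{Conditional Welding} restricted to total boundary length $1$), tilt by the regularized Girsanov factor $\varepsilon^{(\alpha^2-\gamma^2)/2}e^{(\alpha-\gamma)\phi_\varepsilon(i)}$, and transport it through $\psi_\eta$ to pick up exactly the $|\psi_\eta'(i)|^{2\Delta_\alpha-2}$ weight defining $\mathbf m_\alpha$. The only minor difference is in how the $\varepsilon\to 0$ limit is justified at fixed boundary length: the paper proves a preparatory weak-convergence lemma for $\LF_{\mathbb H,\varepsilon}^{(\alpha,i),(\beta,0)}(\ell)\to\LF_{\mathbb H}^{(\alpha,i),(\beta,0)}(\ell)$ and works directly at length $1$, whereas you suggest undisintegrating first---either device works.
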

For technical convenience, we restrict the total boundary length of the curve-decorated quantum surface (\ref{general welding equation}) to $1$. 
For simply connected domain $D_\eta(0)$, $\psi_\eta^{-1}$ is the conformal map from $D_\eta(0)$ to $\mathbb H$ that fixes $0$ and $1$. Let $g(z)=i\frac{z-i}{z+i}$ be the uniformizing map from $\mathbb H$ to $\mathbb D$ and let $\varphi_\eta:D_\eta(0)\mapsto \mathbb D$ be such that $\varphi_\eta := g\circ \psi_\eta^{-1}$. Notice that $\varphi_\eta$ maps $i$ to $0$ and $0$ to $1$ respectively. Under our setups, the conformal radius of $D_\eta(0)$ viewed from $i$, denoted by $\Rad(D_\eta(0),i)$, is defined as $\frac{1}{|\varphi'_\eta(i)|}$, i.e., 
\begin{equation}\label{def:cr}
    \Rad(D_\eta(0),i):=\frac{1}{|\varphi'_\eta(i)|}.
\end{equation}
Notice that our definition of conformal radius (\ref{def:cr}) differs slightly with the classical literature of complex analysis, where the conformal map is chosen so that it maps $i$ to $0$ and its derivative at $i$ is in $\mathbb R_{+}$. By simple computation, 
\begin{align}
\begin{split}
    \varphi_\eta'(i)=\left[g\circ \psi_\eta^{-1}\right]'(i) = g'(\psi_{\eta}^{-1}(i))\cdot(\psi_\eta^{-1})'(i) = g'(i)\cdot\frac{1}{\psi_\eta'(i)}.
\end{split}
\end{align}
Therefore, 
\begin{equation}
    \Rad(D_\eta(0),i)=\frac{1}{|\varphi'_\eta(i)|} = \frac{|\psi_\eta'(i)|}{|g'(i)|} = 2 |\psi_\eta'(i)|.
\end{equation}
When $\eta$ is sampled from $\sle_{\kappa,0}^{\bub}(\rho)[d\eta|i\in D_\eta(0)]$, we are interested in the moments of conformal radius $\Rad(D_\eta(0),i)$. Specifically, we want to compute $\mathbb E\left[\Rad(D_\eta(0),i)^{2\Delta_\alpha-2} \right]$, which is the same as $2^{2\Delta_\alpha-2}\cdot\mathbb E\left[|\psi'_\eta(i)|^{2\Delta_\alpha-2} \right]$. To clear up additional constant in the conformal welding equation (\ref{general welding equation}), we further define the \textit{renormalized moments of conformal radius} $\C(\alpha,W)$ to be
\begin{equation}\label{Def:renormalized CR intro}
	\C(\alpha,W):=\frac{\Rad(D_\eta(0),i)}{2^{2\Delta_\alpha-2}\cdot C_W} =\frac{\mathbb E\left[|\psi'_\eta(i)|^{2\Delta_\alpha-2} \right]}{C_W}. 
\end{equation}
Throughout this paper, with a slight abuse of notation, when we talk about ``the conformal radius of $\sle_{\kappa,0}^{\bub}(\rho)[d\eta|{i\in D_\eta(0)}]$'', we really mean the conformal radius of the random simply connected domain $D_\eta(0)$ viewed from $i$ when $\eta$ is sampled from probability measure $\sle_{\kappa,0}^{\bub}(\rho)[d\eta|{i\in D_\eta(0)}].$
\begin{Prop}[Moments of conformal radius of $\sle_\kappa$ bubbles conditioning on surrounding $i$]\label{main prop: conformal radius}
	Fix $\kappa\in (0,4)$, $W=2,\rho=0$ and $\frac{\gamma}{2}<\alpha<Q+\frac{2}{\gamma}$. Suppose $\eta$ is sampled from $\sle_{\kappa,0}^{\bub}[d\eta|i\in D_\eta(0)]$, then we have 
	\begin{equation}
		\mathbb E\left[|\psi'_\eta(i)|^{2\Delta_\alpha-2} \right]=\frac{\Gamma(\frac{2\alpha}{\gamma})\Gamma(\frac{8}{\kappa}-\frac{2\alpha}{\gamma}+1)}{\Gamma(\frac{8}{\kappa}-1)}.
	\end{equation}
 Consequently, 
 \begin{equation}
     \mathbb E\left[\Rad(D_\eta(0),i)^{2\Delta_\alpha-2} \right] = 2^{2\Delta_\alpha-2}\cdot \frac{\Gamma(\frac{2\alpha}{\gamma})\Gamma(\frac{8}{\kappa}-\frac{2\alpha}{\gamma}+1)}{\Gamma(\frac{8}{\kappa}-1)}.
 \end{equation}
\end{Prop}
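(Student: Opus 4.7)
The plan is to take total masses on both sides of the welding identity from Theorem~\ref{welding of generalized weight} specialized to $W=2$. Then $\rho=0$, $\beta_{2W+2}=\beta_6=\gamma-\tfrac{4}{\gamma}$, and $\mathcal M^{\disk}_{0,2}(2)=\qd_{0,2}$, so equation (\ref{general welding equation}) reduces to a scalar identity after integrating out. By the Radon--Nikodym derivative (\ref{twisted SLE bubble: RN derivative}) defining $\mathbf m_\alpha$, the total mass of $\mathbf m_\alpha$ is exactly the desired expectation:
\[
|\mathbf m_\alpha| = \mathbb E\!\left[|\psi'_\eta(i)|^{2\Delta_\alpha-2}\right].
\]
Hence, rearranging,
\[
\mathbb E\!\left[|\psi'_\eta(i)|^{2\Delta_\alpha-2}\right] \;=\; \frac{C_2 \displaystyle\int_0^\infty |\qd_{1,1}(\alpha,\gamma;\ell)|\cdot |\qd_{0,2}(1,\ell)|\,d\ell}{|\LF_{\mathbb H}^{(\beta_6,0),(\alpha,i)}(1)|}.
\]

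Next, I would substitute the three ingredients, each of which is known in closed form. The boundary length density $|\qd_{0,2}(\ell_1,\ell_2)|$ is an explicit power of $\ell_1+\ell_2$ from \cite{Weldisk}; the bulk-boundary density $|\qd_{1,1}(\alpha,\gamma;\ell)|$ is a GMC one-point moment that simplifies to an explicit power of $\ell$ via a Girsanov/Cameron--Martin shift and the Ang--Sun--Yu framework; and the Liouville partition function $|\LF_{\mathbb H}^{(\beta_6,0),(\alpha,i)}(1)|$ reduces, after the standard change of variables that trades the $c$-integral for a total-length disintegration, to an explicit product of Gamma functions coming from the bulk-boundary GMC unit volume computation (as in Remy and related works). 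Once these are plugged in, the $\ell$-integral becomes a Beta integral $B(\cdot,\cdot)$ whose parameters involve $\tfrac{2\alpha}{\gamma}$ and $\tfrac{8}{\kappa}$, exactly reproducing the Gamma pattern on the right-hand side of the claim.

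To pin down the constant $C_2$ from Theorem~\ref{welding of generalized weight}, I would run the same total-mass computation at a distinguished value of $\alpha$ where the expectation is independently known. A natural choice is $\alpha=\gamma$, corresponding to a quantum-typical bulk marked point: there $\qd_{1,1}(\gamma,\gamma;\ell)$ coincides with $\qd_{1,1}(\ell)$ of the excerpt, and the Liouville field on the left acquires a second $\gamma$-insertion which makes the whole identity collapse to the known volume/length moment of $\qd_{1,1}$. This anchors $C_2$ and fixes the absolute normalization of the formula; the admissibility range $\tfrac{\gamma}{2}<\alpha<Q+\tfrac{2}{\gamma}$ is precisely the Seiberg bound window in which both the GMC moments and the Beta integral converge, and is preserved under the anchoring point.

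The main obstacle will be the bookkeeping of three independent normalization constants—the welding constant $C_2$, the LCFT unit-length partition function, and the quantum-disk densities from \cite{Weldisk,QT22}—and ensuring they combine so that every factor of $\gamma$ that is not of the form $\tfrac{2\alpha}{\gamma}$ or $\tfrac{8}{\gamma^2}$ cancels, leaving the clean ratio $\Gamma(\tfrac{2\alpha}{\gamma})\Gamma(\tfrac{8}{\kappa}-\tfrac{2\alpha}{\gamma}+1)/\Gamma(\tfrac{8}{\kappa}-1)$. The proportionality $\Rad(D_\eta(0),i)=2|\psi'_\eta(i)|$ then upgrades the $|\psi'_\eta(i)|$-moment to the conformal radius moment stated in the proposition.
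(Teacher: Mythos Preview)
Your proposal is correct and follows essentially the same approach as the paper: take total mass on both sides of the $W=2$ welding identity, express the three densities via the bulk-boundary correlator $\overline G(\alpha,\beta)$ from \cite{RZ22} and the explicit $\qd_{0,2}$ boundary-length law, reduce the $\ell$-integral to a Beta integral, and anchor the constant $C_2$ at $\alpha=\gamma$. The only step you left implicit is that the ratio $\overline G(\alpha,\gamma)/\overline G(\alpha,\gamma-\tfrac{4}{\gamma})$ is simplified via the shift relation \cite[(2.30)]{RZ22}, after which analytic continuation extends the formula from the a priori range $\alpha>\tfrac{2}{\gamma}$ down to $\alpha>\tfrac{\gamma}{2}$.
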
 
Moments of the conformal radius of the general $\sle^{\bub}_{\kappa,0}(\rho)[d\eta|i\in D_\eta(0)]$ bubbles are computed in Proposition \ref{conformal radius: general case}. The key ingradients of the computation are the functin $\overline{G}(\alpha,\beta)$ and the \textit{Liouville reflection coefficient} $R(\beta,\mu_1,\mu_2)$ in \cite{RZ22,InteofSLE}, which describe the quantum boundary length laws of the two-pointed disk and the disk with one bulk and one boundary marked points, respectively. 
\subsubsection{The bulk-boundary correlation function in the LCFT}\label{subsubsec: two-pointed correlation function}
 As an another important application of Theorem \ref{welding of generalized weight}, we derived a formula for the bulk-boundary correlation function in the LCFT within probabilistic frameworks. In theoretical physics, the LCFT is defined by the formal path integral. The most basic observable of Liouville theory is the correlation function with $N$ bulk marked points $z_i\in \mathbb H$ with weights $\alpha_i\in \mathbb R$ and $M$ boundary marked points $s_j\in \mathbb R$ with weights $\beta_j$. Precisely, for bulk insertions $(z_i)_{1\leq i\leq N}$ with weights $(\alpha_i)_{1\leq i\leq N}$ and boundary insertions $(s_j)_{1\leq j\leq M}$ with weights $(\beta_j)_{1\leq j\leq M}$, the correlation function in the LCFT at these points is defined using the following formal path integral: 
\begin{equation}\label{eqn: path integral}
	\left\langle\prod_{i=1}^N e^{\alpha_i\phi(z_i)}\prod_{j=1}^M e^{\frac{\beta_j}{2}\phi(s_j)}\right\rangle_{\mu,\mu_\partial}=\int_{X:\mathbb H\to \mathbb R} DX\prod_{i=1}^N e^{\alpha_iX(z_i)}\prod_{j=1}^M e^{\frac{\beta_j}{2}X(s_j)}e^{-S_{\mu,\mu_\partial}^L(X)}, 
\end{equation}
where $DX$ is the formal uniform measure on infinite dimensional function space and $S_{\mu,\mu_\partial}^L(X)$ is the \textit{Liouville action functional} given by 
\begin{equation}\label{eqn: action functional}
	S_{\mu,\mu_\partial}^L(X):=\frac{1}{4\pi}\int_{\mathbb H} \left(|\nabla_g X|^2+Q R_g X + 4\pi\mu e^{\gamma X} \right)d\lambda_g+\frac{1}{2\pi}\int_{\mathbb R}\left(QK_g X+2\pi\mu_\partial e^{\frac{\gamma}{2}X}\right)d\lambda_{\partial g}. 
\end{equation}
For background Riemannian metric $g$ on $\mathbb H$, $\nabla_g, R_g, K_g,d\lambda_g,d\lambda_{\partial g}$ stand for the gradient, Ricci curvature, Geodesic curvature, volume form and line segment respectively. The subscripts $\mu,\mu_\partial$ emphasize the fact that both $\mu$ and $\mu_\partial$ are positive.
\par
As a conformal field theory, the bulk correlation function $\left\langle e^{\alpha\phi(z)} \right\rangle_{\mu,\mu_\partial}$ of LCFT takes the following form:
\begin{equation}
	\left\langle e^{\alpha\phi(z)} \right\rangle_{\mu,\mu_\partial}=\frac{U(\alpha)}{|\Im z|^{2\Delta_\alpha}}\qquad\text{for $z\in \mathbb H$},
\end{equation}
where $U(\alpha)$ is known as the \textit{structure constant} and $\Delta_\alpha=\frac{\alpha}{2}(Q-\frac{\alpha}{2})$ is called the \textit{scaling dimension} as mentioned before. In \cite{FZZ00}, the following formula for $U(\alpha)$ was proposed:
\begin{equation}\label{eqn:UFZZ}
	U_{\FZZ}(\alpha):=\frac{4}{\gamma} 2^{-\frac{\alpha^2}{2}}\left(\frac{\pi\mu}{2^{\gamma\alpha}}\frac{\Gamma\left(\frac{\gamma^2}{4}\right)}{\Gamma(1-\frac{\gamma^2}{4})}\right)^{\frac{Q-\alpha}{\gamma}}\Gamma\left(\frac{\gamma\alpha}{2}-\frac{\gamma^2}{4}\right)\Gamma\left(\frac{2\alpha}{\gamma}-\frac{4}{\gamma^2}-1\right)\cos\left((\alpha-Q)\pi s \right),
\end{equation}
where the parameter $s$ is defined through the following ratio of cosmological constants $\frac{\mu_\partial}{\sqrt{\mu}}$: 
\begin{equation*}
	\cos\frac{\pi\gamma s}{2}=\frac{\mu_\partial}{\sqrt{\mu}}\sqrt{\sin\frac{\pi\gamma^2}{4}},\qquad\text{with}\ \begin{cases}
		s\in [0,\frac{1}{\gamma}) \qquad &\text{if}\qquad \frac{\mu_\partial ^2}{\mu}\sin\frac{\pi\gamma^2}{4}\leq 1,\\
		s\in i[0,+\infty)\qquad & \text{if}\qquad\frac{\mu_\partial^2}{\mu}\sin\frac{\pi\gamma^2}{4}\geq 1.
	\end{cases}
\end{equation*}
In \cite{FZZ}, the (\ref{eqn:UFZZ}) was proved within rigorous probability theory frameworks. From now on, for measure $M$ on the space of distributions, let $M[f]:=\int f(\phi) M(d\phi)$. For $\gamma\in (0,2)$ and $\mu,\mu_\partial >0$, let
\begin{equation}
	\left\langle e^{\alpha\phi(z)}\right\rangle_{\mu,\mu_\partial}:=\LF_{\mathbb H}^{(\alpha,z)}\left[e^{-\mu\mu_{\phi}(\mathbb H)-\mu_\partial \nu_{\phi}(\mathbb R)}-1\right],\qquad\text{for $z\in \mathbb H$,}
\end{equation}
where 
\begin{equation*}
	\mu_{\phi}(\mathbb H)=\lim_{\varepsilon\to 0}\varepsilon^{\frac{\gamma^2}{2}}\int_{\mathbb H}e^{\gamma\phi_\varepsilon(z)}d^2z\qquad\text{and}\qquad\nu_\phi(\mathbb R)=\lim_{\varepsilon\to 0}\varepsilon^{\frac{\gamma^2}{4}}\int_{\mathbb R}e^{\frac{\gamma}{2}\phi_\varepsilon(z)}dz.
\end{equation*}
Since $|\Im z|^{2\Delta_\alpha}\left\langle e^{\alpha\phi(z)}\right\rangle_{\mu,\mu_\partial}$ does not depend on $z\in \mathbb H$, define $U(\alpha):=\left\langle e^{\alpha\phi(i)}\right\rangle_{\mu,\mu_\partial}$.
\begin{Th}[{\cite[Theorem 1.1]{FZZ}}]
	For $\gamma\in (0,2),\alpha\in (\frac{2}{\gamma},Q)$ and $\mu,\mu_\partial >0$, we have $U(\alpha)=U_{\FZZ}(\alpha)$.
\end{Th}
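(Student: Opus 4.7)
The plan is to derive $U(\alpha) = U_{\FZZ}(\alpha)$ by specializing the conformal welding identity \eqref{general welding equation} of Theorem~\ref{welding of generalized weight} to the weight $W = \frac{\gamma^2}{2} - 2$ (so that $\rho = \frac{\gamma^2}{2} - 4$ and $\beta_{2W+2} = \frac{4}{\gamma}$) and weighting both sides by the exponential $e^{-\mu\mu_\phi(\mathbb{H}) - \mu_\partial \nu_\phi(\mathbb{R})}$. This turns the welding identity into a finite equality in which the left-hand side is directly proportional to the bulk one-point function $U(\alpha)$, while the right-hand side factorizes into two independent quantum-disk Laplace transforms that admit closed-form expressions via the Remy-Zhu formula for the boundary reflection coefficient $R(\beta,\mu,\mu_\partial)$ and via the function $\overline{G}(\alpha,\beta)$.

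First, I would insert the weight $e^{-\mu\mu_\phi(\mathbb{H}) - \mu_\partial \nu_\phi(\mathbb{R})}$ into both sides of \eqref{general welding equation}. On the left, the SLE bubble $\mathbf{m}_\alpha$ is independent of the field, so it integrates out, and after using the conformal covariance of $\LF_{\mathbb{H}}^{(\beta_{2W+2},0),(\alpha,i)}$ the remaining integral reduces to an expression of the form $\left\langle e^{\alpha\phi(i)} e^{\frac{\beta_{2W+2}}{2}\phi(0)} \right\rangle_{\mu,\mu_\partial}$. The boundary insertion of weight $\frac{4}{\gamma}$ can then be absorbed via an explicit Girsanov/boundary-reflection manipulation (exploiting that $\frac{4}{\gamma}$ is a degenerate value) to isolate $U(\alpha)$ up to an explicit $\gamma$-dependent prefactor.

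Second, on the right-hand side the two welded factors separate into a convolution over boundary length $\ell$. The factor $\qd_{1,1}(\alpha,\gamma;\ell)$, weighted by $e^{-\mu\mu_\phi - \mu_\partial \ell}$ and integrated over $\ell$, produces the reflection coefficient $R(\gamma,\mu,\mu_\partial)$ of \cite{RZ22}, whose explicit formula involves gamma functions and a cosine in the parameter $s$ determined by $\mu_\partial/\sqrt{\mu}$. The factor $\mathcal{M}^{\disk}_{0,2}(W;1,\ell)$ contributes the two-boundary-pointed disk length transform encoded by $\overline{G}$. Multiplying these and matching term-by-term against \eqref{eqn:UFZZ} yields the identity, with the cosine $\cos((\alpha-Q)\pi s)$ emerging naturally from the reflection coefficient on the $\qd_{1,1}$ side.

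The main obstacle will be pinning down the absolute multiplicative constant $C_W$ in Theorem~\ref{welding of generalized weight} together with the normalization of $\LF_{\mathbb{H}}^{(\beta_{2W+2},0),(\alpha,i)}(1)$, because $U_{\FZZ}(\alpha)$ is a sharp equality rather than a proportionality. I would resolve this by evaluating both sides at a distinguished $\alpha$ where $U(\alpha)$ can be computed directly, for instance by taking $\alpha \to Q$ and using the Seiberg reflection bound, or by exploiting the reflection relation for $U(\alpha)$ together with the known boundary two-point function. A secondary technical point is the range of validity: the welding identity holds for $\alpha \in (\frac{\gamma}{2}, Q+\frac{2}{\gamma})$, which already contains the FZZ window $(\frac{2}{\gamma}, Q)$, but one must verify convergence of the Laplace transforms near the endpoints, which follows from standard tail estimates on quantum disk areas and boundary lengths.
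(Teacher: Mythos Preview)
This theorem is not proved in the present paper; it is quoted verbatim from \cite{FZZ} as background (see the sentence preceding the statement and Remark~\ref{rem:relation to FZZ}). There is therefore no ``paper's own proof'' to compare against here, and the paper's own machinery is used only to derive the \emph{two}-pointed bulk--boundary structure constant $G_{\mu,\mu_\partial}(\alpha,\beta)$ in Proposition~\ref{Prop:bulk-boundary}, not the one-point function $U(\alpha)$.

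Your proposed route has a concrete gap. You specialize Theorem~\ref{welding of generalized weight} to $W=\tfrac{\gamma^2}{2}-2$ in order to force $\beta_{2W+2}=\tfrac{4}{\gamma}$, but for every $\gamma\in(0,2)$ this $W$ is strictly negative, whereas Theorem~\ref{welding of generalized weight} (like Theorem~\ref{Conditional Welding}) is stated and proved only for $W>0$: the weight-$W$ two-pointed disk $\mathcal{M}^{\disk}_{0,2}(W)$ is not defined for $W\le 0$, and the $\sle_\kappa(\rho)$ bubble measure used throughout requires $\rho=W-2>-2$. So the very first step---plugging this $W$ into \eqref{general welding equation}---is outside the regime where the welding identity holds.

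There is also a second problem. Even if one could make sense of the welding at that parameter, what your Laplace-weighting produces on the left is the bulk--boundary correlator $G_{\mu,\mu_\partial}(\alpha,\beta_{2W+2})$, exactly as in Proposition~\ref{Prop:bulk-boundary}. Your assertion that the boundary insertion at weight $\tfrac{4}{\gamma}$ ``can be absorbed via a Girsanov/boundary-reflection manipulation (exploiting that $\tfrac{4}{\gamma}$ is a degenerate value)'' is not correct: $\tfrac{4}{\gamma}=2Q-\gamma$ is the reflection of $\gamma$, not a degenerate value, and reflection relates one two-point function to another two-point function rather than eliminating the boundary operator. Getting from $G_{\mu,\mu_\partial}(\alpha,\beta)$ down to $U(\alpha)$ requires a genuinely different input (in \cite{FZZ} the welding is set up so that the outer surface is $\qd_{1,0}$ with \emph{no} boundary marked point, which is why the resulting field has only the single bulk insertion), and that input is not available from the theorems of this paper.
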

The above theorem is the first step towards rigorously solving the boundary LCFT. In this paper, we consider the bulk-boundary correlation in the LCFT. For $z\in \mathbb H$ and $s\in \mathbb R$, by the conformal invariance property, the bulk-boundary correlation function in the LCFT takes the following form: 
\begin{equation}
	\left\langle e^{\alpha\phi(z)}e^{\frac{\beta}{2}\phi(s)}\right\rangle_{\mu,\mu_\partial}=\frac{G_{\mu,\mu_\partial}(\alpha,\beta)}{|\Im z|^{2\Delta_\alpha-\Delta_\beta}|z-s|^{2\Delta_\beta}}.
\end{equation}
Within probabilistic frameworks, define
\begin{equation}
	\left\langle e^{\alpha\phi(z)}e^{\frac{\beta}{2}\phi(s)}\right\rangle_{\mu,\mu_\partial}:=\LF_{\mathbb H}^{(\alpha,z),(\beta,s)}\left[e^{-\mu\mu_{\phi}(\mathbb H)-\mu_\partial \nu_{\phi}(\mathbb R)} \right]
\end{equation}
and 
\begin{equation}
	G_{\mu,\mu_\partial}(\alpha,\beta):= \LF_{\mathbb H}^{(\alpha,i),(\beta,0)}\left[e^{-\mu\mu_{\phi}(\mathbb H)-\mu_\partial \nu_{\phi}(\mathbb R)} \right].
\end{equation}
Notice that $|\Im z|^{2\Delta_\alpha-\Delta_\beta}|z-s|^{2\Delta_\beta}\left\langle e^{\alpha\phi(z)}e^{\frac{\beta}{2}\phi(s)}\right\rangle_{\mu,\mu_\partial}$ does not depend on $z$ and $s$ and the function $G_{\mu,\mu_\partial}(\alpha,\beta)$ is called the \textit{structure constant} in the boundary Liouville theory.
\par
So far in the literature, all the exact formulas in LCFT except FZZ (\ref{eqn:UFZZ}) have been derived by BPZ equations and the corresponding operator product expansion \cite{BPZ84}, including \cite{KRV20} for the DOZZ formula and \cite{Rem20,RZ20,RZ22} for different cases of boundary Liouville correlation functions with $\mu=0$ and $\mu_\partial>0$; see also discussions in \cite[Section 1.1]{FZZ}. In this paper, from Theorem \ref{welding of generalized weight}, we derive a formula linking the bulk-boundary correlation function to the joint law of left \& right quantum boundary lengths and quantum area of $\mathcal M^{\disk}_{0,2}(W)$ when $0<W<\frac{\gamma^2}{2}$.
\begin{Prop}[Bulk-boundary correlation function in the LCFT]\label{Prop:bulk-boundary}
	Fix $\gamma\in (0,2)$ and $\mu,\mu_\partial>0$. When $\beta_{2W+2}$ and $\alpha$ satisfy $0<\beta_{2W+2}<\gamma$ and $Q-\frac{\beta_{2W+2}}{2}<\alpha<Q$, we have 
	\begin{align}\label{eqn:two-pointed corr with joint law of weight W disk}
	\begin{split}
	G_{\mu,\mu_\partial}(\alpha,\beta_{2W+2}) &=\C(\alpha,W)^{-1}\frac{2}{\gamma}2^{-\frac{\alpha^2}{2}}\overline U_0(\alpha)\frac{2}{\Gamma(\frac{2}{\gamma}(Q-\alpha))}\left(\frac{1}{2}\sqrt{\frac{\mu}{\sin(\pi\gamma^2/4)}}\right)^{\frac{2}{\gamma}(Q-\alpha)}\times\\
 &\mathcal M_{0,2}^{\disk}(W)\left[e^{-\mu_\partial R_W-\mu A_W}\cdot K_{\frac{2}{\gamma}(Q-\alpha)}\left(L_W\sqrt{\frac{\mu}{\sin(\pi\gamma^2/4)}} \right) \right],
	\end{split}
\end{align}
where $\beta_{2W+2}=\gamma-\frac{2W}{\gamma}$, $L_W,R_W$ and $A_W$ denote the left, right quantum boundary length and quantum area of $\mathcal M^{\disk}_{0,2}(W)$ respectively. The $\C(\alpha,W)$ is the renormalized moments of the conformal radius defined in (\ref{Def:renormalized CR intro}) and takes an explicit formula (\ref{formula of C(alpha, W)}). The $\overline U_0(\alpha)$ is defined in Theorem \ref{Th: U0alpha} and takes the following explicit formula: 
\begin{align}
		\overline U_0(\alpha)=\left(\frac{2^{-\frac{\gamma\alpha}{2}}2\pi}{\Gamma(1-\frac{\gamma^2}{4})}\right)^{\frac{2}{\gamma}(Q-\alpha)}\Gamma\left(\frac{\gamma\alpha}{2}-\frac{\gamma^2}{4}\right)\qquad\text{for all $\alpha>\frac{\gamma}{2}$.}
\end{align}
The $K_\nu(x)$ is the modified Bessel function of second kind. Precisely, 
\begin{equation*}
	K_\nu(x):=\int_0^\infty e^{-x\cosh t}\cosh(\nu t)dt\qquad\text{for $x>0$ and $\nu\in \mathbb R$}.
\end{equation*}
\end{Prop}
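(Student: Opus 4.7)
The plan is to apply the welding identity (\ref{general welding equation}) from Theorem \ref{welding of generalized weight} against the Laplace functional $e^{-\mu\mu_\phi(\mathbb H)-\mu_\partial\nu_\phi(\mathbb R)}$, and then use the explicit joint density of quantum area and quantum boundary length for the one-bulk-one-boundary-insertion Liouville field to produce the modified Bessel kernel.

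First, I would integrate out the bubble $\eta$ from both sides of (\ref{general welding equation}). Since $\mathbf{m}$ is a probability measure, $\mathbf{m}_\alpha$ has total mass $\mathbb E_{\mathbf{m}}[|\psi_\eta'(i)|^{2\Delta_\alpha-2}]=C_W\,\C(\alpha,W)$ by the definition (\ref{Def:renormalized CR intro}). Using the additive decomposition $\mu_\phi(\mathbb H)=A_{\mathrm{in}}+A_{\mathrm{out}}$ along the welding interface together with the independence of the two welded quantum surfaces, this yields
\begin{equation*}
\LF_{\mathbb H}^{(\alpha,i),(\beta_{2W+2},0)}(1)[e^{-\mu\mu_\phi(\mathbb H)}] = \C(\alpha,W)^{-1}\int_0^\infty \qd_{1,1}(\alpha,\gamma;\tilde\ell)[e^{-\mu A}]\cdot\mathcal M_{0,2}^{\disk}(W;1,\tilde\ell)[e^{-\mu A}]\,d\tilde\ell.
\end{equation*}

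Next, I would apply the scaling rule for $\LF_{\mathbb H}^{(\alpha,i),(\beta_{2W+2},0)}$—coming from the Girsanov density $e^{(\alpha+\beta_{2W+2}/2-Q)c}\,dc$ of its constant mode, which implies $\LF_{\mathbb H}^{(\alpha,i),(\beta_{2W+2},0)}(\ell)[e^{-\mu A}] = \ell^{\frac{2}{\gamma}(\alpha+\beta_{2W+2}/2-Q)-1}\LF_{\mathbb H}^{(\alpha,i),(\beta_{2W+2},0)}(1)[e^{-\mu\ell^2 A}]$—to disintegrate the bulk-boundary correlator:
\begin{equation*}
G_{\mu,\mu_\partial}(\alpha,\beta_{2W+2}) = \int_0^\infty \ell^{\frac{2}{\gamma}(\alpha+\beta_{2W+2}/2-Q)-1}e^{-\mu_\partial\ell}\LF_{\mathbb H}^{(\alpha,i),(\beta_{2W+2},0)}(1)[e^{-\mu\ell^2\mu_\phi(\mathbb H)}]\,d\ell.
\end{equation*}
Substituting the identity from the first step with $\mu\ell^2$ in place of $\mu$, changing variables $(L_W,R_W)=(\ell,\ell\tilde\ell)$, and applying the analogous scaling to rewrite $\mathcal M_{0,2}^{\disk}(W;1,R_W/L_W)[e^{-\mu L_W^2 A}]$ in terms of $\mathcal M_{0,2}^{\disk}(W;L_W,R_W)[e^{-\mu A}]$ up to an explicit power of $L_W$ and the $\frac{2}{\gamma}$ prefactor from the two-pointed disk's constant-mode scaling, the iterated integral reassembles into an integral against $\mathcal M_{0,2}^{\disk}(W;L_W,R_W)[e^{-\mu A}]e^{-\mu_\partial L_W}$ weighted by a residual $L_W$-dependent factor arising from $\qd_{1,1}(\alpha,\gamma;R_W/L_W)[e^{-\mu L_W^2 A}]$.

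The Bessel kernel then arises from the explicit form of $\qd_{1,1}(\alpha,\gamma;L')[e^{-\mu A}]$: by the joint area-boundary-length density of $\LF_{\mathbb H}^{(\alpha,i),(\gamma,0)}$, at fixed boundary length $L'$ the area $A$ has an unnormalized density proportional to $A^{-\nu-1}e^{-(L')^2/(4A\sin(\pi\gamma^2/4))}$ with $\nu=\frac{2}{\gamma}(Q-\alpha)$. Applying the integral representation $\int_0^\infty u^{s-1}e^{-au-b/u}\,du=2(b/a)^{s/2}K_s(2\sqrt{ab})$ produces $\qd_{1,1}(\alpha,\gamma;L')[e^{-\mu A}]$ as an explicit multiple of $(L')^p\,K_{\nu}\!\left(L'\sqrt{\mu/\sin(\pi\gamma^2/4)}\right)$ for a specific power $p$. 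Collecting the scaling powers of $L_W$ and matching the constants $\tfrac{2}{\gamma}$, $2^{-\alpha^2/2}$, $\overline U_0(\alpha)$, and $\tfrac{2}{\Gamma(\frac{2}{\gamma}(Q-\alpha))}\left(\tfrac12\sqrt{\mu/\sin(\pi\gamma^2/4)}\right)^{\frac{2}{\gamma}(Q-\alpha)}$ in the final prefactor then yields (\ref{eqn:two-pointed corr with joint law of weight W disk}). The main obstacle is this last step—establishing the explicit joint density for $\qd_{1,1}(\alpha,\gamma)$ with the correct KPZ-type normalization $\sin(\pi\gamma^2/4)$ and the $\overline U_0(\alpha)$ prefactor, and then bookkeeping every scaling exponent so that the constants assemble precisely into the claimed formula.
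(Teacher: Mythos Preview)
Your approach is correct and leads to the same formula, but the paper streamlines two of your steps. First, instead of restricting to outer boundary length $1$, applying only the area Laplace functional, and then re-integrating over $\ell$ with an explicit change of variables $(L_W,R_W)=(\ell,\ell\tilde\ell)$, the paper applies the full functional $e^{-\mu_\partial\nu_\phi(\mathbb R)-\mu\mu_\phi(\mathbb H)}$ directly to the welding identity (extended by the same scaling you invoke to all outer boundary lengths), obtaining in one line
\[
\LF_{\mathbb H}^{(\beta_{2W+2},0),(\alpha,i)}\bigl[e^{-\mu_\partial\nu_\phi(\mathbb R)-\mu\mu_\phi(\mathbb H)}\bigr]\cdot\C(\alpha,W)=\mathcal M_{0,2}^{\disk}(W)\Bigl[e^{-\mu_\partial R_W-\mu A_W}\cdot\qd_{1,1}(\alpha,\gamma;L_W)[e^{-\mu A_{1,1}}]\Bigr],
\]
which bypasses your scaling bookkeeping for $\mathcal M_{0,2}^{\disk}(W)$ entirely. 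Second, the Bessel step you flag as the main obstacle is already available as a citable input: the paper observes that the $\gamma$-boundary insertion is quantum-typical, so $\qd_{1,1}(\alpha,\gamma;\ell)[e^{-\mu A}]=\ell\cdot\mathcal M^{\disk}_{1,0}(\alpha;\ell)[e^{-\mu A}]$, and then invokes \cite[Proposition~4.20]{FZZ} for the explicit Bessel formula with the correct $\overline U_0(\alpha)$ and $2/\Gamma(\tfrac{2}{\gamma}(Q-\alpha))$ normalizations. Your proposed conditional area density $A^{-\nu-1}e^{-(L')^2/(4A\sin(\pi\gamma^2/4))}$ has the right shape---its Laplace transform is indeed a $K_\nu$---but pinning down the exact prefactor is precisely the nontrivial content of that cited result, so there is no need to re-derive it. A minor notational point: your change of variables assigns the outer boundary to $L_W$, whereas in the paper's final formula $R_W$ carries $e^{-\mu_\partial}$ and $L_W$ enters the Bessel argument; this is harmless but should be aligned.
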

The condition $0<\beta_{2W+2}<\gamma$ in Proposition \ref{Prop:bulk-boundary} is equivalent to $0<W<\frac{\gamma^2}{2}$, i.e., the case when the two-pointed quantum disk is thin.  By \cite[(3.5),(3.6),(3.7)]{HRV18}, the \textit{Seiberg bounds} correspond to 
        \begin{equation}\label{eqn:Seiberg bound}
            \alpha<Q, \quad \beta_{2W+2}<Q\quad\text{and}\quad\alpha+\frac{1}{2}\beta_{2W+2}>Q,
        \end{equation}
        which hold if and only if 
        \begin{equation}
           0< G_{\mu,\mu_\partial}(\alpha,\beta_{2W+2})= \LF_{\mathbb H}^{(\alpha,i),(\beta_{2W+2},0)}\left[e^{-\mu\mu_{\phi}(\mathbb H)-\mu_\partial \nu_{\phi}(\mathbb R)} \right] <\infty.
        \end{equation}
        Notice that the range of $\alpha$ and $\beta_{2W+2}$ in Proposition \ref{Prop:bulk-boundary} are strictly contained in (\ref{eqn:Seiberg bound}), and therefore the $G_{\mu,\mu_\partial}(\alpha,\beta_{2W+2})$ in (\ref{eqn:two-pointed corr with joint law of weight W disk}) is nontrivial.
\begin{Rem}
An explicit formula for the quantity 
    \begin{equation}
        \mathcal M_{0,2}^{\disk}(W)\left[e^{-\mu_\partial R_W-\mu A_W}\cdot K_{\frac{2}{\gamma}(Q-\alpha)}\left(L_W\sqrt{\frac{\mu}{\sin(\pi\gamma^2/4)}} \right) \right]
    \end{equation}
    in (\ref{eqn:two-pointed corr with joint law of weight W disk}) is derived in the concurrent work of \cite[Lemma 4.4]{ARSZ23}. Combined with Proposition~\ref{Prop:bulk-boundary}, this verifies the formula for $G_{\mu,\mu_\partial}(\alpha,\beta) $  proposed by Hosomichi \cite{KH01} in physics; see \cite[Theorem 1.2]{ARSZ23} for more details.
\end{Rem} 
 \subsection{Acknowledgements}
 This paper is part of the author's Ph.D. thesis written at University of Pennsylvania. The author would like to thank Xin Sun for many helpful discussions. The author also wants to thank Dapeng Zhan for explaining the constructions of $\sle_\kappa(\rho)$ bubbles via radial Bessel processes, and Morris Ang and Zijie Zhuang for the careful reading of the early draft of this paper.
\section{Preliminaries}\label{sec:preliminaries}
\subsection{Notations and basic setups}
Throughout this paper, $\gamma\in (0,2)$ is the LQG coupling constant. Moreover, 
\begin{equation*}
    Q=\frac{2}{\gamma}+\frac{\gamma}{2}\qquad\text{and}\qquad \kappa = \gamma^2.
\end{equation*}
For weight $W\in \mathbb R$, $\beta_W$ is always a function of $W$ with $\beta_W = Q+\frac{\gamma}{2}-\frac{W}{\gamma}=\gamma+\frac{2-W}{\gamma}$. We will work with planar domains in $\mathbb C$ including the upper half plane $\mathbb H = \lbrace z\in \mathbb C:\Im(z)>0\rbrace$, horizontal strip $\mathcal{S}=\mathbb R\times (0,\pi)$ and unit disk $\mathbb D = \lbrace z\in \mathbb C: |z|\leq 1\rbrace$. For a domain $D\subset\mathbb{C}$, we denote its boundary by $\partial D$. For instance, $\partial\mathbb H = \overline{\mathbb R}=\mathbb R\cup\lbrace \infty\rbrace$, $\partial\mathcal{S} = \lbrace z\in \mathbb C: \Im(z) = 0 \ \text{or} \ 1 \rbrace\cup\lbrace \pm\infty\rbrace$ and $\partial\mathbb D=\lbrace z:|z|=1\rbrace$.
\par
We will frequently consider non-probability measure and extend the terminology of probability theory to this setting. More specifically, suppose $M$ is a measure on a measurable space $(\Omega,\mathcal F)$ with $M(\Omega)$ not necessarily $1$ and $X$ is a $\mathcal F$-measurable function, then we say $(\Omega,\mathcal F)$ is a sample space and $X$ is a random variable. We call the pushforward $M_X = X_*M$ the \textit{law} of $X$ and we say that $X$ is sampled from $M_X$. We also write 
\begin{equation*}
	M_X[f]:=\int f(x) M_X(dx).
\end{equation*}
Weighting the law of $X$ by $f(X)$ corresponds to working with measure $d\widetilde{M_X}$ with Randon-Nikodym derivative $\frac{d\widetilde{M_X}}{dM_X}=f$. For some event $E\in \mathcal F$ with $0<M[E]<\infty$, let $M[\cdot|E]$ denote the probability measure $\frac{M[E\cap \cdot]}{M[E]}$ over the measure space $(E,\mathcal F_E)$ with $\mathcal F_E = \lbrace A\cap E:A\in \mathcal F \rbrace$. For a finite positive measure $M$, we denote its total mass by $|M|$ and let $M^{\#} = |M|^{-1} M$ denote the corresponding probability measure.
\par
Let $g$ be a smooth metric on $\mathbb H$ such that the metric completion of $(\mathbb H, g)$ is a compact Riemannian manifold. Let $H^1(\mathbb H,g)$ be the standard Sobolev space with norm defined by 
\begin{equation*}
	|h|_{H^1(\mathbb H,g)}:=\left(\int_{\mathbb H}|\nabla h(z)|^2+|h(z)|^2 g(z) d^2z \right)^{1/2}.
\end{equation*}
Let $H^{-1}(\mathbb H,g)$ be its dual space, which is defined as the completion of the set of smooth functions on $\mathbb H$ with respect to the following norm:
\begin{equation*}
	|f|_{H^{-1}(\mathbb H,g)}:=\sup_{h\in H^1(\mathbb H,g),|h|_{H^1(\mathbb H,g)\leq 1}}\left|\int_{\mathbb H} f(z)h(z)g(z)d^2z \right|.
\end{equation*}
Here we remark that $H^{-1}(\mathbb H)$ is a polish space and its topology does not depend on the choice of $g$. Throughout this paper, all the random functions considered are in $H^{-1}(\mathbb H)$. 
\subsection{$\sle_{\kappa}(\rho)$ bubble measures}\label{Sec: SLE bubble}
In this section, we review the rooted $\sle_{\kappa}(\rho)$ bubble measure constructed by Zhan in \cite{SLEbubble}. It was constructed on $\mathbb{H}$ for all $\kappa>0$ and $\rho>-2$. Throughout this paper, we only consider the case when $0<\kappa<4$ and $\rho>(\frac{\kappa}{2}-4)\vee(-2)$. In this case, the law of the bubble is a $\sigma$-finite infinite measure and satisfies conformal covariance property (\cite[Theorem $3.16$]{SLEbubble}). As mentioned before, an $\sle_{\kappa}(\rho)$ bubble $\eta$ is characterized by the following \textit{Domain Markov Property}: let $\tau$ be a positive stopping time for $\eta$, then conditioning on the part of $\eta$ before $\tau$ and the event that $\eta$ is not complete at the time $\tau$, the part of $\eta$ after $\tau$ is an $\sle_{\kappa}(\rho)$ curve from $\eta(\tau)$ to the root of $\eta$ in a connected component of $\mathbb{H}\backslash \eta[0,\tau]$. To proceed, we first review the chordal $\sle_{\kappa}(\underline{\rho})$ process on $\mathbb H$.
\subsubsection{Chordal $\sle_{\kappa}(\underline{\rho})$ processes}
In this subsection, we review the basic construction of chordal $\sle_{\kappa}(\underline{\rho})$ process. First, we introduce some notations and terminologies. Let $(E,d_E)$ be a metric space and let $C([0,\widehat{T}],E)$ be the space of continuous functions from $[0,\widehat{T})$ to $E$. Let 
\begin{equation*}
    \Sigma^E = \bigcup_{0<\widehat{T}\leq \infty} C([0,\widehat{T}], E).
\end{equation*}
For each $f\in \Sigma^E$, the lifetime $\widehat{T_f}$ of $f$ is the extended number in $(0,\infty]$ such that $[0,\widehat{T_f})$ is the domain of $f$. Let $\mathbb H=\lbrace z\in \mathbb C: \Im z>0\rbrace$ be the open upper half plane. A set $K\subset\mathbb H$ is called an $\mathbb H$-hull if $K$ is bounded and $\mathbb H\backslash K$ is a simply connected domain. For each $\mathbb H$-hull $K$, there is a unique conformal map $g_K$ from $\mathbb H\backslash K$ onto $\mathbb H$ such that $g_K(z)-z = O(1/z)$ as $z\to \infty$. The number hcap$(K):=\lim_{z\to \infty}z(g_K(z)-z)$ is called $\mathbb H$-capacity of $K$, which satisfies hcap$(\emptyset)=0$ and hcap$(K)>0$ if $K\neq\emptyset$. Let 
\begin{equation}
    \rad_\omega(K):=\sup\left\lbrace |z-\omega|:z\in K\cup\lbrace \omega\rbrace \right\rbrace
\end{equation}
for $\omega\in \mathbb C$ and $K\subset\mathbb C$. For $W\in \Sigma^{\mathbb R}$, the chordal Loewner equation driven by $W$ is the following differential equation in $\mathbb C$:
\begin{equation*}
    \partial_t g_t(z) = \frac{2}{g_t(z)-W_t}
\end{equation*}
with $0\leq t<\widehat{T_W}$ and $g_0(z)=z$. For each $z\in \mathbb C$, let $\tau_z^*$ be the biggest extended number in $[0,\widehat{T_W}]$ such that the solution $t\mapsto g_t(z)$ exists on $[0,\tau^*_z)$. For $0\leq t< \widehat{T_W}$, let $K_t = \lbrace z\in \mathbb H: \tau_z^*\leq t\rbrace$ and $H_t = \mathbb H\backslash K_t$. It turns out that each $K_t$ is an $\mathbb H$-hull with hcap$(K_t)=2t$ and $g_t = g_{K_t}$. We call $g_t$ and $K_t$ the chordal Loewner maps and hulls, respectively. 
\par
Now we review the definition of multi-force-point $\sle_{\kappa}(\underline{\rho})$ process. Here, all the force points lie on the boundary. Let $\kappa>0$ and $\underline{\rho} = (\rho_1,\ldots,\rho_m)\in \mathbb R^m$. Let $\omega\in \mathbb R$ and $v_1,\ldots,v_m$ be such that 
\begin{equation}
    \sum_{j:v_j =\omega^+} \rho_j > -2 \qquad\text{and}\qquad \sum_{j:v_j = \omega^{-}}\rho_j > -2.
\end{equation}
Consider the following system of SDE:
\begin{align}\label{SLE_kappa(rho) SDE}
\begin{split}
    dW_t &= \sum_{j=1}^m \mathbbm{1}_{\lbrace W_t\neq V_t^j\rbrace} \frac{\rho_j}{W_t-V_t^j}dt + \sqrt{\kappa}dB_t,\qquad W_0 = \omega;\\
    dV_t^j &= \mathbbm{1}_{\lbrace W_t\neq V_t^j\rbrace}\frac{2}{V_t^j-W_t}dt,\qquad V_0^j = v_j, \qquad 1\leq j\leq m.
\end{split}
\end{align}
If some $v_j=\infty$, then $V_t^j$ is $\infty$, and $\frac{1}{V_t^j-W_t^j}$ is $0$. It is known that a weak solution of the system (\ref{SLE_kappa(rho) SDE}), in the integral sense, exists and is unique in law, and the $W_t$ in the solution a.s. generates a Loewner curve $\eta$, which we call $\sle_{\kappa}(\underline{\rho})$ curve starts from $\omega$ with force points $\underline{v} = (v_1,\ldots,v_m)$. The $V_t^j$ is called the \textit{force point process} started from $v_j$. 
\subsubsection{$\sle_{\kappa}(\rho)$ bubbles as the weak limit of chordal $\sle_{\kappa}(\rho)$}\label{subsec: bubble as limit}
In this section, we review the main constructions of rooted $\sle_{\kappa}(\rho)$ measures in \cite{SLEbubble}. To do this, we first introduce some basic notations and terminologies. Let $f\in \Sigma^E$. For a continuous and strictly increasing function $\theta$ on $[0,\widehat{T_f})$ with $\theta(0)=0$, the function $g:=f\circ\theta^{-1}\in \Sigma^E$ is called the time-change of $f$ via $\theta$, and we write $f\sim g$. Let $\widetilde{\Sigma^E}:=\Sigma^E/\sim$ and an element of $\widetilde{\Sigma^E}$, denoted by $[f]$, where $f\in \Sigma^E$, is called an MTC (module time-changes) function or curve. Throughout this paper, all the curves considered are MTC curve. Therefore, we will simply write $f$ instead of $[f]$ without confusion. The $\widetilde{\Sigma^E}$ is a metric space with the distance defined by 
\begin{equation}\label{metric on curve}
    d_{\widetilde{\Sigma^E}}:=\inf\left\lbrace \sup\lbrace d_E(f'(t),g'(t)):0\leq t < \widehat{T_{f'}}\rbrace: f'\in [f],g'\in [g], \widehat{T_{f'}}=\widehat{T_{g'}}  \right\rbrace.
\end{equation}
An element $f\in \Sigma^E$ is called a rooted loop if 
\begin{equation*}
    \lim_{t\to \widehat{T_f}}f(t) = f(0),
\end{equation*}
and $f(0)$ is called its root. If $f\in\Sigma^E$ is called a rooted loop, then $[f]\in\widetilde{\Sigma}^E$ is called a rooted MTC loop. Notice that all the elements in $\bubble_{\mathbb H}(p)$ are MTC loops.
\par
By \cite{SLEbubble}, the rooted $\sle_{\kappa}(\rho)$ bubble is constructed as the weak limit of chordal $\sle_{\kappa}(\rho)$ measures after rescaling. We use $\xrightarrow{w}$ to denote the \textit{weak convergence}. Recall that for bounded measures $\mu_n,n\in \mathbb{N}$, and $\mu$ defined on some metric space $E$, $\mu_n \xrightarrow{w} \mu$ if and only if for any $f \in C_b(E,\mathbb R)$, $\mu_n(f)\xrightarrow{w}\mu(f)$. For general simply connected domain $(D,a,b)$, let $\sle^{D}_{\kappa,(a,c)\to b}(\rho)$ denote the chordal $\sle_\kappa(\rho)$ process on $D$ from $a$ to $b$ with force point $c$. In this paper, $c\in \lbrace a^+,a^-\rbrace$ mostly. 
\begin{figure}[H]
	\includegraphics[scale = 0.8]{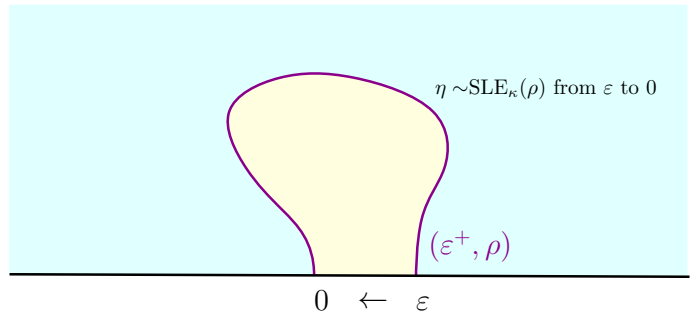}
	\centering
	\caption{Illustration of Theorem \ref{Thm on weak limit}: $\sle^{\bub}_{\kappa,0}(\rho)$ as the weak limit of chordal $\sle_\kappa(\rho)$ with suitable rescaling.}
	\label{fig:SLEbubble_convergence}
\end{figure}
\begin{Th}[{\cite[Theorem $3.20$]{SLEbubble}}]\label{Thm on weak limit}
    Let $0<\kappa<4$ and $\rho>-2$.  There exists a non-zero $\sigma$-finite measure $\sle_{\kappa,0}^{\bub}(\rho)$ on $\bubble_{\mathbb H}(0)$ such that the following holds: For any fixed $S>0$, let $E_S = \lbrace \eta:\rad_0(\eta)>S\rbrace$. Then as $\varepsilon\to 0^+$,
    \begin{equation}\label{equiv: weak limit}
        \varepsilon^{\frac{(\rho+2)(\kappa-8-2\rho)}{2\kappa}}\mathbbm{1}_{E_S}\sle^{\mathbb H}_{\kappa,(\varepsilon;\varepsilon^+)\to 0}(\rho) \xrightarrow{w} \mathbbm{1}_{E_S}\sle_{\kappa,0}^{\bub}(\rho)
    \end{equation}
    in the space $\widetilde{\Sigma}^{\mathbb C}$ with distance defined by (\ref{metric on curve}).
\end{Th}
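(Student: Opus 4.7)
The plan is to construct $\sle_{\kappa,0}^{\bub}(\rho)$ on $\bubble_{\mathbb H}(0)$ by decomposing each sample of $\sle^{\mathbb H}_{\kappa,(\varepsilon;\varepsilon^+)\to 0}(\rho)$ at the first time it reaches the circle $\{|z|=S\}$, and using the Domain Markov Property (DMP) to patch the limiting object together into a single $\sigma$-finite measure. The rescaling factor $\varepsilon^{\frac{(\rho+2)(\kappa-8-2\rho)}{2\kappa}}$ should emerge from a sharp asymptotic, as $\varepsilon\to 0$, for the probability that this chordal curve reaches radius $S$.

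For chordal $\sle^{\mathbb H}_{\kappa,(\varepsilon;\varepsilon^+)\to 0}(\rho)$, the driving gap $Y_t = W_t - V_t$ is, up to a linear time change, a $\bes$ process of dimension $d = 1 + \frac{2(\rho+2)}{\kappa}$. The first step is to construct a local martingale of the form $M_t = Y_t^{a}\cdot\Phi_t^{b}$, where $\Phi_t$ encodes the derivative of the Loewner map at the target $0$, and to choose $(a,b)$ from the underlying SDE so that $M_t$ is a genuine martingale. Applying optional stopping at the first hitting time $\tau_S$ of $\{|z|=S\}$, together with the scaling covariance of SLE, would yield
\begin{equation*}
    \mathbb P\bigl[\rad_0(\eta) > S\bigr] = c_{\kappa,\rho}\,(\varepsilon/S)^{\alpha}(1+o(1))\quad\text{as }\varepsilon\to 0,\qquad \alpha = -\frac{(\rho+2)(\kappa-8-2\rho)}{2\kappa},
\end{equation*}
with $c_{\kappa,\rho}\in(0,\infty)$, which accounts for the rescaling factor in the statement.

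Given this asymptotic, on $E_S$ let $\tau_S$ denote the first hitting time of $\{|z|=S\}$. By DMP, conditional on $\eta[0,\tau_S]$ the remainder of the curve is a chordal $\sle_\kappa(\rho)$ from $\eta(\tau_S)$ back to $0$ in $\mathbb H\setminus\eta[0,\tau_S]$, whose law depends continuously on the endpoint. The Girsanov weighting corresponding to $M_t$ would show that the rescaled joint law of $(\eta[0,\tau_S],\eta(\tau_S))$ converges to an explicit sub-probability measure on initial arcs from $0$ to $\{|z|=S\}$, with the exit point $\eta(\tau_S)$ distributed according to a harmonic-measure-type density dictated by the $\bes$ boundary behavior. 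Concatenating this initial arc with the DMP-conditioned chordal $\sle_\kappa(\rho)$ remainder defines $\mathbbm{1}_{E_S}\sle_{\kappa,0}^{\bub}(\rho)$; applying DMP to this candidate yields consistency across different $S$ and thus a single $\sigma$-finite measure on $\bubble_{\mathbb H}(0)$.

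The hardest step will be proving the sharp asymptotic, particularly the existence and positivity of the constant $c_{\kappa,\rho}$. Identifying the exponent $\alpha$ reduces to an eigenvalue problem for the Bessel generator, but extracting the leading-order constant requires uniform integrability of the candidate martingale $M_t$ up to $\tau_S$, including control as $Y_t\to 0$. This is most delicate when $\rho$ is close to $-2$, where the Bessel dimension $d\to 1$ and the boundary behavior at $Y=0$ becomes degenerate; a monotonicity argument in $\rho$, or an explicit hypergeometric representation of the hitting distribution, can be used to handle the borderline case. Once the asymptotic is established, tightness of the rescaled measures in the metric $d_{\widetilde{\Sigma}^{\mathbb C}}$ follows from standard SLE moment estimates, and identifying any subsequential limit via DMP yields the claimed weak convergence.
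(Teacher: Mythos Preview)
The paper does not give its own proof of this statement: it is quoted directly as \cite[Theorem~3.20]{SLEbubble} and used as a black box. So there is no in-paper argument to compare your proposal against. What the paper does add (in the acknowledgements) is a hint about Zhan's actual method: the construction in \cite{SLEbubble} goes through \emph{radial Bessel processes}, not the chordal driving-gap Bessel picture you sketch. In Zhan's setup one parametrizes the bubble via a radial Loewner chain targeted at an interior point, where the driving/force-point gap becomes a diffusion on a compact interval; the exponent and the limiting measure then fall out of the spectral theory of that radial generator rather than from a hitting-probability martingale for the chordal gap $Y_t=W_t-V_t$.

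Your outline is reasonable in spirit but is really a proof plan rather than a proof, and a couple of steps are underspecified in ways that matter. First, the ansatz $M_t=Y_t^{a}\Phi_t^{b}$ with ``$\Phi_t$ encodes the derivative of the Loewner map at the target $0$'' is not well-posed as written: for chordal SLE from $\varepsilon$ to $0$, the target $0$ is a boundary prime end that is swallowed by the hull, so $g_t'(0)$ is not the right observable; one needs the correct boundary derivative at the force point and at the target, and the two-parameter family of local martingales that actually produces the exponent $\tfrac{(\rho+2)(2\rho+8-\kappa)}{2\kappa}$ has a specific form you have not identified. Second, the ``sharp asymptotic with constant $c_{\kappa,\rho}$'' is exactly the hard part, and your suggestion to handle it by ``monotonicity in $\rho$'' or ``an explicit hypergeometric representation'' is not a substitute for the uniform-integrability/boundary-regularity analysis that Zhan carries out via the radial encoding. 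If you want to turn this into an actual alternative proof, the cleanest route is to pass to the radial picture first (map $\mathbb H$ to $\mathbb D$ sending $0$ to a boundary point and $i$ to the center), where the relevant diffusion lives on a bounded interval and the spectral/eigenfunction argument is standard.
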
 
\begin{rem}
	Notice that in \cite[Theorem 3.20]{InteofSLE}, the author considered $\sle^{\mathbb H}_{\kappa,(r,r^+)\to -r}(\rho)$ for $r>0$ as the limiting sequence of measures. To get (\ref{equiv: weak limit}), we first apply the shift map $f_r:\mathbb H\to \mathbb H$ such that $f_r(z) = z+r$ then let $\varepsilon = 2r$.
\end{rem}
\begin{Def}[Rooted $\sle_{\kappa}(\rho)$ bubble measures]\label{Def of rooted SLE bubble}
    For $0<\kappa<4$ and $\rho>-2$, we define the weak limit $\sle_{\kappa,0}^{\bub}(\rho)$ in Theorem \ref{Thm on weak limit} as the rooted $\sle_{\kappa}(\rho)$ bubble measure with root $0$. More generally, for any $p\in \partial\mathbb H$, let $f_p:\mathbb H\to \mathbb H$ be such that $f_p(z) = z+p$ and define
    \begin{equation*}
        \sle_{\kappa,p}^{\bub}(\rho):= f_p(\sle_{\kappa,0}^{\bub}(\rho)).
    \end{equation*}
    If $\rho=0$, then we omit the existence of $\rho$ and write $\sle_{\kappa,p}^{\bub}$ for fixed $p\in \partial\mathbb H$.
\end{Def}
\begin{Cor}\label{conditional convergence lemma}
    Let $\widetilde{E_{i,0}}$ be the set of curves on $\overline{\mathbb H}$ starting from some point on $[0,\infty]$, ending at $0$, and surrounding $i$.  Under the same settings as Theorem \ref{Thm on weak limit}, we have
    \begin{equation}\label{condition convergence}
        \sle^{\mathbb H}_{\kappa,(\varepsilon;\varepsilon^+)\to 0}(\rho)[d\eta |\widetilde{E_{i,0}}]\xrightarrow{w} \sle_{\kappa,0}^{\bub}(\rho)[d\eta |\widetilde{E_{i,0}}]\qquad\text{as $\varepsilon\to 0^{+}$}
    \end{equation}
     in the metric space $\widetilde{\Sigma}^{\mathbb C}$ with distance defined by (\ref{metric on curve}).
\end{Cor}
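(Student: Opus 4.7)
The plan is to deduce the conditional convergence from the unconditional weak convergence in Theorem \ref{Thm on weak limit} by cancelling the common scaling constant $\varepsilon^{c}$, where $c=\frac{(\rho+2)(\kappa-8-2\rho)}{2\kappa}$. Writing
\begin{equation*}
    \sle^{\mathbb H}_{\kappa,(\varepsilon;\varepsilon^+)\to 0}(\rho)[d\eta \mid \widetilde{E_{i,0}}] = \frac{\mathbbm{1}_{\widetilde{E_{i,0}}}\,\mu_\varepsilon}{\mu_\varepsilon(\widetilde{E_{i,0}})},\qquad \mu_\varepsilon:=\varepsilon^{c}\sle^{\mathbb H}_{\kappa,(\varepsilon;\varepsilon^+)\to 0}(\rho),
\end{equation*}
we see that both numerator and denominator depend only on $\mu_\varepsilon$, not on the unknown scaling factor. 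Since any loop rooted near $0$ that surrounds $i$ must reach Euclidean distance at least $|i|=1$ from the root, we have $\widetilde{E_{i,0}}\subseteq E_S$ for any fixed $S\in (0,1)$ once $\varepsilon$ is small enough. Thus the numerator and denominator above are functionals of the restricted measure $\mathbbm{1}_{E_S}\mu_\varepsilon$, to which Theorem \ref{Thm on weak limit} applies directly.

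The goal is then to identify $\widetilde{E_{i,0}}$ as a continuity set of the limiting measure $\mathbbm{1}_{E_S}\sle_{\kappa,0}^{\bub}(\rho)$. The topological boundary of $\widetilde{E_{i,0}}$ in the MTC metric from (\ref{metric on curve}) is contained in the set of curves whose trace passes through $i$, because for a simple loop not hitting $i$ the property of surrounding $i$ is preserved under small uniform perturbations by continuity of the winding number. So the key analytic step is to show
\begin{equation*}
    \sle_{\kappa,0}^{\bub}(\rho)\bigl(\{\eta:i\in\eta\}\bigr)=0.
\end{equation*}
For this I would invoke the Domain Markov Property recalled in Section \ref{Sec: SLE bubble}: conditionally on an initial segment and on non-closure, the remainder of the bubble is a chordal $\sle_\kappa(\rho)$ in a subdomain of $\mathbb H$, and for $0<\kappa<4$ such a curve almost surely avoids any fixed interior point. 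A standard countable decomposition of $\{\eta:i\in\eta\}$ along stopping times (e.g.\ the first time the curve enters successively smaller neighborhoods of $i$), combined with $\sigma$-finiteness of the bubble measure on $E_S$, then rules out any positive mass on curves hitting $i$.

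Once the continuity-set property is established, the portmanteau theorem yields both $\mathbbm{1}_{\widetilde{E_{i,0}}}\mu_\varepsilon \xrightarrow{w} \mathbbm{1}_{\widetilde{E_{i,0}}}\sle_{\kappa,0}^{\bub}(\rho)$ and $\mu_\varepsilon(\widetilde{E_{i,0}})\to \sle_{\kappa,0}^{\bub}(\rho)(\widetilde{E_{i,0}})$. The denominator's limit is finite and strictly positive because the conditional law $\sle_{\kappa,0}^{\bub}(\rho)[d\eta\mid \widetilde{E_{i,0}}]$ is already asserted (in the paragraph preceding the corollary) to be a probability measure. Taking the quotient gives the weak convergence (\ref{condition convergence}).

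The main obstacle is the continuity-set verification, i.e.\ the zero-mass statement for curves hitting $i$; this is where the restriction $0<\kappa<4$ (simpleness, non-hitting of interior points) and the Domain Markov Property of the bubble measure are essential. The rest of the argument is an elementary manipulation of the scaling identity in Theorem \ref{Thm on weak limit} together with the inclusion $\widetilde{E_{i,0}}\subseteq E_S$.
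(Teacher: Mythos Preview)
Your approach matches the paper's almost exactly: reduce to the restricted measures $\mathbbm{1}_{E_S}\mu_\varepsilon$ via the inclusion $\widetilde{E_{i,0}}\subseteq E_S$, verify that $\widetilde{E_{i,0}}$ is a continuity set of the limit by showing the bubble measure puts zero mass on curves through $i$ (using DMP to reduce to the fact that simple chordal $\sle_\kappa(\rho)$ avoids fixed interior points), and then apply portmanteau to numerator and denominator separately.

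There is, however, a genuine gap in the last step. You justify the strict positivity (and finiteness) of the limiting denominator $\sle_{\kappa,0}^{\bub}(\rho)[\widetilde{E_{i,0}}]$ by appealing to the statement that the conditional law is a probability measure. That statement is precisely what needs to be proved here; any earlier mention of it in the paper is a forward reference, not an independent fact you can cite. Finiteness is indeed immediate from $\widetilde{E_{i,0}}\subseteq E_S$ and the known finiteness of the bubble measure on $E_S$, but positivity requires a separate argument. The paper supplies one: stop the bubble at the first time its radius reaches some small $\delta$, apply DMP so that the remainder is a chordal $\sle_\kappa(\rho)$ in the complementary domain, and then show (by a scaling/contradiction argument) that this chordal curve surrounds $i$ with positive probability. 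Without this step, the quotient defining the conditional law could be $0/0$ or $\cdot/0$, and the conclusion does not follow.
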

\begin{proof}
    Let $E_1 = \lbrace \eta:\rad_0(\eta)>1\rbrace$. It is clear that $\widetilde{E_{i,0}}\subset E_{1}$. Moreover, $\widetilde{E_{i,0}}$ is open in $\widetilde{\Sigma}^{\mathbb C}$ and $\partial \widetilde{E_{i,0}}$ contains the curves that end at $0$ and pass through $i$. For $0<\delta<1$, let $E_\delta = \lbrace \eta:\rad_0(\eta)\geq\delta\rbrace$ and $\tau_\delta=\inf_{t>0} \lbrace t:\rad_0(\eta[0,t])=\delta\rbrace$ be the first time that $\sle_{\kappa,0}^{\bub}(\rho)$ curve has radius $\delta$ under capacity parametrization. For any $\eta\in E_\delta$, let $\eta_{\delta}=\eta[0,\tau_\delta]$. For any fixed instance of $\eta_{\delta}$, let $\widetilde{\partial E_{i,\eta_{\delta}}}$ be the set of curves from $\eta(\tau_\delta)$ to $0$ on $\mathbb H\backslash \eta_\delta$ that pass through $i$. By Domain Markov Property of $\sle_{\kappa,0}^{\bub}(\rho)$ stated in \cite[Theorem 3.16]{SLEbubble}, we have that 
    \begin{align}
        \begin{split}
            \sle_{\kappa,0}^{\bub}(\rho)[\partial\widetilde{E_{i,0}}] = \int_{E_\delta}\sle_{\kappa,(\eta_{\delta},v(\eta_\delta))\to 0}^{\mathbb H\backslash \eta_{\delta}}(\rho)[\widetilde{\partial E_{i,\eta_{\delta}}}]\sle_{k,0}^{\bub}(\rho)(d\eta_\delta).
        \end{split}
    \end{align}
By \cite[Theorem 3.20]{SLEbubble}, $\sle_{\kappa,0}^{\bub}(\rho)[E_\delta]>0$. Moreover, it is well-known that when $0<\kappa<4$, the probability that chordal $\sle_\kappa(\rho)$ passes through a fixed interior point is zero (see, for instance, \cite{DecompofSLE}). Therefore, $\sle_{\kappa,0}^{\bub}(\rho)[\partial\widetilde{E_{i,0}}]=0$. By (\ref{equiv: weak limit}) and \cite[(F3)]{SLEbubble},
    \begin{equation}
        \varepsilon^{\frac{(\rho+2)(\kappa-8-2\rho)}{2\kappa}}\mathbbm{1}_{E_1}\mathbbm{1}_{\widetilde{E_{i,0}}} \sle^{\mathbb H}_{\kappa,(\varepsilon;\varepsilon^+)\to 0}(\rho) \xrightarrow{w} \mathbbm{1}_{E_1}\mathbbm{1}_{\widetilde{E_{i,0}}} \sle_{\kappa,0}^{\bub}(\rho).
    \end{equation}
    Equivalently,
    \begin{equation}\label{equiv: weak limit2}
        \varepsilon^{\frac{(\rho+2)(\kappa-8-2\rho)}{2\kappa}}\mathbbm{1}_{\widetilde{E_{i,0}}} \sle^{\mathbb H}_{\kappa,(\varepsilon;\varepsilon^+)\to 0}(\rho) \xrightarrow{w} \mathbbm{1}_{\widetilde{E_{i,0}}} \sle_{\kappa,0}^{\bub}(\rho).
    \end{equation}
    In order to prove (\ref{condition convergence}), it remains to show that $0<\sle_{\kappa,0}^{\bub}(\rho)[\widetilde{E_{i,0}}]<\infty$. By \cite[Theorem 3.16]{SLEbubble},
    \begin{equation}
        \sle_{\kappa,0}^{\bub}(\rho)[\widetilde{E_{i,0}}]\leq \sle_{\kappa,0}^{\bub}(\rho)[E_1]<\infty.
    \end{equation}
    For any $\eta\in E_\delta$, let $\eta_{\delta}=\eta[0,\tau_\delta]$. For any fixed instance of $\eta_{\delta}$, let $\widetilde{E_{i,\eta_{\delta}}}$ denote the set of curves on $\mathbb H\backslash \eta_\delta$ from $\eta(\tau_\delta)$ to $0$  that surround $i$. Again, by Domain Markov Property of $\sle_{\kappa,0}^{\bub}(\rho)$ (\cite[Theorem 3.16]{SLEbubble}), 
    \begin{align}
        \begin{split}
            \sle_{\kappa,0}^{\bub}(\rho)[\widetilde{E_{i,0}}] = \int_{E_\delta}\sle_{\kappa,(\eta_{\delta},v(\eta_\delta))\to 0}^{\mathbb H\backslash \eta_{\delta}}(\rho)[\widetilde{E_{i,\eta_{\delta}}}]\sle_{k,0}^{\bub}(\rho)(d\eta_\delta),
        \end{split}
    \end{align}
    where the force point $v(\eta_{\delta})$ is defined in \cite[(3.17)]{SLEbubble}. For each instance of $\eta_{\delta}$, we claim that
    \begin{equation}\label{positivity of surrounding prob}
    	\sle_{\kappa,(\eta_{\delta},v(\eta_\delta))\to 0}^{\mathbb H\backslash \eta_{\delta}}(\rho)[\widetilde{E_{i,\eta_{\delta}}}]>0.
    \end{equation}
    Assume otherwise, i.e., $\sle_{\kappa,(\eta_{\delta},v(\eta_\delta))\to 0}^{\mathbb H\backslash \eta_{\delta}}(\rho)[\widetilde{E_{i,\eta_{\delta}}}]=0$. By conformal invariance property of chordal $\sle_\kappa(\rho)$, we only need to consider the $\sle_{\kappa}(\rho)$ on $\mathbb H$ from $0$ to $\infty$ conditional on passing to the left of $i$. By scaling property of chordal $\sle_{\kappa}(\rho)$, the probability that $\sle_\kappa(\rho)$ conditional on passing to the left of $ai,a>0$ is zero, i.e., $\sle_\kappa(\rho)$ will almost surely stay to the right of positive imaginary axis. This is impossible and leads to a contradiction. 
     Therefore, $\sle_{\kappa,0}^{\bub}(\rho)[\widetilde{E_{i,0}}]>0$ and this completes the proof.
\end{proof}
\subsection{The Liouville Conformal Field Theory}\label{sec:LCFT}
In this section, we review key results of Liouville Conformal Field Theory on $\mathbb H$.
\subsubsection{Definitions of the LCFT}
To start, let $h$ be the centered Gaussian process on $\mathbb H$ with covariance kernel given by 
\begin{equation*}
	\mathbb E[h(x)h(y)]=G_{\mathbb H}(x,y):= \log\frac{1}{|x-y||x-\overline y|}+2\log |x|_++2\log|y|_{+},
\end{equation*}
where $|x|_+=\max(|x|,1)$. Notice that $h\in H^{-1}(\mathbb H)$ and for test functions $f,g\in H^1(\mathbb H)$, $(h,f)$ and $(h,g)$ are centred Gaussian variables with covariance given by 
\begin{equation*}
	\mathbb E[(h,f),(h,g)]=\iint f(x)G_{\mathbb H}(x,y)g(y) d^2x d^2y.
\end{equation*}
Let $P_{\mathbb H}$ denote the law of $h$. For smooth test functions $f$ and $g$ with mean $0$ on $\mathbb H$, i.e., 
\begin{equation*}
	\int_{\mathbb H} f(z)d^2z=\int_{\mathbb H} g(z) d^2z=0,
\end{equation*}
 we have that 
 \begin{equation*}
 	\mathbb E[(h,f),(h,g)]=\frac{1}{2\pi}\int_{\mathbb H}\nabla f(z)\cdot\nabla g(z) d^2z.
 \end{equation*}
 Notice that this characterizes the free boundary Gaussian free field, which is defined modulo an additive constant. We can fix a particular instance of field $h$ by requiring the average around the upper half plane unit circle to be zero. 
 \par
 Given a function $f\in H^{-1}(\mathbb H)$, let $f_\varepsilon(z)$ be the circular average of $f(z)$ over $\partial B(z,\varepsilon)\cap\mathbb H$. Suppose $h$ is sampled from $P_{\mathbb H}$, then we can define the random measures 
\begin{equation*}
 	\mu_h:=\lim_{\varepsilon\to 0}\varepsilon^{\gamma^2/2}e^{\gamma h_\varepsilon(z)} d^2z\qquad\text{and}\qquad \nu_h:=\lim_{\varepsilon\to 0}\varepsilon^{\gamma^2/4}e^{\gamma h_\varepsilon(z)/2}dz,
\end{equation*}
where convergence holds almost surely. We call $\mu_h$ the \textit{quantum area measure} and $\nu_h$ the \textit{quantum boundary length measure}.
\begin{Def}[{\cite[Definition 2.14]{InteofSLE}}]
	Let $(h,\mathbf c)$ be sampled from $P_{\mathbb H}\times [e^{-Qc}dc]$ on the product space $H^{-1}(\mathbb H)\times\mathbb R$. Let $\phi(z)=h(z)-2Q\log|z|_{+}+\mathbf c$ and let $\LF_{\mathbb H}$ denote the law of $\phi(z)$ on $H^{-1}(\mathbb H)$. We call the sample from $\LF_{\mathbb H}$ the \textit{Liouville field}.
\end{Def}
\begin{Lemma}[{\cite[Lemma 2.2]{FZZ}}]
    For $\alpha \in \mathbb R$ and $z_0\in \mathbb H$, the limit
    \begin{equation*}
        \LF_{\mathbb H}^{(\alpha,z_0)}:=\lim_{\varepsilon\to 0}\varepsilon^{\alpha^2/2}e^{\alpha\phi_\varepsilon(z_0)}\LF_{\mathbb H}(d\phi)
    \end{equation*}
    exists in the vague topology. Moreover, sample $(h,\mathbf{c})$ from $(2\Im z_0)^{-\alpha^2/2}|z_0|_{+}^{-2\alpha(Q-\alpha)}P_{\mathbb H}\times\left[e^{(\alpha-Q)c}dc \right]$ and let
    \begin{equation*}
        \phi(z) = h(z) - 2Q\log|z|_{+} + \alpha G_{\mathbb H}(z,z_0)+\mathbf c\qquad\text{for $z\in \mathbb H$},
    \end{equation*}
    then the law of $\phi$ is given by $\LF_{\mathbb H}^{(\alpha,z_0)}$. We call $\LF_{\mathbb H}^{(\alpha,z_0)}$ the Liouville field on $\mathbb H$ with $\alpha$-insertion at $z$.
\end{Lemma}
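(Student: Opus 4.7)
The plan is to prove the existence of the limit and the Girsanov-type identification in one stroke by decomposing $\LF_{\mathbb H}$ into its Gaussian and zero-mode components, then applying a Cameron-Martin shift to the weight $\varepsilon^{\alpha^2/2} e^{\alpha \phi_\varepsilon(z_0)}$. Writing $\phi = h - 2Q\log|\cdot|_+ + \mathbf c$ with $(h,\mathbf c) \sim P_{\mathbb H} \times [e^{-Qc}dc]$, observe that $\phi_\varepsilon(z_0) = h_\varepsilon(z_0) - 2Q \log|z_0|_+ + \mathbf c + o(1)$ as $\varepsilon \to 0$, where the $o(1)$ comes from the circular average of the continuous function $-2Q\log|\cdot|_+$ near $z_0 \in \mathbb H$. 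Thus the weight factors as $\varepsilon^{\alpha^2/2} e^{\alpha h_\varepsilon(z_0)} \cdot |z_0|_+^{-2\alpha Q} \cdot e^{\alpha \mathbf c}$ up to $(1+o(1))$.

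For the Gaussian part, I would compute $\mathrm{Var}(h_\varepsilon(z_0))$ by averaging the covariance kernel $G_{\mathbb H}$ over $\partial B(z_0,\varepsilon) \cap \mathbb H$ twice. The diagonal singularity $\log(1/|x-y|)$ contributes $-\log \varepsilon$, the reflected term $\log(1/|x - \bar y|)$ contributes $-\log(2\Im z_0) + o(1)$, and the two $2\log|\cdot|_+$ pieces contribute $4 \log |z_0|_+ + o(1)$. Hence $\varepsilon^{\alpha^2/2} \mathbb E_{P_{\mathbb H}}[e^{\alpha h_\varepsilon(z_0)}] \to (2\Im z_0)^{-\alpha^2/2} |z_0|_+^{2\alpha^2}$. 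Combined with the deterministic factor $|z_0|_+^{-2\alpha Q}$ from the $-2Q\log|\cdot|_+$ shift and the reweighting of the zero mode $e^{\alpha \mathbf c} \cdot e^{-Q\mathbf c}d\mathbf c = e^{(\alpha-Q)\mathbf c}d\mathbf c$, the total normalization matches exactly $(2\Im z_0)^{-\alpha^2/2} |z_0|_+^{-2\alpha(Q-\alpha)}$.

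Next I would apply Girsanov's theorem to the centered Gaussian random variable $\alpha h_\varepsilon(z_0)$: under the tilted measure $\mathbb E_{P_{\mathbb H}}[e^{\alpha h_\varepsilon(z_0)}]^{-1} \, e^{\alpha h_\varepsilon(z_0)} \, P_{\mathbb H}(dh)$, the field $h$ has the same law as $h + \alpha C_\varepsilon(\cdot, z_0)$ under $P_{\mathbb H}$, where $C_\varepsilon(z, z_0) := \mathbb E[h(z) h_\varepsilon(z_0)]$. Since $G_{\mathbb H}(\cdot, z_0)$ is continuous away from $z_0$, one has $C_\varepsilon(\cdot, z_0) \to G_{\mathbb H}(\cdot, z_0)$ in $H^{-1}(\mathbb H)$ as $\varepsilon \to 0$. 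Combining this shift with the deterministic $-2Q\log|\cdot|_+ + \mathbf c$ yields exactly $\phi(z) = h(z) - 2Q\log|z|_+ + \alpha G_{\mathbb H}(z,z_0) + \mathbf c$ under the sampling $(h,\mathbf c) \sim (2\Im z_0)^{-\alpha^2/2}|z_0|_+^{-2\alpha(Q-\alpha)} P_{\mathbb H} \times [e^{(\alpha-Q)c}dc]$, as claimed. Vague convergence follows by testing against bounded continuous functionals on $H^{-1}(\mathbb H)$: after truncating $\mathbf c$ on a compact interval (making the relevant tilted measures finite), the dominated convergence theorem applies to the $\varepsilon \to 0$ limit of the joint density.

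The main technical obstacle is controlling the $o(1)$ errors uniformly and handling the fact that $\LF_{\mathbb H}$ is an infinite measure rather than a probability measure, so that Girsanov cannot be quoted verbatim. The remedy is the truncation in $\mathbf c$ together with compactly supported test functionals: on bounded slices the tilted measures are finite probability measures whose Radon-Nikodym derivatives converge pointwise and are dominated, so vague convergence transfers cleanly to $\LF_{\mathbb H}$. A secondary care point is that $z_0 \in \mathbb H$ is in the interior, so circle averages are well defined for all small $\varepsilon$ and no boundary effects intervene in the variance computation; for $\alpha$-insertions on $\partial \mathbb H$ an analogous but more delicate argument with semicircle averages and the renormalization exponent $\beta^2/4$ would be needed, but that case is handled separately in the excerpt.
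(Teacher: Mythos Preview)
Your proposal is correct and follows the standard Girsanov/Cameron--Martin approach that underlies the cited result; the paper itself does not supply a proof but simply quotes \cite[Lemma 2.2]{FZZ}, whose argument is exactly the variance computation plus exponential tilt you outline. The only minor refinement worth noting is that vague convergence on $H^{-1}(\mathbb H)$ can be handled slightly more directly by testing against functionals of the form $F(\phi)g(\mathbf c)$ with $g$ compactly supported, which automatically localizes the zero mode without an explicit truncation step---but this is a cosmetic difference, not a mathematical one.
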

Next, we introduce the definition of Liouville field with multiple boundary insertions. The following definition is the combination of \cite[Definition 2.15]{InteofSLE} and \cite[Definition 2.17]{InteofSLE}:
\begin{Def}\label{LCFT with boundary insertions}
    Let $(\beta_{i},s_i)\in \mathbb R\times \partial\mathbb H$ for $i=1,\ldots,m$, where $m\geq 0$ and $s_i$ are pairwise distinct. Let $(h,\mathbf c)$ be sampled from $C_{\mathbb H}^{(\beta_i,s_i)_i}P_{\mathbb H}\times \left[e^{(\frac{1}{2}\sum_i\beta_{i}-Q)c}dc\right]$, where 
    \begin{equation*}
        C_{\mathbb H}^{(\beta_i,s_i)_i}=
        \begin{cases}
            \prod_{i=1}^m |s_i|_{+}^{-\beta_{i} (Q-\frac{\beta_i}{2})} e^{\sum_{j=i+1}^m\frac{\beta_i\beta_j}{4}G_{\mathbb H}(s_i,s_j)} &\qquad\text{if $s_1\neq \infty$,}\\
             \prod_{i=2}^m |s_i|_{+}^{-\beta_{i} (Q-\frac{\beta_i}{2}-\frac{\beta_1}{2})} e^{\sum_{j=i+1}^m\frac{\beta_i\beta_j}{4}G_{\mathbb H}(s_i,s_j)} &\qquad\text{if $s_1=\infty$.}
        \end{cases}
    \end{equation*}
    Let 
    \begin{equation*}
        \phi(z) = 
        \begin{cases}
            h(z) - 2Q\log|z|_{+} + \sum_{i=1}^m\frac{\beta_i}{2}G_{\mathbb H}(z,s_i)+\mathbf c &\qquad\text{if $s_1\neq \infty$,}\\
          h(z) + (\beta_1-2Q)\log|z|_{+} + \sum_{i = 2}^m \frac{\beta_i}{2} G_{\mathbb H}(z,s_i)+\mathbf c &\qquad\text{if $s_1=\infty$.}
        \end{cases}
    \end{equation*}
    We write $\LF_{\mathbb H}^{(\beta_i,s_i)_i}$ for the law of $\phi$ and call a sample from $\LF_{\mathbb H}^{(\beta_i,s_i)_i}$ the Liouville field on $\mathbb H$ with boundary insertions $(\beta_{i},s_i)_{1\leq i\leq m}$.
\end{Def}
 \begin{Lemma}[{\cite[Lemma 2.18]{InteofSLE}}]
 	We have the following convergence in the vague topology of measures on $H^{-1}(\mathbb H)$:
 	\begin{equation*}
 		\lim_{r\to +\infty}r^{\beta(Q-\frac{\beta}{2})}\LF_{\mathbb H}^{(\beta,r),(\beta_i,s_i)_i}=\LF_{\mathbb H}^{(\beta,\infty),(\beta_i,s_i)_i}.
 	\end{equation*}
 \end{Lemma}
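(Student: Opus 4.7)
The plan is to couple both sides to a common reference law on $H^{-1}(\mathbb H)\times\mathbb R$, so that the claimed vague convergence reduces to verifying convergence of a deterministic drift together with a scalar normalization constant.

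First, applying Definition \ref{LCFT with boundary insertions}, I would express both measures as pushforwards of the \emph{same} reference measure $P_{\mathbb H}\times\bigl[e^{(\frac{\beta+\sum_i\beta_i}{2}-Q)c}dc\bigr]$. For $r>1$ with the first insertion placed at $r$, the field is
\begin{equation*}
\phi_r(z)=h(z)-2Q\log|z|_{+}+\tfrac{\beta}{2}G_{\mathbb H}(z,r)+\sum_{i\geq 2}\tfrac{\beta_i}{2}G_{\mathbb H}(z,s_i)+\mathbf c,
\end{equation*}
with scalar prefactor $C_{\mathbb H}^{(\beta,r),(\beta_i,s_i)_i}$, while the $s_1=\infty$ case gives
\begin{equation*}
\phi_\infty(z)=h(z)+(\beta-2Q)\log|z|_{+}+\sum_{i\geq 2}\tfrac{\beta_i}{2}G_{\mathbb H}(z,s_i)+\mathbf c,
\end{equation*}
with prefactor $C_{\mathbb H}^{(\beta,\infty),(\beta_i,s_i)_i}$.

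Second, I would verify two deterministic convergences. For the drift: using $G_{\mathbb H}(z,r)=-\log|z-r|^2+2\log|z|_{+}+2\log r$ for $r>1$ (since $\bar r=r\in\mathbb R$), one computes
\begin{equation*}
\phi_r(z)-\phi_\infty(z)=\beta\log\frac{r}{|z-r|},
\end{equation*}
which tends to $0$ as $r\to\infty$ locally uniformly on $\overline{\mathbb H}\setminus\{\infty\}$, and hence in $H^{-1}(\mathbb H)$ upon exploiting the fact that the compactifying metric $g$ gives uniform control of $H^1(\mathbb H,g)$ test functions near $\infty$. For the scalar prefactor: the factor $r^{\beta(Q-\beta/2)}$ exactly cancels the $|r|_{+}^{-\beta(Q-\beta/2)}$ appearing in $C_{\mathbb H}^{(\beta,r),(\beta_i,s_i)_i}$, while the asymptotics $G_{\mathbb H}(r,s_j)\to 2\log|s_j|_{+}$ convert the cross-exponentials $e^{\frac{\beta\beta_j}{4}G_{\mathbb H}(r,s_j)}$ into $|s_j|_{+}^{\beta\beta_j/2}$; combining these with the untouched factors $\prod_{i\geq 2}|s_i|_{+}^{-\beta_i(Q-\beta_i/2)}$ and the cross terms $e^{\frac{\beta_i\beta_j}{4}G_{\mathbb H}(s_i,s_j)}$ (which are $r$-independent) yields exactly $C_{\mathbb H}^{(\beta,\infty),(\beta_i,s_i)_i}$ in the limit.

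Finally, for any $F\in C_c(H^{-1}(\mathbb H))$, continuity of $F$ together with the $H^{-1}$-convergence $\phi_r\to\phi_\infty$ yields $F(\phi_r)\to F(\phi_\infty)$ pointwise under the reference measure; the scalar prefactors converge by the previous step; and since $F$ has compact support in $H^{-1}(\mathbb H)$, the $\mathbf c$-translation pushes $\phi_r$ out of the support of $F$ once $|\mathbf c|$ is large, uniformly in $r\geq r_0$, giving an integrable $r$-independent domination. Dominated convergence then delivers the claimed vague convergence. The main obstacle is the $H^{-1}$ step in the drift analysis: while pointwise and locally uniform convergence of $\beta\log(r/|z-r|)$ to $0$ are elementary, verifying the vanishing of the $H^1(\mathbb H,g)$ pairings requires careful use of the compactified background metric to control the behavior of the drift on neighborhoods of $\infty$ uniformly in $r$.
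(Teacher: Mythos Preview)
The paper does not give a proof of this lemma; it is quoted verbatim from \cite[Lemma~2.18]{InteofSLE} and stated without argument. Your direct verification from Definition~\ref{LCFT with boundary insertions} is the natural approach and the computations are correct: the drift difference $\phi_r-\phi_\infty=\beta\log(r/|z-r|)$ is exactly right, and your tracking of the scalar prefactors (the $r^{\beta(Q-\beta/2)}$ cancelling $|r|_{+}^{-\beta(Q-\beta/2)}$, and the cross terms $e^{\frac{\beta\beta_j}{4}G_{\mathbb H}(r,s_j)}\to |s_j|_{+}^{\beta\beta_j/2}$ combining with the remaining factors to produce $C_{\mathbb H}^{(\beta,\infty),(\beta_i,s_i)_i}$) matches the two cases of the definition precisely.

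The one place that genuinely needs care is the step you already flag: the drift $\beta\log(r/|z-r|)$ does \emph{not} tend to zero uniformly on $\overline{\mathbb H}$ (near $z=\infty$ it is of order $\beta\log|z|$ for $|z|\gg r$), so the $H^{-1}(\mathbb H)$ convergence relies on the compactifying metric $g$ decaying fast enough at infinity that the pairing with any $H^1(\mathbb H,g)$ test function still vanishes. Likewise, the uniform-in-$r$ domination for the $\mathbf c$-integral needs the observation that for $F$ compactly supported in $H^{-1}(\mathbb H)$ there is a fixed $H^{-1}$-ball containing $\mathrm{supp}\,F$, and adding $\mathbf c$ shifts the $H^{-1}$-norm by at least $c\cdot|\mathbf c|$ for some constant depending only on $g$, uniformly over the (uniformly bounded in $H^{-1}$) drifts $\phi_r-\mathbf c$. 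These are standard but should be spelled out if you want a self-contained proof.
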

  \begin{Def}\label{Def one bulk one boundary}
	Let $(\alpha,q)\in \mathbb R\times \mathbb H$ and let $(\beta_i,p_i)\in \mathbb R\times\partial\mathbb H$ for $1\leq i\leq m$. Suppose $(h,\mathbf c)$ is sampled from $C_{\mathbb H}^{(\beta_i,p_i)_i,(\alpha,q)} P_{\mathbb H}\times \left[e^{\left(\frac{1}{2}\sum_i\beta_i+\alpha-Q\right)c}dc \right]$, where 
	\begin{align*}
		C_{\mathbb H}^{(\beta_i,p_i)_i,(\alpha,q)}=
  \begin{cases}
      \prod_{i=1}^m |p_i|_{+}^{-\beta_{i} (Q-\frac{\beta_i}{2})} e^{\sum_{j=i+1}^m\frac{\beta_i\beta_j}{4}G_{\mathbb H}(p_i,p_j)}(2\Im q)^{-\frac{\alpha^2}{2}}|q|_{+}^{-2\alpha(Q-\alpha)}&\qquad\text{if $p_1\neq \infty$,}\\
      \prod_{i=2}^m |p_i|_{+}^{-\beta_{i} (Q-\frac{\beta_i}{2}-\frac{\beta_1}{2})} e^{\sum_{j=i+1}^m\frac{\beta_i\beta_j}{4}G_{\mathbb H}(p_i,p_j)}(2\Im q)^{-\frac{\alpha^2}{2}}|q|_{+}^{-2\alpha(Q-\alpha)}&\qquad\text{if $p_1=\infty$.}
  \end{cases}
	\end{align*}
	Let 
 \begin{align*}
     \phi(z) = 
     \begin{cases}
         h(z)-2Q\log|z|_{+}+\alpha G_{\mathbb H}(z,q)+\sum_{i=1}^m\frac{\beta_i}{2}G_{\mathbb H}(z,p_i)+\mathbf c &\qquad\text{if $p_1\neq \infty$,}\\
         h(z)+(\beta_1-2Q)\log|z|_{+}+\alpha G_{\mathbb H}(z,q)+\sum_{i=2}^m\frac{\beta_i}{2}G_{\mathbb H}(z,p_i)+\mathbf c &\qquad\text{if $p_1= \infty$.}
     \end{cases}
 \end{align*}
 We denote the law of $\phi(z)$ on $H^{-1}(\mathbb H)$ by $\LF_{\mathbb H}^{(\beta_i,p_i)_i,(\alpha,q)}$.
\end{Def}
Finally, we recall the definition of the LCFT on horizontal strip $\mathcal S=\mathbb R\times(0,\pi)$. It is essentially the same procedure as defining LCFT on $\mathbb H$. Let 
\begin{equation*}
	G_{\mathcal S}(z,w)=-\log|e^z-e^w|-\log|e^z-e^{\overline w}|+\max(2\Re z,0)+\max(2\Re w,0)
\end{equation*}
be the Green function on $\mathcal S$. 
\begin{Def}[{\cite[Definition 2.19]{InteofSLE}}]
	Let $(h,\mathbf c)$ be sampled from $C_{\mathcal S}^{(\beta,\pm\infty),(\beta_3,s_3)}P_{\mathcal S}\times \left[e^{(\beta+\frac{\beta_3}{2}-Q)c}dc\right]$, where $\beta\in \mathbb R$ and $(\beta_3,s_3)\in \mathbb R\times \partial\mathcal S$, and 
	\begin{equation*}
		C_{\mathcal S}^{(\beta,\pm\infty),(\beta_3,s_3)}=e^{(-\frac{\beta_3}{2}(Q-\frac{\beta_3}{2})+\frac{\beta\beta_3}{2})|\Re s_3|}.
	\end{equation*}
	Let $\phi(z)=h(z)-(Q-\beta)|\Re z|+\frac{\beta_3}{2}G_{\mathcal S}(z,s_3)+\mathbf c$ and we denote the law of $\phi(z)$ on $H^{-1}(\mathbb H)$ by $\LF_{\mathcal S}^{(\beta,\pm\infty),(\beta_3,s_3)}$. 
\end{Def}
\subsubsection{Conformal symmetries of LCFT}
Let $\conf(\mathbb H)$ be the group of conformal automorphisms of $\mathbb H$ where group multiplication $\cdot$ is the function composition $f\cdot g = f\circ g$.
\begin{Prop}[{\cite[Proposition $2.16$]{InteofSLE}}]
	For $\beta\in \mathbb R$, let $\Delta_\beta=\frac{\beta}{2}(Q-\frac{\beta}{2})$. Let $f\in \conf(\mathbb H)$ and $(\beta_i,s_i)\in \mathbb R\times \partial\mathbb H$ with $f(s_i)\neq \infty$ for all $1\leq i\leq m$. Then $\LF_{\mathbb H}=f_*(\LF_{\mathbb H})$ and 
	\begin{equation*}
		\LF_{\mathbb H}^{(\beta_i,f(s_i))_i}=\prod_{i=1}^m |f'(s_i)|^{-\Delta_{\beta_i}}f_*\left(\LF_{\mathbb H}^{(\beta_i,s_i)_i}\right).
	\end{equation*}
\end{Prop}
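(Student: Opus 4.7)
The plan is to handle the two claims in sequence: first the invariance $\LF_{\mathbb H}=f_*(\LF_{\mathbb H})$ for the insertion-free Liouville field, then the covariant transformation of the insertions via a Girsanov-type computation that tracks how the circle-average regularization transforms under a conformal automorphism of $\mathbb H$.

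For the invariance, I would sample $\phi(z)=h(z)-2Q\log|z|_{+}+\mathbf c$ with $h\sim P_{\mathbb H}$ and $\mathbf c\sim e^{-Qc}dc$, and compute the pushforward $f_*\phi=\phi\circ f^{-1}+Q\log|(f^{-1})'|$ from (\ref{eqn:equiv trans rule}). Because the free-boundary GFF is conformally invariant modulo an additive constant, $h\circ f^{-1}$ differs from a fresh sample $\tilde h\sim P_{\mathbb H}$ by a random constant $a_f(h)$ that restores the zero-semicircle-average normalization. The remaining deterministic piece $H_f(z):=-2Q\log|f^{-1}(z)|_{+}+Q\log|(f^{-1})'(z)|+2Q\log|z|_{+}$ is a bounded harmonic function on $\mathbb H$, and its semicircle average $b_f$ combines with $a_f(h)$ into an additive shift of $\mathbf c$. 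Translation invariance of $dc$ weighted by $e^{-Qc}$ produces a Radon--Nikodym factor $e^{-Q(a_f(h)+b_f)}$, which after integrating $a_f(h)$ against $P_{\mathbb H}$ cancels against the specific choice of the counterterm $-2Q\log|z|_{+}$. This yields $f_*\LF_{\mathbb H}=\LF_{\mathbb H}$.

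For the insertions, I would use the limiting characterization $\LF_{\mathbb H}^{(\beta_i,s_i)_i}=\lim_{\varepsilon\to 0}\prod_i\varepsilon^{\beta_i^2/4}e^{\frac{\beta_i}{2}\phi_\varepsilon(s_i)}\LF_{\mathbb H}(d\phi)$ indicated in the introduction. Testing against bounded continuous $F$, pushing forward by $f$, and substituting $\phi=f^{-1}\bullet_\gamma\tilde\phi$ with $\tilde\phi\sim\LF_{\mathbb H}$ (justified by step one), the key asymptotic is
\[
(f^{-1}\bullet_\gamma\tilde\phi)_\varepsilon(s_i)=\tilde\phi_{\varepsilon|f'(s_i)|}(f(s_i))+Q\log|f'(s_i)|+o_\varepsilon(1),
\]
since $f$ maps $\partial B(s_i,\varepsilon)\cap\mathbb H$ to a curve converging to $\partial B(f(s_i),\varepsilon|f'(s_i)|)\cap\mathbb H$ and the drift $Q\log|f'|$ is continuous at $s_i$. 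Writing $\tilde\varepsilon_i=\varepsilon|f'(s_i)|$ and collecting powers gives
\[
\prod_i\varepsilon^{\beta_i^2/4}e^{\frac{\beta_i}{2}(f^{-1}\bullet_\gamma\tilde\phi)_\varepsilon(s_i)}=\prod_i|f'(s_i)|^{\Delta_{\beta_i}}\tilde\varepsilon_i^{\beta_i^2/4}e^{\frac{\beta_i}{2}\tilde\phi_{\tilde\varepsilon_i}(f(s_i))}\bigl(1+o(1)\bigr),
\]
where the exponent $\frac{\beta_iQ}{2}-\frac{\beta_i^2}{4}=\frac{\beta_i}{2}(Q-\frac{\beta_i}{2})=\Delta_{\beta_i}$ is exactly the scaling dimension. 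Letting $\varepsilon\to 0$ yields $f_*\LF_{\mathbb H}^{(\beta_i,s_i)_i}=\prod_i|f'(s_i)|^{\Delta_{\beta_i}}\LF_{\mathbb H}^{(\beta_i,f(s_i))_i}$, which after dividing by $\prod_i|f'(s_i)|^{\Delta_{\beta_i}}$ is the claimed formula.

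The main obstacle will be making the $o_\varepsilon(1)$ error rigorous so that the vague limit commutes with $\varepsilon\to 0$: one must bound the discrepancy between the $f$-image of the semicircle around $s_i$ and the genuine semicircle around $f(s_i)$, uniformly in $\tilde\phi$ (in an $L^1$ sense against $\LF_{\mathbb H}$), and verify that the explicit normalization constants $C_{\mathbb H}^{(\beta_i,s_i)_i}$ of Definition \ref{LCFT with boundary insertions} transform compatibly with the Jacobian coming from $f$ and from the shift in $\mathbf c$. Both are standard consequences of the modulus-of-continuity estimates for the circle-averaged free-boundary GFF near $\partial\mathbb H$, but they constitute the technical core of the computation.
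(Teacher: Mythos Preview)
The paper does not supply its own proof of this proposition: it is quoted verbatim as \cite[Proposition~2.16]{InteofSLE} and used as background input, so there is no in-paper argument to compare against. Your two-step outline (conformal invariance of the unmarked Liouville field via the additive-constant absorption into $e^{-Qc}dc$, followed by the Girsanov/regularization computation that produces the $\Delta_{\beta_i}$ exponents from the scaling of the semicircle average) is exactly the standard route taken in the cited reference and in the earlier LCFT literature, and the algebra you display for the exponent $\frac{\beta_i Q}{2}-\frac{\beta_i^2}{4}=\Delta_{\beta_i}$ is correct.

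One point worth tightening: in your first step you assert that the deterministic piece $H_f$ is a \emph{bounded} harmonic function on $\mathbb H$. In general $H_f$ need not be bounded (think of $f(z)=-1/z$), but what actually matters is that $H_f$ is harmonic and that the combined random-plus-deterministic shift $a_f(h)+b_f$ has the correct distribution so that the $e^{-Qc}dc$ factor is preserved; this follows from an explicit Girsanov computation rather than boundedness. Apart from that wording, your plan is sound and matches the approach of \cite{InteofSLE}.
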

\begin{Prop}\label{pushforward of LCFT}
For $1\leq i\leq m$ and $1\leq j\leq n$, let $(\alpha_i,z_i)\in \mathbb R\times\mathbb H$ and $(\beta_j,s_j)\in \mathbb R\times \partial\mathbb H$ with $f(s_j)\neq\infty$ for all $1\leq j\leq n$. Let $f\in \conf(\mathbb H)$ and we have 
\begin{equation*}
	\LF_{\mathbb H}^{(\alpha_i,f(z_i))_i,(\beta_j,f(s_j))_j}=\prod_{i=1}^m \prod_{j=1}^n |f'(z_i)|^{-2\Delta_{\alpha_i}}|f'(s_j)|^{-\Delta_{\beta_j}} \LF_{\mathbb H}^{(\alpha_i,z_i)_i,(\beta_j,s_j)_j}.
\end{equation*}
\end{Prop}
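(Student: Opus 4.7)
The plan is to extend the preceding boundary-only statement (the proposition immediately above) to the case with bulk insertions, by induction on the number $m\geq 0$ of bulk insertions; the base case $m=0$ is precisely that boundary-only proposition. For the inductive step I use the limiting characterization of a bulk insertion, applied to the Liouville field which already carries the other insertions,
\begin{equation*}
    \LF_{\mathbb H}^{(\alpha_1,z_1),(\alpha_i,z_i)_{i\geq 2},(\beta_j,s_j)_j}(d\phi) = \lim_{\varepsilon\to 0}\varepsilon^{\alpha_1^2/2}e^{\alpha_1\phi_\varepsilon(z_1)}\LF_{\mathbb H}^{(\alpha_i,z_i)_{i\geq 2},(\beta_j,s_j)_j}(d\phi),
\end{equation*}
together with a change-of-variables calculation for circle averages under the coordinate change $\phi\mapsto f\bullet_\gamma\phi := \phi\circ f^{-1}+Q\log|(f^{-1})'|$ that realizes the pushforward $f_*$.

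The key computation is the following. Since $f$ is conformal at $z_1\in\mathbb H$, for small $\varepsilon$ the set $\partial B(f(z_1),\varepsilon)$ pulls back under $f^{-1}$ to a loop which, up to $O(\varepsilon^2)$ corrections, agrees with $\partial B(z_1,\widetilde\varepsilon)$ where $\widetilde\varepsilon:=\varepsilon/|f'(z_1)|$. A Taylor expansion therefore gives, for $\widetilde\phi := f^{-1}\bullet_\gamma\phi$,
\begin{equation*}
    \phi_\varepsilon(f(z_1)) = \widetilde\phi_{\widetilde\varepsilon}(z_1) - Q\log|f'(z_1)| + o(1)\qquad\text{as }\varepsilon\to 0.
\end{equation*}
Using $\varepsilon^{\alpha_1^2/2} = |f'(z_1)|^{\alpha_1^2/2}\widetilde\varepsilon^{\alpha_1^2/2}$ and the identity $\tfrac{\alpha_1^2}{2}-\alpha_1 Q = -2\Delta_{\alpha_1}$, this combines to
\begin{equation*}
    \varepsilon^{\alpha_1^2/2}e^{\alpha_1\phi_\varepsilon(f(z_1))} = |f'(z_1)|^{-2\Delta_{\alpha_1}}\widetilde\varepsilon^{\alpha_1^2/2}e^{\alpha_1\widetilde\phi_{\widetilde\varepsilon}(z_1)}\,(1+o(1)).
\end{equation*}

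Substituting the inductive hypothesis into the limit formula for $\LF_{\mathbb H}^{(\alpha_1,f(z_1)),(\alpha_i,f(z_i))_{i\geq 2},(\beta_j,f(s_j))_j}$, pulling out the factor $|f'(z_1)|^{-2\Delta_{\alpha_1}}$, and recognizing the remaining limit (in the variable $\widetilde\varepsilon$) as $f_*\LF_{\mathbb H}^{(\alpha_1,z_1),(\alpha_i,z_i)_{i\geq 2},(\beta_j,s_j)_j}$ closes the induction and produces the full product $\prod_i|f'(z_i)|^{-2\Delta_{\alpha_i}}\prod_j|f'(s_j)|^{-\Delta_{\beta_j}}$, understood modulo the pushforward $f_*$ on the right-hand side (as in the boundary-only proposition). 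The case $f(s_j)=\infty$ is then recovered by passing to the limit via the analog of \cite[Lemma 2.18]{InteofSLE}.

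The main obstacle will be the rigorous interchange of the limit $\varepsilon\to 0$ with the pushforward $f_*$ and with integration against test functions. I would handle this in the vague topology on $H^{-1}(\mathbb H)$ along the lines of \cite[Lemma 2.2]{FZZ} and \cite[Lemma 2.18]{InteofSLE}, testing against a bounded continuous functional and invoking standard $L^p$ bounds on the regularized vertex operators together with the dominated convergence theorem. A secondary technical point is controlling the $o(1)$ error in the circular-average identity uniformly on the support of the test functional; this follows from the standard modulus-of-continuity estimates for the circle averages of the free-boundary GFF together with the fact that the Taylor error of $f$ at $z_1$ is deterministic and of order $\varepsilon^2$.
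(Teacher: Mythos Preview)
Your approach is correct and the key computation with the circle averages is right (indeed, precisely this identity is used later in the paper in the proof of Theorem~\ref{welding of generalized weight}; see the displayed computation for $X_\varepsilon(i)$ there). You also correctly note that the stated identity must be read with an $f_*$ on the right-hand side, in line with the boundary-only proposition.

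However, your route differs from the paper's. The paper simply invokes the argument of \cite[Proposition~2.9]{InteofSLE}, which is a \emph{direct} verification from the explicit Definition~\ref{Def one bulk one boundary}: one writes $\phi$ as $h-2Q\log|\cdot|_+$ plus the Green-function terms plus $\mathbf c$, pushes forward by $f$ using the known transformation law for the free-boundary GFF and the identity $G_{\mathbb H}(f(z),f(w))=G_{\mathbb H}(z,w)+\text{(boundary terms involving }\log|f'|)$, and checks that the normalizing constants $C_{\mathbb H}^{(\cdot)}$ and the measure $e^{(\ldots)c}dc$ transform correctly. This avoids any limit argument entirely; the price is bookkeeping of several explicit constants. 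Your inductive approach via the regularized vertex operator is more conceptual (it reduces everything to the $m=0$ case plus a single local computation), but it trades that algebra for the analytic work you flag at the end: controlling the $o(1)$ in the circle-average identity and justifying the interchange of limits in the vague topology. Both proofs are standard in the LCFT literature; the paper's choice has the advantage of being self-contained once the definitions are written down.
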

\begin{proof}
    The proof is exactly the same as that of \cite[Proposition 2.9]{InteofSLE}, which describes the case in $\widehat{\mathbb C}$ instead of $\mathbb H$.
\end{proof}
\begin{Lemma}[{\cite[Lemma 3.14]{FZZ}}]
	Let $\alpha\in \mathbb R$ and $u\in \mathcal S$ with $\Re(u)=0$, then we have 
	\begin{equation*}
		\exp_* \LF_{\mathcal S}^{(\alpha,u)}=\LF_{\mathbb H}^{(\alpha,e^u)}.
	\end{equation*}
\end{Lemma}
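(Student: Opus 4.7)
The plan is to view the identity as an instance of conformal covariance for the Liouville field: the map $f(w) = e^w$ is a conformal equivalence $\mathcal S \to \mathbb H$, and the hypothesis $\Re(u) = 0$ gives $|f'(u)| = e^{\Re(u)} = 1$, so no Jacobian factor appears at the insertion point. Concretely, the argument proceeds in two steps.

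\emph{Step one.} Establish the insertion-free analog $\exp_* \LF_{\mathcal S} = \LF_{\mathbb H}$ (essentially the $\alpha = 0$ case). Sampling $(h_{\mathcal S}, \mathbf c)$ from the data defining the Liouville field on $\mathcal S$ and applying the transformation rule (\ref{eqn:equiv trans rule}) with $\mathfrak f = \log$, the pushforward to $\mathbb H$ is $\exp_* \phi(z) = \phi(\log z) + Q\log|z|$. The free-boundary GFF part transforms correctly: $h_{\mathcal S} \circ \log$ has the law of the free-boundary GFF on $\mathbb H$ normalized on the upper unit semicircle $\partial \mathbb D \cap \mathbb H$, because the latter is the image under $\exp$ of the segment $\{i\theta : \theta \in [0,\pi]\} \subset \mathcal S$ on which $h_{\mathcal S}$ is normalized. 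The deterministic drift $-Q|\Re w|$ on $\mathcal S$ combined with the transformation factor $Q\log|z|$ equals $-2Q(\log|z|)_-$ on $\mathbb H$, which matches $-2Q\log|z|_+$ modulo the constant shift $-2Q\log|z|$, and such a constant shift is absorbed into the additive constant $\mathbf c$ by translation invariance of Lebesgue measure.

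\emph{Step two.} Add the bulk insertion via the limiting definition
\begin{equation*}
\LF_{\mathcal S}^{(\alpha,u)} = \lim_{\varepsilon \to 0} \varepsilon^{\alpha^2/2} e^{\alpha \phi_\varepsilon(u)}\, \LF_{\mathcal S}(d\phi),\qquad \LF_{\mathbb H}^{(\alpha,e^u)} = \lim_{\varepsilon \to 0} \varepsilon^{\alpha^2/2} e^{\alpha \phi_\varepsilon(e^u)}\, \LF_{\mathbb H}(d\phi).
\end{equation*}
Under $\exp_*$ together with the transformation rule, the circular average $\phi_\varepsilon(u)$ in $\mathcal S$ equals, up to $o(1)$, $(\exp_* \phi)_{\varepsilon'}(e^u) + Q\log|f'(u)|$ with $\varepsilon' = |f'(u)|\varepsilon$. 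The hypothesis $\Re(u) = 0$ forces $|f'(u)| = 1$, so $\varepsilon' = \varepsilon$ and the additive correction $Q\log|f'(u)|$ vanishes. Hence the weight $\varepsilon^{\alpha^2/2} e^{\alpha \phi_\varepsilon(u)}$ on $\mathcal S$ converts directly to $\varepsilon^{\alpha^2/2} e^{\alpha \phi_\varepsilon(e^u)}$ on $\mathbb H$, and combining with Step one gives the claim.

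The main obstacle is the careful bookkeeping of Step one: matching the GFF normalizations (the unit-semicircle average on $\mathbb H$ versus the corresponding segment in $\mathcal S$) and the deterministic linear drifts $-Q|\Re w|$ versus $-2Q\log|z|_+$, absorbing the mismatch into a translation of $\mathbf c$. Once this unweighted push-forward is secured, Step two follows immediately from Girsanov (equivalently, from the limiting definition of the insertion), and the role of $\Re(u) = 0$ is exactly to eliminate the $|f'(u)|^{-2\Delta_\alpha}$ Jacobian factor that would otherwise appear by the general conformal covariance of LCFT, turning a mere proportionality into the equality asserted.
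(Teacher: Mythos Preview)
The paper does not prove this lemma; it simply cites \cite[Lemma 3.14]{FZZ}. Your overall strategy---establish the base identity $\exp_* \LF_{\mathcal S} = \LF_{\mathbb H}$, then add the insertion via Girsanov and use $|f'(u)|=1$---is exactly the right one and is how the cited proof works.

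However, Step one contains a genuine error. You take $\mathfrak f = \log$ in the transformation rule, but to compute $\exp_*\phi$ you need $\mathfrak f = \exp$, giving $\exp_*\phi(z) = \phi(\log z) + Q\log|(\log)'(z)| = \phi(\log z) - Q\log|z|$. With the correct sign, the drift becomes
\[
-Q\,|\log|z|| - Q\log|z| = -2Q(\log|z|)_+ = -2Q\log|z|_+,
\]
which matches the $\mathbb H$-drift exactly, with nothing to absorb into $\mathbf c$. Your version has $+Q\log|z|$, producing $-2Q(\log|z|)_-$, and you then assert this differs from $-2Q\log|z|_+$ by a ``constant shift $-2Q\log|z|$'' to be absorbed into $\mathbf c$. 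But $-2Q\log|z|$ is not constant in $z$, so this absorption is impossible and the argument as written does not close. Once the sign is corrected the remainder of your proof goes through without change.
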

\begin{Lemma}[{\cite[Lemma 2.20]{InteofSLE}}]
	Let $\beta\in \mathbb R$ and $(\beta_3,s_3)\in \mathbb R\times \partial\mathcal S$, then we have 
	\begin{equation*}
		\LF_{\mathbb H}^{(\beta,\infty),(\beta,0),(\beta_3,e^{s_3})}=e^{\frac{\beta_3^2}{4}\Re s_3}\exp_* \LF_{\mathcal S}^{(\beta,\pm\infty),(\beta_3,s_3)}.
	\end{equation*}
	Similarly, if $\beta_1,\beta_2,\beta_3\in \mathbb R$ and $f\in \conf(\mathbb H)$ satisfies $f(0)=0,f(1)=1,$ and $f(-1)=\infty$, then 
	\begin{equation*}
		\LF_{\mathbb H}^{(\beta_1,\infty),(\beta_2,0),(\beta_3,1)}=2^{\Delta_{\beta_1}-\Delta_{\beta_2}+\Delta_{\beta_3}}\cdot f_*\LF_{\mathbb H}^{(\beta_1,-1),(\beta_2,0),(\beta_3,1)}.
	\end{equation*}
\end{Lemma}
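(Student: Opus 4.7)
The lemma gives two identities between Liouville measures under conformal reparametrizations: the first passes between the strip $\mathcal S$ and $\mathbb H$ via $\exp$, while the second transports three boundary marked points on $\mathbb H$ via a single M\"obius map. In both cases the plan is to unfold both sides using their explicit definitions, invoke the Liouville-field change-of-coordinates rule $\phi \mapsto \phi\circ f^{-1} + Q\log|(f^{-1})'|$, and track every deterministic prefactor from the normalizing constants, Green's-function differences and vertex-operator Jacobians.

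For the first identity I would sample $\phi_{\mathcal S}$ from $\LF_{\mathcal S}^{(\beta,\pm\infty),(\beta_3,s_3)}$, so $\phi_{\mathcal S}(z) = h_{\mathcal S}(z) - (Q-\beta)|\Re z| + \tfrac{\beta_3}{2}G_{\mathcal S}(z,s_3) + \mathbf c$ with $h_{\mathcal S} \sim P_{\mathcal S}$ weighted by $C_{\mathcal S}^{(\beta,\pm\infty),(\beta_3,s_3)}$, and form the quantum-surface pushforward $\phi_{\mathbb H}(w) := \phi_{\mathcal S}(\log w) - Q\log|w|$. I would then match $\phi_{\mathbb H}$ piece by piece against the decomposition of $\LF_{\mathbb H}^{(\beta,\infty),(\beta,0),(\beta_3,e^{s_3})}$ from Definition \ref{LCFT with boundary insertions}: (i) the GFF part, where the cited FZZ lemma for $\exp_*$ on the Liouville field (stated just above for one bulk insertion) ensures $\exp$ carries $P_{\mathcal S}$ into $P_{\mathbb H}$ after the $-Q\log|w|$ correction; (ii) the deterministic term $-(Q-\beta)|\Re z|$, which after the substitution $z=\log w$ matches the combined contribution of the $|w|_+$ factor and the $\tfrac{\beta}{2}G_{\mathbb H}(w,0)$ plus $(\beta-2Q)\log|w|_+$ insertions at $0$ and $\infty$; (iii) the mixed term $\tfrac{\beta_3}{2}G_{\mathcal S}(z,s_3)$, which agrees with $\tfrac{\beta_3}{2}G_{\mathbb H}(w,e^{s_3})$ up to a deterministic boundary anomaly depending only on $\Re s_3$; (iv) the constant $\mathbf c$, whose Lebesgue-type measure is the same on both sides. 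Collecting the normalizing constants together with the boundary Jacobian $|\exp'(s_3)|^{-\Delta_{\beta_3}} = e^{-\Delta_{\beta_3}\Re s_3}$, and using the identity $\tfrac{\beta_3^2}{4} = \tfrac{\beta_3 Q}{2} - \Delta_{\beta_3}$, produces the claimed prefactor $e^{\frac{\beta_3^2}{4}\Re s_3}$.

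For the second identity, the unique M\"obius transformation with $f(0)=0$, $f(1)=1$, $f(-1)=\infty$ is $f(z)=\tfrac{2z}{z+1}$, giving $f'(0)=2$ and $f'(1)=\tfrac{1}{2}$. Since Proposition \ref{pushforward of LCFT} does not apply directly when a marked point maps to $\infty$, I would approximate: for each large $r>0$ choose $f_r \in \conf(\mathbb H)$ sending $(-1,0,1)$ to $(r,0,1)$, apply Proposition \ref{pushforward of LCFT} to $\LF_{\mathbb H}^{(\beta_1,-1),(\beta_2,0),(\beta_3,1)}$, and let $r\to\infty$. The cited convergence $\lim_{r\to\infty} r^{\beta_1(Q-\beta_1/2)}\LF_{\mathbb H}^{(\beta_1,r),\ldots} = \LF_{\mathbb H}^{(\beta_1,\infty),\ldots}$ controls the limit at the moving point. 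Expanding $|f_r'(-1)|\sim c\,r^2$ while $|f_r'(0)|\to 2$ and $|f_r'(1)|\to \tfrac{1}{2}$, the product $|f_r'(-1)|^{-\Delta_{\beta_1}}|f_r'(0)|^{-\Delta_{\beta_2}}|f_r'(1)|^{-\Delta_{\beta_3}}$ combines with $r^{\beta_1(Q-\beta_1/2)} = r^{2\Delta_{\beta_1}}$ to give exactly $2^{\Delta_{\beta_1}-\Delta_{\beta_2}+\Delta_{\beta_3}}$ in the limit.

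The principal difficulty is the meticulous bookkeeping: the $|z|_+$ corrections in the definitions, the boundary conformal anomaly for $G_{\mathcal S}$ versus $G_{\mathbb H}$ under $\exp$, the Jacobians at each marked point, and the precise matching of the $r\to\infty$ limit with the normalization from the cited convergence lemma. The structural identity is essentially forced by conformal covariance of LCFT, but the number of deterministic contributions makes arithmetic errors easy; as a sanity check I would specialize to a degenerate case (say $\beta_3=0$, reducing to an identity already verifiable from the one-bulk-insertion FZZ lemma) before committing to the general computation.
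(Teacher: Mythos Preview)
The paper does not prove this lemma; it is quoted verbatim from \cite[Lemma 2.20]{InteofSLE} in the preliminaries section with no accompanying argument, so there is no in-paper proof to compare against. Your strategy is the standard one and is correct: for the first identity, push forward the explicit strip field through $\exp$ and match the GFF, drift, Green-function, and normalizing-constant pieces against Definition~\ref{LCFT with boundary insertions}; for the second, approximate the map $f(z)=\tfrac{2z}{z+1}$ by $f_r$ sending $(-1,0,1)\mapsto(r,0,1)$, apply the finite-point covariance, and pass to the limit using the $r\to\infty$ lemma. Your derivative computations $f'(0)=2$, $f'(1)=\tfrac12$, and the asymptotic $|f_r'(-1)|\sim r^2/2$ are all correct, and they combine with $r^{2\Delta_{\beta_1}}$ to give exactly $2^{\Delta_{\beta_1}-\Delta_{\beta_2}+\Delta_{\beta_3}}$. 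One minor point: you should also verify that $f_r\to f$ locally uniformly so that $(f_r)_*\to f_*$ in the vague topology, but this is immediate from the explicit formula.
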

\subsection{Quantum disks}\label{sec:quantum disks}
\subsubsection{Quantum surfaces}
Let $\mathcal D\mathcal H=\lbrace (D,h): D\subset \mathbb C\ \text{open,  $h\in C_0^\infty(D)'$} \rbrace$. We define equivalence relation on $\mathcal D\mathcal H$ by letting $(D,h)\sim (\widetilde D,\widetilde h)$ if there is a conformal map $\psi:D\to \widetilde D$ such that $\widetilde h=\psi\bullet_\gamma h$, where 
\begin{equation}\label{quantum equivalence relation}
	\psi\bullet_\gamma h:=h\circ \psi^{-1}+Q\log|(\psi^{-1})'|.
\end{equation}
A \textit{quantum surface} is an equivalence class of pairs $(D,h)\in \mathcal D\mathcal H$ under the equivalence relation $\sim_\gamma$. An \textit{embedding} of a quantum surface is a choice of representative $(D,h)$. We can also consider quantum surfaces with marked points $(D,h,z_1,\ldots,z_m,\omega_1,\ldots,\omega_n)$ where $z_i\in D$ and $\omega_j\in \partial D$. We say 
\begin{equation*}
	(D,h,z_1,\ldots,z_m,\omega_1,\ldots,\omega_n)\sim_\gamma (\widetilde D,\widetilde h,\widetilde{z_1},\ldots,\widetilde{z_m},\widetilde{\omega_1},\ldots,\widetilde{\omega_n})
\end{equation*}
if there is a conformal map $\psi:D\to \widetilde D$ such that $\widetilde h=\psi\bullet_\gamma h$ and $\psi(z_i)=\widetilde{z_i},\psi(\omega_j)=\widetilde{\omega_j}$. Let $\mathscr D_{m,n}$ denote the set of equivalence class of such tuples under $\sim_\gamma$ and  let $\mathscr D=\mathscr D_{0,0}$ for simplicity. We use $(\ref{quantum equivalence relation})$ to define the equivalence relation because $\gamma$-LQG quantum area and $\gamma$-LQG quantum length measure is invariant under pushforward $\bullet_\gamma$. Since we will mainly work with $\mathbb H$, we view the set $\mathscr D_{m,n}$ as the quotient space 
\begin{equation*}
	\lbrace(\mathbb H,h,z_1,\ldots,z_m,\omega_1,\ldots,\omega_n):h \ \text{is a distribution on $\mathbb H$}, z_1\ldots,z_m\in \mathbb H, \omega_1,\ldots,\omega_n\in \overline{\mathbb R} \rbrace/\sim_\gamma.
\end{equation*}
The Borel $\sigma$-algebra of $\mathscr D_{m,n}$ is induced by the Borel sigma algebra on $H^{-1}(\mathbb H)$. 
\subsubsection{Quantum Disks}
We recall the definitions of two-pointed quantum disk introduced in \cite{Weldisk}. It is a family of measures on $\mathscr D_{0,2}$. It is initially defined on the horizontal strip $\mathcal S=\mathbb R\times (0,\pi)$. Let $\exp:\mathcal S\to \mathbb H$ be the exponential map $z\mapsto e^z$ and let $h_{\mathcal S}=h_{\mathbb H}\circ \exp$ where $h_{\mathbb H}$ is sampled from $P_{\mathbb H}$. We call $h_{\mathcal S}$ the \textit{free boundary GFF} on $\mathcal S$. It is known that $h_{\mathcal S}$ can be written as the sum of $h^c$ and $h^\ell$ where $h^c$ is constant on $u+[0,i\pi],u\in \mathbb R$ and $h^\ell$ has mean zero on all such vertical lines. We call $h^\ell$ the \textit{lateral component} of free boundary GFF.
\begin{Def}[Thick quantum disk]\label{Def: thick quantum disk}
	Let $W\geq\frac{\gamma^2}{2}$, and let $\beta=Q+\frac{\gamma}{2}-\frac{W}{\gamma}$. Let 
	\begin{equation*}
		Y_t=
		\begin{cases}
			B_{2t}-(Q-\beta)t &\qquad\text{if $t\geq 0$,}\\
			\widetilde{B_{-2t}}+(Q-\beta)t &\qquad\text{if $t<0$,}
		\end{cases}
	\end{equation*}
	where $(B_s)_{s\geq 0}, (\widetilde B_{s})_{s\geq 0}$ are independent standard Brownian motions conditional on $B_{2s}-(Q-\beta)s<0$ and $\widetilde{B_{2s}}-(Q-\beta)s<0$ for all $s>0$. Let $h^1(z)=Y_t$ for all $z$ with $\Re(z)=t$. Let $h^2(z)$ be the lateral component of free boundary GFF on $\mathcal S$ and let $\mathbf c$ be sampled from $\frac{\gamma}{2}e^{(\beta-Q)c}dc$ independent of $h^1$ and $h^2$. Let $\widehat h(z)=h^1(z)+h^2(z)$ and let $\phi(z)=\widehat h(z) +\mathbf c$. Let $\mathcal M^{\disk}_{0,2}(W)$ denote the infinite measure on $\mathscr D_{0,2}$ describing the law of $(\mathcal S,\phi,-\infty,+\infty)$. We call a sample from $\mathcal M^{\disk}_{0,2}(W)$ a weight-$W$ quantum disk.
\end{Def}
\begin{Th}[{\cite[Theorem 2.22]{InteofSLE}}]
    Fix $W\geq\frac{\gamma^2}{2}$ and $\beta_W = \gamma + \frac{2-W}{\gamma}$. If we independently sample $T$ from $\leb_{\mathbb R}$ and $(\mathcal S,\phi,+\infty,-\infty)$ from $\mathcal M^{\disk}_{0,2}(W)$, then the law of $\widetilde{\phi}:=\phi(\cdot+T)$ is $\frac{\gamma}{2(Q-\beta_W)^2}$ is $\frac{\gamma}{2(Q-\beta_W)^2}\LF_{\mathcal S}^{(\beta_W,\pm\infty)}$.
\end{Th}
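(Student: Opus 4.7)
My plan is to reduce the claimed identity to a one-dimensional statement about the ``average process'' along vertical cross-sections of the strip $\mathcal S$, and then prove the 1D identity via a Williams-type path decomposition of drifted Brownian motion at its global maximum.

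First I would decompose the free-boundary GFF on $\mathcal S$ as $h_{\mathcal S} = h^c + h^\ell$, where $h^c(z)$ depends only on $\Re z$ (it equals the average of $h_{\mathcal S}$ on the segment $\Re z + i[0,\pi]$) and $h^\ell$ is the lateral component, with $h^c$ and $h^\ell$ independent; under the normalization in the excerpt, $(h^c(t))_t \stackrel{d}{=} (B_{2t})_t$ for a two-sided standard BM with $B_0 = 0$. This decomposition applies to both sides: in $\mathcal M^{\disk}_{0,2}(W)$ the lateral component is the $h^2$ of Definition \ref{Def: thick quantum disk}, which has the same law as $h^\ell$, and in $\LF_{\mathcal S}^{(\beta_W, \pm\infty)}$ the lateral component is $h^\ell$ by construction. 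The lateral laws therefore match on the two sides and are preserved under the random horizontal shift by $T$, so the identity reduces to one between the laws of the average process $\bar\phi(t)$ on the two sides, up to the overall constant $\frac{\gamma}{2(Q-\beta_W)^2}$.

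Setting $\alpha := Q - \beta_W > 0$: on the LHS the average process is $Y_{\cdot+T} + \mathbf c$ with $\mathbf c \sim \tfrac{\gamma}{2} e^{-\alpha c}\,dc$, $T \sim dt$, and $Y$ as in Definition \ref{Def: thick quantum disk}; on the RHS it is $X_t := B^*_{2t} - \alpha|t| + \mathbf c'$ with $\mathbf c' \sim e^{-\alpha c'}\,dc'$ and $B^*$ a two-sided BM. Under the $\sigma$-finite RHS law, $X$ a.s.\ attains a unique global maximum at a location $T^*$ with value $M$, and the Williams decomposition at the maximum says that the centred process $\tilde Y_s := X_{s+T^*} - M$ is distributed exactly as $Y$: via the strong Markov property of $B^*$ at $2T^*$, a Doob $h$-transform identification, and a time-reversal argument, the post-$T^*$ and pre-$T^*$ pieces become independent drifted Brownian motions conditioned to stay $\leq 0$, matching the independent $B$ and $\widetilde B$ used in the definition of $Y$. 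The change of variables $(\mathbf c', B^*) \leftrightarrow (T^*, M, \tilde Y)$ is a bijection once the normalization $B^*_0 = 0$ is imposed, with $(T^*, \tilde Y)$ determining $B^*$ and $M$ carrying the one-dimensional information from $\mathbf c'$ through $dM = d\mathbf c'$.

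To conclude I would compute the joint density of $(T^*, M)$ explicitly: the classical fact that the running maximum of $B_s - (\alpha/2)s$ is $\mathrm{Exp}(\alpha)$-distributed (after the time change $s = 2t$), together with independence of the two halves of $B^*$ and the $e^{-\alpha c'}\,dc'$ weighting, collapses the RHS to $\alpha^{-2} e^{-\alpha M}\,dT^*\,dM \otimes \mathcal L(\tilde Y)$. Matching this to the LHS pushforward under the bijection $(T, c, Y) \mapsto Y_{\cdot+T} + c$ (well-defined since $Y$ has a unique maximum $0$ at the origin) produces the claimed prefactor $\frac{\gamma}{2(Q-\beta_W)^2}$. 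The main obstacle is the careful execution of the Williams decomposition — identifying the post- and pre-maximum pieces with the independent drifted BMs conditioned to stay negative in the definition of $Y$ — and tracking the Jacobian so that the factor $(Q-\beta_W)^{-2}$ emerges exactly.
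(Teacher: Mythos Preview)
The paper does not give its own proof of this statement: it is quoted as \cite[Theorem 2.22]{InteofSLE} and used as a black box. So there is nothing in the present paper to compare your argument against.

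That said, your proposal is correct and is exactly the argument used in the original reference (and earlier in \cite{DMS14}). The decomposition into the lateral component $h^\ell$ and the average process is the right first step, and the lateral parts on the two sides are identically distributed and shift-invariant, so everything reduces to the one-dimensional claim. The core of that claim is precisely the Williams path decomposition of two-sided Brownian motion with drift $-\alpha|t|$ at its a.s.\ unique global maximum: the post- and pre-maximum pieces are independent and each has the law of drifted Brownian motion conditioned to stay below its starting point, which is exactly the law of the two halves of $Y$ in Definition~\ref{Def: thick quantum disk}. Your identification of the change of variables $(\mathbf c', B^*) \leftrightarrow (T^*, M, \tilde Y)$ and the Exp$(\alpha)$ law of the one-sided maximum are correct, and tracking the two factors of $\alpha^{-1} = (Q-\beta_W)^{-1}$ from the two independent halves together with the $\gamma/2$ in the $\mathbf c$-measure yields the prefactor $\frac{\gamma}{2(Q-\beta_W)^2}$. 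The only point requiring care, as you note, is making the Williams decomposition precise for the two-sided process and verifying independence of $(T^*, M)$ from $\tilde Y$; this is standard (see e.g.\ \cite[Lemma A.4, Proposition A.8]{Weldisk} or the corresponding argument in \cite{DMS14}).
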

\begin{Def}\label{Def one addition boundary marked point}
    For $W\geq\frac{\gamma^2}{2}$, we first sample $(\mathcal S,\phi,+\infty,-\infty)$ from $\nu_\phi(\mathbb R)\mathcal M^{\disk}_{0,2}(W)[d\phi]$, then sample $s\in \mathbb R$ according to the probability measure proportional to $\nu_\phi|_{\mathbb R}$. We denote the law of the surface $(\mathcal S,\phi,+\infty,-\infty)/\sim_\gamma$ has the same law as $\mathcal M^{\disk}_{2,\bullet}(W)$.
\end{Def}
\begin{Def}
    Fix $W\geq\frac{\gamma^2}{2}$ and let $\alpha\in \mathbb R$. Let $\mathcal M^{\disk}_{2,\bullet}(W;\alpha)$ denote the law of $(\mathcal S,\phi,\pm\infty,0)/\sim_\gamma$ with $\phi$ sampled from $\frac{\gamma}{2(Q-\beta)^2}\LF_{\mathcal S}^{(\beta,\pm\infty),(\alpha,0)}$. 
\end{Def}
\begin{Def}[Thin quantum disk]\label{Def: thin quantum disk}
	Let $0<W<\frac{\gamma^2}{2}$ and define the infinite measure $\mathcal M^{\disk}_{0,2}(W)$ on two-pointed beaded surfaces as follows: first take $T$ according to $(1-\frac{2}{\gamma^2}W)^{-2} \leb_{\mathbb R_+}$, then sample a Poisson point process $\lbrace (u,\mathcal D_u)\rbrace$ according to $\leb_{[0,T]}\times \mathcal M^{\disk}_{0,2}(\gamma^2-W)$ and concatenate the $\mathcal D_u$ according to ordering induced by $u$. 
\end{Def}
\begin{Def}
    For $W\in (0,\frac{\gamma^2}{2})$ and $\alpha\in \mathbb R$, let $(S_1,S_2,S_3)$ be sampled from 
    \begin{equation*}
        \left(1-\frac{2}{\gamma^2}W\right)^2\mathcal M^{\disk}_{0,2}(W)\times \mathcal M^{\disk}_{2,\bullet}(\gamma^2-W;\alpha)\times\mathcal M^{\disk}_{0,2}(W) 
    \end{equation*}
    and $S$ is the concatenation of the three surfaces. We define the infinite measure $\mathcal M^{\disk}_{2,\bullet}(W;\alpha)$ to be the law of $S$.
\end{Def}
When $W=2$, two marked points of $\mathcal M^{\disk}_{0,2}(2)$ are typical with respect to the quantum boundary length measure, see \cite[Proposition $A.8$]{Weldisk}.
\begin{Def}[Typical quantum disks]\label{qd with arbitrary marked points}
	Let $(\mathcal S,\phi,-\infty,+\infty)$ be an embedding of a sample from $\mathcal M^{\disk}_{0,2}(2)$. Let $A=\mu_\phi(\mathcal S)$ denote the total quantum area and $L=\nu_\phi(\partial\mathcal S)$ denote the total quantum boundary length. Let $\qd$ denote the law of $(\mathcal S,\phi)$ under reweighted measure $L^{-2}\mathcal M^{\disk}_{0,2}(2)$, viewed as a measure on $\mathscr D$ by forgetting two marked points. For non-negative integers $m$ and $n$, let $(\mathcal S,\phi)$ be a sample from $A^mL^n\qd$, then independently sample $z_1,\ldots,z_m$ and $\omega_1,\ldots,\omega_n$ according to $\mu_\phi^{\#}$ and $\nu_\phi^{\#}$, respectively. Let $\qd_{m,n}$ denote the law of $(\mathcal S,\phi,z_1,\ldots,z_m,\omega_1,\ldots,\omega_n)$ viewed as a measure on $\mathscr D_{m,n}$. We call a sample from $\qd_{m,n}$ quantum disk with $m$ bulk and $n$ boundary marked points.
\end{Def}
\subsubsection{Conformal welding of quantum disks}
The following theorem describes the conformal welding of $n$ quantum disks. Notice that the weight $W$ is linearly added when performing the welding operation. 
\begin{Th}[{\cite[Theorem 2.2]{Weldisk}}]\label{Disk welding theorem}
    Fix $W_1,\ldots,W_n>0$ and $W=W_1+\ldots+W_n$. There exists a constant $C=C_{W_1,\ldots,W_n}\in (0,\infty)$ such that for all $\ell,r>0$, the identity 
    \begin{align}
        \begin{split}
            &\mathcal{M}^{\disk}_{0,2}(W;\ell,r)\otimes \mathcal P^{\disk}(W_1,\ldots,W_n)\\
            &=C\iiint_0^\infty \mathcal{M}^{\disk}_{0,2}(W_1;\ell,\ell_1)\times \mathcal{M}^{\disk}_{0,2}(W_2;\ell_1,\ell_2)\times\ldots\times \mathcal{M}^{\disk}_{0,2}(W_n;\ell_{n-1},r)d\ell_1 \ldots d\ell_{n-1}
        \end{split}
    \end{align}
    holds as measures on the space of curve-decorated quantum surfaces. The measure $\mathcal P^{\disk}(W_1,\ldots,W_n)$ is defined in \cite[Definition 2.25]{Weldisk} on tuple of curves $(\eta_1,\ldots,\eta_{n-1})$ in a domain $(D,x,y)$. It was defined by the following induction procedure: first sample $\eta_{n-1}$ from $\sle_{\kappa}(W_1+\ldots+W_{n-1}-2;W_n-2)$ then $(\eta_1,\ldots,\eta_{n-2})$ from $\mathcal P^{\disk}(W_1,\ldots,W_{n-1})$ on connected component $(D',x',y')$ on the left of $D\backslash\eta_{n-1}$ where $x'$ and $y'$ are the first and the last point hit by $\eta_{n-1}$.
\end{Th}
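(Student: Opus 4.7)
The plan is to proceed by induction on $n$, with the base case $n = 2$ being the two-disk conformal welding theorem of Ang, Holden and Sun. For $n = 2$, $\mathcal P^{\disk}(W_1, W_2)$ is the law of a single $\sle_\kappa(W_1-2;W_2-2)$ curve, and the identity
\begin{equation*}
\mathcal{M}^{\disk}_{0,2}(W_1+W_2;\ell,r)\otimes \mathcal P^{\disk}(W_1,W_2) = C_{W_1,W_2}\int_0^\infty \mathcal{M}^{\disk}_{0,2}(W_1;\ell,\ell_1)\times\mathcal{M}^{\disk}_{0,2}(W_2;\ell_1,r)\,d\ell_1
\end{equation*}
is precisely the statement that conformally welding two independent boundary-matched quantum disks of weights $W_1$ and $W_2$ produces a weight-$(W_1+W_2)$ disk decorated by the welding interface.

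For the inductive step, set $W' = W_1 + \cdots + W_{n-1}$, so $W = W' + W_n$. First I would apply the two-disk welding to the weight split $(W', W_n)$, which decorates a sample from $\mathcal{M}^{\disk}_{0,2}(W;\ell,r)$ by an $\sle_\kappa(W'-2;W_n-2)$ curve $\eta_{n-1}$ and realizes it, up to the constant $C_{W',W_n}$, as the welding of a sample from $\mathcal{M}^{\disk}_{0,2}(W';\ell,\ell_{n-1})$ along its right boundary with a sample from $\mathcal{M}^{\disk}_{0,2}(W_n;\ell_{n-1},r)$ along its left boundary, integrated against $d\ell_{n-1}$. The curve $\eta_{n-1}$ is exactly the outermost curve in the recursive definition of $\mathcal P^{\disk}(W_1,\ldots,W_n)$. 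Next I would condition on $\eta_{n-1}$ and work inside its left complementary component $D'$, which carries a weight-$W'$ quantum disk with two marked points at the endpoints of $\eta_{n-1}$. Invoking the inductive hypothesis for $\mathcal P^{\disk}(W_1,\ldots,W_{n-1})$ on $D'$ produces internal curves $(\eta_1,\ldots,\eta_{n-2})$ realizing $D'$ as the concatenated welding of weight-$W_i$ disks for $i \le n-1$. Since $(\eta_1,\ldots,\eta_{n-2})$ is sampled in terms of $D'$ as a marked quantum surface only, it is conditionally independent of the weight-$W_n$ piece given $\eta_{n-1}$, which exactly matches the inductive construction of $\mathcal P^{\disk}(W_1,\ldots,W_n)$. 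Stitching the two identities together by Fubini yields the theorem with $C_{W_1,\ldots,W_n} = C_{W',W_n}\cdot C_{W_1,\ldots,W_{n-1}}$.

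The hardest part will be the thin regime $W_j \in (0, \gamma^2/2)$, where $\mathcal{M}^{\disk}_{0,2}(W_j)$ is supported on beaded surfaces (a Poissonian concatenation of thick bubbles as in Definition \ref{Def: thin quantum disk}) rather than on simply connected surfaces; the welding interface then genuinely passes through the concatenation pinch points, and one must verify that the two-disk welding identity extends verbatim to this regime together with a Markov-type decomposition that splits $\eta_{n-1}$ off without entangling the Poisson structure on either side. A secondary technical point is checking measurability and $\sigma$-finiteness of each intermediate disintegration in $\ell_i$, which ensures Fubini applies at every stage of the induction and that the constant $C_{W_1,\ldots,W_n}$ lies in $(0,\infty)$.
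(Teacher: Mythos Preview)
This theorem is not proved in the present paper; it is simply quoted from \cite[Theorem 2.2]{Weldisk} as background, so there is no ``paper's own proof'' to compare against. Your inductive strategy---reducing to the two-disk welding and peeling off the outermost curve $\eta_{n-1}$---is exactly the argument used in \cite{Weldisk} to establish the result in the first place, and your identification of the thin-disk regime as the main subtlety is accurate.
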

\section{Law of welding interface via a limiting procedure}\label{sec: curve law}
In this section, we prove Proposition \ref{Curve Law Prop}. In words, we show that under the same setup as Theorem $\ref{Conditional Welding}$, the law of the welding interface is $\sle_\kappa(\rho)$ bubble measure conditioning on surrounding $i$. 
\begin{Prop}\label{Curve Law Prop}
Fix $\gamma\in (0,2)$. For $W>0$, let $\rho = W-2$. Let $(\mathbb H,\phi,\eta,0,i)$ be an embedding of the quantum surface
\begin{equation}
		\int_0^\infty \mathcal M^{\disk}_{0,2}(W;\cdot,\ell)\times \qd_{1,1}(\ell) d\ell. 
\end{equation}
Let $M_{\phi}$ denote the marginal law of $\phi$ in $(\mathbb H,\phi,\eta,0,i)$, then  $(\phi,\eta)$ has the law of $M_\phi\times \sle_{\kappa,0}^{\bub}(\rho)[\cdot|i\in D_\eta(0)]$.
\end{Prop}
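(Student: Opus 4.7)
The plan is to realize the quantum surface $\int_0^\infty \mathcal M^{\disk}_{0,2}(W;\cdot,\ell)\times \qd_{1,1}(\ell) d\ell$ as a $\varepsilon\to 0^+$ limit of weldings of the form $\int \mathcal M^{\disk}_{0,2}(W;\cdot,\ell)\times \qd_{1,2}(\varepsilon,\ell-\varepsilon)\, d\ell$, where $\qd_{1,2}(\ell_1,\ell_2)$ denotes the typical quantum disk with one bulk insertion and two boundary insertions disintegrated on the two boundary arc lengths $\ell_1,\ell_2$. For the prelimit object the welding interface can be identified explicitly by standard conformal welding of two-pointed quantum disks, and then a coupling argument plus the conditional convergence statement Corollary \ref{conditional convergence lemma} will deliver both the identification of the curve law and its independence from the field.

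First I would establish the prelimit (this is the role of Lemma \ref{Lem:02+12} referenced in the outline): for any fixed $\ell>0$ and $\varepsilon\in (0,\ell)$, welding the right boundary of $\mathcal M^{\disk}_{0,2}(W;\cdot,\ell)$ with the boundary of $\qd_{1,2}(\varepsilon,\ell-\varepsilon)$, then embedding the resulting marked quantum surface as $(\mathbb H,\phi_\varepsilon,\eta_\varepsilon,0,\varepsilon,i)$ (with the bulk insertion sent to $i$ by an appropriate conformal normalization), the conditional law of $\eta_\varepsilon$ given $\phi_\varepsilon$ is the chordal $\sle^{\mathbb H}_{\kappa,(\varepsilon,\varepsilon^+)\to 0}(\rho)$ measure conditioned on the event that the curve separates $i$ from $+\infty$, i.e.\ passes to the left of $i$. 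This reduces to Theorem \ref{Disk welding theorem} (two-pointed disk welding) together with the standard de-weighting of the auxiliary typical boundary point that converts $\qd_{1,2}$ into $\qd_{1,1}$ after the bulk insertion is fixed.

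Second, I would build a coupling of these prelimit weldings with the target $\int \mathcal M^{\disk}_{0,2}(W;\cdot,\ell)\times \qd_{1,1}(\ell) d\ell$ by taking the same $\mathcal M^{\disk}_{0,2}(W;\cdot,\ell)$ sample and coalescing the two boundary marked points of $\qd_{1,2}(\varepsilon,\ell-\varepsilon)$ as $\varepsilon\to 0^+$; in the limit the $\qd_{1,2}$ factor becomes a sample from $\qd_{1,1}(\ell)$. Under the normalization fixing $0,\infty\in\partial\mathbb H$ and $i\in\mathbb H$, the field $\phi_\varepsilon$ converges in law to the field $\phi$ of the target, while by Corollary \ref{conditional convergence lemma}, after the appropriate polynomial rescaling factor $\varepsilon^{(\rho+2)(\kappa-8-2\rho)/(2\kappa)}$, the conditional chordal law $\sle^{\mathbb H}_{\kappa,(\varepsilon,\varepsilon^+)\to 0}(\rho)[\,\cdot\,|\widetilde{E_{i,0}}]$ converges weakly on the MTC curve space to $\sle_{\kappa,0}^{\bub}(\rho)[\,\cdot\,|i\in D_\eta(0)]$. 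Since the prelimit conditional law depends on $\phi_\varepsilon$ only through the location of $i$ and the two welding endpoints, which are pinned by the embedding, the limiting conditional law is deterministic in $\phi$. This forces the joint limit to have product form $M_\phi\times \sle_{\kappa,0}^{\bub}(\rho)[\,\cdot\,|i\in D_\eta(0)]$ and yields both assertions simultaneously.

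The main technical obstacle is to match the two rescalings in the limit. On the curve side the natural rescaling is $\varepsilon^{(\rho+2)(\kappa-8-2\rho)/(2\kappa)}$ coming from Theorem \ref{Thm on weak limit}, whereas on the quantum disk side the cost of merging two quantum-typical boundary marked points at distance $\varepsilon$ in $\mathcal M^{\disk}_{0,2}(W;\cdot,\ell)\times\qd_{1,2}(\varepsilon,\ell-\varepsilon)$ produces its own Radon--Nikodym blow-up as $\varepsilon\to 0^+$. I would check that, after renormalizing by the latter, the coupled welding measures converge vaguely to the target welding with a finite nonzero constant, and separately that the prelimit conditional curve laws converge as in Corollary \ref{conditional convergence lemma}; the two rates must agree by the KPZ-type scaling dimension of the $\gamma$-length measure, which is why the bubble exponent and the quantum disk length exponent are compatible. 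Once this compatibility is established, the product structure of the prelimit descends to the limit and independence of $\eta$ from $\phi$, together with identification of the conditional law, follow.
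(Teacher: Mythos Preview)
Your overall strategy matches the paper's: realize the target welding as a limit of weldings with an extra boundary marked point, identify the prelimit interface as a conditioned chordal $\sle_\kappa(\rho)$, couple the prelimit to the target, and invoke Corollary \ref{conditional convergence lemma}. However, there are two concrete gaps in your execution.

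\textbf{Over-determined embedding.} After welding $\mathcal M^{\disk}_{0,2}(W)$ with $\qd_{1,2}$ you have two boundary marked points and one bulk marked point, i.e.\ four real constraints against a three-dimensional automorphism group. You therefore cannot ``embed as $(\mathbb H,\phi_\varepsilon,\eta_\varepsilon,0,\varepsilon,i)$'' with all three points pinned; one of them must be random. The paper handles this by fixing the bulk point at $i$ and one boundary point at $\infty$, letting the other boundary point $\mathbf{x}$ be random (Lemma \ref{Lem:02+12}), and then proving via the coupling (Lemma \ref{Key coupling lemma}(3)) that in the disk embedding the image $\widetilde{\mathbf{x}_\delta}$ converges to the root point. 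This step---showing that small \emph{quantum} boundary length forces the two Euclidean endpoints to coalesce in the chosen embedding---is exactly what your sentence ``pinned by the embedding'' glosses over, and it is where the actual work lies. Relatedly, your notation uses $\varepsilon$ simultaneously for a quantum arc length in $\qd_{1,2}(\varepsilon,\ell-\varepsilon)$ and for a Euclidean position in the embedding; these are different quantities and their relationship is the content of that missing step.

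\textbf{The rescaling matching is a red herring.} Your last paragraph worries about matching the bubble exponent $\varepsilon^{(\rho+2)(\kappa-8-2\rho)/(2\kappa)}$ with a quantum blow-up via KPZ. The paper never needs this: it normalizes both the prelimit and the target to probability measures $M_\delta^{\#}$ and $M_{\mathbb D}^{\#}$, shows directly (Lemma \ref{quantum length converges}) that the joint law of the relevant boundary lengths converges, and then applies Corollary \ref{conditional convergence lemma} at the level of conditional laws. No exponent bookkeeping is required. If you insist on working with unnormalized measures you would indeed have to match rates, but this is an unnecessary complication that your KPZ sketch does not actually carry out.
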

\subsection{The LCFT description of three-pointed quantum disks}
We start with the definition of two-pointed quantum disk with one additional typical bulk insertion. 
\begin{Def}[{\cite[Definition 3.10]{FZZ}}]
	For $W\geq\frac{\gamma^2}{2}$, recall the definition of thick quantum disk $\mathcal M^{\disk}_{0,2}(W)$ from Definition \ref{Def: thick quantum disk}. Sample $\phi$ on $H^{-1}(\mathbb H)$ such that $(\mathbb H,\phi,0,\infty)$ is an embedding of $\mathcal M^{\disk}_{0,2}(W)$. Let $L$ denote the law of $\phi$ and let $(\phi,z)$ be sampled from $L(d\phi)\mu_{\phi}(dz^2)$. We write $\mathcal M^{\disk}_{1,2}(W)$ for the law $(\mathbb H,\phi,z,0,\infty)$ viewed as a marked quantum surface.
\end{Def} 
\begin{Lemma}\label{LCFTthree}
For $\gamma\in (0,2)$ and $W\in \mathbb R$, let $\beta_W=\gamma+\frac{2-W}{\gamma}$. Suppose $(\phi,\mathbf x)$ is sampled from $\LF_{\mathbb H}^{(\gamma,i),(\beta_W,\infty),(\beta_W,\mathbf x)}\times dx$, then the law of $(\mathbb H,\phi,\eta,i,\infty,\mathbf x)$ as a marked quantum surface is equal to $\frac{2(Q-\beta_W)^2}{\gamma}\mathcal{M}^{\disk}_{1,2}(W)$.
\end{Lemma}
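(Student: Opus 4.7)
The plan is to realize both sides as measures on the space of $3$-marked quantum surfaces (one bulk plus two boundary marked points) and match them after a suitable change of embedding. The two main ingredients will be the preceding theorem identifying $\mathcal{M}^{\disk}_{0,2}(W)$ on the strip $\mathcal{S}$ (averaged over horizontal translations) with $\frac{\gamma}{2(Q-\beta_W)^2}\LF_{\mathcal{S}}^{(\beta_W,\pm\infty)}$, together with the Cameron--Martin/Girsanov identity
\[
\mu_\phi(dz^2)\LF_{\mathcal{S}}^{(\beta_W,\pm\infty)}(d\phi)=\LF_{\mathcal{S}}^{(\beta_W,\pm\infty),(\gamma,z)}(d\phi)\,dz^2,
\]
which turns the $\mu_\phi$-weighted sampling of the typical bulk marked point into a bulk $\gamma$-insertion in LCFT.

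First, starting from the definition $\mathcal{M}^{\disk}_{1,2}(W)(d\phi,dz)=\mathcal{M}^{\disk}_{0,2}(W)(d\phi)\mu_\phi(dz^2)$ on $\mathcal{S}$, I would combine the two ingredients above to obtain that the pushforward of $\mathcal{M}^{\disk}_{1,2}(W)(d\phi,dz)\,dT$ under $(\phi,z,T)\mapsto(\tilde\phi,\tilde z):=(\phi(\cdot+T),z-T)$ equals $\tfrac{\gamma}{2(Q-\beta_W)^2}\LF_{\mathcal{S}}^{(\beta_W,\pm\infty),(\gamma,\tilde z)}(d\tilde\phi)\,d\tilde z^2$. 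Second, for each $\tilde z=u+iv\in\mathcal{S}$, I would change coordinates to $\mathbb{H}$ via the conformal map $\Psi_{\tilde z}=f_{\tilde z}\circ\exp$ sending $(+\infty,-\infty,\tilde z)\mapsto(\infty,\mathbf{x},i)$, where $\mathbf{x}=-\cot v$ and $f_{\tilde z}(\zeta)=e^{-u}\zeta/\sin v-\cot v$ is affine. Using $\exp_{*}\LF_{\mathcal{S}}^{(\beta_W,\pm\infty)}=\LF_{\mathbb{H}}^{(\beta_W,0),(\beta_W,\infty)}$ (which follows from the formula for $\exp_{*}$ stated in the excerpt by sending the auxiliary insertion weight to zero), Girsanov for the bulk $\gamma$-insertion, and the conformal covariance of $\LF_{\mathbb{H}}$ under $\conf(\mathbb{H})$ in its limiting form at $\infty$ (with the convention $|f'(\infty)|=1/a$ for $f(\zeta)=a\zeta+b$), the three derivative factors at $0$, $\infty$, and $e^{\tilde z}$ combine to give
\[
(\Psi_{\tilde z})_{*}\LF_{\mathcal{S}}^{(\beta_W,\pm\infty),(\gamma,\tilde z)}=(e^u\sin v)^{-2\Delta_\gamma}\LF_{\mathbb{H}}^{(\gamma,i),(\beta_W,\infty),(\beta_W,\mathbf{x})}.
\]
Third, I would extract the measure on quantum surfaces by gauge-fixing the horizontal translation freedom (say $\Re\tilde z=0$); after the change of variable $\mathbf{x}=-\cot v$ with Jacobian $dv=\sin^2 v\,d\mathbf{x}$, the factor $(\sin v)^{-2\Delta_\gamma}$ cancels exactly against $\sin^2 v$ thanks to the identity $\Delta_\gamma=\frac{\gamma}{2}(Q-\frac{\gamma}{2})=1$ valid for all $\gamma\in(0,2)$, which produces precisely the claimed identity.

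The main obstacle will be the careful bookkeeping of conformal factors: one must keep track of the derivative factors at $0$, $\infty$, and $e^{\tilde z}$ (including the nonstandard convention at $\infty$), the Jacobian of $\exp$ combined with Girsanov, and the final change of variable from $d\tilde z^2$ to $d\mathbf{x}$ after gauge-fixing. The cancellation that yields the clean $d\mathbf{x}$ factor (with no residual $v$- or $\mathbf{x}$-dependent weight) ultimately rests on $\Delta_\gamma=1$, which is special to the bulk insertion of weight exactly $\gamma$ corresponding to the $\mu_\phi$-typical bulk marked point; any other bulk weight would leave a nontrivial $\mathbf{x}$-dependent prefactor. In the thin regime $0<W<\gamma^2/2$, where the above translation-averaging correspondence for $\mathcal{M}^{\disk}_{0,2}(W)$ is not directly stated, the identity extends by decomposing a thin disk as a Poisson concatenation of thick disks of weight $\gamma^2-W$ and applying the disk-welding theorem stated earlier, reducing to the thick case handled above.
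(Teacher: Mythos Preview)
Your proposal is correct and follows essentially the same route as the paper's proof: both start from the strip LCFT representation of $\mathcal{M}^{\disk}_{1,2}(W)$ (the paper by citing \cite[Lemma~3.12]{FZZ}, you by translation-averaging plus Girsanov), then push forward via $\exp$ followed by the affine map sending $(e^{i\theta},0,\infty)\mapsto(i,-\cot\theta,\infty)$, and use conformal covariance (handling the $\infty$ insertion by a limit) so that the resulting factor $(\sin\theta)^{-2}$ cancels the Jacobian of $\mathbf{x}=-\cot\theta$, exactly because $\Delta_\gamma=1$. Your closing remark about the thin regime is unnecessary: $\mathcal{M}^{\disk}_{1,2}(W)$ is only defined for $W\ge\gamma^2/2$ in the paper, and the lemma is only applied with weight $W+2>\gamma^2/2$.
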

\begin{proof}
By \cite[Lemma $3.12$]{FZZ}, if $\mathcal{M}^{\disk}_{1,2}(W)$ is embedded as $(\mathcal{S},\phi,i\theta,+\infty,-\infty)$, then $(\phi,\theta)$ has the law of 
\begin{equation}
    \frac{\gamma}{2(Q-\beta_W)^2}\LF_{\mathcal{S}}^{(\beta_W,\pm\infty),(\gamma,i\theta)}(d\phi)\mathbbm{1}_{\widetilde{ \theta}\in (0,\pi)}d\widetilde{\theta}.
\end{equation}
Fix $\theta\in (0,\pi)$ and let $\exp:\mathcal{S}\to \mathbb H$ be the map $z\mapsto e^z$. By \cite[Lemma $3.14$]{FZZ} and \cite[Lemma $2.20$]{InteofSLE}, we have
\begin{equation*}
    \exp_*\left( \LF_{\mathcal{S}}^{(\beta_W,\pm\infty),(\gamma,i\theta)}\right) = \LF_{\mathbb H}^{(\gamma, e^{i\theta}),(\beta_W,\infty),(\beta_W,0)}.
\end{equation*}
Let $f_\theta(z)=\frac{z}{\sin\theta}-\cot\theta$, which sends $e^{i\theta}\mapsto i$, $\infty\mapsto\infty$, and $0\mapsto \mathbf{x}=-\cot\theta$. By \cite[Proposition $2.16$]{InteofSLE}, for any $r\in \mathbb R$, we have
\begin{equation*}
    \LF_{\mathbb H}^{(\gamma,i),(\beta_W,-\cot\theta),(\beta_W,\frac{r}{\sin\theta}-\cot\theta)} = (\sin\theta)^{2\Delta_\gamma+2\Delta_\beta} (f_\theta)_* \LF_{\mathbb H}^{(\gamma,e^{i\theta}),(\beta_W,0),(\beta_W,r)}, 
\end{equation*}
where $\Delta_\alpha=\frac{\alpha}{2}(Q-\frac{\alpha}{2})$. After multiplying both sides by $\left(\frac{r}{\sin\theta}-\cot\theta\right)^{2\Delta_{\beta_W}}$, we have 
	\begin{align*}
		&\left(\frac{r}{\sin\theta}-\cot\theta\right)^{2\Delta_{\beta_W}} \LF_{\mathbb H}^{(\gamma,i),(\beta_W,-\cot\theta), (\beta_W, \frac{r}{\sin\theta}-\cot\theta)}\\
		&=(\sin\theta)^{2\Delta_\gamma+2\Delta_{\beta_W}}\left(\frac{1}{\sin\theta}-\frac{\cot\theta}{r}\right)^{2\Delta_{\beta_W}} \left(f_\theta\right)_* \left(r^{2\Delta_{\beta_W}} \LF_{\mathbb H}^{(\gamma, e^{\theta i}), (\beta_W, 0),(\beta_W,r)} \right).
	\end{align*}
	By \cite[Lemma $2.18$]{InteofSLE}, taking limit as $r\to \infty$ yields
	\begin{equation*}
	   \frac{1}{(\sin\theta)^2} \LF_{\mathbb H}^{(\gamma,i),(\beta_W,-\cot\theta), (\beta_W, \infty)}= \left(f_{\theta}\right)_{*}\LF_{\mathbb H}^{(\gamma, e^{\theta i}), (\beta_W,0), (\beta_W, \infty)}.
	\end{equation*}
	Here the convergence is in the vague topology. When $\theta$ is sampled from $\mathbbm{1}_{(0,\pi)}(\widetilde{\theta}) d\widetilde{\theta}$, we have  
	\begin{equation*}
	    \frac{1}{(\sin\theta)^2} \LF_{\mathbb H}^{(\gamma,i),(\beta_W,-\cot\theta), (\beta_W, \infty)}=\LF_{\mathbb H}^{(\gamma,i),(\beta_W,\infty),(\beta_W,\mathbf x)}\times dx
	\end{equation*}
 by change of variables $\mathbf x = -\cot\theta$. This completes the proof.
\end{proof}
A direct consequence of \cite[Theorem $2.2$]{Weldisk} is the following: 
\begin{Th}\label{Conformal welding of two disks}
	Let $(\mathbb H, \phi, 0,\infty)$ be the embedding of a sample from $\mathcal M^{\disk}_{0,2}(W+2)$. Let $\eta$ be sampled from $\sle_\kappa(W-2,0)$ on $(\mathbb H,0,\infty)$ independent of $\phi$, then 
	\begin{equation}
		\mathcal M_{0,2}^{\disk} (W+2)\otimes \sle_{\kappa}(W-2,0)=C_{W,2}\int_{0}^{\infty} \mathcal M_{0,2}^{\disk}(W,\cdot,\ell)\times \mathcal M^{\disk}_{0,2}(2,\ell,\cdot)d\ell
	\end{equation}
	for some constant $C_{W,2}\in (0,\infty)$. 
\end{Th}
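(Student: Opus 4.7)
The plan is to specialize Theorem \ref{Disk welding theorem} to the case $n=2$ with weights $(W_1,W_2)=(W,2)$, which yields total weight $W_1+W_2=W+2$ matching the disk on the left-hand side. The first step is to identify the measure $\mathcal P^{\disk}(W,2)$ using the inductive definition given in Theorem \ref{Disk welding theorem}: at the base case $n=2$ it consists of a single curve $\eta_1$ sampled from $\sle_\kappa(W_1-2;W_2-2)=\sle_\kappa(W-2;0)$ on $(\mathbb H,0,\infty)$. Since this is a probability law on curves not depending on the field, tensoring with it corresponds precisely to the independence of $\phi$ and $\eta$ asserted in the target statement.

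Next I would plug these parameter values into Theorem \ref{Disk welding theorem}, obtaining for all $\ell,r>0$ the identity
\begin{equation*}
    \mathcal M^{\disk}_{0,2}(W+2;\ell,r)\otimes \sle_\kappa(W-2;0)=C_{W,2}\int_0^\infty \mathcal M^{\disk}_{0,2}(W;\ell,\ell_1)\times \mathcal M^{\disk}_{0,2}(2;\ell_1,r)\,d\ell_1
\end{equation*}
as measures on curve-decorated quantum surfaces, with $C_{W,2}$ the constant supplied by that theorem.

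The final step is to integrate both sides over $d\ell\,dr$ on $(0,\infty)^2$. On the left, the disintegration
\begin{equation*}
    \mathcal M^{\disk}_{0,2}(W+2)=\int_0^\infty\!\!\int_0^\infty \mathcal M^{\disk}_{0,2}(W+2;\ell,r)\,d\ell\,dr
\end{equation*}
produces $\mathcal M^{\disk}_{0,2}(W+2)\otimes\sle_\kappa(W-2;0)$. On the right, integrating out $\ell$ releases the constraint on the left boundary length of the first disk, giving $\mathcal M^{\disk}_{0,2}(W;\cdot,\ell_1)$, and integrating out $r$ releases the constraint on the right boundary length of the second disk, giving $\mathcal M^{\disk}_{0,2}(2;\ell_1,\cdot)$. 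Relabeling $\ell_1\to \ell$ yields precisely the claimed identity.

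Since the argument merely specializes an already-stated result and then disintegrates over boundary lengths, there is no substantive obstacle. The only detail worth highlighting is the correct matching of force-point conventions in the identification $\mathcal P^{\disk}(W,2)=\sle_\kappa(W-2;0)$, which is immediate from the base case of the inductive definition (the marginal force-point weight $W_n-2=0$ at the root of $\eta_1$).
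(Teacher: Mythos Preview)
Your proposal is correct and matches the paper's approach exactly: the paper simply states that Theorem~\ref{Conformal welding of two disks} is a direct consequence of Theorem~\ref{Disk welding theorem} (the $n$-disk welding result from \cite{Weldisk}), and your argument spells out the specialization to $n=2$ with $(W_1,W_2)=(W,2)$ together with the integration over the outer boundary lengths. The identification $\mathcal P^{\disk}(W,2)=\sle_\kappa(W-2;0)$ from the base case of the inductive definition is exactly the right ingredient.
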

For $W>0$, let $\beta_{W+2} = \gamma-\frac{W}{\gamma}$. Let $(\phi,\mathbf x)$ be sampled from $\LF_{\mathbb H}^{(\gamma,i),(\beta_{W+2},\infty),(\beta_{W+2},\mathbf x)}\times dx$ and let $\eta$ be sampled from the chordal $\sle^{\mathbb H}_{\kappa,(\mathbf{x};\mathbf{x}^-)\to \infty}(W-2)$. Denote $\nu_{\phi}(a,b)$ the quantum boundary length of $(a,b)$ with respect to the random field $\phi$. Fix $\delta\in (0,\frac{1}{2})$ and let $M_{\delta}$ denote the law of $(\phi,\mathbf x, \eta)$ restricted to the event that $\nu_{\phi}(\mathbf x,\infty)\in (\delta, 2\delta)$, $\nu_{\phi}(\mathbb R)\in (1,2)$ and $i$ is to the right of $\eta$. Let $M_\delta^{\#} = \frac{1}{|M_\delta|}M_\delta$ be the corresponding probability measure.
\begin{figure}[H]
	\includegraphics[scale = 0.8]{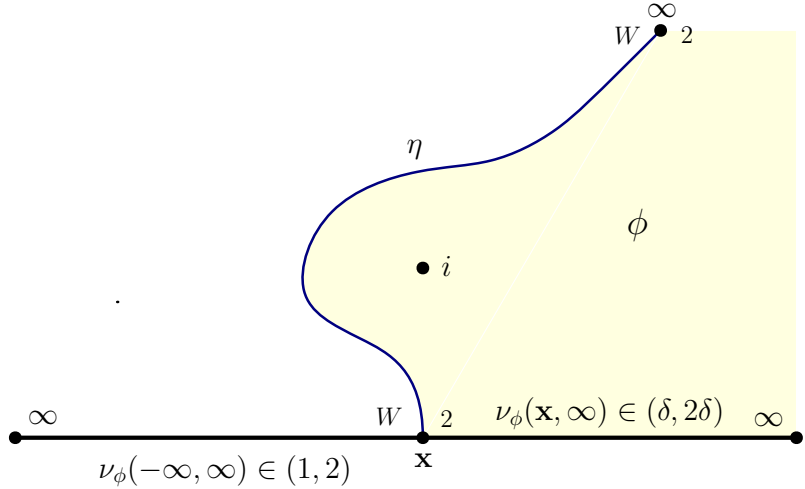}
	\centering
	\caption{Illustration of $M_\delta$: first sample $(\phi,\mathbf x)$ from $\LF_{\mathbb H}^{(\gamma,i),(\beta_{W+2},\infty),(\beta_{W+2},\mathbf x)}\times dx$ and then sample $\eta$ according to $\sle^{\mathbb H}_{\kappa,(\mathbf{x};\mathbf{x}^-)\to \infty}(W-2)$. The $M_\delta$ is the restriction $(\phi,\mathbf x,\eta)$ to the event that $\nu_{\phi}(\mathbf x,\infty)\in (\delta, 2\delta)$, $\nu_{\phi}(-\infty,\infty)\in (1,2)$ and $i$ is to the right of $\eta$.}
	\label{fig:M_delta}
\end{figure}
\begin{Lemma}\label{Lem:02+12}
	Fix $W>0$. There exists some constant $C\in (0,\infty)$ such that for each $\delta\in (0,\frac{1}{2})$, if $(\phi,\mathbf x, \eta)$ is sampled from $M_{\delta}$, then the law of marked quantum surface $(\mathbb H, \phi,\eta, i,\mathbf x, \infty)$ is 
	\begin{equation}\label{welding equation}
		C\cdot\int_{\delta}^{2\delta}\int_{1-\ell'}^{2-\ell'}\int_{0}^{\infty} \mathcal M_{0,2}^{\disk}(W; \ell,\ell_1)\times \mathcal M^{\disk}_{1,2}(2,\ell_1,\ell')d\ell_1 d\ell d\ell'.
	\end{equation}
\end{Lemma}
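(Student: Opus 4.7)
The strategy is to rewrite the Liouville field description in $M_\delta$ as a sample from a pair of conformally welded quantum disks, using Lemma \ref{LCFTthree} for the field law and Theorem \ref{Conformal welding of two disks} for the welding itself, and then to disintegrate the typical bulk insertion across the welding interface.

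Applying Lemma \ref{LCFTthree} with $W$ replaced by $W+2$, so that $\beta_{W+2}=\gamma-\frac{W}{\gamma}$, the marginal law of $(\mathbb H,\phi,i,\mathbf x,\infty)$ under $\LF_{\mathbb H}^{(\gamma,i),(\beta_{W+2},\infty),(\beta_{W+2},\mathbf x)}(d\phi)\times dx$ equals, up to a positive multiplicative constant, the marked quantum surface law $\mathcal{M}^{\disk}_{1,2}(W+2)$. By the definition of the latter, this is the law of $(\phi,\mathbf x,\infty)$ sampled from $A\cdot\mathcal{M}^{\disk}_{0,2}(W+2)$ with $A=\mu_\phi(\mathbb H)$, together with an independent bulk point $i\sim\mu_\phi^{\#}$. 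Now sampling $\eta$ independently from $\sle^{\mathbb H}_{\kappa,(\mathbf x;\mathbf x^-)\to\infty}(W-2)$ and applying Theorem \ref{Conformal welding of two disks} with $W_1=W$, $W_2=2$, the joint law of $(\phi,\eta,\mathbf x,\infty)$, before introducing $i$, equals a positive constant times $\int_0^\infty \mathcal{M}^{\disk}_{0,2}(W;\cdot,\ell_1)\times\mathcal{M}^{\disk}_{0,2}(2;\ell_1,\cdot)\,d\ell_1$.

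Under this welding, writing $L,R$ for the left and right connected components of $\mathbb H\setminus\eta$ and $A_L,A_R$ for their quantum areas, one has $A=A_L+A_R$ and $\mu_\phi^{\#}=\frac{A_L}{A}\mu_L^{\#}+\frac{A_R}{A}\mu_R^{\#}$. Restricting to the event $\{i\in R\}$ replaces the weighting $A\,\mu_\phi^{\#}$ by $A_R\,\mu_R^{\#}$, which, combined with the right-hand factor $\mathcal{M}^{\disk}_{0,2}(2;\ell_1,\cdot)$ in the welding integral, produces exactly $\mathcal{M}^{\disk}_{1,2}(2;\ell_1,\cdot)$ by the definition of the typical bulk insertion on a weight-$2$ disk. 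Therefore the joint law of $(\phi,\eta,i,\mathbf x,\infty)$ restricted to $\{i\in R\}$ equals
\begin{equation*}
C_1\int_0^\infty\!\int_0^\infty\!\int_0^\infty \mathcal{M}^{\disk}_{0,2}(W;\ell,\ell_1)\times\mathcal{M}^{\disk}_{1,2}(2;\ell_1,\ell')\,d\ell_1\,d\ell\,d\ell'
\end{equation*}
with $\ell=\nu_\phi(-\infty,\mathbf x)$ and $\ell'=\nu_\phi(\mathbf x,\infty)$. The remaining conditions defining $M_\delta$, namely $\ell'\in(\delta,2\delta)$ and $\ell+\ell'\in(1,2)$, then cut out precisely the integration region in the claimed formula.

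The main subtle step is the identification of the restriction of the typical bulk insertion to the right component, at a fixed welding interface length $\ell_1$, with a typical bulk insertion on $\mathcal{M}^{\disk}_{0,2}(2;\ell_1,\cdot)$. This is morally immediate from the additivity of the quantum area measure under welding, but must be checked carefully because Theorem \ref{Conformal welding of two disks} gives the welding identity only at the level of two-pointed disks, so one has to commute the sampling $i\sim\mu_\phi^{\#}$ with the $d\ell_1$-disintegration.
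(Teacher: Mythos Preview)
Your proof is correct and follows essentially the same route as the paper's: identify the Liouville field law with $\mathcal M_{1,2}^{\disk}(W+2)$ via Lemma~\ref{LCFTthree}, apply the two-disk welding Theorem~\ref{Conformal welding of two disks}, push the typical bulk point into the right component, and restrict to the boundary-length events. The paper compresses the area-additivity step (your paragraph about $A\mu_\phi^\#\mathbf 1_{\{i\in R\}}=A_R\mu_R^\#$ turning $\mathcal M_{0,2}^{\disk}(2;\ell_1,\cdot)$ into $\mathcal M_{1,2}^{\disk}(2;\ell_1,\cdot)$) into a single appeal to Theorem~\ref{Conformal welding of two disks}, so your version is actually more explicit on the only nontrivial point.
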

\begin{proof}
	By Lemma $\ref{LCFTthree}$, if we sample $(\phi,\mathbf x)$ from $\LF_{\mathbb H}^{(\gamma,i),(\beta_{W+2},\infty),(\beta_{W+2},\mathbf x)}\times dx$, then $(\mathbb H, \phi, i, \mathbf x, \infty)$ viewed as a marked quantum surface has the law of $C\cdot\mathcal M_{1,2}^{\disk}(W+2)$ for some constant $C\in (0,\infty)$. Furthermore, if we sample $\eta$ from $\sle^{\mathbb H}_{\kappa,(\mathbf{x};\mathbf{x}^-)\to \infty}(W-2)$ conditional on $i$ is to the right of $\eta$, then by Theorem $\ref{Conformal welding of two disks}$, the quantum surface $(\mathbb H, \phi, \eta, i,\mathbf x, \infty)$ has the law of 
\begin{equation}
	C\cdot\int_{0}^{\infty}\int_{0}^{\infty}\int_{0}^{\infty}\mathcal M_{0,2}^{\disk}(W; \ell, \ell_1)\times \mathcal M_{1,2}^{\disk}(2;\ell_1,\ell')d\ell d\ell_1 d\ell'.
\end{equation}
Conditioning on $\nu_{\phi}(\mathbf x,\infty)\in (\delta,2\delta)$ and $\nu_{\phi}(\mathbb R)\in (1,2)$ gives the desired result. 
\end{proof}
\subsection{Proof of Proposition \ref{Curve Law Prop} via coupling}
Fix $W>0$. Sample a pair of quantum surfaces $(\mathcal{D}_1,\mathcal{D}_2)$ from
\begin{equation}
    \int_1^2 \int_0^\infty \mathcal M^{\disk}_{0,2}(W;a,p)\times \qd_{1,1}(p)dpda
\end{equation}
and let $\mathcal D_1\oplus\mathcal D_2$ be the curve-decorated quantum surface obtained by conformally welding the right boundary of $\mathcal D_1$ and total boundary of $\mathcal D_2$. Notice that $\mathcal D_1\oplus\mathcal D_2$ has a interior marked point and a boundary marked point. Let $(\mathbb D, \phi_{\mathbb D},\eta_{\mathbb D},0,i)$ be the unique embedding of $\mathcal D_1\oplus \mathcal D_2$ on $(\mathbb D, 0, i)$ and let $f:\mathbb H\to \mathbb D$ be the conformal map with $f(i)=0$ and $f(\infty)=i$. Denote $M_{\mathbb D}$ the joint law of $(\mathbb D, \phi_{\mathbb D},\eta_{\mathbb D},0,i)$ and let $M_{\mathbb D}^{\#}=\frac{1}{|M_{\mathbb D}|}M_{\mathbb D}$ be the probability measure obtained from $M_{\mathbb D}$.
\begin{figure}[H]
	\includegraphics[scale = 0.8]{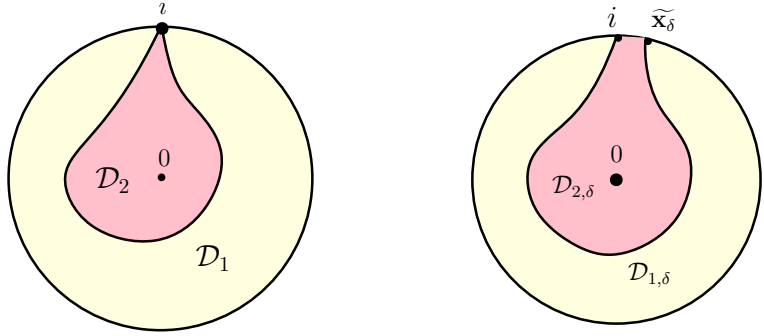}
	\centering
	\caption{\textbf{Left:} $(\phi_{\mathbb D},\eta_{\mathbb D})$ from $M_{\mathbb D}^{\#}$ is obtained by embedding $\mathcal D_1\oplus\mathcal D_2$ into $(\mathbb D,0,i)$. \textbf{Right:} $(\phi^\delta,\eta^\delta)$ from $M_\delta^{\#}$ is obtained by embedding $\mathcal D_{1,\delta}\oplus \mathcal D_{2,\delta}$ into $(\mathbb D,0,i)$.}
	\label{fig:Two_disks}
\end{figure}
Next, we recall the definition of $M_\delta^{\#}$. For $0<\gamma<2$ and $W>0$, let $\beta_{W+2}=\gamma-\frac{W}{\gamma}$. Sample $(\phi,\mathbf x)$ from $\LF_{\mathbb H}^{(\gamma,i),(\beta_{W+2},\infty),(\beta_{W+2} ,\mathbf x)}\times dx$ and let $\eta$ be sampled from $\sle^{\mathbb H}_{\kappa,(\mathbf{x};\mathbf{x}^-)\to \infty}(W-2)$. Fix $\delta\in (0,\frac{1}{2})$ and let $M_{\delta}$ be the law of $(\phi,\mathbf x, \eta)$ restricted to the event that $\nu_{\phi}(\mathbf x,\infty)\in (\delta, 2\delta)$, $\nu_{\phi}(\mathbb R)\in (1,2)$ and $i$ is to the right of $\eta$. Let $M_\delta^{\#}=\frac{1}{|M_\delta|}M_\delta$ be the corresponding probability measure.
\par
Sample $(\phi,\mathbf x,\eta)$ from $M_\delta^{\#}$ and let $\mathcal D_{1,\delta}$ and $\mathcal D_{2,\delta}$ be the two components such that $(\mathbb H, \phi, \eta, i,\mathbf x)$ is the embedding of the surface $\mathcal D_{1\,\delta}\oplus \mathcal D_{2,\delta}$ after conformal welding. Let $\phi^{\delta}=\phi\circ f^{-1}+\log |(f^{-1})'|$ and $\eta^\delta=f\circ\eta$ be such that $(\mathbb D, \phi^\delta,\eta^\delta, 0,i)$ is the embedding of $\mathcal D_{1,\delta}\oplus \mathcal D_{2,\delta}$. Here $\eta^{\delta}$ is the welding interface between $\mathcal D_{1,\delta}$ and $\mathcal D_{2,\delta}$. Let $\widetilde {\mathbf x_\delta}=f(\mathbf x)$ be the image of $\mathbf x$ under $f$.
\begin{Lemma}\label{Key coupling lemma}
	There exists a coupling between $M_{\mathbb D}^{\#}$ and $M_\delta^{\#}$ such that the followings hold: There exist random simply connected domains $U_\delta$ and $\widetilde{U_{\delta}}\subset \mathbb D$ and a conformal map $g_{\delta}:\widetilde{U_\delta}\to U_{\delta}$ satisfying the following properties: With probability $1-o_{\delta}(1)$, we have
	\begin{enumerate}
	    \item $\phi_{\mathbb D}(z)=	\phi^{\delta}\circ g_{\delta}(z)+Q\log |g_\delta'(z)|, \qquad\text{for $z\in\widetilde{U_\delta}$}.$
	    \item $\diam(\mathbb D\backslash U_\delta)=o_\delta(1) \ \text{and} \ \diam(\mathbb D\backslash \widetilde{U_\delta})=o_\delta(1).$
	    \item $|\widetilde{\mathbf x_\delta}-i|=o_\delta(1).$
	    \item $\sup_{z\in K}|g_\delta(z)-z|=o_\delta(1), \ \ \text{for any compact set $K\subset \mathbb D$}.$
	\end{enumerate}
\end{Lemma}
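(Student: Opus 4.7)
The plan is to construct an explicit coupling at the level of the input quantum disks (before welding) that forces the underlying Liouville fields to agree outside a region of quantum boundary length $O(\delta)$, and then to transfer this closeness through the conformal welding to the embedded pictures on $\mathbb D$, using Corollary \ref{conditional convergence lemma} and the continuity of the welding operation.

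\textbf{Coupling the two disks.} First I would couple the $\mathcal D_2$-components. Start with $\mathcal D_2\sim\qd_{1,1}$, embedded so that its bulk and boundary marked points are in place. Using Definition \ref{qd with arbitrary marked points} to interpret $\qd_{1,2}$ as $\qd_{1,1}$ plus one additional boundary marked point sampled from the quantum length measure, I would pick a second boundary marked point $\omega_2$ by traversing the boundary a signed quantum distance $\ell'\in(\delta,2\delta)$ clockwise from the first, and reweight by the appropriate Radon-Nikodym factor so that the resulting triple matches the disintegration $\mathcal M^{\disk}_{1,2}(2;\cdot,\ell')\,d\ell'$ that appears in Lemma \ref{Lem:02+12}. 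This gives $\mathcal D_{2,\delta}$ on the \emph{same} underlying field as $\mathcal D_2$, with the sole difference being one extra boundary point lying within quantum length $2\delta$ of the original. For the $\mathcal D_1$-side I would simply couple $\mathcal D_1$ and $\mathcal D_{1,\delta}$ to have the same field; the only discrepancy is an $O(\delta)$ mismatch in the prescribed right boundary length, which can be absorbed by restricting to an event of probability $1-o_\delta(1)$ once the conditioning on the total boundary length lying in $(1,2)$ is imposed.

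\textbf{Passing through conformal welding.} After welding, the two welded curve-decorated surfaces $\mathcal D_1\oplus\mathcal D_2$ and $\mathcal D_{1,\delta}\oplus\mathcal D_{2,\delta}$ share a common underlying field outside a subdomain that meets the boundary in an arc of quantum length $O(\delta)$. The welding interface in the first is a bubble rooted at the image of $\omega_1$, while that in the second is a chord from the image of $\omega_1$ to the image of $\omega_2$; by Corollary \ref{conditional convergence lemma}, the chord converges weakly to the bubble as $\delta\to 0^+$ (conditional on surrounding $i$), so by Skorokhod representation (or a direct coupling on a common probability space) we may assume the curves agree on any compact subset of the complement of a small neighborhood of $\omega_1$ with probability $1-o_\delta(1)$.

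\textbf{Producing $g_\delta$ and checking the four items.} Embedding both welded surfaces into $(\mathbb D,0,i)$, with the bulk point sent to $0$ and the distinguished boundary point (the root of the bubble, respectively $\infty$ on the $\mathbb H$-side) sent to $i\in\partial\mathbb D$, the quantum surface isomorphism between the two inputs on the shared large subregion induces a conformal map $g_\delta:\widetilde U_\delta\to U_\delta$ between simply connected subdomains of $\mathbb D$ satisfying the change-of-coordinates identity in (1) by the very definition of the equivalence relation $\sim_\gamma$. The excluded region is the image under the two embeddings of the $O(\delta)$-quantum-length bad arc; Hölder regularity of the quantum boundary length measure with respect to the Euclidean metric then gives that this region has Euclidean diameter $o_\delta(1)$ with high probability, yielding (2) and (3). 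Finally, $g_\delta$ is a conformal self-map between nearly-full subdomains of $\mathbb D$ whose boundaries differ by $o_\delta(1)$ and that fixes the bulk point $0$ and sends a boundary point $o_\delta(1)$-close to $i$ to $i$ itself; Carathéodory kernel convergence together with this three-point normalization forces $g_\delta\to\mathrm{id}_{\mathbb D}$ uniformly on compact subsets, giving (4).

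\textbf{Main obstacle.} The conceptual steps are routine, but the real work is the quantitative translation ``quantum boundary length $O(\delta)\Rightarrow$ Euclidean diameter $o_\delta(1)$'' in the presence of both a bulk $\gamma$-insertion at $i$ and a boundary $\beta_{W+2}$-insertion at the root: the modulus of continuity of $\nu_\phi$ deteriorates near these insertion points, and one must argue that with high probability the bad arc avoids the insertion at $i$ and stays macroscopically far from the opposite side of the boundary. Once this regularity estimate is in place, the continuity of the welding operation (as used in \cite{Weldisk}) and Corollary \ref{conditional convergence lemma} complete the coupling.
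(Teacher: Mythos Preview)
Your outline has the right shape but two genuine gaps.

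First, the step ``simply couple $\mathcal D_1$ and $\mathcal D_{1,\delta}$ to have the same field'' cannot work as stated. If you force $\mathcal D_2$ and $\mathcal D_{2,\delta}$ to share a field (total boundary length $\widetilde P$) and place the extra marked point at quantum distance $\ell'\in(\delta,2\delta)$, then the interface side of $\mathcal D_{2,\delta}$ has length $\widetilde P-\ell'$, so $\mathcal D_{1,\delta}$ must have right boundary length $\widetilde P-\ell'$ while $\mathcal D_1$ has right boundary length $\widetilde P$. The right boundary length is a deterministic functional of the field, so these two disks cannot literally share a field; you need a nontrivial coupling of $\mathcal M^{\disk}_{0,2}(W;\cdot,\widetilde P)^{\#}$ with $\mathcal M^{\disk}_{0,2}(W;\cdot,\widetilde P-\ell')^{\#}$. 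This is exactly what the paper supplies in Lemmas \ref{Coupling of $1,1$} and \ref{Coupling of $0,2$}: after excising an $\varepsilon$-neighborhood of one marked point, the truncated disks $\mathcal D^\varepsilon$ converge in total variation as the boundary lengths converge. The paper first couples the boundary-length vectors via Lemma \ref{quantum length converges}, then couples the truncated disks $(\mathcal D_1^\varepsilon,\mathcal D_2^\varepsilon)=(\mathcal D_{1,\delta}^\varepsilon,\mathcal D_{2,\delta}^\varepsilon)$ with probability $1-o_\delta(1)$, for $\varepsilon=o_\delta(1)$ decaying slowly.

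Second, invoking Corollary \ref{conditional convergence lemma} inside this proof is backward. That corollary is a statement about weak convergence of chordal $\sle_\kappa(\rho)$ as the \emph{Euclidean} separation of the endpoints tends to zero; here you only know the \emph{quantum} separation is $O(\delta)$, and converting this to Euclidean smallness is precisely the obstacle you flag and do not resolve. The paper does not need curve convergence to prove the coupling lemma at all: once the truncated quantum surfaces agree, conformal welding produces the map $g_\delta$ directly, and property (3) follows because $\widetilde{\mathbf x_\delta}\in\mathbb D\setminus U_\delta$. Corollary \ref{conditional convergence lemma} is only invoked afterward, in the proof of Proposition \ref{Curve Law Prop}, once (3) has already supplied the Euclidean convergence $\widetilde{\mathbf x_\delta}\to i$.

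Finally, the paper sidesteps your ``main obstacle'' entirely. Rather than proving a H\"older-type estimate converting small quantum length to small Euclidean diameter near the insertions, it argues via harmonic measure: the set $\mathbb D\setminus\widetilde U_\delta$ is determined by the fixed $M_{\mathbb D}^{\#}$-sample and shrinks to $\{i\}$ almost surely as $\varepsilon\to 0$, so its harmonic measure from $0$ is $o_\delta(1)$; conformal invariance of harmonic measure under $g_\delta$ then forces the same for $\mathbb D\setminus U_\delta$, giving (2) without any quantitative GMC regularity.
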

In order to prove Lemma $\ref{Key coupling lemma}$, we need the following two basic coupling results on the quantum disk. The first one is on $\qd_{1,1}$. Suppose $\mathcal D $ as a quantum surface has the law of $ \qd_{1,1}$ and it has emebdding $(\mathbb H,\phi, i,-1)$. Let $\mathcal D^\varepsilon:=\left(\mathbb H_\varepsilon, \phi,i,-1,-1-2\varepsilon\right)$, where $\mathbb H_\varepsilon=\mathbb H\backslash B_\varepsilon(-1-\varepsilon)$ with $B_\varepsilon(-1-\varepsilon)=\left\lbrace z\in \mathbb C: |z+1+\varepsilon|\leq \varepsilon \right\rbrace$. 
\begin{Lemma}[{\cite[Lemma $5.17$]{FZZ}}]\label{Coupling of $1,1$}
	For $\varepsilon>0$ and $\ell>0$, suppose $\mathcal D$ and $\widetilde{\mathcal D}$ are sampled from $\qd_{1,1}(\ell)^{\#}$ and $\qd_{1,1}(\widetilde \ell)^{\#}$ respectively, then the law of $\widetilde{\mathcal D^\varepsilon}$ converges in total variation distance to $\mathcal D^\varepsilon$ as $\widetilde \ell\to \ell$. 
\end{Lemma}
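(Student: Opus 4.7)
The plan is to establish total variation convergence via a Bayes' rule / conditional-density argument. Since the three marked points $i, -1, -1-2\varepsilon$ of $\mathcal D^\varepsilon$ fix the $3$-dimensional conformal automorphism group of the simply connected domain $\mathbb H_\varepsilon$, the equivalence class $\mathcal D^\varepsilon = (\mathbb H_\varepsilon, \phi, i, -1, -1-2\varepsilon)/\sim_\gamma$ is in bijection with the restricted distribution $X := \phi|_{\mathbb H_\varepsilon}$. It therefore suffices to show that the law of $X$ under $\qd_{1,1}(\widetilde\ell)^{\#}$ converges in TV to its law under $\qd_{1,1}(\ell)^{\#}$ as $\widetilde\ell \to \ell$.

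Decompose the total boundary length as $L = L_{\mathrm{out}}(X) + L_{\mathrm{in}}$, where $L_{\mathrm{out}}(X) := \nu_\phi(\mathbb R \setminus (-1-2\varepsilon, -1))$ depends only on $X$, and $L_{\mathrm{in}} := \nu_\phi((-1-2\varepsilon, -1))$ depends additionally on the field inside the semi-disc $B_\varepsilon(-1-\varepsilon) \cap \mathbb H$. Using the LCFT description $\qd_{1,1} \propto \LF_{\mathbb H}^{(\gamma,i),(\gamma,-1)}$ together with the Markov property of the free-boundary GFF, conditionally on $X$ the inner field splits as a deterministic part (the harmonic extension of $X$ through the semi-circle plus the fixed $\gamma$-log insertion at $-1$) plus an independent mixed Dirichlet-Neumann GFF $\phi^0_{\mathrm{in}}$ (Dirichlet on the semi-circle, Neumann on the diameter). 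Applying Bayes' rule to the joint law of $(X, L_{\mathrm{in}})$ under $\qd_{1,1}$, the law of $X$ under $\qd_{1,1}(\ell)^{\#}$ has density $Z(\ell)^{-1}\,p(\ell - L_{\mathrm{out}}(X) \mid X)$ with respect to the $X$-marginal of $\qd_{1,1}$, where $p(\cdot \mid X)$ is the conditional density of $L_{\mathrm{in}}$ given $X$ and $Z(\ell) := \int p(\ell - L_{\mathrm{out}}(X) \mid X)\, \qd_{1,1}(dX)$.

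The key step is to show that $s \mapsto p(s \mid X)$ is continuous on $(0, \infty)$ for $\qd_{1,1}$-a.e.\ $X$. This follows from a Cameron-Martin shift argument: perturbing $\phi^0_{\mathrm{in}}$ by $t\chi$, for a smooth function $\chi$ supported in the semi-disc and approximately constant near the diameter, multiplies $L_{\mathrm{in}}$ by a factor strictly monotone and smooth in $t$, while Girsanov equips $t$ with a Gaussian density; integrating out the transverse directions realizes the conditional law of $L_{\mathrm{in}}$ as the pushforward of a Gaussian under a smooth monotone map, yielding a continuous density. Granting this (and continuity of $Z(\cdot)$, which is a routine consequence), both
\begin{equation*}
p(\widetilde\ell - L_{\mathrm{out}}(X) \mid X)/Z(\widetilde\ell) \quad\text{and}\quad p(\ell - L_{\mathrm{out}}(X) \mid X)/Z(\ell)
\end{equation*}
are probability densities with respect to the $X$-marginal of $\qd_{1,1}$, and they agree pointwise in the limit $\widetilde\ell \to \ell$. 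Scheffé's lemma then yields TV convergence of the laws of $X$, and hence of $\mathcal D^\varepsilon$.

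The main obstacle is establishing continuity of $p(\cdot \mid X)$ in the presence of the $\gamma$-log singularity at the cut endpoint $-1$. Since $\gamma^2/4 < 1$ the GMC integral $L_{\mathrm{in}}$ converges and admits positive and small negative moments, but the Cameron-Martin shift argument must be propagated uniformly in $X$ even as the harmonic extension and the log insertion both blow up at $-1$. A convenient device is to pass to half-strip coordinates via $z \mapsto -\log(z+1)$, under which the log insertion becomes a linear drift and the inner field becomes a locally stationary log-correlated Gaussian process, so that standard GMC density results on half-strips apply directly.
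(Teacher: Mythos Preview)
The paper does not supply its own proof of this lemma; it is quoted verbatim from \cite[Lemma~5.17]{FZZ}. So there is no in-paper argument to compare against, and your task is really to reconstruct the FZZ argument.

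Your outline is the right one and matches the strategy used in the FZZ paper: fix the embedding so that the marked quantum surface $\mathcal D^\varepsilon$ is identified with the restricted field $X=\phi|_{\mathbb H_\varepsilon}$, use the domain Markov property of the free-boundary GFF to split off the field on the removed semi-disc, and then show that conditioning on the total boundary length $\ell$ produces a law on $X$ with a density that varies continuously in $\ell$, whence Scheff\'e gives TV convergence. The only point that demands genuine care---and you correctly flag it---is the continuity of the conditional density $p(\cdot\mid X)$ of $L_{\mathrm{in}}$, given that the $\gamma$-log insertion at $-1$ sits exactly at an endpoint of the excised diameter. Passing to half-strip (logarithmic) coordinates so that the insertion becomes an affine drift is indeed the clean way to handle this, and is how the FZZ paper organizes the corresponding estimate. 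One small clarification: your remark that the three marked points ``fix the $3$-dimensional automorphism group'' is slightly over-counted (one interior plus one boundary point already suffices), but all you actually need is that no nontrivial automorphism of $\mathbb H_\varepsilon$ fixes $i,-1,-1-2\varepsilon$ simultaneously, which is immediate.
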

The second coupling result is on $\mathcal M^{\disk}_{0,2}(W)$. Suppose $\mathcal D$ is sampled from $\mathcal M^{\disk}_{0,2}(W)$ and it has embedding $(\mathbb D, \phi,-i,i )$. With a slight abuse of notation, let $\mathcal D^\varepsilon:=(\mathbb D_\varepsilon,\phi, \alpha_\varepsilon, \alpha'_\varepsilon,i,-i)$, where $B_\varepsilon(i)=\left\lbrace z\in \mathbb C: |z-i|\leq \varepsilon \right\rbrace$, $\mathbb D_\varepsilon =\mathbb D\backslash B_\varepsilon(i)$, and $\lbrace \alpha_\varepsilon, \alpha'_\varepsilon\rbrace=\partial\mathbb D\cap \partial B_\varepsilon(i)$.
\begin{Lemma}\label{Coupling of $0,2$}
	Fix $W>0$. For $\varepsilon,\ell,r,\widetilde \ell,\widetilde r >0$, suppose $\mathcal D$ and $\widetilde{\mathcal D}$ are sampled from $\mathcal M^{\disk}_{0,2}(W;\ell,r)^{\#}$ and $\mathcal M^{\disk}_{0,2}(W;\widetilde \ell,\widetilde r)^{\#}$ respectively, then $\widetilde{\mathcal D^\varepsilon}$ converges in total variation distance to $\mathcal D$ as $(\widetilde\ell,\widetilde r)\to (\ell,r)$.
\end{Lemma}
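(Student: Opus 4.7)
The plan is to mirror the strategy used for the analogous statement on $\qd_{1,1}$, namely \cite[Lemma 5.17]{FZZ}, which proceeds by establishing near-insertion absolute continuity of the field under a mild perturbation of the boundary length parameters. The key input is that, away from the marked points $\pm i$, the field describing $\mathcal M^{\disk}_{0,2}(W;\ell,r)^{\#}$ is absolutely continuous with respect to a reference (free boundary) GFF on $\mathbb D_\varepsilon$, with Radon--Nikodym derivative continuous in $(\ell,r)$. Once such a continuous density is in place, the total variation convergence as $(\widetilde{\ell},\widetilde{r})\to(\ell,r)$ follows from Scheff\'e's lemma.

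First, I would treat the thick regime $W\geq\gamma^2/2$. By Definition \ref{Def: thick quantum disk}, a sample $(\mathcal S,\phi,\pm\infty)$ from $\mathcal M^{\disk}_{0,2}(W)$ decomposes as $\phi=h^1+h^2+\mathbf c$, where $h^2$ is the lateral component of the free boundary GFF on $\mathcal S$, $h^1$ comes from a two-sided drifted Brownian motion conditioned to stay negative, and $\mathbf c$ is an independent shift. Pushing this via a fixed conformal map $\psi:\mathcal S\to\mathbb D$ sending $\pm\infty$ to $\pm i$ and using $\bullet_\gamma$, one obtains an embedded field $\phi_{\mathbb D}$ whose restriction to any compact subset of $\mathbb D\setminus\{-i,i\}$, and in particular to $\mathbb D_\varepsilon$, differs from a lateral free boundary GFF only by a smooth deterministic term and the constant $\mathbf c$. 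The left and right boundary lengths $(\nu_\phi(\mathrm{left}),\nu_\phi(\mathrm{right}))=(\ell,r)$ decompose as a sum of a contribution from a neighborhood of $i$ (which involves the field near $i$) and a contribution from the remainder; conditioning on $(\ell,r)$ therefore amounts to imposing a linear constraint on $\mathbf c$ and conditioning a Bessel-type functional of $h^1$. A Girsanov computation (made explicit on the strip and transported via $\psi$) produces an explicit density for the law of $\mathcal D^\varepsilon$ with respect to a reference law on $\mathbb D_\varepsilon$ that is jointly continuous in $(\ell,r)$.

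Next, for the thin regime $0<W<\gamma^2/2$, Definition \ref{Def: thin quantum disk} describes $\mathcal M^{\disk}_{0,2}(W)$ as a concatenation of countably many thick beads of weight $\gamma^2-W$ indexed by a Poisson point process. Under the embedding $(\mathbb D,\phi,-i,i)$, the marked point $i$ almost surely lies on the boundary of a unique bead, call it $\mathcal D^{\star}$, and for $\varepsilon$ small enough the ball $B_\varepsilon(i)$ intersects only this bead with probability $1-o_\varepsilon(1)$. Conditioning on the backbone decomposition and the individual bead lengths, and applying the thick case to $\mathcal D^{\star}$, reduces the statement to the thick setting: the only additional input is the elementary fact that the total left/right lengths of the beaded surface depend continuously (indeed linearly) on the individual bead lengths through a locally finite sum.

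The main obstacle, exactly as in \cite[Lemma 5.17]{FZZ}, is the careful identification of a regular conditional density for the field restricted to $\mathbb D_\varepsilon$ given the pair of boundary lengths $(\ell,r)$, since the quantum boundary length measure is a nonlinear functional of the field. This is resolved by combining (i) the Girsanov/shift structure in the $\mathcal S$-coordinate, which turns conditioning on the total boundary length into conditioning on the endpoint of a drifted Brownian motion and produces an explicit and continuous family of densities, with (ii) locality of the lateral GFF, which guarantees that the field on a neighborhood of $i$ is independent up to a smooth correction of the far-field contributions to $(\ell,r)$. Together these yield the continuous-in-$(\ell,r)$ density sought, and the proof is concluded by Scheff\'e.
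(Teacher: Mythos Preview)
The paper's own proof is a one-line citation: it states that the result ``follows directly from \cite[Proposition 2.23]{Weldisk}'', which already contains the continuity-in-boundary-lengths statement needed here. Your proposal instead attempts to rederive this from scratch by mimicking the argument of \cite[Lemma 5.17]{FZZ}. For the thick regime $W \ge \gamma^2/2$ your sketch is plausible, and indeed this is roughly how such a statement is established in that regime, though the details (disintegrating over two boundary lengths simultaneously rather than one) are more delicate than your outline indicates.

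Your treatment of the thin regime $0 < W < \gamma^2/2$, however, contains a genuine error. You claim that under the embedding $(\mathbb D, \phi, -i, i)$ the marked point $i$ lies on the boundary of a unique bead $\mathcal D^\star$, and that $B_\varepsilon(i)$ intersects only this bead with high probability. This is false: in Definition~\ref{Def: thin quantum disk} the beads arise from a Poisson point process with intensity $\leb_{[0,T]} \times \mathcal M^{\disk}_{0,2}(\gamma^2-W)$, an infinite measure, so the bead indices are almost surely dense in $[0,T]$ and the endpoints $\pm i$ (corresponding to $u=0$ and $u=T$) are accumulation points of infinitely many beads rather than boundary points of any single bead. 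Any ball $B_\varepsilon(i)$ therefore contains infinitely many small beads almost surely, and your reduction to the thick case fails as written. A correct treatment of the thin regime must work with the Poisson/subordinator description of the total left and right lengths directly, or simply invoke \cite[Proposition 2.23]{Weldisk} as the paper does.
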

\begin{proof}
    The proof follows directly from \cite[Proposition $2.23$]{Weldisk}.
\end{proof}
\begin{Lemma}\label{quantum length converges}
   Suppose $(\phi,\mathbf x,\eta)$ is sampled from $M_\delta^{\#}$ and let $A=\nu_{\phi}(-\infty, \mathbf x)$, $B=\nu_{\phi}(\mathbf x,\infty)$ and $P=\nu_{\phi}(\eta)$, then as $\delta\to 0$, $B$ converges to $0$ in probability and the $M_\delta^{\#}$-law of $(A,P)$ converges in total variation distance to a probability measure on $(1,2)\times (0,\infty)$ whose density function is proportional to 
   \begin{equation}
   	f_{W}(a,p)p^{-\frac{4}{\gamma^2}+1}dadp,
   \end{equation}
  where $f_{W}(a,p)=|\mathcal M^{\disk}_{0,2}(W;a,p)|$. 
\end{Lemma}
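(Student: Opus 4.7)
The plan is to apply Lemma \ref{Lem:02+12} to identify $M_\delta$ explicitly as a joint law of quantum disk lengths, and then extract the asymptotics as $b \to 0$. By Lemma \ref{Lem:02+12}, the $M_\delta$-law of $(\mathbb H, \phi, \eta, i, \mathbf x, \infty)$ is a triple integral of quantum disk measures, so that the joint $M_\delta$-density of $(A, B, P) = (\nu_\phi(-\infty, \mathbf x), \nu_\phi(\mathbf x, \infty), \nu_\phi(\eta))$ is proportional to
$$ f_W(a, p)\cdot h(p, b)\cdot \mathbbm{1}_{b \in (\delta, 2\delta)}\cdot \mathbbm{1}_{a \in (1-b,\, 2-b)},$$
where $f_W(a, p) := |\mathcal M^{\disk}_{0,2}(W; a, p)|$ and $h(p, b) := |\mathcal M^{\disk}_{1,2}(2; p, b)|$. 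Since $B \in (\delta, 2\delta)$ by construction, the convergence $B \to 0$ in probability is immediate.

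The heart of the argument is to establish the pointwise asymptotic $h(p, b) = C\, p^{-\frac{4}{\gamma^2}+1} (1+o_b(1))$ as $b \to 0$, uniformly for $p$ in compact subsets of $(0,\infty)$. To do so I would unpack the definition $\mathcal M^{\disk}_{1,2}(2) = \mathcal M^{\disk}_{0,2}(2) \otimes \mu_\phi$ to factor $h(p, b) = \bar A(p, b)\cdot |\mathcal M^{\disk}_{0,2}(2; p, b)|$, where $\bar A(p, b)$ is the expected quantum area under $\mathcal M^{\disk}_{0,2}(2; p, b)^{\#}$. Since the two boundary marked points of $\mathcal M^{\disk}_{0,2}(2)$ are quantum typical, disintegrating the boundary length density $q(L) = C L^{-4/\gamma^2 - 2}$ of $\qd$ over the two uniform arc decompositions yields $|\mathcal M^{\disk}_{0,2}(2; \ell, r)| = C(\ell + r)\, q(\ell + r) = C(\ell + r)^{-\frac{4}{\gamma^2}-1}$. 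Combined with the LQG scaling $\phi \mapsto \phi + c/\gamma$ (which rescales length by $e^{c/2}$ and area by $e^c$) to obtain $\bar A(p, b) = (p + b)^2 \bar A(1, b/p)$ with $\bar A(1,\cdot)$ continuous and positive at $0^+$, one derives $h(p, b) = C(p+b)^{-\frac{4}{\gamma^2}+1} \bar A(1, b/p)$, which gives the desired limit.

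With this asymptotic, direct integration gives $|M_\delta| = C \delta(1 + o(1))$, and integrating the joint density against $db$ on $(\delta, 2\delta)$ shows that the $M_\delta^{\#}$-density of $(A, P)$ converges pointwise on $(1, 2) \times (0, \infty)$ to a density proportional to $f_W(a, p)\, p^{-\frac{4}{\gamma^2}+1} \mathbbm{1}_{a \in (1, 2)}$. Scheff\'e's lemma then promotes pointwise convergence of probability densities to $L^1$ convergence, which is equivalent to total variation convergence.

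The main obstacle is the uniform control needed to apply Scheff\'e's lemma, namely integrability of the limit density on $(1,2)\times(0,\infty)$ and an integrable domination of $\lbrace h(\cdot, b)\rbrace_{b \in (0,1)}$ against $f_W$. The tail for $p \to \infty$ follows from the decay of the joint boundary length density of the weight-$W$ two-pointed disk for $a$ bounded, while the behavior near $p \to 0$ requires controlling $\bar A(1, b/p)$ as $b/p \to \infty$, i.e.\ the expected quantum area of $\mathcal M^{\disk}_{0,2}(2; \ell, r)^{\#}$ when one boundary arc is much longer than the other. Both are technically delicate and would be handled by the Bessel-process/disintegration descriptions of the quantum disk boundary length laws already used elsewhere in the paper; this is the real technical burden of the lemma.
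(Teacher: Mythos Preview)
Your approach is the same as the paper's in outline: invoke Lemma~\ref{Lem:02+12} to read off the joint density of $(A,P,B)$, then let $b\to 0$. The difference is that you treat $h(p,b)=|\mathcal M^{\disk}_{1,2}(2;p,b)|$ as something whose $b\to 0$ asymptotic must be extracted, whereas the paper simply quotes the \emph{exact} formula $h(p,b)=C(p+b)^{-\frac{4}{\gamma^2}+1}$ (Proposition~5.1 together with \cite[Lemma~3.3]{InteofSLE}). In fact your own derivation already yields this exact formula once you notice one point you overlooked: because the two boundary marked points of $\mathcal M^{\disk}_{0,2}(2)$ are quantum typical, the conditional law $\mathcal M^{\disk}_{0,2}(2;p,b)^{\#}$ depends on $(p,b)$ only through $p+b$, so $\bar A(p,b)$ is a function of $p+b$ alone and by scaling equals $C(p+b)^2$. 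Hence $h(p,b)=C(p+b)^{-\frac{4}{\gamma^2}-1}\cdot C'(p+b)^2$ exactly, with no residual dependence on $b/p$.

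This simplification dissolves what you call the ``main obstacle''. There is no need to control $\bar A(1,b/p)$ as $b/p\to\infty$, and the density $m(a,p,b)=f_W(a,p)(p+b)^{-\frac{4}{\gamma^2}+1}$ on $S_\delta$ is already in closed form. From here the convergence of the $(A,P)$-marginal is immediate: $m(a,p,b)\to f_W(a,p)\,p^{-\frac{4}{\gamma^2}+1}$ pointwise and monotonically (or with the obvious domination $m(a,p,b)\le f_W(a,p)\,p^{-\frac{4}{\gamma^2}+1}$ when $-\frac{4}{\gamma^2}+1<0$, and the reverse inequality otherwise, with $b\le 1$), so Scheff\'e applies without the delicate tail analysis you anticipated. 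Your scaling identity $\bar A(p,b)=(p+b)^2\bar A(1,b/p)$ is also slightly off --- scaling gives $\bar A(\lambda p,\lambda b)=\lambda^2\bar A(p,b)$, which yields $\bar A(p,b)=(p+b)^2\bar A\bigl(\tfrac{p}{p+b},\tfrac{b}{p+b}\bigr)$, and the right-hand factor is constant by the typicality observation above.
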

\begin{proof}
   By Proposition $5.1$ and \cite[Lemma $3.3$]{InteofSLE}, we have
   \begin{equation}
   	|\mathcal M^{\disk}_{0,2}(W;\ell,r)|=f_W(\ell,r)\qquad\text{and}\qquad |\mathcal M^{\disk}_{1,2}(2;\ell,r)|=C(\ell+r)^{-\frac{4}{\gamma^2}+1}.
   \end{equation}
   By $(\ref{welding equation})$, the $M_\delta^{\#}$-law of $(A,P,B)$ is a probability measure on the space
   \begin{equation*}
   	S_\delta=\left\lbrace (a,p,b)\in (0,\infty)^{3}: b\in (\delta,2\delta),a+b\in (1,2) \right\rbrace, 
   \end{equation*}
   whose density function is proportional to 
   \begin{equation*}
   	m(a,p,b)=f_W(a,p)(p+b)^{-\frac{4}{\gamma^2}+1}.
   \end{equation*}
   Therefore, we have
   \begin{equation*}
   	|M_\delta|=\int_{S_\delta}m(a,p,b)dadpdb.
   \end{equation*}
   By definition of $M_\delta^{\#}$, for any $\varepsilon>0$, we have $\lim_{\delta\to 0}M_\delta^{\#}[B>\varepsilon]=0$. As $\delta\to 0$, the limiting $M_\delta^{\#}$-law of $(A,P)$ is a probability measure on $(1,2)\times (0,\infty)$ whose density function is proportional to $f_W(a,p)p^{-\frac{4}{\gamma^2}+1}$. This completes the proof.
\end{proof}
\begin{figure}[H]
	\includegraphics[scale = 0.8]{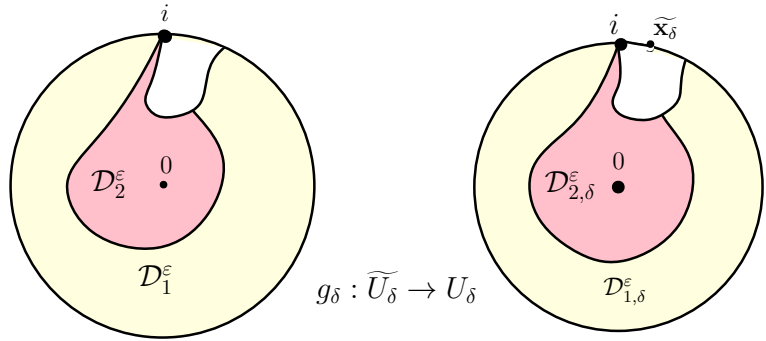}
	\centering
	\caption{We can couple $M_{\mathbb D}^{\#}$ and $M_{\delta}^{\#}$ so that the light green and pink quantum surfaces agree with high probability. The domain $\widetilde{U_\delta}$ is the interior of $\overline{\mathcal D_1^\varepsilon\cup \mathcal D_2^\varepsilon}$ in the embedding of $\mathcal D_1\oplus\mathcal D_2$ and $U_\delta$ is the interior of $\overline{\mathcal D_{1,\delta}^\varepsilon\cup \mathcal D_{2,\delta}^\varepsilon}$ in the embedding of $\mathcal D_{1,\delta}\oplus\mathcal D_{2,\delta}$.}
	\label{fig: coupling}
\end{figure}
\begin{proof}[Proof of Lemma \ref{Key coupling lemma}]
	Recall the definition of marked quantum surfaces $\mathcal D_1$ and $\mathcal D_2$ embedded as $(\mathbb D, \phi_{\mathbb D},\eta_{\mathbb D},0,i)$. Let $\widetilde A$ and $\widetilde P$ be the left and right boundary length of $\mathcal D_1$ respectively. The law of $(\widetilde A, \widetilde P)$ is the probability measure on $[1,2]\times (0,\infty)$ proportional to
	\begin{equation}
		\left|\mathcal M^{\disk}_{0,2}(W;a,p)\right||\qd_{1,1}(p)|\propto f_W(a,p)p^{-\frac{4}{\gamma^2}+1}. 
	\end{equation} 
	Conditioning on $(\widetilde A,\widetilde P)$, the joint law of $(\mathcal D_1,\mathcal D_2)$ is $\mathcal M^{\disk}_{0,2}(W;\widetilde A,\widetilde P)^{\#}\times \qd_{1,1}(\widetilde P)^{\#}$. 
	\par
	Next, let $A_\delta$ and $P_\delta$ be the left and right boundary of $D_{1,\delta}$ respectively and let $B_\delta$ be the right boundary of $D_{2,\delta}$. By Lemma $\ref{quantum length converges}$, as $\delta\to 0$, $M_\delta^{\#}$-law of $(A_\delta,P_\delta)$ converges in law to $(\widetilde A,\widetilde P)$ and $B_\delta\to 0$ in probability. Therefore, we can couple $M_\delta^{\#}$ and $M_{\mathbb D}^{\#}$ so that $(A_\delta,P_\delta)=(\widetilde A,\widetilde P)$ with probability $1-o_{\delta}(1)$. By Lemma $\ref{Coupling of $1,1$}$ and $\ref{Coupling of $0,2$}$, there exists a coupling between $(\mathcal D_1^\varepsilon,\mathcal D_2^\varepsilon)$ and $(\mathcal D_{1,\delta}^\varepsilon,\mathcal D_{2,\delta}^\varepsilon)$ such that 
	\begin{equation}\label{coupling equation}
		\lim_{\delta\to 0} \mathbb P\left[(\mathcal D_1^\varepsilon,\mathcal D_2^\varepsilon)=(\mathcal D_{1,\delta}^\varepsilon,\mathcal D_{2,\delta}^\varepsilon)\right]=1
	\end{equation}
	for some $\varepsilon=o_\delta(1)$ with sufficiently slow decay. Let $\widetilde{U_\delta}$ denote the interior of $\overline{\mathcal D_1^\varepsilon\cup \mathcal D_2^\varepsilon}$ in the embedding of $\mathcal D_1\oplus\mathcal D_2$ and $U_\delta$ denote the interior of $\overline{\mathcal D_{1,\delta}^\varepsilon\cup \mathcal D_{2,\delta}^\varepsilon}$ in the embedding of $\mathcal D_{1,\delta}\oplus\mathcal D_{2,\delta}$. By conformal welding, the marked quantum surfaces $(\widetilde {U_\delta},\phi_{\mathbb D},0,i^{-})$ and $(U_\delta,\phi^\delta,0,i^{-})$ agree with probability $1-o_\delta(1)$. On this high probability event, there exists a unique conformal map $g_\delta:\widetilde{U_\delta}\to U_\delta$ such that $\phi_{\mathbb D}=\phi^\delta\circ g_\delta+Q\log|g_\delta'|$ with $g_\delta(0)=0$ and $g_\delta(i^-)=i^-$. 
\par
Notice that the random simply connected domain $\widetilde {U_\delta}$ is completely determined by $M_{\mathbb D}^{\#}$. Almost surely under $M_{\mathbb D}^{\#}$, the $\lbrace \overline{\mathbb D} \backslash \widetilde {U_\delta}\rbrace_\delta$ is a sequence of shrinking compact sets in the euclidean sense, i.e., $\diam(\overline{\mathbb D} \backslash \widetilde {U_\delta})=o_\delta(1)$ and $\bigcap_{\delta>0}\overline{\mathbb D}\backslash \widetilde{U_\delta}=\lbrace i\rbrace$. By the coupling between $M_{\mathbb D}^{\#}$ and $M_\delta^{\#}$, we know that $\diam(\overline{\mathbb D} \backslash \widetilde {U_\delta})=o_\delta(1)$ with probability $1-o_\delta(1)$. Notice that $\diam(\overline{\mathbb D}\backslash \widetilde {U_\delta})=0$ if and only if the harmonic measure of $\mathbb D\backslash \widetilde{U_\delta}$ viewed from $0$ in $\widetilde{U_\delta}$ tends to $0$ as $\delta\to 0$. Therefore, in our coupling, with probability $1-o_\delta(1)$, the harmonic measure of $\mathbb D\backslash \widetilde{U_\delta}$ viewed from $0$ in $\widetilde{U_\delta}$ is $o_\delta(1)$. Since the harmonic measure is conformally invariant and by $(\ref{coupling equation})$, with probability $1-o_\delta(1)$, harmonic measure of $\mathbb D\backslash U_\delta$ viewed from $0$ in $U_\delta$ is also $o_\delta(1)$. Hence, we have $\diam(\mathbb D\backslash U_\delta)=o_\delta(1)$ with probability $1-o_\delta(1)$. This proves $(2)$ in Lemma $\ref{Key coupling lemma}$.
\par
By construction, we know that $\widetilde{\mathbf x_\delta}\in \mathbb D\backslash U_\delta$ and $|\widetilde{\mathbf x_\delta}-i|\leq \diam\left(\mathbb D\backslash U_\delta \right) $. The above argument directly implies that  $|\widetilde {\mathbf x_\delta}-i|=o_\delta(1)$ with probability $1-o_\delta(1)$. Therefore $(3)$ is also proved. 
\par
Finally, by $(\ref{coupling equation})$, we have that $g_\delta(0)=0$, $g_\delta(i^{-})=i^{-}$, $\diam(\mathbb D\backslash U_\delta)=o_\delta(1)$, and $\diam(\mathbb D\backslash\widetilde{U_\delta})=o_\delta(1)$ with probability $1-o_\delta(1)$, the standard conformal distortion estimates imply $(4)$. 
\end{proof}

\begin{proof}[Proof of Proposition \ref{Curve Law Prop}]
For the convenience of readers, we first recall the definition and basic setup regarding  $M^{\#}_{\delta}$ on $\mathbb H$: For $W>0$, let $\beta_{W+2} = \gamma-\frac{W}{\gamma}$. Sample $(\phi,\mathbf x)$ from $\LF_{\mathbb H}^{(\gamma,i),(\beta_{W+2},\infty),(\beta_{W+2},\mathbf x)}\times dx$ and let $\eta$ be sampled from $\sle^{\mathbb H}_{\kappa,(\mathbf{x};\mathbf{x}^-)\to \infty}(W-2)$. Fix $\delta\in (0,\frac{1}{2})$ and let $M^{\#}_{\delta}$ be the probability law of $(\phi,\mathbf x, \eta)$ restricted to the event that $\nu_{\phi}(\mathbf x,\infty)\in (\delta, 2\delta)$, $\nu_{\phi}(\mathbb R)\in (1,2)$ and $i$ is to the right of $\eta$. Sample $(\phi,\mathbf x,\eta)$ from $M_\delta^{\#}$ and let $\mathcal D_{1,\delta}$ and $\mathcal D_{2,\delta}$ be the two components such that $(\mathbb H, \phi, \eta, i,\mathbf x)$ is the embedding of the conformally welded surface $\mathcal D_{1\,\delta}\oplus \mathcal D_{2,\delta}$.
\par
We first prove the results on $(\mathbb D,0,i)$ instead of $(\mathbb H,i,\infty)$. Let $f:\mathbb H\to \mathbb D$ be the conformal map such that $f(i)=0$ and $f(\infty)=i$. In the end, since both $M_\delta^{\#}$ and $M_{\mathbb D}^{\#}$ are probability laws, we can pull back all the results via $f^{-1}$. Let $\phi^{\delta}=\phi\circ f^{-1}+\log |(f^{-1})'|$ and $\eta^\delta=f\circ\eta$ be such that $(\mathbb D, \phi^\delta,\eta^\delta, 0,i)$ is an embedding of $\mathcal D_{1,\delta}\oplus \mathcal D_{2,\delta}$. Let $\widetilde {\mathbf x_\delta}=f(\mathbf x)$ be the image of $\mathbf{x}$ under $f$. Here $\eta^{\delta}$ represents the welding interface between $\mathcal D_{1,\delta}$ and $\mathcal D_{2,\delta}$.
\par
By Lemma \ref{Key coupling lemma}, there exists a coupling between $M_{\mathbb D}^{\#}$ and $M_\delta^{\#}$ such that 
\begin{equation}
		\lim_{\delta\to 0} \mathbb P\left[(\mathcal D_1^\varepsilon,\mathcal D_2^\varepsilon)=(\mathcal D_{1,\delta}^\varepsilon,\mathcal D_{2,\delta}^\varepsilon)\right]=1
	\end{equation}
for some $\varepsilon=o_\delta(1)$ with sufficiently slow decay (this is (\ref{coupling equation})).
Moreover, let $U_\delta$ be the interior of $\overline{\mathcal D_{1,\delta}^\varepsilon\cup \mathcal D_{2,\delta}^\varepsilon}\subset\mathbb D$ and let $\widetilde{U_{\delta}}$ be the interior of $\overline{\mathcal D_{1}^\varepsilon\cup \mathcal D_{2}^\varepsilon}\subset\mathbb D$. Then there exists a unique conformal map $g_{\delta}:\widetilde{U_\delta}\to U_{\delta}$ such that with probability $1-o_\delta(1)$,  $|\widetilde{\mathbf x_\delta}-i|=o_{\delta}(1)$ and $\sup_{z\in K}|g_\delta(z)-z|=o_\delta(1)$ for any compact set $K\subset\mathbb D$. Take $K=\overline{\mathcal{D}_1}\subseteq\mathbb D$ and by definition of $M_{\mathbb D}^{\#}$, $\eta_{\mathbb D}\subseteq\partial\overline{\mathcal{D}_1}$. The image of $\eta_{\mathbb D}$ under $g_\delta$ is $\eta^\delta\subset\partial\overline{\mathcal{D}_{1,\delta}}$. Since $\sup_{z}|g_\delta(z) - z| = o_\delta(1)$, there exist parametrizations $p_{\delta}:[0,1]\to \eta^\delta$ and $p_{\mathbb D}:[0,1]\to \eta_{\mathbb D}$ such that $|g_\delta( p_{\mathbb D}(t))-p_{\mathbb D}(t)|=|p_\delta(t)-p_{\mathbb D}(t)|=o_\delta(1)$ for all $t\in [0,1]$. Hence, under such coupling between $M^{\#}_{\mathbb D}$ and $M^{\#}_{\delta}$, with probability $1-o_\delta(1)$, there exist parametrizations $p_\delta$ and $p_{\mathbb D}$ of $\eta^\delta$ and $\eta_{\mathbb D}$ respectively, such that $\sup_{t\in [0,1]}|p_\delta(t)-p_{\mathbb D}(t)|=o_\delta(1)$, which implies the topology of convergence under coupling is the same as (\ref{metric on curve}).
\par
Next, by Lemma \ref{Key coupling lemma}, $|\widetilde{\mathbf x_\delta}-i|=o_{\delta}(1)$ with probability $1-o_\delta(1)$, and for any instance of $\widetilde{\mathbf x_\delta}$, $\eta^\delta$ has the law of $\sle^{\mathbb D}_{\kappa,(\widetilde{\mathbf x_\delta};\widetilde{\mathbf x_\delta}^+)\to i}(W-2)[\cdot|0\in \mathcal{D}_{2,\delta}]$. By Corollary \ref{conditional convergence lemma}, for any deterministic sequence $\mathbf{x}_\delta$ on $\partial\mathbb D$ that converges to $i$ in euclidean distance as $\delta\to 0$, 
\begin{equation}
    \sle^{\mathbb D}_{\kappa,(\mathbf{x}_\delta;\mathbf{x}_\delta^+)\to i}(W-2)[\cdot|0\in \mathcal{D}_{2,\delta}]\xrightarrow{w} \sle_{\kappa,i}^{\bub}(W-2)[\cdot|0\in \mathcal{D}_2]
\end{equation}
in the distance (\ref{metric on curve}). Hence , under $M_{\mathbb D}^{\#}$, $\eta_{\mathbb D}$ is independent of $\phi_{\mathbb D}$ and has the law of $\sle_{\kappa,i}^{\bub}(W-2)[\cdot|0\in \mathcal{D}_2]$.
By pulling back all the results above on $\mathbb D$ to $\mathbb H$ via $f^{-1}$, we have that
\begin{equation}\label{restricted integration}
   \int_1^2 \phi(\ell)d\ell \times \sle_{\kappa,0}^{\bub}(W-2)[\cdot|{i\in D_\eta(0)}] = \int_1^2\int_0^\infty \mathcal{M}^{\disk}_{0,2}(W;\ell,r)\times \qd_{1,1}(r) dr d\ell   
\end{equation}
for some unknown Liouville field $\phi$. Finally, by the identical scaling argument in the proof of \cite[Theorem $4.1$]{FZZ}, the integration on $[1,2]$ in (\ref{restricted integration}) can be replaced by $(0,\infty)$. This completes the proof.
\end{proof}
\section{Law of field after conformal welding via induction}\label{sec:field law}
\subsection{Preliminaries on quantum triangles}
Our derivation of field law relies heavily on the conformal welding of quantum triangle with quantum disk. In this section, we recall the definition of quantum triangle and review the welding theorem between quantum triangle and quantum disk (\cite{QT22}).
\begin{Def}[Thick quantum triangle, {\cite[Definition 2.17]{QT22}}]\label{Thick quantum triangle}
    For $W_1,W_2,W_3>\frac{\gamma^2}{2}$, set $\beta_{i}=\gamma+\frac{2-W_i}{\gamma}<Q$ for $i=1,2,3,$ and let $\LF_{\mathcal S}^{(\beta_1,+\infty),(\beta_2,-\infty),(\beta_3,0)}$ be the Liouville field on $\mathcal S$ with insertion $\beta_1,\beta_2,\beta_3$ at $+\infty,-\infty$ and $0$, respectively. Let $\phi$ be sampled from 
    \begin{equation*}
        \frac{1}{(Q-\beta_1)(Q-\beta_2)(Q-\beta_3)}\LF_{\mathcal S}^{(\beta_1,+\infty),(\beta_2,-\infty),(\beta_3,0)}.
    \end{equation*}
    Define $\qt(W_1,W_2,W_3)$ to be the law of the three-pointed quantum surface $(\mathcal S,\phi,+\infty,-\infty,0)/\sim_{\gamma}$ and we call a sample from $\qt(W_1,W_2,W_3)$ a quantum triangle of weight $(W_1,W_2,W_3)$.
\end{Def}
One can also define the conditional law of quantum disks/triangles on fixed boundary length. This is again done by disintegration.
\begin{Def}[{\cite[Definition 2.26]{QT22}}]\label{QT with fixed boundary length}
Fix $W_1,W_2,W_3>\frac{\gamma^2}{2}$. Let $\beta_{i} = \gamma + \frac{2-W_i}{\gamma}$ and $\overline{\beta}=\beta_{1}+\beta_{2}+\beta_{3}$. Sample $h$ from $P_{\mathbb H}$ and set 
\begin{equation*}
    \widetilde{h(z)} = h(z) + (\overline{\beta}-2Q)\log|z|_{+} -\beta_{1}\log|z|-\beta_{2}\log|z-1|.
\end{equation*}
Fix $\ell>0$ and let $L_{12} = \nu_{\widetilde{h}}([0,1])$. We define $\qt(W_1,W_2,W_3;\ell)$, the quantum triangles of weights $W_1,W_2,W_3$ with left boundary length $\ell$, to be the law of $\widetilde{h}+\frac{2}{\gamma}\log \frac{\ell}{L_{12}}$ under the reweighted measure $\frac{2}{\gamma}\frac{\ell^{\frac{1}{\gamma}(\overline{\beta}-2Q)-1}}{L_{12}^{\frac{1}{\gamma}(\overline{\beta}-2Q)}}P_{\mathbb H}(dh)$. The same thing holds if we replace $L_{12} = \nu_{\widetilde{h}}([0,1])$ by $L_{13} = \nu_{\widetilde{h}}((-\infty,0])$ or $L_{23} = \nu_{\widetilde{h}}([1,+\infty))$.
\end{Def}
\begin{Lemma}[{\cite[Lemma 2.27]{QT22}}]
    In the same settings of Definition \ref{QT with fixed boundary length}, the sample from $\qt(W_1,W_2,W_3;\ell)$ has left boundary length $\ell$, and we have 
    \begin{equation}\label{disint of QT}
        \qt(W_1,W_2,W_3) = \int_{0}^\infty \qt(W_1,W_2,W_3;\ell)d\ell .
    \end{equation}
\end{Lemma}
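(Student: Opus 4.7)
The plan is to identify the Liouville field on $\mathbb H$ that describes the quantum triangle $\qt(W_1,W_2,W_3)$ under a fixed embedding, then isolate its zero mode, and finally disintegrate over the left boundary length by an explicit change of variable.

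First I would pull back to $\mathbb H$. The exponential map $z\mapsto e^z$ sends the three marked strip points $(+\infty,-\infty,0)$ to three boundary points of $\mathbb H$ which, after composing with a fixed M\"obius map, we take to be $(0,1,\infty)$ carrying weights $(\beta_1,\beta_2,\beta_3)$. Combining the strip/half-plane identities of Section \ref{sec:LCFT} (the conformal covariance of the LCFT and the specific $\exp_*$-pushforward lemmas) with the prefactor $\frac{1}{(Q-\beta_1)(Q-\beta_2)(Q-\beta_3)}$ in Definition \ref{Thick quantum triangle} rewrites $\qt(W_1,W_2,W_3)$ as an explicit finite constant times $\LF_{\mathbb H}^{(\beta_1,0),(\beta_2,1),(\beta_3,\infty)}$ under the fixed embedding $(\mathbb H,\cdot,0,1,\infty)$.

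Next I would separate the zero mode. By Definition \ref{LCFT with boundary insertions} (applied with the insertion at $\infty$), a sample $\phi$ from $\LF_{\mathbb H}^{(\beta_1,0),(\beta_2,1),(\beta_3,\infty)}$ can be written as $\phi=\widetilde h+\mathbf c$, where $\widetilde h(z)=h(z)+(\overline\beta-2Q)\log|z|_+-\beta_1\log|z|-\beta_2\log|z-1|$ with $h\sim C_{\mathbb H}^{(\beta_i,s_i)_i}\,P_{\mathbb H}$, and $\mathbf c$ is sampled independently from $e^{(\overline\beta/2-Q)c}\,dc$; this matches the deterministic field of Definition \ref{QT with fixed boundary length} exactly, with the substitutions of Green's functions $G_{\mathbb H}(z,0)=-2\log|z|+2\log|z|_+$ and $G_{\mathbb H}(z,1)=-2\log|z-1|+2\log|z|_+$ collapsing the two insertions into the two $\log$ terms. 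Now, since $\nu_{\widetilde h+\mathbf c}=e^{\gamma\mathbf c/2}\nu_{\widetilde h}$, the left boundary length is $\ell:=\nu_\phi([0,1])=e^{\gamma\mathbf c/2}L_{12}$, so $\mathbf c=\frac{2}{\gamma}\log(\ell/L_{12})$ and $d\mathbf c=\frac{2}{\gamma\ell}d\ell$. Substituting gives
\[
e^{(\overline\beta/2-Q)\mathbf c}\,d\mathbf c=\frac{2}{\gamma}\,\frac{\ell^{\frac{1}{\gamma}(\overline\beta-2Q)-1}}{L_{12}^{\frac{1}{\gamma}(\overline\beta-2Q)}}\,d\ell,
\]
which is precisely the reweighting factor of Definition \ref{QT with fixed boundary length}, while the field becomes $\widetilde h+\frac{2}{\gamma}\log(\ell/L_{12})$, matching the definition of $\qt(W_1,W_2,W_3;\ell)$. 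The boundary length assertion then follows immediately from $\nu_{\widetilde h+\frac{2}{\gamma}\log(\ell/L_{12})}([0,1])=(\ell/L_{12})\cdot L_{12}=\ell$.

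The main point that will require care is the bookkeeping of deterministic constants from three different sources: the prefactor $1/((Q-\beta_1)(Q-\beta_2)(Q-\beta_3))$ in Definition \ref{Thick quantum triangle}, the constant $C_{\mathbb H}^{(\beta_i,s_i)_i}$ in Definition \ref{LCFT with boundary insertions}, and the Jacobian produced by pulling back via $\exp$ and the auxiliary M\"obius map. All three are finite and positive, and they combine into one explicit finite constant that gets absorbed into the $\frac{2}{\gamma}$ present on both sides of the identity after the zero-mode change of variables; no new idea is needed beyond the manipulations above once the index convention between the strip (where $\beta_1,\beta_2,\beta_3$ sit at $+\infty,-\infty,0$) and the half-plane (where they sit at $0,1,\infty$ in Definition \ref{QT with fixed boundary length}) is fixed.
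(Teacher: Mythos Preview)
The paper does not supply its own proof of this lemma; it is quoted verbatim from \cite[Lemma~2.27]{QT22}. So there is no in-paper argument to compare against. Your approach---separate the zero mode and change variables $\mathbf c\mapsto\ell$ via $\ell=e^{\gamma\mathbf c/2}L_{12}$---is the standard one, and it is exactly what the paper itself does a few lines later when proving the analogous disintegration \eqref{disint of general QD_{1,1}} for $\LF_{\mathbb H}^{(\alpha,i),(\beta,0)}$. Your verification that the boundary length equals $\ell$ and your Jacobian computation $e^{(\overline\beta/2-Q)\mathbf c}\,d\mathbf c=\frac{2}{\gamma}\ell^{\frac{1}{\gamma}(\overline\beta-2Q)-1}L_{12}^{-\frac{1}{\gamma}(\overline\beta-2Q)}\,d\ell$ are correct and match Definition~\ref{QT with fixed boundary length} on the nose.

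The one point where your write-up is loose is the constant bookkeeping in the last paragraph. The factor $\tfrac{2}{\gamma}$ does not appear ``on both sides''; it is precisely the Jacobian of $\mathbf c\mapsto\ell$ and accounts for the $\tfrac{2}{\gamma}$ in Definition~\ref{QT with fixed boundary length}. What actually has to be checked is that the prefactor $\prod_i(Q-\beta_i)^{-1}$ from Definition~\ref{Thick quantum triangle}, the normalizing constant $C_{\mathcal S}$ in the strip Liouville field, and the conformal-covariance Jacobians from $\exp$ and the M\"obius map (sending the strip triple $(+\infty,-\infty,0)$ to the half-plane triple $(0,1,\infty)$ with the matching weight labels) combine to exactly $1$, so that the identity holds without a multiplicative constant. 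With the specific target points $0,1,\infty$ one has $C_{\mathbb H}^{(\beta_3,\infty),(\beta_1,0),(\beta_2,1)}=1$ (since $|0|_+=|1|_+=1$ and $G_{\mathbb H}(0,1)=0$), so the remaining check is that the strip-to-$\mathbb H$ pushforward reproduces the $\prod_i(Q-\beta_i)^{-1}$ normalization; this is routine but should be stated rather than described as ``absorbed.''
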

Let $\sle_{\kappa}(\rho_{-};\rho_{+},\rho_1)$ be the law of a chordal $\sle_{\kappa}$ on $\mathbb H$ from $0$ to $\infty$ with force points $0^{-},0^{+},1$, with corresponding weights $\rho_{-},\rho_{+},\rho_1$ respectively. Moreover, suppose $\eta$ is a curve from $0$ to $\infty$ on $\mathbb H$ that does not touch $1$. Let $D_\eta$ be the connected component of $\mathbb H\backslash \eta$ containing $1$ and $\psi_\eta$ is the unique conformal map from the component $D_\eta$ to $\mathbb H$ fixing $1$ and sending the first (resp. last) point on $\partial D_\eta$ hit by $\eta$ to $0$ (resp. $\infty$). Define the measure $\widetilde{\sle_{\kappa}}(\rho_{-};\rho_{+},\rho_1;\alpha)$ on curves from $0$ to $\infty$ on $\mathbb H$ as follows:
\begin{equation}
    \frac{d\widetilde{\sle_{\kappa}}(\rho_{-};\rho_{+},\rho_1;\alpha)}{d\sle_{\kappa}(\rho_{-};\rho_{+},\rho_1)}(\eta) = \psi_{\eta}'(1)^\alpha.
\end{equation}
\begin{Th}[{\cite[Theorem 1.2]{QT22}}]\label{qt+qd,v2}
    Suppose $W,W_1,W_2,W_3>0$ and $\frac{\gamma^2}{2}\notin\lbrace W_1,W_2,W_3,W+W_1,W+W_2\rbrace$. Let 
    \begin{equation}
        \alpha = \frac{W_3+W_2-W_1-2}{4\kappa}\left(W_3+W_1+2-W_2-\kappa\right).
    \end{equation}
    Then there exist some constant $C = C_{W,W_1,W_2}\in (0,\infty)$ such that
    \begin{equation}
        \qt(W+W_1,W+W_2,W_3)\otimes \widetilde{\sle_{\kappa}}(W-2;W_2-2,W_1-W_2;\alpha) = C\int_{0}^{\infty}\mathcal M^{\disk}_{0,2}(W;\ell)\times \qt(W_1,W_2,W_3;\ell)d\ell. 
    \end{equation}
\end{Th}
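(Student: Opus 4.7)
The plan is to deduce the triangle-disk welding identity from the basic two-disk welding of Theorem \ref{Disk welding theorem}, by upgrading one of the disks to a quantum triangle via an additional Liouville insertion at a fixed boundary point and tracking how this insertion propagates through the welding. The starting identity is the $n=2$ case of Theorem \ref{Disk welding theorem}:
\begin{equation*}
\mathcal{M}^{\disk}_{0,2}(W + W_1 + W_2) \otimes \sle_{\kappa}(W-2;\, W_1+W_2-2) = C \int_0^\infty \mathcal{M}^{\disk}_{0,2}(W;\ell) \times \mathcal{M}^{\disk}_{0,2}(W_1+W_2;\ell) \, d\ell,
\end{equation*}
and the goal is to insert a third boundary point of weight $\beta_3 = \gamma + (2-W_3)/\gamma$ at the position $1 \in \partial\mathbb{H}$ in the $\mathcal{M}^{\disk}_{0,2}(W_1+W_2)$-factor on the right, converting it into $\qt(W_1,W_2,W_3;\ell)$, while identifying what the same operation does to the left-hand side.

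On the right, the additional insertion is installed by disintegrating $\mathcal{M}^{\disk}_{0,2}(W_1+W_2)$ at its left-boundary length, picking the canonical embedding on $\mathbb H$ with marked points at $0$ and $\infty$, and multiplying by the regularized Liouville vertex operator $\lim_{\varepsilon \to 0} \varepsilon^{\beta_3^2/4} e^{(\beta_3/2)\phi_\varepsilon(1)}$. The LCFT description of $\mathcal{M}^{\disk}_{0,2}(W_1+W_2)$ via the two-insertion Liouville field, together with Definition \ref{Thick quantum triangle}, identifies the tilted law with $\qt(W_1,W_2,W_3;\ell)$ up to an explicit multiplicative constant. On the left, the same tilt applied to $\mathcal{M}^{\disk}_{0,2}(W+W_1+W_2)$ converts it into $\qt(W+W_1,W+W_2,W_3)$, but the insertion point $1$ now sits in the \emph{random} component $D_\eta$ of $\mathbb H \setminus \eta$, and to land in the canonical embedding of the target triangle one must pull back by $\psi_\eta$. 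The conformal covariance rule (Proposition \ref{pushforward of LCFT}) contributes a factor $|\psi_\eta'(1)|^{-2\Delta_{\beta_3}}$ to the field, while the Cameron-Martin shift induced by the $\beta_3$-insertion modifies the Loewner driving process: a standard Girsanov computation for $\sle_\kappa(\rho)$ under a boundary-insertion tilt shows that the single force point of weight $W_1+W_2-2$ splits into one of weight $W_2-2$ at $0^+$ and one of weight $W_1-W_2$ at $1$, recovering precisely $\sle_\kappa(W-2;\,W_2-2,\,W_1-W_2)$.

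Finally, the residual multiplicative factor carried by the curve must be computed by assembling (i) the $|\psi_\eta'(1)|^{-2\Delta_{\beta_3}}$ from the insertion pullback, (ii) the $Q\log|(\psi_\eta^{-1})'|$ shift in $\psi_\eta \bullet_\gamma \phi$ arising from the quantum surface equivalence, and (iii) the scaling-dimension Jacobians at the two corners of the triangle along the welding interface, which must be reconciled between their canonical $\qt(W_1,W_2,W_3)$ embedding and their post-welding positions on $\partial D_\eta$. All of these factors are pure powers of $\psi_\eta'$ evaluated at $1$, $0$, and $\infty$; after using $\psi_\eta(1) = 1$ and normalizing the two endpoints on $\eta$, the factors at $0$ and $\infty$ are absorbed into the baseline LCFT-SLE welding, leaving a single residual factor $\psi_\eta'(1)^\alpha$ with the stated
\begin{equation*}
\alpha = \frac{W_3 + W_2 - W_1 - 2}{4\kappa}\,(W_3 + W_1 + 2 - W_2 - \kappa).
\end{equation*}

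The main obstacle is the explicit verification of this last identification of $\alpha$. Tracking simultaneously the scaling dimensions $\Delta_{\beta_{W+W_1}}$, $\Delta_{\beta_{W+W_2}}$, $\Delta_{\beta_{W_1}}$, $\Delta_{\beta_{W_2}}$, $\Delta_{\beta_3}$ at each of the corners, the Girsanov derivative of the SLE under the $\beta_3$-insertion, the $Q$-shift from the quantum-surface equivalence relation, and the conformal-covariance Jacobians from mapping the subcomponent containing $1$ back to the canonical triangle embedding, and then verifying that all such factors collapse into a single power of $\psi_\eta'(1)$ with no residual field-dependence and the precise algebraic exponent above, is the technical heart of the argument and where the restrictions $\tfrac{\gamma^2}{2} \notin \{W_1,W_2,W_3,W+W_1,W+W_2\}$ become essential (to keep all relevant Liouville fields in the thick regime where the LCFT identities above apply directly).
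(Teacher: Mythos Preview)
The paper does not actually prove this theorem; it is quoted from \cite{QT22} as a preliminary input. So there is no in-paper proof to compare against, and your proposal must be judged on its own.

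Your strategy has a genuine gap at the very first step. You claim that inserting a single boundary vertex operator of weight $\beta_3$ at the point $1$ converts $\mathcal{M}^{\disk}_{0,2}(W_1+W_2)$ into $\qt(W_1,W_2,W_3)$, and likewise $\mathcal{M}^{\disk}_{0,2}(W+W_1+W_2)$ into $\qt(W+W_1,W+W_2,W_3)$. This is false in general. The two-pointed disk $\mathcal{M}^{\disk}_{0,2}(V)$ carries the \emph{same} insertion weight $\beta_V$ at both of its marked points (Definition \ref{Def: thick quantum disk}); a Girsanov tilt by $\varepsilon^{\beta_3^2/4}e^{(\beta_3/2)\phi_\varepsilon(1)}$ adds a log-singularity only at $1$ and leaves the insertions at $0$ and $\infty$ untouched. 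The resulting three-pointed surface is $\qt(V,V,W_3)$, not $\qt(W_1,W_2,W_3)$ with $W_1\neq W_2$. Your ``splitting of the force point'' sentence conflates two distinct tilts: the field-level vertex insertion (which does not see the independent curve at all) and a separate SLE martingale reweighting (which would require its own justification and would not, by itself, change the Liouville insertions at the endpoints). At best your argument establishes the special case $W_1=W_2$ of the theorem; the general case requires either additional insertions at $0$ and $\infty$ to shift $\beta_{W_1+W_2}$ to $\beta_{W_1}$ and $\beta_{W_2}$ separately, or an entirely different induction scheme, and this is precisely the substantive work carried out in \cite{QT22} that your outline skips.
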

\subsection{Quantum disks with generic bulk and boundary insertions}
\begin{Def}[Special case of Definition \ref{Def one bulk one boundary}]\label{Def: one bulk one boundary}
	Let $\alpha,\beta\in \mathbb R$. Fix $p\in \mathbb R$ and $q\in \mathbb H$. Suppose $(h,\mathbf c)$ is sampled from $C_{\mathbb H}^{(\beta,p),(\alpha,q)} P_{\mathbb H}\times \left[e^{\left(\frac{1}{2}\beta+\alpha-Q\right)c}dc \right]$, where 
	\begin{equation*}
		C_{\mathbb H}^{(\beta,p),(\alpha,q)}=|p|_{+}^{-\beta(Q-\frac{\beta}{2})}(2\Im q)^{-\frac{\alpha^2}{2}}|q|_{+}^{-2\alpha(Q-\alpha)}.
	\end{equation*}
	Then the field $\phi(z)=h(z)-2Q\log|z|_{+}+\alpha G_{\mathbb H}(z,q)+\frac{\beta}{2}G_{\mathbb H}(z,p)+\mathbf c$ has the law of $\LF_{\mathbb H}^{(\beta,p),(\alpha,q)}$. Moreover, If $p=\infty$, let $(h,\mathbf c)$ be sampled from $C_{\mathbb H}^{(\beta,\infty),(\alpha,q)}P_{\mathbb H}\times \left[e^{\left(\frac{1}{2}\beta+\alpha-Q\right)c}dc \right]$, where 
	\begin{equation*}
		C_{\mathbb H}^{(\beta,\infty),(\alpha,q)}=(2\Im q)^{-\frac{\alpha^2}{2}}|q|_{+}^{-2\alpha(Q-\alpha)}.
	\end{equation*}
	Let $\phi_{\infty}(z)=h(z)+(\beta-2Q)\log|z|_{+}+\alpha G_{\mathbb H}(z,q)+\mathbf c$ and $\phi_{\infty}$ has the law of $\LF_{\mathbb H}^{(\beta,\infty),(\alpha,p)}$.
\end{Def}
\begin{Prop}[{\cite[Proposition 3.9]{FZZ}}]\label{LCFT des of QD_{1,1}}
    Suppose $(\mathbb H,\phi,i,0)$ is an embedding of $\qd_{1,1}$, then $\phi$ has the law of $C_{0}\cdot\LF_{\mathbb H}^{(\gamma,i),(\gamma,0)}$ for some fixed finite constant $C_{0}$.
\end{Prop}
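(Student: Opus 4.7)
The plan is to deduce the desired LCFT description of $\qd_{1,1}$ from Lemma~\ref{LCFTthree} (the LCFT description of $\mathcal M^{\disk}_{1,2}(W)$) by forgetting one of the two boundary marked points, and then to transfer the resulting embedding from $(i,\infty)$ to $(i,0)$ via a M\"obius pushforward. First I would unfold the definitions of $\mathcal M^{\disk}_{1,2}(W)$ and of $\qd_{m,n}$ to observe that for $W=2$ (where $\mathcal M^{\disk}_{0,2}(2)=\qd_{0,2}$ by \cite[Proposition A.8]{Weldisk}) one has the identification $\mathcal M^{\disk}_{1,2}(2)=\qd_{1,2}$ as marked quantum surfaces, both being obtained from $\qd$ by weighting by $\mu_\phi(\mathbb H)\nu_\phi(\mathbb R)^2$ and placing one $\mu_\phi$-typical bulk point together with two $\nu_\phi$-typical boundary points. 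This yields the factorization
\begin{equation*}
\qd_{1,2}(d\phi,dz,d\omega_1,d\omega_2) = \qd_{1,1}(d\phi,dz,d\omega_1)\,\nu_\phi(d\omega_2),
\end{equation*}
so that integrating $\omega_2$ over $\mathbb R$ gives $\nu_\phi(\mathbb R)\cdot\qd_{1,1}$. Specializing Lemma~\ref{LCFTthree} to $W=2$ (hence $\beta_W=\gamma$) and embedding $\qd_{1,2}$ on $\mathbb H$ with bulk insertion at $i$ and boundary insertions at $\infty$ and $\mathbf{x}\in\mathbb R$, the joint law of $(\phi,\mathbf{x})$ is $\tfrac{\gamma}{2(Q-\gamma)^2}\LF_{\mathbb H}^{(\gamma,i),(\gamma,\infty),(\gamma,\mathbf{x})}(d\phi)\,dx$. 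Combining the two expressions and integrating out $\mathbf{x}$ I arrive at
\begin{equation*}
\nu_\phi(\mathbb R)\,\qd_{1,1}\big|_{(i,\infty)}(d\phi) = \frac{\gamma}{2(Q-\gamma)^2}\int_{\mathbb R}\LF_{\mathbb H}^{(\gamma,i),(\gamma,\infty),(\gamma,\mathbf{x})}(d\phi)\,dx.
\end{equation*}

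Next I would invoke the Cameron--Martin identity
\begin{equation*}
\int_{\mathbb R}\LF_{\mathbb H}^{(\gamma,i),(\gamma,\infty),(\gamma,\mathbf{x})}(d\phi)\,dx = \nu_\phi(\mathbb R)\cdot\LF_{\mathbb H}^{(\gamma,i),(\gamma,\infty)}(d\phi),
\end{equation*}
which follows from the vague-limit definition $\LF^{(\cdots),(\gamma,\mathbf{x})}=\lim_{\varepsilon\to 0}\varepsilon^{\gamma^2/4}e^{\frac{\gamma}{2}\phi_\varepsilon(\mathbf{x})}\LF^{(\cdots)}$ together with $\lim_\varepsilon\int_{\mathbb R}\varepsilon^{\gamma^2/4}e^{\frac{\gamma}{2}\phi_\varepsilon(\mathbf{x})}\,dx=\nu_\phi(\mathbb R)$. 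Since $\nu_\phi(\mathbb R)>0$ almost surely, it can be canceled on both sides, giving $\qd_{1,1}\big|_{(i,\infty)}(d\phi)=\tfrac{\gamma}{2(Q-\gamma)^2}\LF_{\mathbb H}^{(\gamma,i),(\gamma,\infty)}(d\phi)$.

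Finally I would transfer to the embedding $(i,0)$ required by the statement by pushing forward under the M\"obius map $f(z)=-1/z$, which fixes $i$ and sends $\infty$ to $0$. Applying Proposition~\ref{pushforward of LCFT} to $\LF_{\mathbb H}^{(\gamma,i),(\gamma,r)}$ and passing to the limit $r\to\infty$ via $\LF_{\mathbb H}^{(\gamma,\infty),(\cdots)}=\lim_{r\to\infty}r^{2\Delta_\gamma}\LF_{\mathbb H}^{(\gamma,r),(\cdots)}$, the blow-up of the conformal factor $|f'(r)|^{-\Delta_\gamma}=r^{2\Delta_\gamma}$ cancels the normalizing prefactor and one obtains $f_*\LF_{\mathbb H}^{(\gamma,i),(\gamma,\infty)}=\LF_{\mathbb H}^{(\gamma,i),(\gamma,0)}$, so the proposition holds with $C_0=\tfrac{\gamma}{2(Q-\gamma)^2}$. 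The main technical obstacle is the Cameron--Martin identity in the middle step: rigorously exchanging the vague $\varepsilon$-limit with integration over the noncompact boundary $\mathbb R$ requires tail estimates that account for the additional bulk insertion at $i$ and the boundary insertion at $\infty$, while the M\"obius step is routine once the $r^{2\Delta_\gamma}$ renormalization is tracked carefully.
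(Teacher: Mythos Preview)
The paper does not supply its own proof of this proposition; it is imported wholesale as \cite[Proposition 3.9]{FZZ}. Your argument is correct and, pleasantly, is essentially self-contained within the present paper's toolkit: you specialize Lemma~\ref{LCFTthree} to $W=2$ (so $\beta_W=\gamma$ and $\mathcal M^{\disk}_{1,2}(2)=\qd_{1,2}$), integrate out the extra boundary point, and push forward by the M\"obius map $z\mapsto -1/z$.

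Two remarks. First, the step you flag as the ``main technical obstacle'' is not one: the identity $\LF_{\mathbb H}^{(\cdots),(\gamma,x)}(d\phi)\,dx = \LF_{\mathbb H}^{(\cdots)}(d\phi)\,\nu_\phi(dx)$ is exactly the boundary analogue of Lemma~\ref{Girsanov on bulk insertion}, and the paper already invokes it (as \cite[Lemma~2.31]{InteofSLE}) in the proof of Lemma~\ref{connection between QT and QD alpha}; no bespoke tail estimate is needed. Likewise, your M\"obius step is precisely the $q=i$, $\alpha=\beta=\gamma$ case of Lemma~\ref{transformation rule}. Second, there is a potential circularity to check: Lemma~\ref{LCFTthree} is proved here via \cite[Lemma~3.12]{FZZ}, so one must verify that the latter does not itself rely on \cite[Proposition~3.9]{FZZ}. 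It does not---the strip description of $\mathcal M^{\disk}_{1,2}(W)$ follows directly from the field definition of $\mathcal M^{\disk}_{0,2}(W)$ plus a Girsanov shift for the bulk point---so your route is legitimate, and it recovers the explicit constant $C_0=\tfrac{\gamma}{2(Q-\gamma)^2}$.
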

\begin{Def}\label{def:general qd}  
Fix $\alpha,\beta\in \mathbb R$. Define the quantum surface $\qd_{1,1}(\alpha,\beta)$ as follows: suppose $(\mathbb H,\phi,i,0)$ is an embedding of $\qd_{1,1}(\alpha,\beta)$, then the law of $\phi$ is $\LF_{\mathbb H}^{(\alpha,i),(\beta,0)}$. Notice that $\qd_{1,1}(\gamma,\gamma) = C\cdot\qd_{1,1}$ for some finite constant $C$.
\end{Def}
\begin{Lemma}
    Fix $\alpha,\beta\in \mathbb R$ and let $h$ be sampled from $P_{\mathbb H}$. Let $\widetilde{h}(z) = h(z) + \alpha G_{\mathbb H}(z,i)+\frac{\beta}{2}G_{\mathbb H}(z,0)-2Q\log|z|_{+}$ and $L=\nu_{\widetilde{h}}(\mathbb R)$. Let $\LF_{\mathbb H}^{(\alpha,i),(\beta,0)}(\ell)$ be the law of $\widetilde{h}+\frac{2}{\gamma}\log\frac{\ell}{L}$ under the reweighted measure $2^{-\alpha^2/2}\frac{2}{\gamma}\frac{\ell^{\frac{2}{\gamma}(\alpha+\frac{\beta}{2} -Q)-1}}{L^{\frac{2}{\gamma}(\alpha+\frac{\beta}{2}-Q)}} P_{\mathbb H}$, and let $\qd_{1,1}(\alpha,\beta;\ell)$ be the measure on quantum surfaces $(\mathbb H,\phi,0,i)$ with $\phi$ being sampled from $\LF^{(\alpha,i),(\beta,0)}_{\mathbb H}(\ell)$. Then $\qd_{1,1}(\alpha,\beta;\ell)$ is a measure on quantum surfaces with (quantum) boundary length $\ell$, and 
    \begin{equation}\label{disint of general QD_{1,1}}
        \LF_{\mathbb H}^{(\alpha,i),(\beta,0)} = \int_{0}^{\infty}  \LF_{\mathbb H}^{(\alpha,i),(\beta,0)}(\ell)d\ell\qquad\text{and}\qquad \qd_{1,1}(\alpha,\beta) = \int_{0}^{\infty}\qd_{1,1}(\alpha,\beta;\ell)d\ell. 
    \end{equation}
\end{Lemma}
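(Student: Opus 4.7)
The statement is a disintegration claim: the Liouville field $\LF_{\mathbb H}^{(\alpha,i),(\beta,0)}$ (and hence $\qd_{1,1}(\alpha,\beta)$) decomposes over its quantum boundary length, with the conditional measures given by the explicit reweighted fields described in the statement. The plan is to unwind Definition \ref{Def: one bulk one boundary} and perform a direct change of variables in the additive constant $\mathbf c$. Since $\qd_{1,1}(\alpha,\beta)$ is, by definition, the pushforward of $\LF_{\mathbb H}^{(\alpha,i),(\beta,0)}$ to the space of quantum surfaces $(\mathbb H,\phi,0,i)$, the second disintegration in \eqref{disint of general QD_{1,1}} will follow immediately from the first.

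First, I would recall from Definition \ref{Def: one bulk one boundary} that a sample $\phi$ from $\LF_{\mathbb H}^{(\alpha,i),(\beta,0)}$ can be written as $\phi = \widetilde h + \mathbf c$, where $\widetilde h(z) = h(z)+\alpha G_{\mathbb H}(z,i)+\tfrac{\beta}{2}G_{\mathbb H}(z,0)-2Q\log|z|_{+}$ with $h\sim P_{\mathbb H}$, and $\mathbf c$ is independent of $h$ with law $C\cdot e^{(\frac{\beta}{2}+\alpha-Q)c}dc$ on $\mathbb R$. Using $|0|_{+}=|i|_{+}=1$ and $2\Im i=2$, the normalization constant in that definition simplifies to $C_{\mathbb H}^{(\beta,0),(\alpha,i)}=2^{-\alpha^2/2}$. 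In particular, the quantum boundary length of $\phi$ is $\nu_{\phi}(\mathbb R)=e^{\gamma \mathbf c/2}\nu_{\widetilde h}(\mathbb R)=e^{\gamma\mathbf c/2}L$, which is the quantity we want to disintegrate over.

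Next, I would perform the change of variables $\ell := e^{\gamma\mathbf c/2}L$, so that $\mathbf c = \tfrac{2}{\gamma}\log\tfrac{\ell}{L}$ and $d\mathbf c = \tfrac{2}{\gamma}\tfrac{d\ell}{\ell}$. Substituting and using Fubini to move the $\ell$-integral outside, the law of $\phi$ becomes
\begin{equation*}
\LF_{\mathbb H}^{(\alpha,i),(\beta,0)}(d\phi)=\int_0^\infty\! 2^{-\alpha^2/2}\frac{2}{\gamma}\,\frac{\ell^{\frac{2}{\gamma}(\alpha+\frac{\beta}{2}-Q)-1}}{L^{\frac{2}{\gamma}(\alpha+\frac{\beta}{2}-Q)}}\,P_{\mathbb H}(dh)\,\delta_{\widetilde h+\frac{2}{\gamma}\log(\ell/L)}(d\phi)\,d\ell,
\end{equation*}
which is exactly $\int_0^\infty \LF_{\mathbb H}^{(\alpha,i),(\beta,0)}(\ell)\,d\ell$ by the definition in the statement. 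To finish, I would verify that under $\LF_{\mathbb H}^{(\alpha,i),(\beta,0)}(\ell)$ the boundary length is deterministically $\ell$: the shift $\widetilde h\mapsto \widetilde h + \tfrac{2}{\gamma}\log(\ell/L)$ rescales $\nu_{\widetilde h}(\mathbb R)=L$ by $e^{\gamma/2\cdot (2/\gamma)\log(\ell/L)}=\ell/L$, giving $\nu_{\phi}(\mathbb R)=\ell$ as required. The second identity in \eqref{disint of general QD_{1,1}} then follows by pushing forward through the map $\phi\mapsto(\mathbb H,\phi,0,i)/\!\sim_\gamma$.

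There is no real obstacle here: this is a standard change-of-variables disintegration of the type carried out for quantum disks in \cite[Section 2]{Weldisk} and for quantum triangles in \cite[Definition 2.26]{QT22}, and the only thing to watch is the algebra of exponents produced by the exponential law of $\mathbf c$ and the Jacobian factor, which combine to give the power $\frac{2}{\gamma}(\alpha+\frac{\beta}{2}-Q)-1$ of $\ell$ and the compensating power of $L$ appearing in the statement. The prefactor $2^{-\alpha^2/2}$ comes purely from the constant $C_{\mathbb H}^{(\beta,0),(\alpha,i)}$, and the measurability assertions needed for Fubini are standard since $L$ is a measurable functional of $h$ taking values in $(0,\infty)$ almost surely.
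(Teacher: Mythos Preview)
Your proposal is correct and follows essentially the same approach as the paper: both arguments verify that the shifted field has boundary length $\ell$ and then perform the change of variables $c=\tfrac{2}{\gamma}\log\tfrac{\ell}{L}$ (with Jacobian $\tfrac{2}{\gamma}\tfrac{d\ell}{\ell}$) together with Fubini to obtain the disintegration. Your write-up is slightly more explicit about the origin of the constant $2^{-\alpha^2/2}$, but otherwise the two proofs are the same.
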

\begin{proof}
Suppose $\phi$ has the law of $\widetilde{h}+\frac{2}{\gamma}\log\frac{\ell}{L}$, then we have 
\begin{align}
    \begin{split}
        \nu_{\phi}(\mathbb R) = \int_{\mathbb R} e^{\frac{\gamma}{2}\phi(x)}dx = \frac{\ell}{L}\int_{\mathbb R} e^{\frac{\gamma}{2}\widetilde{h(x)}}dx = \ell.
    \end{split}
\end{align}
Therefore, we have $\nu_{\phi}(\mathbb R)=\ell$ almost surely under $\LF_{\mathbb H}^{(\alpha,i),(\beta,0)}(\ell)$. Moreover, for any non-negative measurable function $F$ on $H^{-1}(\mathbb H)$, we have 
    \begin{equation}
      \int_0^{\infty}\int F\left(\widetilde{h}+\frac{2}{\gamma}\log\frac{\ell}{L}\right)2^{-\alpha^2/2}\frac{2}{\gamma}\frac{\ell^{\frac{2}{\gamma}(\alpha+\frac{\beta}{2} -Q)-1}}{L^{\frac{2}{\gamma}(\alpha+\frac{\beta}{2}-Q)}} P_{\mathbb H}(dh)d\ell =\int\int_{-\infty}^{\infty} F(\widetilde{h}+c)2^{-\alpha^2/2}e^{(\alpha+\frac{\beta}{2}-Q)c}dc P_{\mathbb H}(dh)
    \end{equation}
    by Fubini's theorem and change of variable $c=\frac{2}{\gamma}\log\frac{\ell}{L}$. This matches the field law in Definition \ref{Def one bulk one boundary}. Hence (\ref{disint of general QD_{1,1}}) is proved. 
\end{proof}
\begin{Def}\label{Def: general QD}
    Fix $\alpha\in \mathbb R$ and let $(\mathbb H,\phi,i,0)$ be an embedding of $\qd_{1,1}(\gamma,\alpha)$. Let $L=\nu_{\phi}(\mathbb R)$ denote the total quantum boundary length and $A = \mu_{\phi}(\mathbb H)$ denote the total quantum area. Let $\qd_{0,1}(\gamma,\alpha)$ be the law of $(\mathbb H,\phi,0)$ under the reweighted measure $A^{-1}\qd_{1,1}(\gamma,\alpha)$. For integers $n\geq 0$ and $m\geq 1$, let $(\mathbb H, \phi)$ be sampled from the re-weighted measure $A^{n}L^{m-1}\qd_{0,1}(\gamma,\alpha)$, then independently sample $\omega_1,\ldots,\omega_{m-1}$ and $z_1,\ldots,z_n$ according to $\nu_{\phi}^{\#}$ and $\mu_{\phi}^{\#}$ respectively. Let $\qd_{n,m}(\gamma,\alpha)$ denote the law of $(\mathbb H,\phi,0,\omega_1,\ldots,\omega_{m-1},z_1,\ldots,z_n)$ viewed as a measure on equivalence class $\mathscr{D}_{n,m}$.
\end{Def}
\begin{figure}[H]
	\includegraphics[scale=0.6]{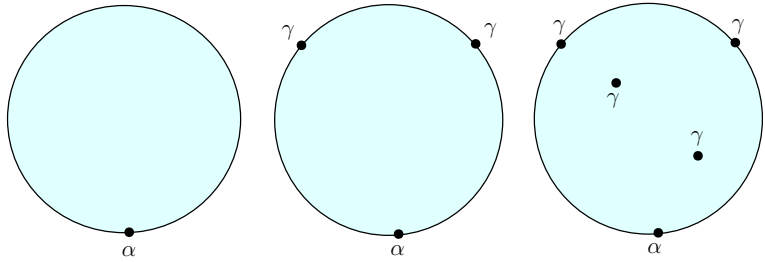}
	\centering
	\caption{Left: $\qd_{0,1}(\gamma,\alpha)$ Middle: $\qd_{0,3}(\gamma,\alpha)$ Right: $\qd_{2,3}(\gamma,\alpha)$}
	\label{fig:Def_general_qd}
\end{figure}
More generally, for fixed $\ell_1,\ldots,\ldots,\ell_m$, like in \cite[Section 2.6]{Weldisk}, we can define the measure $\qd_{1,m}(\gamma,\alpha)(\ell_1,\ell_2,\ldots,\ell_m)$ using disintegration and it satisfies
\begin{equation}
    \qd_{1,m}(\gamma,\alpha) =\int_0^\infty\ldots \int_{0}^\infty \qd_{1,m}(\gamma,\alpha;\ell_1,\ldots,\ell_m)d\ell_1\ldots d\ell_m.
\end{equation}
\subsection{Conformal welding of thin and thick disks}
\begin{Lemma}\label{connection between QT and QD alpha}
For $W>\frac{\gamma^2}{2}$, let $\beta_W = \gamma+\frac{2-W}{\gamma}<Q$. Then we have 
\begin{equation}
    \qd_{0,3}(\gamma,\beta_W) =C\cdot \qt(2,2,W)
\end{equation}
for some finite constant $C$.
\end{Lemma}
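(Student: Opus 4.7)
The plan is to unpack $\qd_{0,3}(\gamma,\beta_W)$ into its underlying Liouville field, convert the two quantum-typical boundary marked points into $\gamma$-insertions via Girsanov, use conformal covariance to fix the embedding, and finally integrate out the leftover bulk $\gamma$-insertion against the $A^{-1}$ area factor to match the LCFT description of $\qt(2,2,W)$ on the strip.

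First, combining Definitions \ref{def:general qd} and \ref{Def: general QD}, a sample $(\mathbb H,\phi,0,\omega_1,\omega_2)$ from $\qd_{0,3}(\gamma,\beta_W)$ has joint law (up to a universal constant)
\begin{equation*}
A^{-1}\,\LF_{\mathbb H}^{(\gamma,i),(\beta_W,0)}(d\phi)\,\nu_\phi(d\omega_1)\,\nu_\phi(d\omega_2),\qquad A:=\mu_\phi(\mathbb H).
\end{equation*}
The boundary Girsanov identity $\nu_\phi(d\omega)\,\LF_{\mathbb H}^{(\cdots)}(d\phi)=\LF_{\mathbb H}^{(\cdots),(\gamma,\omega)}(d\phi)\,d\omega$, which follows directly from the defining limit of $\LF_{\mathbb H}^{(\gamma,\omega)}$ and of $\nu_\phi$, rewrites this as
\begin{equation*}
A^{-1}\,\LF_{\mathbb H}^{(\gamma,i),(\beta_W,0),(\gamma,\omega_1),(\gamma,\omega_2)}(d\phi)\,d\omega_1\,d\omega_2.
\end{equation*}

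Next, for each $(\omega_1,\omega_2)$ let $f_{\omega_1,\omega_2}\in\conf(\mathbb H)$ be the unique Möbius map sending the three boundary marked points $(0,\omega_1,\omega_2)$ to $(1,\infty,0)$, and set $z_0:=f_{\omega_1,\omega_2}(i)\in\mathbb H$. Changing the embedding via this map and invoking Proposition \ref{pushforward of LCFT} (handling the insertion at $\infty$ via the standard $r\to\infty$ limit), the joint measure becomes, up to explicit $|f'|$-factors at the four insertion points,
\begin{equation*}
A^{-1}\,\LF_{\mathbb H}^{(\beta_W,1),(\gamma,\infty),(\gamma,0),(\gamma,z_0)}(d\widetilde\phi)\,d\omega_1\,d\omega_2.
\end{equation*}
Performing the change of variables $(\omega_1,\omega_2)\mapsto z_0$ generates a Jacobian that combines with the four conformal-derivative prefactors into a single numerical constant, independent of $z_0$ and $\widetilde\phi$; this cancellation is forced by conformal covariance of the entire construction.

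Finally, integrating $z_0$ over $\mathbb H$ via the bulk identity $\int_{\mathbb H}\LF_{\mathbb H}^{(\cdots),(\gamma,z_0)}(d\widetilde\phi)\,d^2z_0=\mu_{\widetilde\phi}(\mathbb H)\,\LF_{\mathbb H}^{(\cdots)}(d\widetilde\phi)$, which follows from \cite[Lemma 2.2]{FZZ} together with the definition of $\mu_\phi$, yields the factor $\mu_{\widetilde\phi}(\mathbb H)=A$, exactly cancelling the $A^{-1}$. What remains is a constant multiple of $\LF_{\mathbb H}^{(\beta_W,1),(\gamma,\infty),(\gamma,0)}$, and by \cite[Lemma 2.20]{InteofSLE} with $\beta=\gamma$, $\beta_3=\beta_W$ and $s_3=0$, this is (a constant times) $\exp_*\LF_{\mathcal S}^{(\gamma,\pm\infty),(\beta_W,0)}$, which by Definition \ref{Thick quantum triangle} is the field law of $\qt(2,2,W)$ up to a finite constant. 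The main technical obstacle is the middle step: one must verify that the Möbius Jacobian together with the four conformal-derivative prefactors combines into a pure constant with no residual $z_0$- or field-dependence, a fact dictated a priori by conformal covariance but still requiring a short explicit computation.
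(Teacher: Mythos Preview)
Your overall strategy mirrors the paper's: convert the two $\nu_\phi$-sampled boundary points into $\gamma$-insertions via Girsanov, fix an embedding with three boundary marked points, and identify the resulting field with $\qt(2,2,W)$. You are also more explicit than the paper about carrying the bulk $(\gamma,i)$-insertion and the $A^{-1}$ factor through the argument.

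However, the step you flag as the ``main technical obstacle'' is a genuine gap, and your assertion that the cancellation ``is forced by conformal covariance of the entire construction'' does not hold. Write $g=f_{\omega_1,\omega_2}$ for the M\"obius map fixing the three boundary points (say $g(0)=0,\,g(\omega_1)=-1,\,g(\omega_2)=1$, to avoid the $\infty$-limit). The three $\gamma$-insertions have $\Delta_\gamma=1$, so their factors $|g'(i)|^{2}$, $|g'(\omega_1)|$, $|g'(\omega_2)|$ do combine cleanly with the Jacobian $|\partial(\omega_1,\omega_2)/\partial z_0|$. But the $\beta_W$-insertion at $0$ has $\Delta_{\beta_W}\neq 1$ in general, and its contribution $|g'(0)|^{\Delta_{\beta_W}}$ does \emph{not} cancel: a direct computation (using $g(0)=0,\,g(i)=z_0$, so $|g'(0)|=|z_0|^2/\Im z_0$ by Lemma~\ref{derivative}) leaves the residual factor
\[
|g'(i)|^{2}\,|g'(0)|^{\Delta_{\beta_W}}\,|g'(\omega_1)|\,|g'(\omega_2)|\cdot\left|\frac{\partial(\omega_1,\omega_2)}{\partial z_0}\right|
\;=\;2\left(\frac{|z_0|^{2}}{\Im z_0}\right)^{\Delta_{\beta_W}-1},
\]
which is \emph{not} constant unless $W=2$. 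This spoils your step 5: integrating $\LF_{\mathbb H}^{(\gamma,z_0),(\beta_W,0),(\gamma,-1),(\gamma,1)}$ against $(|z_0|^2/\Im z_0)^{\Delta_{\beta_W}-1}\,d^2z_0$ is not $A$ times the field without the bulk insertion, so the $A^{-1}$ does not cancel.

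The paper sidesteps this by never dragging the bulk insertion through a change of embedding at all: it works directly with $\LF_{\mathbb H}^{(\beta_W,0)}$, adds the two boundary $\gamma$-insertions via Girsanov, and disintegrates over $(x,y)$ to fix the embedding at $(-1,0,1)$. The point you are missing is that the $A^{-1}$ weighting together with forgetting the bulk $\gamma$-point should be handled \emph{before} fixing the three-boundary-point embedding, reducing $\qd_{0,1}(\gamma,\beta_W)$ to a surface whose only boundary singularity is $\beta_W$; once that reduction is made, no four-point Jacobian computation is needed.
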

\begin{proof}
   After applying \cite[Lemma 2.31]{InteofSLE} twice, we have 
   \begin{equation}
       \LF_{\mathbb H}^{(\beta_W,0)}(d\phi)\nu_{\phi}(dx)\nu_{\phi}(dy) = \LF_{\mathbb H}^{(\beta_W,0),(\gamma,x),(\gamma,y)}(d\phi)dxdy.
   \end{equation}
   By disintegration, we can fix an embedding of $\qd_{0,3}(\gamma,\alpha)$ to be $(\mathbb H,\phi,-1,0,1)$ so that $\phi$ has the law of $C\cdot\LF_{\mathbb H}^{(\beta_W,0),(\gamma,-1),(\gamma,1)}$ for some finite constant $C$. Let $f:\mathbb H\to \mathcal S$ be the conformal map such that $f(-1)=-\infty,f(1)=\infty$ and $f(0)=0$. Therefore, by Definition \ref{Thick quantum triangle}, it has the law of $\qt(2,2,W)$ under push-forward of $f$. This completes the proof.
\end{proof}
\begin{Lemma}\label{Girsanov on bulk insertion}
	Recall $\LF_{\mathbb H}^{(\beta_i,z_i)_i}$ from Definition \ref{LCFT with boundary insertions}. We have
	\begin{equation}
		\LF_{\mathbb H}^{(\beta_i,z_i)_i}\left[f(\phi)\int_{\mathbb H}g(u)\mu_{\phi}(du) \right]=\LF_{\mathbb H}^{(\beta_i,z_i)_i,(\gamma,u)}[f(\phi)]g(u)d^2u
	\end{equation}
	for non-negative measurable functions $f$ and $g$. 
\end{Lemma}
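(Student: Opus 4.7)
The plan is to reduce everything to the definition of the Liouville field with a bulk insertion as a circle-average limit and then exchange the order of integration. Recall that the quantum area measure admits the circle-average approximation
\begin{equation*}
\mu_\phi(du) = \lim_{\varepsilon \to 0} \varepsilon^{\gamma^2/2} e^{\gamma \phi_\varepsilon(u)} \, d^2 u,
\end{equation*}
where the convergence holds in the vague topology almost surely under $\LF_{\mathbb H}^{(\beta_i,z_i)_i}$ (after conditioning on the shift parameter, say). Meanwhile, by Definition \ref{Def one bulk one boundary} and the construction recalled in Section \ref{sec:LCFT},
\begin{equation*}
\LF_{\mathbb H}^{(\beta_i,z_i)_i,(\gamma,u)}(d\phi) = \lim_{\varepsilon \to 0} \varepsilon^{\gamma^2/2} e^{\gamma \phi_\varepsilon(u)} \LF_{\mathbb H}^{(\beta_i,z_i)_i}(d\phi)
\end{equation*}
in the vague topology of measures on $H^{-1}(\mathbb H)$; this is precisely the Cameron--Martin / Girsanov identification of an additional bulk $\gamma$-insertion at $u$ with the reweighting by the regularized vertex operator $\varepsilon^{\gamma^2/2} e^{\gamma \phi_\varepsilon(u)}$.

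With these two limit identifications in hand, I would proceed as follows. First, I reduce to the case that $f$ and $g$ are non-negative, bounded, and compactly supported (on $H^{-1}(\mathbb H)$ and $\mathbb H$ respectively); the general case then follows by decomposition into positive and negative parts and monotone approximation. Second, I write
\begin{align*}
\LF_{\mathbb H}^{(\beta_i,z_i)_i}\!\left[f(\phi) \int_{\mathbb H} g(u)\mu_\phi(du)\right]
&= \LF_{\mathbb H}^{(\beta_i,z_i)_i}\!\left[f(\phi) \lim_{\varepsilon \to 0}\int_{\mathbb H} g(u)\varepsilon^{\gamma^2/2} e^{\gamma \phi_\varepsilon(u)} d^2u\right] \\
&= \lim_{\varepsilon \to 0}\int_{\mathbb H} g(u) \, \varepsilon^{\gamma^2/2} \LF_{\mathbb H}^{(\beta_i,z_i)_i}\!\left[f(\phi) e^{\gamma \phi_\varepsilon(u)}\right] d^2u,
\end{align*}
where the first equality is the circle-average definition of $\mu_\phi$ and the second is Fubini--Tonelli together with an exchange of limit and integral. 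Third, I use the regularized vertex operator identity to rewrite the inner expression as $\LF_{\mathbb H}^{(\beta_i,z_i)_i,(\gamma,u)}[f(\phi)]$ in the $\varepsilon \to 0$ limit, giving
\begin{equation*}
\int_{\mathbb H} g(u) \LF_{\mathbb H}^{(\beta_i,z_i)_i,(\gamma,u)}[f(\phi)]\,d^2u,
\end{equation*}
which is the claimed formula (interpreting the right-hand side of the lemma with the implicit $u$-integration).

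The main obstacle is justifying the exchange of the $\varepsilon \to 0$ limit with the Lebesgue integral in $u$ and with the Liouville expectation. For non-negative $f$ and $g$, the integrand is positive, and standard Gaussian multiplicative chaos theory (in particular $L^1$-convergence of $\varepsilon^{\gamma^2/2} e^{\gamma \phi_\varepsilon(u)} d^2u$ against bounded compactly supported test functions, uniformly away from the insertion points $z_i$) delivers the required dominated/monotone convergence. One does need to check that the log-singularities in $\phi$ at the insertions $z_i$ do not create integrability issues near those points; but since $f,g$ are bounded with compact support away from the $z_i$ (or one uses the standard fact that the GMC measure $\mu_\phi$ assigns finite mass to compacts even near subcritical $\alpha$-insertions with $\alpha < Q$), this is handled by the standard GMC second-moment estimate. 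With these technicalities dispatched, the identity drops out of the two limit identifications and Fubini.
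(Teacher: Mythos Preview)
Your proposal is correct and follows essentially the same approach as the paper's cited reference \cite[Lemma~2.33]{InteofSLE}: the Girsanov/Cameron--Martin identification of a bulk $\gamma$-insertion with reweighting by the regularized vertex operator $\varepsilon^{\gamma^2/2}e^{\gamma\phi_\varepsilon(u)}$, combined with Fubini and the circle-average approximation of the GMC measure. One small remark: since the lemma is stated only for non-negative $f$ and $g$, the reduction to positive and negative parts is unnecessary, and your monotone-class/Fatou-type justifications suffice directly.
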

\begin{proof}
	The proof is identical to that of \cite[Lemma 2.33]{InteofSLE} with $\widehat{\mathbb C}$ replaced by $\mathbb H$. 
\end{proof}
Next we recall the decomposition theorem of thin quantum disk with one additional typical boundary marked point that is crucial to our derivation of the field law. 
\begin{Lemma}[{\cite[Proposition 4.4]{Weldisk}}]\label{Lemma decomp of thin disk}
    For $W\in (0,\frac{\gamma^2}{2})$, we have
    \begin{equation}\label{decomp of thin disk with marked pt}
        \mathcal M^{\disk}_{2,\bullet}(W) = \left(1-\frac{2}{\gamma^2}W \right)^2\mathcal M^{\disk}_{0,2}(W)\times \mathcal M^{\disk}_{2,\bullet}(\gamma^2-W)\times\mathcal M^{\disk}_{0,2}(W).
    \end{equation}
\end{Lemma}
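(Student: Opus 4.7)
The plan is to use the Poisson point process (PPP) definition of the thin quantum disk (Definition \ref{Def: thin quantum disk}) together with the Slivnyak-Mecke identity. Recall that a sample from $\mathcal M^{\disk}_{0,2}(W)$ for $W \in (0,\gamma^2/2)$ is constructed by first sampling $T$ from $(1-\frac{2}{\gamma^2}W)^{-2}\leb_{\mathbb R_+}$ and then a PPP $\{(u_i,\mathcal D_i)\}$ on $[0,T]$ with mark space $\mathcal M^{\disk}_{0,2}(\gamma^2-W)$, concatenated according to the linear order on the $u_i$. The measure $\mathcal M^{\disk}_{2,\bullet}(W)$ is obtained (analogously to Definition \ref{Def one addition boundary marked point}) by weighting by the total boundary length $L = \sum_i L_i$ and sampling the marked boundary point from the quantum boundary length measure $\nu_\phi$.

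The first key step is to apply Slivnyak-Mecke to the sum $\sum_i L_i$: weighting the PPP by this sum is equivalent to introducing a distinguished extra atom $(u^*, \mathcal D^*)$, with $u^* \in [0,T]$ uniform and $\mathcal D^*$ sampled from $L(\mathcal D)\,\mathcal M^{\disk}_{0,2}(\gamma^2-W)(d\mathcal D)$, while the remaining atoms continue to form an independent PPP with the original intensity. Combining the length weighting on $\mathcal D^*$ with the uniform sampling of a boundary marked point on it yields exactly $\mathcal M^{\disk}_{2,\bullet}(\gamma^2-W)$ by the thick case of the definition. The remaining atoms split into independent PPPs on $[0,u^*]$ and $[u^*, T]$ by the restriction property of Poisson processes.

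The second step is the change of variables $(T, u^*) \mapsto (T_1, T_2)$ with $T_1 = u^*$ and $T_2 = T - u^*$, under which $dT\,du^* = dT_1\,dT_2$. On each side, $dT_i$ together with the PPP on $[0,T_i]$ recovers $\mathcal M^{\disk}_{0,2}(W)$ once the prefactor $(1-\frac{2}{\gamma^2}W)^{-2}$ is absorbed. The original definition supplies one prefactor $(1-\frac{2}{\gamma^2}W)^{-2}$, while identifying the two halves as thin disks consumes $(1-\frac{2}{\gamma^2}W)^{-4}$, giving the net factor $(1-\frac{2}{\gamma^2}W)^{2}$ on the right side. The concatenation of the three pieces in the order (left thin disk, marked bead, right thin disk) reproduces the concatenation defining $\mathcal M^{\disk}_{2,\bullet}(W)$.

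The main obstacle is not the Palm computation itself but the bookkeeping of constants and the check that the gluing of the two halves to the middle marked bead is compatible with the concatenation convention used to define thin quantum disks, which are beaded rather than simply connected surfaces; one must also justify Slivnyak-Mecke in this $\sigma$-finite setting with the infinite intensity measure $\leb_{[0,T]} \times \mathcal M^{\disk}_{0,2}(\gamma^2-W)$, which is standard but requires a mild truncation argument in $T$ and in the bead boundary length.
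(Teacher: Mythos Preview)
Your proposal is correct and follows the standard Palm/Slivnyak--Mecke argument. Note, however, that the present paper does not actually prove this lemma: it is simply quoted from \cite[Proposition 4.4]{Weldisk}. The proof in \cite{Weldisk} proceeds exactly along the lines you describe --- weight the Poisson point process of beads by the total (one-sided) boundary length, apply the Mecke equation to produce a distinguished length-biased bead at a Lebesgue-distributed position $u^*\in[0,T]$, and then use the change of variables $(T,u^*)\mapsto(T_1,T_2)=(u^*,T-u^*)$ to factor the remaining beads into two independent thin disks, with the prefactor bookkeeping giving $(1-\tfrac{2}{\gamma^2}W)^2$. Two small points of precision: the extra atom location $u^*$ is sampled from Lebesgue measure on $[0,T]$ (not the uniform \emph{probability} measure), which is what makes the Jacobian computation go through; and the boundary-length weight $L_i$ should be the length of one fixed boundary arc of each bead (matching Definition~\ref{Def one addition boundary marked point}, where the extra point lies on $\mathbb R\subset\partial\mathcal S$), so that the marked bead is exactly $\mathcal M^{\disk}_{2,\bullet}(\gamma^2-W)$.
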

\begin{figure}[H]
	\includegraphics[scale=0.8]{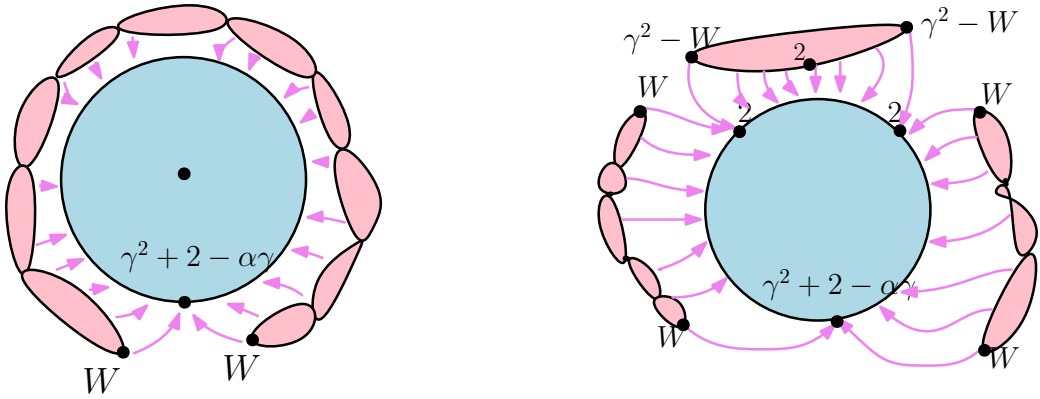}
	\centering
	\caption{When $0<W<\frac{\gamma^2}{2}$, welding $\qd_{1,1}(\gamma,\alpha)$ with $\mathcal M^{\disk}_{0,2}(W)$ is equivalent to first welding $\qd_{0,3}(\gamma,\alpha)$ with three independent quantum disks $\mathcal M^{\disk}_{0,2}(W)$, $\mathcal M^{\disk}_{2,\bullet}(\gamma^2-W)$ and $\mathcal M^{\disk}_{0,2}(W)$ separately then de-weighting all the three additional boundary marked points and sampling an bulk marked point in the blue region according to quantum area measure.}
	\label{fig:weld_thin_disks}
\end{figure}
\begin{Prop}\label{thin+qd11: field law}
    Fix $0<\gamma<2$ and $0<W<\frac{\gamma^2}{2}$. For $\alpha\leq \gamma<Q$, let $W_\alpha = 2-(\alpha-\gamma)\gamma\geq 2>\frac{\gamma^2}{2}$. Let $(\mathbb H,\phi,\eta, 0,i)$ be an embedding of 
    \begin{equation}
        \int_{0}^{\infty}\qd_{1,1}(\gamma,\alpha;\ell)\times \mathcal M_{0,2}^{\disk}(W;\ell)d\ell.
    \end{equation}
    Then $\phi$ has the law of $C\cdot\LF_{\mathbb H}^{(\beta_{2W+W_\alpha},0),(\gamma,i)}$ for some finite constant $C$. Notice that $\alpha=\beta_{W_\alpha}=Q+\frac{\gamma}{2}-\frac{W_\alpha}{\gamma}$. 
\end{Prop}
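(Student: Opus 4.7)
The plan is to reduce the welding of $\qd_{1,1}(\gamma,\alpha)$ with a thin disk to a sequence of three weldings of a quantum triangle with thick and thin disks, for which Theorem \ref{qt+qd,v2} provides an explicit LCFT description. The key tools are Lemma \ref{Lemma decomp of thin disk} for the thin-disk decomposition, Lemma \ref{connection between QT and QD alpha} identifying $\qd_{0,3}(\gamma,\alpha)$ with a quantum triangle, and Lemma \ref{Girsanov on bulk insertion} converting bulk marked points into LCFT insertions.

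First, I would enrich both sides of the welding by sampling typical boundary marked points: two on the $\qd_{1,1}(\gamma,\alpha)$ side (via Definition \ref{Def: general QD}, producing $\qd_{1,3}(\gamma,\alpha)$) and one on the $\mathcal M^{\disk}_{0,2}(W)$ side (via Definition \ref{Def one addition boundary marked point}, producing $\mathcal M^{\disk}_{2,\bullet}(W)$). Since the two sides share the same quantum boundary length measure along the weld, these sampled marked points can be arranged to be consistent on the welded surface. Applying Lemma \ref{Lemma decomp of thin disk},
\begin{equation*}
    \mathcal M^{\disk}_{2,\bullet}(W) = \left(1-\frac{2W}{\gamma^2}\right)^2 \mathcal M^{\disk}_{0,2}(W) \times \mathcal M^{\disk}_{2,\bullet}(\gamma^2-W) \times \mathcal M^{\disk}_{0,2}(W),
\end{equation*}
so the enriched welding factorizes into three separate weldings of the three boundary arcs of $\qd_{1,3}(\gamma,\alpha)$ against the three disks in this decomposition.

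Next I absorb the bulk marked point of $\qd_{1,3}(\gamma,\alpha)$ as a $(\gamma,i)$ LCFT insertion via Lemma \ref{Girsanov on bulk insertion}, and identify $\qd_{0,3}(\gamma,\alpha) = C\cdot\qt(2,2,W_\alpha)$ with $\alpha = \beta_{W_\alpha}$ via Lemma \ref{connection between QT and QD alpha}. Under this identification, the boundary marked point $0$ with insertion $\beta_{W_\alpha}=\alpha$ corresponds to the triangle vertex of weight $W_\alpha$, and the two auxiliary boundary marked points correspond to the two vertices of weight $2$. The central thick disk is then welded to the triangle along the side between the two weight-$2$ vertices, while the two outer thin disks are welded along the two sides incident to the $W_\alpha$ vertex. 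Applying Theorem \ref{qt+qd,v2} three times (with relabeling of vertices to accommodate each side), the cumulative weight shifts produce a final quantum triangle of weights $(2+\gamma^2,\ 2+\gamma^2,\ W_\alpha + 2W)$. Since $\beta_{2+\gamma^2} = 0$, the LCFT description (Definition \ref{Thick quantum triangle}) gives a boundary insertion $\beta_{2W+W_\alpha}$ at $0$ together with trivial insertions at the two auxiliary marked points. De-weighting the three auxiliary marked points (all carrying weight-zero LCFT insertions) recovers the marginal field law $\phi\sim C\cdot\LF_{\mathbb H}^{(\beta_{2W+W_\alpha},0),(\gamma,i)}$.

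The main obstacle is the careful tracking of constants, Radon-Nikodym factors, and marked-point matchings through the three applications of Theorem \ref{qt+qd,v2} together with Lemmas \ref{Lemma decomp of thin disk}, \ref{connection between QT and QD alpha}, and \ref{Girsanov on bulk insertion}. In particular, one must verify that the three welded disks attach to the correct sides of the triangle so that the claimed weight shifts accumulate at the $W_\alpha$ vertex, and that integration against the typical marked-point sampling measures (boundary quantum length on both sides of the weld) correctly cancels the extra boundary-length factors introduced by the thin-disk decomposition and by the definitions of $\qd_{1,3}(\gamma,\alpha)$ and $\mathcal M^{\disk}_{2,\bullet}(W)$.
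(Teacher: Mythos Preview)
Your proposal is correct and follows essentially the same approach as the paper: both reduce the welding to three applications of Theorem \ref{qt+qd,v2} on $\qd_{0,3}(\gamma,\alpha)=C\cdot\qt(2,2,W_\alpha)$ via the thin-disk decomposition of Lemma \ref{Lemma decomp of thin disk}, arriving at the triangle $(2+\gamma^2,\,2+\gamma^2,\,W_\alpha+2W)$ whose two auxiliary insertions have weight $\beta_{2+\gamma^2}=0$, and then use Lemma \ref{Girsanov on bulk insertion} for the bulk point. The only cosmetic difference is the order of operations---the paper starts from the four-surface welding and relates it back to $\mathrm{Weld}(\qd_{1,1}(\gamma,\alpha),\mathcal M^{\disk}_{0,2}(W))$ via explicit reweighting chains (equations \eqref{first step de-weighting}--\eqref{final formula}), whereas you enrich first and de-weight last; your caveat about tracking the boundary-length factors is exactly the content of those chains.
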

\begin{proof}
    Fix $0<W<\frac{\gamma^2}{2}$ and $\alpha\leq \gamma$. Start with the following four quantum surfaces: 
    \begin{equation}
        \qd_{0,3}(\gamma,\alpha),\mathcal M^{\disk}_{0,2}(W), \mathcal M^{\disk}_{0,2}(\gamma^2-W)\ \text{and} \ \mathcal M^{\disk}_{0,2}(W).
    \end{equation}
    Notice that $\qd_{0,3}(\gamma,\alpha)$ has one $\alpha$ insertion and two $\gamma$ insertions along its boundary. First, weld two $\mathcal{M}^{\disk}_{0,2}(W)$ disks along the boundaries of $\qd_{0,3}(\gamma,\alpha)$ with $\gamma$ and $\alpha$ insertions, then weld $\mathcal{M}^{\disk}_{0,2}(\gamma^2-W)$ along the boundary of $\qd_{0,3}(\gamma,\alpha)$ with two $\gamma$ insertions. Precisely, we consider
    \begin{align}\label{first step de-weighting}
        \begin{split}
        &\weld \left(\qd_{0,3}(\gamma,\alpha), \mathcal M^{\disk}_{0,2}(W)\times \mathcal M^{\disk}_{0,2}(\gamma^2-W)\times \mathcal M^{\disk}_{0,2}(W) \right)\\
            &:=\int_{0}^{\infty}\left(\iint_0^\infty \mathcal M^{\disk}_{0,2}(W;\ell_1) \qd_{0,3}(\gamma,\alpha;\ell_1,\ell_2,\ell_3)\mathcal M^{\disk}_{0,2}(W;\ell_3) d\ell_1 d\ell_3\right)\mathcal M^{\disk}_{0,2}(\gamma^2-W;\ell_2)d\ell_2 \\
            &= \int_{0}^{\infty}\left(\iint_0^\infty \mathcal M^{\disk}_{0,2}(W;\ell_1) \qd_{0,3}(\gamma,\alpha;\ell_1,\ell_2,\ell_3)\mathcal M^{\disk}_{0,2}(W;\ell_3) d\ell_1 d\ell_3\right)\ell_2^{-1} \mathcal M^{\disk}_{2,\bullet}(\gamma^2-W;\ell_2)d\ell_2\\
            &=L_2^{-1}\cdot\weld(\qd_{0,3}(\gamma,\alpha),\mathcal M^{\disk}_{0,2}(W)\times\mathcal M^{\disk}_{2,\bullet}(\gamma^2-W)\times\mathcal M^{\disk}_{0,2}(W))\\
            &= \left(1-\frac{2}{\gamma^2}W\right)^{-2}\cdot L_2^{-1}\cdot\weld\left(\qd_{0,3}(\gamma,\alpha),\mathcal M^{\disk}_{2,\bullet}(W) \right),
        \end{split}
    \end{align}
    where $L_2$ denotes the quantum length of welding interface between $\qd_{0,3}(\gamma,\alpha)$ and $\mathcal{M}^{\disk}_{0,2}(\gamma^2-W)$ and 
    \begin{equation}\label{03+2dot}
        \weld\left(\qd_{0,3}(\gamma,\alpha),\mathcal M^{\disk}_{2,\bullet}(W) \right) := \int_{0}^\infty \qd_{0,3}(\gamma,\alpha;\ell)\times \mathcal M^{\disk}_{2,\bullet}(W;\cdot,\ell)d\ell. 
    \end{equation}
    In (\ref{03+2dot}), $\qd_{0,3}(\gamma,\alpha;\ell)$ represents the $\qd_{0,3}(\gamma,\alpha)$ conditioning on having total boundary length $\ell$ and $\mathcal M^{\disk}_{2,\bullet}(W;\cdot,\ell)$ represents the $\mathcal M^{\disk}_{2,\bullet}(W)$ conditioning on having left boundary length $\ell$. By de-weighting all the three marked points on the welding interface and sampling an additional bulk marked points in the inner region of (\ref{03+2dot}), we have
    \begin{align}\label{second step de-weighting}
        \begin{split}
            \weld\left(\qd_{0,3}(\gamma,\alpha),\mathcal M^{\disk}_{2,\bullet}(W) \right) &= \int_{0}^\infty \qd_{0,3}(\gamma,\alpha;\ell)\times \mathcal M^{\disk}_{2,\bullet}(W;\ell)d\ell\\
            &=\int_{0}^\infty\ell^2\cdot \qd_{0,1}(\gamma,\alpha;\ell)\times \mathcal M^{\disk}_{2,\bullet}(W;\ell)d\ell\\
            &= \int_{0}^{\infty} \ell^3\cdot \qd_{0,1}(\gamma,\alpha;\ell)\times\mathcal M^{\disk}_{0,2}(W;\ell) d\ell\\
            & = L_T^3\cdot\weld(\qd_{0,1}(\gamma,\alpha),\mathcal M^{\disk}_{0,2}(W))\\
            &=L_T^3\cdot A_I^{-1}\cdot \weld(\qd_{1,1}(\gamma,\alpha),\mathcal M^{\disk}_{0,2}(W)),
        \end{split}
    \end{align}
    where $L_T$ denotes the quantum length of the total welding interface and $A_I$ denotes the quantum area of $\qd_{0,1}(\gamma,\alpha)$. Hence, by (\ref{first step de-weighting}), (\ref{second step de-weighting}), we have
    \begin{align}\label{final formula}
        \begin{split}
            &\weld \left(\qd_{0,3}(\gamma,\alpha), \mathcal M^{\disk}_{0,2}(W)\times \mathcal M^{\disk}_{0,2}(\gamma^2-W)\times \mathcal M^{\disk}_{0,2}(W) \right)\\
            &=\left(1-\frac{2}{\gamma^2}W\right)^{-2}\cdot L_2^{-1}\cdot L_T^3\cdot A_I^{-1}\cdot \weld(\qd_{1,1}(\gamma,\alpha),\mathcal M^{\disk}_{0,2}(W)).
        \end{split}
    \end{align}
    By applying Theorem \ref{qt+qd,v2} three times, we know that suppose $(\mathbb H,\phi,\eta_1,\eta_2,\eta_3,0,1,-1)$ is an embedding of 
    \begin{equation*}
        \weld \left(\qd_{0,3}(\gamma,\alpha), \mathcal M^{\disk}_{0,2}(W)\times \mathcal M^{\disk}_{0,2}(\gamma^2-W)\times \mathcal M^{\disk}_{0,2}(W) \right),
    \end{equation*}
    then $\phi$ is independent of $(\eta_1,\eta_2,\eta_3)$ and has the law of $C\cdot\LF_{\mathbb H}^{(\beta_{2W+W_\alpha},0),(0,-1),(0,1)}$ for some finite constant $C$. Here we emphasize the fact that weights of insertions $-1$ and $1$ are both zero due to the computation 
    \begin{equation*}
    	\beta_{2+W+(\gamma^2-W)}=\beta_{2+\gamma^2}=0,
    \end{equation*}
    where the $2$ comes from the insertion $\gamma$ on $\qd_{0,3}(\gamma,\alpha)$, the $W$ comes from $\mathcal M^{\disk}_{0,2}(W)$ and the $\gamma^2-W$ comes from  $\mathcal M_{0,2}^{\disk}(\gamma^2-W)$. Finally, by quantum surface relationship (\ref{final formula}) and Lemma \ref{Girsanov on bulk insertion}, we know that suppose $(\mathbb H,\phi, 0, i, \eta)$  is an embedding of $\weld\left(\qd_{1,1}(\gamma,\alpha),\mathcal M^{\disk}_{0,2}(W) \right)$, then $\phi$ has the law of $C\cdot\LF_{\mathbb H}^{(\beta_{2W+W_\alpha},0),(\gamma,i)}$ for some finite constant $C$. 
\end{proof}
\subsection{Proof of Theorem \ref{Conditional Welding}}
In this section, we prove Theorem \ref{Conditional Welding} by inductively welding thin disks along the $\qd_{1,1}$.
\begin{figure}[H]
	\includegraphics[scale=0.8]{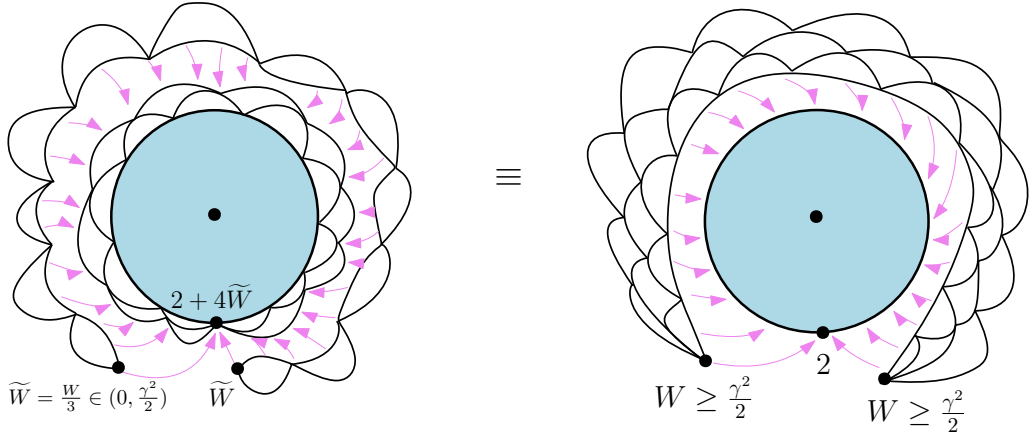}
	\centering
	\caption{Illustration of the induction procedure in the proof of Theorem \ref{Conditional Welding}: suppose $W\geq\frac{\gamma^2}{2}$ and $\widetilde{W}= \frac{W}{3}\in (0,\frac{\gamma^2}{2})$, then welding a thick quantum disk $\mathcal M^{\disk}_{0,2}(W)$ is equivalent to welding three thin quantum disks $\mathcal M^{\disk}_{0,2}(\widetilde{W})$. Notice that here we only care about the law of the underlying random field.}
	\label{fig:Induction}
\end{figure}
 
\begin{proof}[Proof of Theorem \ref{Conditional Welding}]
	By Proposition \ref{Curve Law Prop}, we have the correct curve law and know that the curve law is independent of the underlying random field. Therefore, it remains to derive the field law. Fix $0<\gamma<2$ and $0<W<\frac{\gamma^2}{2}$. For $\alpha\leq \gamma$, let $W_\alpha = 2-(\alpha-\gamma)\gamma\geq 2$. Let $(\mathbb H,\phi,\eta, 0,i)$ be an embedding of quantum surface
    \begin{equation}\label{4.4.1}
        \int_{0}^{\infty}\qd_{1,1}(\gamma,\alpha;\ell)\times \mathcal M_{0,2}^{\disk}(W;\ell)d\ell.
    \end{equation}
    By Proposition \ref{thin+qd11: field law}, $\phi$ has the law of $C\cdot\LF_{\mathbb H}^{(\beta_{2W+W_\alpha},0),(\gamma,i)}$ for some finite constant $C$. Therefore, in order to prove the Theorem \ref{Conditional Welding}, we only need to extend the range of $W$ from $(0,\frac{\gamma^2}{2})$ to $(0,\infty)$. For any $W\geq\frac{\gamma^2}{2}$, there exists some integer $n\geq 2$ such that $\widetilde{W}=\frac{W}{n}\in (0,\frac{\gamma^2}{2})$. Moreover, by Theorem \ref{Disk welding theorem}, we have
    \begin{align}
    	\begin{split}
    		&\int_{0}^{\infty}\qd_{1,1}(\ell)\times \mathcal M_{0,2}^{\disk}(W;\ell)d\ell\\
    		&=\int_{0}^{\infty}\qd_{1,1}(\ell)\underbrace{\mathcal M_{0,2}^{\disk}(\widetilde{W};\ell,\ell_1)\mathcal M_{0,2}^{\disk}(\widetilde{W};\ell_1,\ell_2) \ldots\mathcal M_{0,2}^{\disk}(\widetilde{W};\ell_{n-2},\ell_{n-1}) \mathcal M_{0,2}^{\disk}(\widetilde{W};\ell_{n-1},\ell_n)}_{n \ \text{thin disks}} d\vec{\ell}\\
    		&=\int_0^\infty\left(\left(\left(\left(\qd_{1,1}(\ell)\mathcal M_{0,2}^{\disk}(\widetilde{W};\ell,\ell_1)\right)\mathcal M_{0,2}^{\disk}(\widetilde{W};\ell_1,\ell_2)\right)\ldots  \mathcal M_{0,2}^{\disk}(\widetilde{W};\ell_{n-2},\ell_{n-1})\right) \mathcal M_{0,2}^{\disk}(\widetilde{W};\ell_{n-1},\ell_n)\right)d\vec{\ell},
    	\end{split}
    \end{align}
    where $d\vec{\ell}=d\ell d\ell_1,\ldots,d\ell_n$. Notice that $\qd_{1,1}=C\cdot\qd_{1,1}(\gamma,\gamma)$ by definition and $W_\gamma=2$. By applying Proposition \ref{thin+qd11: field law} $n$ times from the inner bracket to outer bracket, we have that suppose $(\mathbb H,\phi,\eta,0,i)$ is an embedding of $(\ref{4.4.1})$, then $\phi$ has the law of $C\cdot\LF_{\mathbb H}^{(\gamma,i),(\beta_{2+2n\widetilde{W}},0)}$, which is the same as $C\cdot\LF_{\mathbb H}^{(\gamma,i),(\beta_{2+2W},0)}$ for some finite constant $C$. This completes the proof. 
\end{proof}
\section{Proof of Theorem \ref{Main welding theorem} via uniform embeddings of quantum surfaces}\label{sec:proof of main based on conditional welding}
\subsection{Uniform embedding of quantum surfaces}
To start, recall that $\conf(\mathbb H)$ is the group of conformal automorphisms of $\mathbb H$ where group multiplication $\cdot$ is the function composition $f\cdot g = f\circ g$. Let $\mathbf m_{\mathbb H}$ be a Haar measure on $\conf(\mathbb H)$, which is both left and right invariant. Suppose $\mathfrak{f}$ is sampled from $\mathbf m_{\mathbb H}$ and $\phi\in H^{-1}(\mathbb H)$, then we call the random function $\mathfrak{f}\bullet_\gamma\phi=\phi\circ \mathfrak{f}^{-1}+Q|\log(\mathfrak{f}^{-1})'|$ the \textit{uniform embedding} of $(\mathbb H,\phi)$ via $\mathbf{m}_{\mathbb H}$. By invariance property of Haar measure, the law of $\mathfrak{f}\bullet_\gamma\phi$ only depends on $(\mathbb H,\phi)$ as quantum surface. Let $(z_i)_{1\leq i\leq n}\in \mathbb H, (s_j)_{1\leq j\leq m}\in \partial\mathbb H$ be groups of bulk and boundary marked points respectively. Suppose $(\mathbb H,h,z_1\ldots,z_n,s_1,\ldots,s_m)$ is a marked quantum surface, then we call $\mathbf{m}_{\mathbb H}\ltimes (\mathbb H,h,z_1\ldots,z_n,s_1,\ldots,s_m)$ the \textit{uniform embedding} of $(\mathbb H,h,z_1\ldots,z_n,s_1,\ldots,s_m)$ via $\mathbf{m}_{\mathbb H}$. 
\begin{Lemma}[{\cite[Lemma $3.7$]{FZZ}}]\label{decomp of Haar measure}
    Define three measures $A,N,K$ on the conformal automorphism group $\conf(\mathbb H)$ on $\mathbb H$ as follows. Sample $\mathbf t$ from $1_{t>0}\frac{1}{t}dt$ and let $a:z\mapsto \mathbf{t}z$. Sample $\mathbf s$ from Lebesgue measure on $\mathbb R$ and let $n:z\mapsto z+ \mathbf s$. Sample $\mathbf u$ from $\mathbbm{1}_{\lbrace -\frac{\pi}{2}<u<\frac{\pi}{2}\rbrace}du$ and let $k:z\mapsto \frac{z\cos\mathbf u-\sin\mathbf u}{z\sin\mathbf u+\cos\mathbf u}$. Let $A,N,K$ be the law of $a,n,k$ respectively, then the law of $a\circ n\circ k$ under $A\times N\times K$ is equal to $\mathbf{m}_{\mathbb H}$.
\end{Lemma}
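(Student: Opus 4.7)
The identity is a version of the Iwasawa (more precisely $ANK$) decomposition for $\conf(\mathbb H)\cong \mathrm{PSL}(2,\mathbb R)$. Since $\mathrm{PSL}(2,\mathbb R)$ is unimodular, its Haar measure is unique up to a positive scalar, so it suffices to show that the pushforward of $A\times N\times K$ under the map $\Phi\colon (a,n,k)\mapsto a\circ n\circ k$ is a left-invariant measure on $\conf(\mathbb H)$. The plan is to carry this out in two steps: verify that $\Phi$ is a bijection (so that the pushforward really is a measure on all of $\conf(\mathbb H)$), and then identify the pushforward with the standard hyperbolic-volume-times-rotation measure, which is manifestly left-invariant.

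For bijectivity of $\Phi$: a direct computation gives $k_u(i)=\frac{i\cos u-\sin u}{i\sin u+\cos u}=i$, so every $k_u$ fixes $i$; and $a_t\circ n_s(i)=ts+ti$, so $(t,s)\mapsto a_t\circ n_s(i)$ is a bijection $\mathbb R_+\times\mathbb R\to\mathbb H$. Given $g\in\conf(\mathbb H)$, write $g(i)=x+yi$ and set $(t,s)=(y,x/y)$; then $(a_t\circ n_s)^{-1}\circ g$ fixes $i$. The stabilizer of $i$ in $\conf(\mathbb H)$ is the image of $\mathrm{SO}(2)$ in $\mathrm{PSL}(2,\mathbb R)$, and the map $u\mapsto k_u$ gives a bijection from $(-\pi/2,\pi/2)$ onto this stabilizer. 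Hence the remaining factor is uniquely $k_u$ for some $u\in(-\pi/2,\pi/2)$.

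For the identification of the pushforward: parametrize $\conf(\mathbb H)$ by the global chart $g\mapsto(g(i),\arg g'(i))\in \mathbb H\times\mathbb R/2\pi\mathbb Z$ (which is a chart precisely by the bijectivity above). The measure $\mu=(\Im z)^{-2}\,d^2z\,d\theta$ on $\mathbb H\times\mathbb R/2\pi\mathbb Z$ is left-invariant: the hyperbolic area form $(\Im z)^{-2}d^2z$ is Möbius invariant, and the chain rule $\arg(g_0\circ g)'(i)=\arg g_0'(g(i))+\arg g'(i)$ shows that $g\mapsto g_0\circ g$ acts on the $\theta$-coordinate by a translation that is independent of $\theta$, so $d\theta$ is preserved. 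Therefore $\mu$ is a Haar measure on $\conf(\mathbb H)$. To compute $\Phi_*(A\times N\times K)$ in these coordinates, use the chain rule on $g=a_t\circ n_s\circ k_u$ together with $k_u'(i)=(\cos u+i\sin u)^{-2}=e^{-2iu}$ and $n_s'\equiv 1$, $a_t'\equiv t$, to get
\begin{equation*}
    g(i)=ts+ti,\qquad g'(i)=te^{-2iu},\qquad\text{so}\qquad (x,y,\theta)=(ts,\,t,\,-2u\bmod 2\pi).
\end{equation*}
The Jacobian of $(t,s,u)\mapsto(ts,t,-2u)$ is $2t$, so
\begin{equation*}
    (\Im z)^{-2}\,dx\,dy\,d\theta \;=\; \frac{2t}{t^2}\,dt\,ds\,du \;=\; 2\,\frac{dt}{t}\,ds\,du.
\end{equation*}
This says $\Phi_*(A\times N\times K)=\tfrac12\mu$ up to the multiplicative constant absorbed into the definition of $\mathbf m_{\mathbb H}$, completing the proof.

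The main obstacle is purely bookkeeping: keeping straight what the ``rotation angle'' coordinate $\theta=\arg g'(i)$ does under left multiplication, and checking that it transforms by a shift depending only on $(x,y)$. Once the chain-rule identity above is in hand, left-invariance of $(\Im z)^{-2}d^2z\,d\theta$ is immediate from the classical Möbius invariance of the hyperbolic area form, and the remaining computation is the Jacobian.
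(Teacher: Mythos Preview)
Your proof is correct. The paper does not actually prove this lemma; it is imported verbatim from \cite[Lemma~3.7]{FZZ} and stated without argument. Your write-up supplies a self-contained proof via the Iwasawa decomposition of $\mathrm{PSL}(2,\mathbb R)$: you verify that $(a,n,k)\mapsto a\circ n\circ k$ is a bijection onto $\conf(\mathbb H)$, then identify the pushforward of $\tfrac{dt}{t}\,ds\,du$ with (a constant multiple of) the manifestly left-invariant measure $(\Im z)^{-2}d^2z\,d\theta$ in the coordinates $(g(i),\arg g'(i))$. All the intermediate computations ($k_u(i)=i$, $k_u'(i)=e^{-2iu}$, the Jacobian $2t$, and the chain-rule argument for left-invariance of $d\theta$) check out. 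Since the Haar measure $\mathbf m_{\mathbb H}$ is only specified up to a positive scalar in this paper, the factor $\tfrac12$ is harmless.
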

\begin{Lemma}\label{Joint law}
	Suppose $\mathfrak{f}$ is sampled from $\mathbf m_{\mathbb H}$, then the joint law of $(\mathfrak{f}(0),\mathfrak{f}(i))$ is $\frac{1}{\Im q\cdot |p-q|^2}dpdq^2$.
\end{Lemma}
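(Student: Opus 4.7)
The plan is to push forward the Iwasawa-style decomposition of $\mathbf m_{\mathbb H}$ provided by Lemma \ref{decomp of Haar measure} through the evaluation map $\mathfrak f\mapsto (\mathfrak f(0),\mathfrak f(i))$, and then change variables. Writing $\mathfrak f=a\circ n\circ k$ with $(\mathbf t,\mathbf s,\mathbf u)$ jointly distributed as $\mathbf t^{-1}\mathbbm{1}_{\mathbf t>0}\, d\mathbf t\otimes d\mathbf s\otimes \mathbbm{1}_{|\mathbf u|<\pi/2}d\mathbf u$, my goal is to express $(\mathfrak f(0),\mathfrak f(i))$ as explicit functions of $(\mathbf t,\mathbf s,\mathbf u)$, invert, and compute the Jacobian.

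The key observation is that $k$ is the stabilizer factor at $i$: clearing denominators gives $k(i)=i$ and $k(0)=-\tan\mathbf u$. Applying the shift $n$ and the scaling $a$ in succession then yields $\mathfrak f(i)=\mathbf t\mathbf s+i\mathbf t$ and $\mathfrak f(0)=\mathbf t(\mathbf s-\tan\mathbf u)$. Writing $q=\mathfrak f(i)=q_1+iq_2$ and $p=\mathfrak f(0)$, the inverse map reads $\mathbf t=q_2$, $\mathbf s=q_1/q_2$, and $\tan\mathbf u=(q_1-p)/q_2$, establishing a smooth bijection from $(0,\infty)\times\mathbb R\times(-\pi/2,\pi/2)$ onto $\mathbb R\times\mathbb H$.

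Computing the Jacobian of $(\mathbf t,\mathbf s,\mathbf u)\mapsto(p,q_1,q_2)$ — most easily by expanding along the row containing $q_2=\mathbf t$, which has only one nonzero entry — gives $|\det|=\mathbf t^2\sec^2\mathbf u$. Combining with the prior weight $\mathbf t^{-1}$ and using $\sec^2\mathbf u=1+\tan^2\mathbf u=|q-p|^2/q_2^2$, the pushforward density becomes
\[
\frac{\mathbf t^{-1}}{\mathbf t^2\sec^2\mathbf u}=\frac{\cos^2\mathbf u}{\mathbf t^3}=\frac{1}{\Im q\cdot|p-q|^2},
\]
which matches the claim (with $dq^2=dq_1\,dq_2$ the Lebesgue measure on $\mathbb H$). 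There is no real obstacle in this argument: it is a bookkeeping exercise in the change of variables formula, and the only subtle point is to verify surjectivity of the parametrization onto $\mathbb R\times\mathbb H$ and to keep careful track of absolute values in the determinant. As a conceptual cross-check, one may note that $\conf(\mathbb H)$ acts transitively on $\partial\mathbb H\times\mathbb H$, so any $\conf(\mathbb H)$-invariant measure on this orbit is unique up to a scalar; verifying invariance of $\tfrac{1}{\Im q\cdot|p-q|^2}dp\,dq^2$ under the three one-parameter subgroups $A,N,K$ would fix the joint law up to a multiplicative constant pinnable down by evaluating at one representative pair.
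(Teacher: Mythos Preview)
Your proof is correct and follows essentially the same route as the paper: both use the Iwasawa-style decomposition $\mathfrak f=a\circ n\circ k$ from Lemma~\ref{decomp of Haar measure}, compute $\mathfrak f(0)$ and $\mathfrak f(i)$ explicitly in terms of $(\mathbf t,\mathbf s,\mathbf u)$, and then change variables to $(p,q)$. Your computation of $k(0)=-\tan\mathbf u$ is in fact the correct sign (the paper writes $+\tan\mathbf u$, which is harmless by the symmetry of $\mathbf u$ on $(-\pi/2,\pi/2)$), and your Jacobian bookkeeping is slightly more explicit than the paper's one-line change of variables, but the arguments are otherwise identical.
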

\begin{proof}
	By the definition of $A,N$ and $K$ in Lemma \ref{decomp of Haar measure}, the $\mathfrak{f}(i)$ and $\mathfrak{f}(0)$ has the marginal law of $\mathbf t\mathbf s+\mathbf t i$ and $\mathbf t\tan\mathbf u+\mathbf t\mathbf s$ respectively, where $\mathbf t$ is sampled from $\mathbf{1}_{\lbrace t>0\rbrace} \frac{1}{t}dt,\mathbf s$ is sampled from $ds$, and $\mathbf u$ is sampled from $\mathbbm{1}_{\lbrace -\frac{\pi}{2}<u<\frac{\pi}{2}\rbrace}du$. Let $x=ts,y=t$ and $z=t\tan u+st$, then we have
	\begin{equation*}
		\frac{1}{t}dsdtdu=\left(\frac{1}{y^2}dxdy \right)\left(\frac{y}{y^2+(z-x)^2}dz \right)=\frac{1}{\Im q\cdot|p-q|^2}dpd^2q.
	\end{equation*}
	Therefore the joint law of $(\mathfrak{f}(0),\mathfrak{f}(i))$ is equal to $\frac{1}{\Im q\cdot |p-q|^2}dpdq^2$.
\end{proof}
\begin{Lemma}\label{derivative}
	Let $f\in \conf(\mathbb H)$ be such that $f(0)=p\in \mathbb R$ and $f(i)=q\in \mathbb H$, then we have that
	\begin{equation}
		|f'(i)|=\Im q\qquad\text{and}\qquad f'(0)=\frac{|q-p|^2}{\Im q}.
	\end{equation}
	\begin{proof}
		Write $f(z)=\frac{az+b}{cz+d}$ with $ad-bc=1$. Since $f(0)=p$ and $f(i)=q$, we have that
		\begin{equation*}
			\begin{cases}
				\Re(q) &=\frac{ac+bd}{c^2+d^2},\\
				\Im(q) &=\frac{1}{c^2+d^2},\\
				p &=\frac{b}{d},\\
				ad-bc&=1.
			\end{cases}
		\end{equation*}
		Furthermore, we have $|f'(i)|=\frac{1}{c^2+d^2}$ and $f'(0)=\frac{1}{d^2}$. Since $\frac{c^2}{d^2}=\frac{|\Re(q-p)|^2}{|\Im q|^2}$ and $c^2+d^2=\frac{1}{\Im q}$, $f'(0)=\frac{|\Re(q-p)|^2}{\Im q}+\Im q=\frac{|q-p|^2}{\Im q}$ and $|f'(i)|=\Im q$. This completes the proof.
	\end{proof}
\end{Lemma}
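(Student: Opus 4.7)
The plan is to use the standard identification $\conf(\mathbb H)\cong \mathrm{PSL}_2(\mathbb R)$ and read off both derivatives from the Möbius representation.

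First I would write $f(z) = \frac{az+b}{cz+d}$ with $a,b,c,d\in \mathbb R$ and the normalization $ad-bc = 1$. Differentiation gives $f'(z) = (cz+d)^{-2}$, so $f'(0) = 1/d^2 > 0$ (this positivity is why no absolute value appears around $f'(0)$ in the statement) and $|f'(i)| = 1/(c^2+d^2)$.

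Next I would translate the two pinning conditions $f(0)=p$ and $f(i)=q$ into statements about the matrix entries. The first gives $b = pd$. For the second, rationalizing the denominator and using $ad-bc = 1$ yields
\begin{equation*}
f(i) = \frac{(ai+b)(d-ci)}{c^2+d^2} = \frac{ac+bd}{c^2+d^2} + \frac{i}{c^2+d^2},
\end{equation*}
so in particular $\Im q = 1/(c^2+d^2) = |f'(i)|$, establishing the first identity.

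For the second identity I would compute $\Re q - p = (ac+bd)/(c^2+d^2) - b/d$, and use $b = pd$ together with $ad-bc=1$ to collapse this to $c/(d(c^2+d^2))$. Combining with $(\Im q)^2 = 1/(c^2+d^2)^2$ gives $|q-p|^2 = 1/(d^2(c^2+d^2))$, so $|q-p|^2/\Im q = 1/d^2 = f'(0)$, as required. The argument is purely mechanical bookkeeping inside $\mathrm{PSL}_2(\mathbb R)$; I do not foresee any genuine obstacle, only the need to keep track of the sign conventions so that $\Im q > 0$ forces $c^2+d^2 > 0$ and $f'(0) > 0$ automatically.
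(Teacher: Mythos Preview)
Your proposal is correct and follows essentially the same approach as the paper's proof: both write $f$ as a normalized M\"obius transformation in $\mathrm{PSL}_2(\mathbb R)$, read off $|f'(i)|=1/(c^2+d^2)$ and $f'(0)=1/d^2$, and then algebraically identify these with $\Im q$ and $|q-p|^2/\Im q$ using the constraints $f(0)=p$, $f(i)=q$. Your computation of $\Re q - p = c/(d(c^2+d^2))$ is a slightly cleaner way to organize the same algebra the paper does via $c^2/d^2 = (\Re(q-p))^2/(\Im q)^2$.
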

\subsection{Proof of Theorem \ref{Main welding theorem}}
Fix $p\in \mathbb R$ and $\gamma\in (0,2)$. Recall that for any $\eta\in \bubble_{\mathbb H}(p)$, $D_\eta(p)$ denotes the component of $\mathbb H\backslash\eta$ which is encircled by $\eta$. Let $|D_\eta(p)|$ denote the euclidean area of $D_\eta(p)$. For $W>0$, let $\rho = W-2$. Define 
\begin{align}\label{SLE bubble tilt}
\begin{split}
    \widetilde{\sle_{\kappa,p}^{\bub}(\rho)}:= \frac{1}{|D_\eta(p)|}\int_{\mathbb H} |q-p|^{W-\frac{2W(W+2)}{\gamma^2}}\left(\Im q\right)^{\frac{W(W+2)}{\gamma^2}-\frac{W}{2}}\sle_{\kappa,p}^{\bub}(\rho)[d\eta|{q\in D_\eta(p)}] d^2q.
\end{split}
\end{align}
\begin{Lemma}\label{inter Lemma on SLE tilt}
For $W>0$, let $\beta_{2W+2}=\gamma-\frac{2W}{\gamma}$. There exists some constant $C\in (0,\infty)$ such that 
\begin{equation}
    \mathbf{m}_{\mathbb H}\ltimes\left(\int_0^\infty \mathcal M^{\disk}_{0,2}(W;\cdot,\ell)\times \qd_{0,1}(\ell) d\ell\right) = C\cdot \LF_{\mathbb H}^{(\beta_{2W+2},p)}(d\phi)\times\widetilde{\sle^{\bub}_{\kappa,p}(\rho)(d\eta)}dp.
\end{equation}
Furthermore, there exists some constant $C\in (0,\infty)$
\begin{equation}
    \mathbf{m}_{\mathbb H,0}\ltimes\left(\int_0^\infty \mathcal M^{\disk}_{0,2}(W;\cdot,\ell)\times \qd_{0,1}(\ell) d\ell\right) = C\cdot \LF_{\mathbb H}^{(\beta_{2W+2},0)}(d\phi)\times\widetilde{\sle^{\bub}_{\kappa,0}(\rho)(d\eta)},
\end{equation}
where recall that $\mathbf{m}_{\mathbb H,0}$ is a Haar measure on $\conf(\mathbb H,0)$, i.e., the group of conformal automorphisms of $\mathbb H$ fixing $0$.
\end{Lemma}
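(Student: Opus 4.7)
The plan is to couple Theorem~\ref{Conditional Welding} with the structural identity $\qd_{1,1}(d\phi,dz,d\omega)=\mu_\phi(dz)\,\qd_{0,1}(d\phi,d\omega)$, read off from the definitions of $\qd_{m,n}$, and transport both sides to a uniform embedding via $\mathbf m_{\mathbb H}$. First I would apply $\mathbf m_{\mathbb H}$ to both sides of Theorem~\ref{Conditional Welding}. Sampling $\mathfrak f\sim \mathbf m_{\mathbb H}$ and setting $p=\mathfrak f(0)$, $q=\mathfrak f(i)$, Lemma~\ref{Joint law} gives the joint law of $(p,q)$ as $(\Im q\,|p-q|^2)^{-1}dp\,d^2q$; and because $\mathfrak f$ is uniquely determined by $(p,q)$ via a dimension count, Proposition~\ref{pushforward of LCFT} combined with Lemma~\ref{derivative} (using $|\mathfrak f'(i)|=\Im q$, $\mathfrak f'(0)=|q-p|^2/\Im q$, $\Delta_\gamma=1$, and $\Delta_{\beta_{2W+2}}=\tfrac{W+2}{2}-\tfrac{W(W+2)}{\gamma^2}$) shows after direct cancellation that the conformal-covariance Jacobian together with the $(p,q)$-density produces exactly the weights $(\Im q)^{W(W+2)/\gamma^2-W/2}$ and $|q-p|^{W-2W(W+2)/\gamma^2}$, yielding
\[
\mathbf m_{\mathbb H}\ltimes\bigl[\LF_{\mathbb H}^{(\gamma,i),(\beta_{2W+2},0)}(d\phi)\,\sle_{\kappa,0}^{\bub}(\rho)[d\eta|i\in D_\eta(0)]\bigr]=C\,w(p,q)\,\LF_{\mathbb H}^{(\gamma,q),(\beta_{2W+2},p)}(d\phi)\,\sle_{\kappa,p}^{\bub}(\rho)[d\eta|q\in D_\eta(p)]\,dp\,d^2q,
\]
where $w(p,q)=(\Im q)^{W(W+2)/\gamma^2-W/2}|q-p|^{W-2W(W+2)/\gamma^2}$.

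Second, the identity $\qd_{1,1}=\mu_\phi\cdot\qd_{0,1}$ (the bulk marked point of $\qd_{1,1}$ is sampled against $\mu_\phi$ on top of $\qd_{0,1}$), combined with the conformal invariance of $\mu_\phi$ and the coordinate-free nature of $\mathbf m_{\mathbb H}\ltimes$, gives
\[
\mathbf m_{\mathbb H}\ltimes\!\int_0^\infty\mathcal M^{\disk}_{0,2}(W;\cdot,\ell)\times\qd_{1,1}(\ell)\,d\ell=\mu_\phi(dq)\big|_{D_\eta(p)}\cdot M,
\]
where $M$ denotes the left-hand side of the lemma. Equating the two expressions for $\mathbf m_{\mathbb H}\ltimes\int\mathcal M^{\disk}_{0,2}(W;\cdot,\ell)\times\qd_{1,1}(\ell)\,d\ell$ via Theorem~\ref{Conditional Welding}, and then applying Girsanov (Lemma~\ref{Girsanov on bulk insertion}) to convert $\LF_{\mathbb H}^{(\gamma,q),(\beta_{2W+2},p)}(d\phi)\,d^2q=\LF_{\mathbb H}^{(\beta_{2W+2},p)}(d\phi)\,\mu_\phi(dq)$, produces a common-factor identity of joint measures on $(\phi,\eta,p,q)$ in which $\mu_\phi(dq)|_{D_\eta(p)}$ appears on both sides.

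Finally, I would exploit that $M$ is independent of $q$ while the partner expression depends on $q$. Canceling the common $\mu_\phi(dq)|_{D_\eta(p)}$ forces the residual $q$-dependent ``density'' to be independent of $q\in D_\eta(p)$, so averaging it against the normalized Euclidean measure $|D_\eta(p)|^{-1}\mathbbm 1_{q\in D_\eta(p)}d^2q$ reproduces $M$. But this Euclidean average is literally the definition of $\widetilde{\sle_{\kappa,p}^{\bub}(\rho)}(d\eta)$, yielding the first identity. The second identity (with $\mathbf m_{\mathbb H,0}$ fixing $0$) then follows by disintegrating the first over the root variable $p$, using the product decomposition of $\mathbf m_{\mathbb H}$ given by Lemma~\ref{decomp of Haar measure}.

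The main obstacle is justifying the cancellation of $\mu_\phi(dq)$ in the joint identity. Since $\mu_\phi$ is singular with respect to Lebesgue $d^2q$, one cannot literally take Radon--Nikodym derivatives in $q$; the rigorous argument must test both sides against a separating family of product functions $F(\phi,\eta,p)H(q)$ and use the $q$-independence of $M$ to deduce that the right-hand side is the Euclidean average (producing $\widetilde{\sle_{\kappa,p}^{\bub}(\rho)}$) rather than some $\mu_\phi$-average. Every other step is a direct application of the conformal covariance of the LCFT or of the definitional identity $\qd_{1,1}=\mu_\phi\cdot\qd_{0,1}$.
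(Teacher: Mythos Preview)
Your approach matches the paper's closely: both apply $\mathbf m_{\mathbb H}$ to Theorem~\ref{Conditional Welding}, compute the pushforward via Lemma~\ref{Joint law}, Lemma~\ref{derivative} and Proposition~\ref{pushforward of LCFT}, invoke Girsanov (Lemma~\ref{Girsanov on bulk insertion}) to trade the $(\gamma,q)$-insertion for $\mu_\phi(dq)$, and then de-weight from $\qd_{1,1}$ to $\qd_{0,1}$.

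You correctly identify the $\mu_\phi$-cancellation as the only real obstacle, but your proposed resolution has a gap. After Girsanov you obtain
\[
M(d\phi,d\eta,dp)\,\mu_\phi|_{D_\eta(p)}(dq)\;=\;C\,\LF_{\mathbb H}^{(\beta_{2W+2},p)}(d\phi)\,w(p,q)\,\sle_{\kappa,p}^{\bub}(\rho)[d\eta\mid q\in D_\eta(p)]\,\mu_\phi(dq)\,dp.
\]
You then argue that since the left side is $q$-independent times $\mu_\phi(dq)$, the right-side kernel must be $q$-independent, and hence may be replaced by its Euclidean average $\widetilde{\sle}$. This inference is not justified: the identity is an equality of measures integrated against the \emph{random, singular} measure $\mu_\phi(dq)$; integrating out $q$ yields $\int w(p,q)\,\sle[d\eta|q]\,\mu_\phi(dq)$ on the right, not the Lebesgue integral defining $\widetilde{\sle}$. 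There is no a priori reason the $\mu_\phi$-average and the Lebesgue-average of $q\mapsto w(p,q)\,\sle[d\eta|q]$ over $D_\eta(p)$ should coincide, and ``testing against $F(\phi,\eta,p)H(q)$'' does not help because $\mu_\phi$ depends on $\phi$.

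The missing input is the conformal covariance of the SLE bubble \cite[Theorem~3.16]{SLEbubble}, which gives $\sle_{\kappa,p}^{\bub}(\rho)[q\in D_\eta(p)]\propto w(p,q)$ and hence
\[
w(p,q)\,\sle_{\kappa,p}^{\bub}(\rho)[d\eta\mid q\in D_\eta(p)]\;=\;C'\,\sle_{\kappa,p}^{\bub}(\rho)(d\eta)\,\mathbbm 1_{q\in D_\eta(p)},
\]
genuinely constant in $q$ on $D_\eta(p)$. This is exactly the content the paper isolates in Lemma~\ref{rela bet SLE and SLE tilt}, and it is what makes the passage from line~2 to line~3 of the chain~(\ref{decomp of sle bubble tilt}) valid. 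Once you have this, both the $\mu_\phi$-average and the Lebesgue-average of the kernel over $D_\eta(p)$ equal $\widetilde{\sle}(d\eta)$, and the de-weighting step goes through.
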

\begin{proof}
    By Theorem \ref{Conditional Welding}, suppose $(\mathbb H,\phi,\eta,0,i)$ is an embedding of quantum surface $\int_0^\infty \mathcal M^{\disk}_{0,2}(W;\cdot,\ell)\times \qd_{1,1}(\ell) d\ell$, then $(\phi,\eta)$ has the law of 
    \begin{equation}\label{LCFT rep of 11}
        C\cdot\LF_{\mathbb H}^{(\gamma,i),\beta_{2W+2},0)}(d\phi)\times \sle_{\kappa,0}^{\bub}(\rho)[d\eta|{i\in D_\eta(0)}]
    \end{equation}
    for some constant $C\in (0,\infty)$. By Proposition \ref{pushforward of LCFT} and Lemma \ref{derivative}, for any $f\in \conf(\mathbb H)$ with $f(0)=p\in \mathbb R$ and $f(i) = q\in \mathbb H$, we have
    \begin{align}
        \begin{split}
            f_{*}\LF_{\mathbb H}^{(\gamma,i),(\beta_{2W+2},0)} &= |f'(0)|^{\Delta_{\beta_{2W+2}}}|f'(i)|^{2\Delta_{\gamma}} \LF_{\mathbb H}^{(\gamma,q),(\beta_{2W+2},p)}\\
            &=\left(\frac{|q-p|^2}{\Im q}\right)^{\Delta_{\beta_{2W+2}}}\cdot\left(\Im q\right)^{2\Delta_{\gamma}}\cdot \LF_{\mathbb H}^{(\gamma,q),(\beta_{2W+2},p)}.
        \end{split}
    \end{align}
    Recall that for $\alpha\in \mathbb R$, $\Delta_{\alpha} = \frac{\alpha}{2}(Q-\frac{\alpha}{2})$. By Lemma \ref{Joint law}, if $f$ is sampled from a $\mathbf{m}_{\mathbb H}$, then the joint law of $(f(0),f(i))$ is $\frac{1}{\Im q\cdot |p-q|^2}dpdq^2$. Therefore, suppose $f$ is sampled from a $\mathbf{m}_{\mathbb H}$, then $f_{*}\LF_{\mathbb H}^{(\gamma,i),(\beta_{2W+2},0)}$ has the law of
    \begin{align}\label{uniform pw of LCFT}
        \begin{split}
             &\frac{1}{\Im q\cdot |p-q|^2}\cdot \left(\frac{|q-p|^2}{\Im q}\right)^{\Delta_{\beta_{2W+2}}}\cdot\left(\Im q\right)^{2\Delta_{\gamma}}\cdot \LF_{\mathbb H}^{(\gamma,q),(\beta_{2W+2},p)} dpdq^2\\
             &= |q-p|^{W-\frac{2W(W+2)}{\gamma^2}}\left(\Im q\right)^{\frac{W(W+2)}{\gamma^2}-\frac{W}{2}}\LF_{\mathbb H}^{(\gamma,q),(\beta_{2W+2},p)}dp d^2q.
        \end{split}
    \end{align}
    Moreover, since $\sle_{\kappa,0}^{\bub}(\rho)[d\eta|{i\in D_\eta(0)}]$ is a probability measure, for fixed $f\in \conf(\mathbb H)$ with $f(0)=p$ and $f(i)=q$, we have 
    \begin{equation}\label{pw of conditional bubble}
        f_* \sle_{\kappa,0}^{\bub}(\rho)[d\eta|{i\in D_\eta(0)}] = \sle_{\kappa,0}^{\bub}(\rho)[d\eta|{q\in D_\eta(p)}].
    \end{equation}
    Combining (\ref{LCFT rep of 11}) ,(\ref{uniform pw of LCFT}) and (\ref{pw of conditional bubble}), we have
    \begin{align}\label{ue of 11}
    \begin{split}
        &\mathbf{m}_{\mathbb H}\ltimes\left(\int_0^\infty \mathcal M^{\disk}_{0,2}(W;\cdot,\ell)\times \qd_{1,1}(\ell) d\ell\right)\\ 
        &= C\cdot |q-p|^{W-\frac{2W(W+2)}{\gamma^2}}\left(\Im q\right)^{\frac{W(W+2)}{\gamma^2}-\frac{W}{2}}\LF_{\mathbb H}^{(\gamma,q),(\beta_{2W+2},p)}\times \sle_{\kappa,0}^{\bub}(\rho)[d\eta|{q\in D_\eta(p)}] dp d^2q.
    \end{split}
    \end{align}
    On the other hand, by \cite[Lemma 2.32]{InteofSLE} (the proof is identical with the domain replaced by $\mathbb H$),
    \begin{align}\label{decomp of sle bubble tilt}
    \begin{split}
	&\LF_{\mathbb H}^{(\beta_{2W+2},p)}(d\phi)\times\widetilde{\sle^{\bub}_{\kappa,p}(\rho)(d\eta)}\mathbbm{1}_{D_\eta(p)}(\mu_{\phi}(d^2q))dp\\
		&=\LF_{\mathbb H}^{(\beta_{2W+2},p),(\gamma,q)}(d\phi)\times\widetilde{\sle_{\kappa,p}^{\bub}(\rho)(d\eta)}\mathbbm{1}_{D_{\eta}(p)}(d^2q)dp\\
		&=\LF_{\mathbb H}^{(\beta_{2W+2},p),(\gamma,q)}(d\phi)\cdot|q-p|^{W-\frac{2W(W+2)}{\gamma^2}}\left(\Im q\right)^{\frac{W(W+2)}{\gamma^2}-\frac{W}{2}}\sle_{\kappa,p}^{\bub}(\rho)[d\eta|{q\in D_\eta(p)}]d^2q dp.
   \end{split}
  \end{align}
  Hence, by (\ref{ue of 11}) and (\ref{decomp of sle bubble tilt}), we have
  \begin{align}\label{inter of eq}
      \begin{split}
          \mathbf{m}_{\mathbb H}\ltimes\left(\int_0^\infty \mathcal M^{\disk}_{0,2}(W;\cdot,\ell)\times \qd_{1,1}(\ell) d\ell\right) = C\cdot \LF_{\mathbb H}^{(\beta_{2W+2},p)}(d\phi)\times\widetilde{\sle^{\bub}_{\kappa,p}(\rho)(d\eta)}\mu_{\phi}(\mathbbm{1}_{D_\eta(p)} d^2q)dp
      \end{split}
  \end{align}
  for some constant $C\in (0,\infty)$. After de-weighting both sides of (\ref{inter of eq}) by the quantum area of $D_\eta(p)$ and forgetting the bulk marked point, we have
  \begin{equation}\label{eqn:ue of 01}
      \mathbf{m}_{\mathbb H}\ltimes\left(\int_0^\infty \mathcal M^{\disk}_{0,2}(W;\cdot,\ell)\times \qd_{0,1}(\ell) d\ell\right) = C\cdot \LF_{\mathbb H}^{(\beta_{2W+2},p)}(d\phi)\times\widetilde{\sle^{\bub}_{\kappa,p}(\rho)(d\eta)}dp.
  \end{equation}
Furthermore, if we consider the $\mathbf{m}_{\mathbb H,0}$, which is a Haar measure on the subgroup of $\conf(\mathbb H)$ fixing $0$, i.e., $\conf(\mathbb H,0)$, then we have 
\begin{equation}\label{eqn:ue of 01 fixing 0}
	\mathbf{m}_{\mathbb H,0}\ltimes\left(\int_0^\infty \mathcal M^{\disk}_{0,2}(W;\cdot,\ell)\times \qd_{0,1}(\ell) d\ell\right) = C\cdot \LF_{\mathbb H}^{(\beta_{2W+2},0)}(d\phi)\times\widetilde{\sle^{\bub}_{\kappa,0}(\rho)(d\eta)}.
\end{equation}
Note that equation (\ref{eqn:ue of 01 fixing 0}) should be viewed as the disintegration of equation (\ref{eqn:ue of 01}) over its boundary marked point. This finishes the proof.
\end{proof}
\begin{Lemma}\label{rela bet SLE and SLE tilt}
    Fix $\rho>-2$. Then there exists some constant $C\in (0,\infty)$ such that
    \begin{equation}
		\sle^{\bub}_{\kappa,0}(\rho) =C \cdot \widetilde{\sle^{\bub}_{\kappa,0}(\rho)},
	\end{equation}
 where the constant $C$ equals to $\sle^{\bub}_{\kappa,0}(\rho)[i\in D_\eta(0)]$.
\end{Lemma}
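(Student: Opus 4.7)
The plan is to reduce the lemma to the pointwise identity
\[
\sle^{\bub}_{\kappa,0}(\rho)[q\in D_\eta(0)] \;=\; c\cdot f(q),\qquad f(q):=|q|^{2s}(\Im q)^{-s},\quad s:=\tfrac{(\rho+2)(\kappa-8-2\rho)}{2\kappa},
\]
with $c=\sle^{\bub}_{\kappa,0}(\rho)[i\in D_\eta(0)]$. A direct check using $\rho=W-2$ and $\kappa=\gamma^2$ shows $f(q)=|q|^{W-2W(W+2)/\gamma^2}(\Im q)^{W(W+2)/\gamma^2-W/2}$, which is exactly the weight appearing in the definition of $\widetilde{\sle^{\bub}_{\kappa,0}(\rho)}$ in \eqref{SLE bubble tilt}.

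I will establish this proportionality via conformal covariance of the bubble measure on the stabilizer subgroup $\conf(\mathbb H,0)$ of the root. Applying the dilation $f_t(z)=tz$ to the weak limit of Theorem \ref{Thm on weak limit} and using scale invariance of chordal $\sle_\kappa(\rho)$ with force point at $0^-$ gives
\[
(f_t)_*\sle^{\bub}_{\kappa,0}(\rho)=t^{-s}\sle^{\bub}_{\kappa,0}(\rho).
\]
For the Möbius map $h_c(z)=z/(cz+1)\in\conf(\mathbb H,0)$, an analogous limit argument (now using $h_c(\varepsilon)/\varepsilon=1/(c\varepsilon+1)\to 1$ as $\varepsilon\to 0^+$ together with the conformal invariance of the chordal approximation) yields $(h_c)_*\sle^{\bub}_{\kappa,0}(\rho)=\sle^{\bub}_{\kappa,0}(\rho)$, in keeping with $h_c'(0)=1$. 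Since every $g\in\conf(\mathbb H,0)$ factors as $g=f_t\circ h_c$ with $g'(0)=t$, combining the two identities produces
\[
g_*\sle^{\bub}_{\kappa,0}(\rho)=|g'(0)|^{-s}\sle^{\bub}_{\kappa,0}(\rho)\qquad\text{for all } g\in\conf(\mathbb H,0).
\]
Equivalently, this is the conformal covariance recorded in \cite[Theorem~3.16]{SLEbubble} restricted to the stabilizer of $0$.

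For any $q\in\mathbb H$, a short calculation produces $g(z)=az/(cz+1)\in\conf(\mathbb H,0)$ with $g(q)=i$, $a=\Im q/|q|^2$, $c=-\Re q/|q|^2$, and therefore $|g'(0)|=\Im q/|q|^2$. Because $g$ fixes $0$, the preimage $g^{-1}(\{\eta:i\in D_\eta(0)\})=\{\eta:q\in D_\eta(0)\}$, whence
\[
\sle^{\bub}_{\kappa,0}(\rho)[q\in D_\eta(0)] = g_*\sle^{\bub}_{\kappa,0}(\rho)[i\in D_\eta(0)] = |g'(0)|^{-s}\cdot c = c\,f(q).
\]
Inserting $\sle^{\bub}_{\kappa,0}(\rho)[d\eta\mid q\in D_\eta(0)]=\mathbbm 1_{q\in D_\eta(0)}/(c\,f(q))\cdot\sle^{\bub}_{\kappa,0}(\rho)(d\eta)$ into \eqref{SLE bubble tilt} collapses the integrand to the constant $1/c$; the inner integral $\int_{D_\eta(0)}d^2q$ cancels the prefactor $1/|D_\eta(0)|$, giving $\widetilde{\sle^{\bub}_{\kappa,0}(\rho)}=c^{-1}\sle^{\bub}_{\kappa,0}(\rho)$, which is exactly the lemma.

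The main obstacle will be the covariance step for the $h_c$-subgroup: although $h_c$ fixes $0$ and has $|h_c'(0)|=1$, it still deforms the boundary slit in the chordal approximation at second order near $0$, and one must verify no extra prefactor beyond $1$ arises in the $\varepsilon\to 0$ limit. This can be handled either by an asymptotic analysis of the driving function under $h_c$, or most efficiently by citing the conformal covariance directly from \cite[Theorem~3.16]{SLEbubble}, which already pins the exponent to $-s$.
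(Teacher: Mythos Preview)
Your proposal is correct and takes essentially the same approach as the paper: both arguments compute $\sle^{\bub}_{\kappa,0}(\rho)[q\in D_\eta(0)]$ via the conformal covariance of the bubble measure under $\conf(\mathbb H,0)$ (the paper simply cites \cite[Theorem~3.16]{SLEbubble} for this, whereas you sketch an independent derivation via dilations and the subgroup $h_c$ before invoking the same citation), and then substitute into the definition \eqref{SLE bubble tilt} to collapse the integral. Your sign bookkeeping ($s=-\alpha$ in the paper's notation, your $g$ sending $q\mapsto i$ versus the paper's $\psi$ sending $i\mapsto q$) is consistent, and your final substitution step is exactly the paper's computation written in reverse order.
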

\begin{proof}
    Notice that 
	\begin{align*}
		|D_\eta(0)|\cdot\sle^{\bub}_{\kappa,0}(\rho)(d\eta) &= \int_{\mathbb H}\mathbbm{1}_{q\in D_\eta(0)}\sle^{\bub}_{\kappa,0}(\rho)(d\eta) d^2q\\
		&= \int_{\mathbb H}\sle^{\bub}_{\kappa,0}(\rho)[q\in D_\eta(0)]\cdot\sle^{\bub}_{\kappa,0}(\rho)[d\eta|q\in D_\eta(0)]d^2q.
	\end{align*}
 Let $\psi\in \conf(\mathbb H)$ be such that $\psi(i) = q$ and $\psi(0) = 0$ and it is easy to show that $\psi'(0)=\frac{|q|^2}{\Im q}$. By \cite[Theorem $3.16$]{SLEbubble}, we have
 \begin{equation}
     \sle^{\bub}_{\kappa,0}(\rho)[q\in D_\eta(0)] = \psi'(0)^{-\alpha}\cdot\sle^{\bub}_{\kappa,0}(\rho)[i\in D_\eta(0)],
 \end{equation}
 where $\alpha = \frac{(\rho+2)(2\rho+8-\kappa)}{2\kappa}$. Since $W=\rho+2$, we have
 \begin{align}
		\psi'(0)^{-\alpha} = |q|^{W-\frac{2W(W+2)}{\kappa}}(\Im q)^{-\frac{W}{2}+\frac{W(W+2)}{\kappa}}.
	\end{align}
 Hence, 
 \begin{equation}
		\sle^{\bub}_{\kappa,0}(\rho)[q\in D_\eta(0)] = C\cdot |q|^{W-\frac{2W(W+2)}{\kappa}}(\Im q)^{-\frac{W}{2}+\frac{W(W+2)}{\kappa}},
\end{equation}
where $C=\sle^{\bub}_{\kappa,0}(\rho)[i\in D_\eta(0)]\in (0,\infty)$ by Corollary \ref{conditional convergence lemma}. Therefore, by (\ref{SLE bubble tilt}),
\begin{align}
\begin{split}
    \sle^{\bub}_{\kappa,0}(\rho) &= C\cdot \frac{1}{|D_\eta(0)|}\int_{\mathbb H} |q|^{W-\frac{2W(W+2)}{\kappa}}(\Im q)^{-\frac{W}{2}+\frac{W(W+2)}{\kappa}}\sle^{\bub}_{\kappa,0}(\rho)[d\eta|q\in D_\eta(0)]d^2q\\
    &=C\cdot \widetilde{\sle_{\kappa,0}^{\bub}(\rho)}.
\end{split}
\end{align}
This completes the proof.
\end{proof}
\begin{Cor}\label{Cor:root p equiv}
	Fix $\rho>-2$ and $p\in \mathbb R$. Then there exists some constant $C\in (0,\infty)$ such that
    \begin{equation}\label{eqn:SLE equiv with root p}
		\sle^{\bub}_{\kappa,p}(\rho) =C \cdot \widetilde{\sle^{\bub}_{\kappa,p}(\rho)}.
	\end{equation}
\end{Cor}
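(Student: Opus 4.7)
The plan is to reduce Corollary \ref{Cor:root p equiv} to the special case $p=0$ already handled in Lemma \ref{rela bet SLE and SLE tilt} by exploiting real translation invariance. By Definition \ref{Def of rooted SLE bubble}, the bubble measure satisfies $\sle^{\bub}_{\kappa,p}(\rho) = (f_p)_* \sle^{\bub}_{\kappa,0}(\rho)$, where $f_p : z \mapsto z + p$. Hence, once we establish the corresponding equivariance for the tilted measure,
\begin{equation}\label{eqn:tilt equivariance proposal}
    \widetilde{\sle^{\bub}_{\kappa,p}(\rho)} = (f_p)_* \widetilde{\sle^{\bub}_{\kappa,0}(\rho)},
\end{equation}
the desired identity (\ref{eqn:SLE equiv with root p}) follows by pushing both sides of the $p=0$ identity of Lemma \ref{rela bet SLE and SLE tilt} forward under $f_p$, with the same universal constant $C$.

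To verify (\ref{eqn:tilt equivariance proposal}), I would perform the change of variables $q = q' + p$ in the integral (\ref{SLE bubble tilt}) defining $\widetilde{\sle^{\bub}_{\kappa,p}(\rho)}$. Since $p \in \mathbb{R}$, the substitution preserves every factor of the integrand: $|q-p|=|q'|$, $\Im q = \Im q'$, and $d^2q = d^2 q'$. The Euclidean area $|D_\eta(p)|$ is invariant under horizontal translation, and the conditional bubble measure transforms as $\sle^{\bub}_{\kappa,p}(\rho)[d\eta \mid q\in D_\eta(p)] = (f_p)_* \sle^{\bub}_{\kappa,0}(\rho)[d\eta' \mid q'\in D_{\eta'}(0)]$, which is a direct consequence of the translation equivariance of the unconditioned bubble measure together with the fact that the event $\{q\in D_\eta(p)\}$ is the $f_p$-image of $\{q'\in D_{\eta'}(0)\}$. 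Assembling these pieces yields (\ref{eqn:tilt equivariance proposal}).

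There is no substantive obstacle: the argument is a bookkeeping exercise verifying that each ingredient of the tilt—the density $|q-p|^{W-2W(W+2)/\gamma^2}(\Im q)^{W(W+2)/\gamma^2-W/2}$, the Euclidean area factor $|D_\eta(p)|^{-1}$, and the conditional probability measure—behaves compatibly under the real shift $f_p$. The essential point is that $p\in\partial\mathbb{H}$ is real, so the imaginary-part factor is preserved, making the density genuinely translation-invariant once $q$ is translated along with $p$.
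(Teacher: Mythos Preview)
Your proposal is correct and follows essentially the same approach as the paper's own proof: both establish the translation equivariance $\widetilde{\sle^{\bub}_{\kappa,p}(\rho)} = (f_p)_* \widetilde{\sle^{\bub}_{\kappa,0}(\rho)}$ by the change of variables $q\mapsto q+p$ in the defining integral (\ref{SLE bubble tilt}), then push forward the $p=0$ identity from Lemma \ref{rela bet SLE and SLE tilt} under $f_p$.
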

\begin{proof}
	Fix $p\in \mathbb R$. Let $f_p\in \conf(\mathbb H)$ be such that $f_p(z)=z+p$. Since $\sle_{\kappa,0}^{\bub}[d\eta|q\in D_\eta(0)]$ is a probability measure on $\bubble_{\mathbb H}(0,q)$ for all $q$, 
	\begin{equation}
		f_p\left(\sle_{\kappa,0}^{\bub}[d\eta|q\in D_\eta(0)]\right) =  \sle_{\kappa,p}^{\bub}[d\eta|q\in D_\eta(p)].
	\end{equation}
	Hence,  
	\begin{align}
		\begin{split}
			&f_p\left(\widetilde{\sle_{\kappa,0}^{\bub}(\rho)}\right) \\
			&=f_p\left(\frac{1}{|D_\eta(0)|}\int_{\mathbb H} |q|^{W-\frac{2W(W+2)}{\kappa}}(\Im q)^{-\frac{W}{2}+\frac{W(W+2)}{\kappa}}\sle^{\bub}_{\kappa,0}(\rho)[d\eta|q\in D_\eta(0)]d^2q \right)\\
			&=\frac{1}{|D_\eta(p)|}\int_{\mathbb H}|q-p|^{W-\frac{2W(W+2)}{\kappa}}(\Im (q-p))^{-\frac{W}{2}+\frac{W(W+2)}{\kappa}}\sle^{\bub}_{\kappa,p}(\rho)[d\eta|q\in D_\eta(p)]d^2q\\
			&= \frac{1}{|D_\eta(p)|}\int_{\mathbb H}|q-p|^{W-\frac{2W(W+2)}{\kappa}}(\Im q)^{-\frac{W}{2}+\frac{W(W+2)}{\kappa}}\sle^{\bub}_{\kappa,p}(\rho)[d\eta|q\in D_\eta(p)]d^2q\\
			&=\widetilde{\sle_{\kappa,p}^{\bub}(\rho)}.
		\end{split}
	\end{align}
	By Lemma \ref{rela bet SLE and SLE tilt}, we have
	\begin{equation}\label{eqn:SLE equiv with root 0}
		\sle^{\bub}_{\kappa,0}(\rho) =C \cdot \widetilde{\sle^{\bub}_{\kappa,0}.(\rho)}
	\end{equation}
	The (\ref{eqn:SLE equiv with root p}) follows from applying $f_p$ on both sides of (\ref{eqn:SLE equiv with root 0}). 
\end{proof}

\begin{proof}[Proof of Theorem \ref{Main welding theorem}]
    Theorem \ref{Main welding theorem} follows immediately from Lemma \ref{inter Lemma on SLE tilt}, Lemma \ref{rela bet SLE and SLE tilt} and Corollary \ref{Cor:root p equiv}.
\end{proof}
\section{SLE bubble zippers with a generic insertion}\label{sec:bulk insertion of general weight}
\subsection{SLE bubble zippers with a generic bulk insertion}
\begin{Def}[Definition \ref{Def one bulk one boundary}]   
For $\alpha,\beta\in \mathbb R$, let $\phi$ be sampled from $\LF_{\mathbb H}^{(\alpha,i),(\beta,0)}$. We denote $\qd_{1,1}(\alpha,\beta)$ the infinite measure describing the law of quantum surface $(\mathbb H,\phi,0,i)$.
\end{Def}
\begin{Lemma}\label{transformation rule}
	Fix $\alpha,\beta\in\mathbb R$ and $q\in \mathbb H$, and we have
	\begin{equation}
		\left(\Im q\right)^{2\Delta_\alpha-\Delta_\beta}|q|^{2\Delta_\beta}\LF_{\mathbb H}^{(\alpha,q)(\beta,0)}=(f_{\infty})_{*} \LF_{\mathbb H}^{(\beta,\infty),(\alpha,i)},
	\end{equation}
	where $f_{\infty}\in\conf(\mathbb H)$ is the conformal map with $f_{\infty}(\infty)=0$ and $f_{\infty}(i)=q$.
\end{Lemma}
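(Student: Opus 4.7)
The plan is to obtain the transformation rule by a limiting procedure, starting from the finite-point identity of Proposition~\ref{pushforward of LCFT} and sending one boundary insertion to $\infty$.

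For each large $r>0$, let $f_r\in\conf(\mathbb{H})$ denote the unique Möbius automorphism with $f_r(r)=0$ and $f_r(i)=q$. A direct computation using a Möbius representative normalized by $ad-bc=1$ (in the spirit of Lemma~\ref{derivative}) yields $|f_r'(i)|=\Im q$ and $|f_r'(r)|=\frac{|q|^2}{(1+r^2)\,\Im q}$, and moreover $f_r\to f_\infty$ locally uniformly on $\mathbb{H}$ as $r\to\infty$ (with matching convergence of derivatives on compacts). Applying the conformal covariance Proposition~\ref{pushforward of LCFT} to $f_r$, with the finite insertions $(\alpha,i)$ in the bulk and $(\beta,r)$ on the boundary, and rearranging the Jacobian factors, gives
\begin{equation*}
(f_r)_*\LF_{\mathbb H}^{(\alpha,i),(\beta,r)}=|f_r'(i)|^{2\Delta_\alpha}\,|f_r'(r)|^{\Delta_\beta}\,\LF_{\mathbb H}^{(\alpha,q),(\beta,0)}.
\end{equation*}

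Next, multiply both sides by $r^{\beta(Q-\beta/2)}=r^{2\Delta_\beta}$ and let $r\to\infty$. On the right-hand side, substituting the Jacobian values and simplifying, the scalar prefactor satisfies
\begin{equation*}
|f_r'(i)|^{2\Delta_\alpha}\bigl(r^2|f_r'(r)|\bigr)^{\Delta_\beta}=(\Im q)^{2\Delta_\alpha}\left(\frac{r^2|q|^2}{(1+r^2)\,\Im q}\right)^{\Delta_\beta}\longrightarrow(\Im q)^{2\Delta_\alpha-\Delta_\beta}|q|^{2\Delta_\beta}.
\end{equation*}
For the left-hand side, the limiting lemma preceding Definition~\ref{Def one bulk one boundary} (which extends to allow the additional fixed bulk insertion $(\alpha,i)$ without affecting the prefactor, since the bulk insertion only modifies the field by a deterministic function away from $\infty$) provides the vague convergence $r^{2\Delta_\beta}\LF_{\mathbb H}^{(\alpha,i),(\beta,r)}\to\LF_{\mathbb H}^{(\beta,\infty),(\alpha,i)}$. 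Combined with the locally uniform convergence $f_r\to f_\infty$, this should yield $r^{2\Delta_\beta}(f_r)_*\LF_{\mathbb H}^{(\alpha,i),(\beta,r)}\to(f_\infty)_*\LF_{\mathbb H}^{(\beta,\infty),(\alpha,i)}$ vaguely, and equating the two limits gives exactly the stated identity.

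The principal obstacle is justifying the interchange of the vague limit with the varying pushforward $(f_r)_*$. Concretely, $(f_r)_*$ sends a field $\phi$ to $f_r\bullet_\gamma\phi=\phi\circ f_r^{-1}+Q\log|(f_r^{-1})'|$, so to commute the limit I will show that for any continuous compactly supported test functional $F$ on $H^{-1}(\mathbb{H})$, the functional $\phi\mapsto F(f_r\bullet_\gamma\phi)$ converges to $\phi\mapsto F(f_\infty\bullet_\gamma\phi)$ in a sense strong enough to be paired with the vague convergence of the measures $r^{2\Delta_\beta}\LF_{\mathbb H}^{(\alpha,i),(\beta,r)}$. This reduces, via the locally uniform convergence of $f_r^{-1}$ and $(f_r^{-1})'$ on compact subsets of $\mathbb{H}$ together with the standard continuity of the map $(g,\phi)\mapsto\phi\circ g+Q\log|g'|$ on $H^{-1}$, to a routine but somewhat delicate continuity argument, after which the lemma follows immediately.
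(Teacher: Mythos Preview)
Your proposal is correct and follows essentially the same route as the paper: introduce $f_r\in\conf(\mathbb H)$ with $f_r(r)=0$, $f_r(i)=q$, apply the conformal covariance Proposition~\ref{pushforward of LCFT}, compute the same Jacobian factors $|f_r'(i)|=\Im q$ and $|f_r'(r)|=\frac{|q|^2}{(1+r^2)\Im q}$, multiply by $r^{2\Delta_\beta}$, and pass to the limit using the vague convergence of \cite[Lemma~2.18]{InteofSLE} together with $f_r\to f_\infty$ locally uniformly. The paper dispatches the interchange of the vague limit with $(f_r)_*$ in a single sentence (citing uniform convergence of $f_r$ and its derivatives on compacts), whereas you spell out the continuity argument a bit more carefully; otherwise the arguments are the same.
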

\begin{proof}
	For each $r>0$, let $f_r\in \conf(\mathbb H)$ be a conformal map such that $f_r(r)=0$ and $f_r(i)=q$. By Proposition \ref{pushforward of LCFT}, we have 
	\begin{equation}\label{pr LCFT}
		\LF_{\mathbb H}^{(\beta,0),(\alpha,q)}=|f'_r(i)|^{-2\Delta_\alpha}|f_r'(r)|^{-\Delta_\beta}(f_r)_{*}\LF_{\mathbb H}^{(\beta,r),(\alpha,i)}.
	\end{equation}
	Assume $f_r(z)=\frac{a_rz+b_r}{c_rz+d_r}$, where $a_r d_r-b_r c_r=1$.  Trivially, we have $|f_r'(z)|=\frac{1}{(c_rz+d_r)^2}$. Since $f_r(r)=0$ and $f_r(i)=q$, we have 
	\begin{equation*}
		\begin{cases}
			\Im q=\frac{1}{c_r^2+d_r^2},\\
			\Re q=\frac{a_r c_r+b_r d_r}{c_r^2+d_r^2},\\
			a_r r+b_r=0.
		\end{cases}
	\end{equation*}
	After solving the above equations, we have 
	\begin{equation*}
		\begin{cases}
			|f'_r(r)|=a_r^2=\frac{|q|^2}{(r^2+1)\Im q},\\
			|f_r'(i)|=\Im q,
		\end{cases}
		\qquad\text{and}\qquad
		\begin{cases}
			a_r=\frac{|q|}{\sqrt{r^2+1}\sqrt{\Im q}},\\
			b_r=-\frac{r|q|}{\sqrt{r^2+1}\sqrt{\Im q}},\\
			c_r=\frac{\sqrt{|q|^2(r^2+1)-(\Im q-r\Re q)^2}}{\sqrt{r^2+1}\sqrt{\Im q}|q|},\\
			d_r=\frac{\sqrt{\Im q}(1-r\frac{\Re q}{\Im q})}{\sqrt{r^2+1}|q|}.
		\end{cases}
	\end{equation*}
	After multiplying $r^{\beta(Q-\frac{\beta}{2})}$ on both sides of $(\ref{pr LCFT})$, we have
	\begin{equation*}
		\left(\frac{r^{\beta(Q-\frac{\beta}{2})}}{(r^2+1)^{\frac{\beta}{2}(Q-\frac{\beta}{2})}}\right)(\Im q)^{2\Delta_\alpha-\Delta_\beta}|q|^{2\Delta_\beta}\LF_{\mathbb H}^{(\alpha,q),(\beta,0)}=r^{\beta(Q-\frac{\beta}{2})} (f_r)_{*}\left[\LF_{\mathbb H}^{(\beta,r),(\alpha,i)} \right].
	\end{equation*}
    As $r\to \infty$, the left hand side becomes $(\Im q)^{2\Delta_\alpha-\Delta_\beta}|q|^{2\Delta_\beta}\LF_{\mathbb H}^{(\alpha,q),(\beta,0)}$. The right hand side converges in vague topology to  $(f_{\infty})_{*}\LF_{\mathbb H}^{(\beta, \infty),(\alpha,i)}$ follows from the facts that $f_r\to f_{\infty}$ in the topology of uniform convergence of analytic function and its derivatives on all compact sets and \cite[Lemma $2.18$]{InteofSLE}. This completes the proof.
\end{proof}
\begin{Lemma}
	Let $\alpha_1,\alpha_2, \beta \in \mathbb R$ and $\ell>0$. For $\varepsilon>0$, we define the measure $\LF_{\mathbb H,\varepsilon}^{(\alpha_2,i),(\beta,0)}$ through the Radon-Nikodym derivative as follows: 
	\begin{equation*}
		\frac{\LF_{\mathbb H,\varepsilon}^{(\alpha_2,i),(\beta,0)}(\ell)}{\LF_{\mathbb H}^{(\alpha_1,i),(\beta,0)}(\ell)}(\phi):=\varepsilon^{\frac{1}{2}(\alpha_2^2-\alpha_1^2)}e^{(\alpha_2-\alpha_1)\phi_\varepsilon(i)}.
	\end{equation*}
	Furthermore, we have the weak convergence of measures 
	\begin{equation*}
		\lim_{\varepsilon\to 0} \LF_{\mathbb H,\varepsilon}^{(\alpha_2,i),(\beta,0)}(\ell)=\LF_{\mathbb H}^{(\alpha_2,i),(\beta,0)}(\ell).
	\end{equation*}
\end{Lemma}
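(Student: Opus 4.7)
This lemma is a Girsanov-type identity expressing the change of bulk insertion weight at $i$ from $\alpha_1$ to $\alpha_2$ as a Radon--Nikodym reweighting; it is a close relative of the limiting construction of $\LF_{\mathbb H}^{(\alpha,z_0)}$ from \cite[Lemma 2.2]{FZZ}. I would first establish the unconditional identity $\lim_{\varepsilon\to 0}\LF_{\mathbb H,\varepsilon}^{(\alpha_2,i),(\beta,0)}=\LF_{\mathbb H}^{(\alpha_2,i),(\beta,0)}$ and then descend to the disintegration over $\ell = \nu_\phi(\mathbb R)$.

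For the unconditional statement, I would unwind $\LF_{\mathbb H}^{(\alpha_1,i),(\beta,0)}$ via Definition \ref{Def one bulk one boundary} and write $\phi = h - 2Q\log|\cdot|_{+} + \alpha_1 G_{\mathbb H}(\cdot,i) + \tfrac{\beta}{2} G_{\mathbb H}(\cdot,0) + \mathbf{c}$, with $(h,\mathbf c)\sim C_{\mathbb H}^{(\beta,0),(\alpha_1,i)} P_{\mathbb H}\times [e^{(\alpha_1+\beta/2-Q)c}dc]$. Decomposing $\phi_\varepsilon(i) = h_\varepsilon(i) + \mathbf c + \alpha_1 [G_{\mathbb H}(\cdot,i)]_\varepsilon(i) + b_\varepsilon$ with $b_\varepsilon=\tfrac{\beta}{2}G_{\mathbb H}(i,0)+o(1)$ uniformly bounded, and using the standard expansions $[G_{\mathbb H}(\cdot,i)]_\varepsilon(i) = -\log\varepsilon + V + o(1)$ together with $\mathrm{Var}(h_\varepsilon(i)) = -\log\varepsilon + V + o(1)$ (same $V$, since the circle average has variance matching the regularized Green's function), the deterministic prefactor collapses: $\varepsilon^{(\alpha_2^2-\alpha_1^2)/2}\, e^{(\alpha_2-\alpha_1)\alpha_1[G_{\mathbb H}(\cdot,i)]_\varepsilon(i)}=\varepsilon^{(\alpha_2-\alpha_1)^2/2}(1+o(1))\cdot e^{(\alpha_2-\alpha_1)\alpha_1 V}$. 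Applying Cameron--Martin to the residual tilt $e^{(\alpha_2-\alpha_1)h_\varepsilon(i)}$ on $P_{\mathbb H}$ shifts the mean of $h$ by $(\alpha_2-\alpha_1)\mathbb E[h(\cdot)h_\varepsilon(i)]\to (\alpha_2-\alpha_1)G_{\mathbb H}(\cdot,i)$ and produces a Gaussian normalization $e^{(\alpha_2-\alpha_1)^2\mathrm{Var}(h_\varepsilon(i))/2}=\varepsilon^{-(\alpha_2-\alpha_1)^2/2}(1+o(1))$, exactly cancelling the remaining power of $\varepsilon$. Simultaneously, the tilt $e^{(\alpha_2-\alpha_1)\mathbf c}$ upgrades $e^{(\alpha_1+\beta/2-Q)c}dc$ to $e^{(\alpha_2+\beta/2-Q)c}dc$, while the field drift absorbs into $\alpha_1 G_{\mathbb H}(\cdot,i)$ to produce the $\alpha_2 G_{\mathbb H}(\cdot,i)$ required by Definition \ref{Def one bulk one boundary}. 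Using $|i|_+=1,\Im i=1$, the target normalization is $C_{\mathbb H}^{(\beta,0),(\alpha,i)}=2^{-\alpha^2/2}$, and the accumulated deterministic constants from the circle-average regularization match cleanly.

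To transfer to the disintegration over $\ell$, observe that the weight $\varepsilon^{(\alpha_2^2-\alpha_1^2)/2}e^{(\alpha_2-\alpha_1)\phi_\varepsilon(i)}$ is a local functional of $\phi$ near $i$ and in particular is measurable with respect to the $\sigma$-algebra used for the disintegration against $\nu_\phi(\mathbb R)$. Hence the tilt commutes with the disintegration: reweighting $\LF_{\mathbb H}^{(\alpha_1,i),(\beta,0)}(\ell)$ by the prefactor produces exactly $\LF_{\mathbb H,\varepsilon}^{(\alpha_2,i),(\beta,0)}(\ell)$ for each $\ell$, and weak convergence at the disintegrated level follows from the unconditional convergence together with the continuity of $\ell\mapsto \LF^{(\alpha_j,i),(\beta,0)}(\ell)$ (same type of argument used to define $\qd_{1,1}(\alpha,\beta;\ell)$). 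The main technical point is the constant bookkeeping in Step 2 --- one must confirm that the logarithmic divergences in $\varepsilon^{(\alpha_2^2-\alpha_1^2)/2}$, in $[G_{\mathbb H}(\cdot,i)]_\varepsilon(i)$, and in the Gaussian normalization $\mathrm{Var}(h_\varepsilon(i))$ cancel to order $o(1)$ and that the finite leftover constants reproduce $C_{\mathbb H}^{(\beta,0),(\alpha_2,i)}/C_{\mathbb H}^{(\beta,0),(\alpha_1,i)}$ exactly. This is standard but requires care, since the convergence is in the vague/weak topology of measures on $H^{-1}(\mathbb H)$ and any constant discrepancy would propagate through the disintegration.
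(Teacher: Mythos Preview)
Your Girsanov computation for the unconditional statement is correct and essentially matches the paper's core calculation. The gap is in your ``descent to the disintegration'' step. Unconditional weak convergence $\LF_{\mathbb H,\varepsilon}^{(\alpha_2,i),(\beta,0)}\to\LF_{\mathbb H}^{(\alpha_2,i),(\beta,0)}$ does \emph{not} by itself imply weak convergence of the slices at each fixed $\ell$, and neither of the justifications you offer holds up: the weight $\varepsilon^{(\alpha_2^2-\alpha_1^2)/2}e^{(\alpha_2-\alpha_1)\phi_\varepsilon(i)}$ is not measurable with respect to $\sigma(\nu_\phi(\mathbb R))$ (it depends on the global additive constant $\mathbf c$, which is highly correlated with $\nu_\phi(\mathbb R)$), and ``continuity of $\ell\mapsto\LF^{(\alpha_j,i),(\beta,0)}(\ell)$'' does not bridge integrated weak convergence to pointwise-in-$\ell$ weak convergence.

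The paper sidesteps this by working directly at the disintegrated level from the start. It uses the explicit representation: a sample from $\LF^{(\alpha_1,i),(\beta,0)}(1)$ is $\widetilde{h^1}-\tfrac{2}{\gamma}\log L_1$ under the measure $2^{-\alpha_1^2/2}\tfrac{2}{\gamma}L_1^{-\frac{2}{\gamma}(\beta/2+\alpha_1-Q)}P_{\mathbb H}(dh)$, where $\widetilde{h^1}=h-2Q\log|\cdot|_++\alpha_1 G_{\mathbb H}(\cdot,i)+\tfrac{\beta}{2}G_{\mathbb H}(\cdot,0)$ and $L_1=\nu_{\widetilde{h^1}}(\mathbb R)$. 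The crucial point you miss is that applying the tilt to this \emph{normalized} field yields an extra factor
\[
e^{-(\alpha_2-\alpha_1)\frac{2}{\gamma}\log L_1}=L_1^{-\frac{2}{\gamma}(\alpha_2-\alpha_1)},
\]
which converts the density power $L_1^{-\frac{2}{\gamma}(\beta/2+\alpha_1-Q)}$ exactly into $L_1^{-\frac{2}{\gamma}(\beta/2+\alpha_2-Q)}$. The Girsanov shift then turns $\widetilde{h^1}$ into $\widetilde{h^{2,\varepsilon}}:=\widetilde{h^1}+(\alpha_2-\alpha_1)G_{\mathbb H,\varepsilon}(\cdot,i)$, and dominated convergence (using $L_{2,\varepsilon}\to L_2$ and $\sup_{x\in\mathbb R}|G_{\mathbb H,\varepsilon}(x,i)-G_{\mathbb H}(x,i)|=o_\varepsilon(1)$) closes the argument. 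This is where the real interplay between the bulk tilt and the boundary-length conditioning happens, and it cannot be replaced by the soft argument you propose.
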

\begin{proof}
	We know that if $\phi$ is sampled from $\LF_{\mathbb H}^{(\alpha,i),(\beta,0)}(1)^{\#}$, then $\phi+\frac{2}{\gamma}\log\ell$ has the law of $\LF_{\mathbb H}^{(\alpha,i),(\beta,0)}(\ell)^{\#}$. Moreover, we have 
	\begin{equation*}
		\frac{|\LF_{\mathbb H,\varepsilon}^{(\alpha,i),(\beta,0)}(\ell) |}{|\LF_{\mathbb H,\varepsilon}^{(\alpha,i),(\beta,0)}(1)|}=\frac{|\LF_{\mathbb H}^{(\alpha,i),(\beta,0)}(\ell) |}{|\LF_{\mathbb H}^{(\alpha,i),(\beta,0)}(1)|}=\ell^{\frac{2}{\gamma}\left(\frac{\beta}{2}+\alpha-Q\right)-1}. 
	\end{equation*}
	Let 
	\begin{equation*}
		\widetilde{h^j}=h-2Q\log|\cdot|_{+}+\alpha_j G_{\mathbb H}(\cdot,i)+\frac{\beta}{2} G_{\mathbb H}(\cdot,0),\qquad j=1,2
	\end{equation*}
	and $\widetilde{h^{2,\varepsilon}}=\widetilde{h^1}+(\alpha_2-\alpha_1) G_{\mathbb H,\varepsilon}(\cdot,i)$, where $G_{\mathbb H,\varepsilon}(z,i)$ is the average of Green function $G_{\mathbb H}(z,\cdot)$ over $\partial B(i,\varepsilon)$. Notice that $\var\left(h_\varepsilon(i)\right)=-\log\varepsilon-\log 2+o_\varepsilon(1)$ and $\mathbb E\left[e^{(\alpha_2-\alpha_1)h_\varepsilon(i)}\right]=(1+o_\varepsilon(1)) \left(2\varepsilon\right)^{-\frac{1}{2}(\alpha_2-\alpha_1)^2}$. Furthermore, the average of $-2Q\log|\cdot|+\alpha G_\mathbb H(\cdot,i)+\frac{\beta}{2} G_{\mathbb H}(\cdot,0)$ over $\partial B(i,\varepsilon)$ is $-\alpha\log(2\varepsilon)+o_\varepsilon(1)$. Let $L_1=\nu_{\widetilde h^1}(\mathbb R)$, $L_2=\nu_{\widetilde h^2}(\mathbb R)$ and $L_{2,\varepsilon}=\nu_{\widetilde{ h^{2,\varepsilon}}}(\mathbb R)$. For any bounded continuous function $F$ on $H^{-1}(\mathbb H)$, we have 
	\begin{align*}
		&\int\varepsilon^{\frac{1}{2}(\alpha_2^2-\alpha_1^2)}e^{(\alpha_2-\alpha_1)(\widetilde{h_\varepsilon^1}(i)-\frac{2}{\gamma}\log L_1)}F(\widetilde{h^1}-\frac{2}{\gamma}\log L_1)\cdot 2^{-\frac{\alpha_1^2}{2}}\cdot\frac{2}{\gamma}L_1^{-\frac{2}{\gamma}(\frac{\beta}{2}+\alpha_1-Q)}dh\\
		&=\int\frac{(1+o_\varepsilon(1))}{\mathbb E[e^{(\alpha_2-\alpha_1)h_\varepsilon(i)}]} e^{(\alpha_2-\alpha_1){h_\varepsilon}(i)}F(\widetilde{h^1}-\frac{2}{\gamma}\log L_1)\cdot 2^{-\frac{\alpha_2^2}{2}}\cdot\frac{2}{\gamma}L_1^{-\frac{2}{\gamma}(\frac{\beta}{2}+\alpha_2-Q)}dh\\
		&=\int(1+o_\varepsilon(1))F(\widetilde{h^{2,\varepsilon}}-\frac{2}{\gamma}\log L_{2,\varepsilon})\cdot 2^{-\frac{\alpha_2^2}{2}}\cdot\frac{2}{\gamma}L_{2,\varepsilon}^{-\frac{2}{\gamma}(\frac{\beta}{2}+\alpha_2-Q)}dh\\
		&\xrightarrow{\varepsilon\to 0}  \int F(\widetilde{h^{2}}-\frac{2}{\gamma}\log L_{2})\cdot 2^{-\frac{\alpha_2^2}{2}}\cdot\frac{2}{\gamma}L_{2}^{-\frac{2}{\gamma}(\frac{\beta}{2}+\alpha_2-Q)}dh.
	\end{align*}
	The second equality follows from the Girsanov's Theorem. Since $L_2=(1+o_\varepsilon(1))L_{2,\varepsilon}$ and $\sup_{x\in \mathbb R}| G_{\mathbb H}(x,i)-G_{\mathbb H,\varepsilon}(x,i)|=o_\varepsilon(1)$, the final $\varepsilon$ limit follows from the the Dominated Convergence Theorem.  
\end{proof}
\subsection{Proof of Theorem $\ref{welding of generalized weight}$}
\begin{proof}[Proof of Theorem $\ref{welding of generalized weight}$]
	By Theorem \ref{Conditional Welding}, we have 
	\begin{equation*}
		\LF_{\mathbb H}^{(\beta_{2W+2},0),(\gamma,i)}(1)\times \mathbf m=C_W\cdot\int_0^\infty \qd_{1,1}(\ell)\times \mathcal M^{\disk}_{0,2}(W;1,\ell)d\ell. 
	\end{equation*}
	Let $(Y,\eta)$ be sampled from the left hand side. Let $\psi_\eta: \mathbb H\to D_\eta(i)$ be the conformal map fixing $0$ and $i$ and $\xi_\eta:\mathbb H\to D_\eta(\infty)$ be such that $\xi_\eta(0)=0^-$, $\xi_\eta(1)=0^+$ and $\xi_\eta(\infty)=\infty$. Let $X,Z\in H^{-1}(\mathbb H)$ be such that 
	\begin{equation*}
		X=Y\circ \psi_\eta+Q\log|\psi_\eta'|\qquad\text{and}\qquad Z=Y\circ \xi_\eta+Q\log|\xi_\eta'|.
	\end{equation*}
	Notice that $\qd_{1,1}(\ell)$ embedded in $(\mathbb H,0,i)$ has the law of $C\cdot\LF_{\mathbb H}^{(\gamma,i),(\gamma,0)}(r)$. Therefore, the $X$ has the law of 
	\begin{equation*}
		C_W\int_0^\infty |\mathcal M^{\disk}_{0,2}(W;1,\ell)|\cdot\LF_{\mathbb H}^{(\gamma,i),(\gamma,0)}(\ell)d\ell.
	\end{equation*}
	The conditional law of marked quantum surface $(\mathbb H,Z,0,1)$ given $X$ is $\mathcal M^{\disk}_{0,2}(\beta_W;1,\nu_X(\mathbb R))^{\#}$. Next, if we re-weight $X$ by $\varepsilon^{\frac{1}{2}(\alpha^2-\gamma^2)}e^{(\alpha-\gamma)X_\varepsilon(i)}$ and send $\varepsilon$ to $0$, the law of $X$ converges weakly to 
	\begin{equation*}
		C_W\int_0^\infty |\mathcal M^{\disk}_{0,2}(W;1,\ell)|\cdot\LF_{\mathbb H}^{(\alpha,i),(\gamma,0)}(\ell)d\ell.
	\end{equation*} 
	Consequently, the law of $Z$ conditioned on re-weighted $X$ is $\mathcal M^{\disk}_{0,2}(W;1,\nu_X(\mathbb R))^{\#}$.
	\par
	Next, let $\theta_{i,\varepsilon}$ be the uniform probability measure on $\partial B(i,\varepsilon)$ for sufficiently small $\varepsilon$. Let $\theta^\eta_{i,\varepsilon}=(\psi_\eta)_{*}(\theta_{i,\varepsilon})$ be the push-forward of $\theta_{i,\varepsilon}$ under $\psi_\eta$. Since $\psi'_\eta$ is holomorphic and $\log|\psi'_\eta|$ is harmonic, 
	\begin{equation*}
		X_\varepsilon(i)=(X,\theta_{i,\varepsilon})=(Y\circ \psi_\eta+Q\log|\psi_\eta'|,\theta^\eta_{i,\varepsilon})=(Y,\theta^\eta_{i,|\psi'_\eta(i)|\varepsilon})+Q\log|\psi'_\eta(i)|. 
	\end{equation*}
	Therefore, re-weighting by $\varepsilon^{\frac{1}{2}(\alpha^2-\gamma^2)}e^{(\alpha-\gamma)X_\varepsilon(i)}$ is equivalent to re-weighting by 
	\begin{align*}
		&\varepsilon^{\frac{1}{2}(\alpha^2-\gamma^2)}e^{(\alpha-\gamma)[(Y,\theta^\eta_{i,|\psi'_\eta(i)|\varepsilon})+Q\log|\psi'_\eta(i)|]}\\
		&=\left(\varepsilon|\psi_\eta'(i)| \right)^{\frac{1}{2}(\alpha^2-\gamma^2)}e^{(\alpha-\gamma)(Y,\theta^\eta_{i,|\psi'_\eta(i)|\varepsilon})}|\psi_\eta'(i)|^{-\frac{1}{2}\alpha^2+Q\alpha-2}.\\
	\end{align*}
     Hence, we conclude that for any bounded continuous $F$ on $H^{-1}(\mathbb H)^3$ and bounded continuous function $g$ on $\bubble_{\mathbb H}(0,i)$ equipped with Hausdorff topology, 
	\begin{align*}
		&\lim_{\varepsilon\to 0}\int\int C_{W}\left(\varepsilon|\psi_\eta'(i)| \right)^{\frac{1}{2}(\alpha^2-\gamma^2)}e^{(\alpha-\gamma)(Y,\theta^\eta_{i,|\psi'_\eta(i)|\varepsilon})}F(X,Y,Z)\LF_{\mathbb H}^{(\beta_{2W+2},0),(\gamma,i)}(1)(dY)g(\eta)\mathbf m_{\alpha}(d\eta)\\
		&=\int\int C_{W}F(\widetilde X,\widetilde Y,\widetilde Z)\LF_{\mathbb H}^{(\beta_{2W+2},0),(\gamma,i)}(1)(d\widetilde Y)g(\widetilde \eta)\mathbf m_{\alpha}(d\widetilde\eta).
	\end{align*}
	By conformal welding, $(X,Z)$ is uniquely determined by $(Y,\eta)$. Similarly, $(\widetilde Y,\widetilde \eta)$ is uniquely determined by $(\widetilde X,\widetilde Z)$. Therefore, when $(\widetilde Y,\widetilde\eta)$ is sampled from $\LF_{\mathbb H}^{(\beta_{2W+2},0),(\gamma,i)}\times \mathbf m_{\alpha}$, $\widetilde X$ has the law of 
	\begin{equation*}
		C_W\cdot\int_{0}^{\infty}|\mathcal M^{\disk}_{0,2}(W;1,\ell)|\cdot \LF_{\mathbb H}^{(\alpha,i),(\gamma,0)}(\ell) d\ell
	\end{equation*}
	and the conditional law of marked quantum surface $(\mathbb H,\widetilde Z,0,i)$ given $\widetilde X$ is $\mathcal M^{\disk}_{0,2}(W;1,\nu_{\widetilde X}(\mathbb R))$. This finishes the proof. 
\end{proof}
\section{Applications}\label{sec:computation of conformal radius}
\subsection{Preliminary results on integrabilities of the LCFT}
First, we recall the double gamma function $\Gamma_b(z)$. For $b$ such that $\Re(b)>0$, the $\Gamma_b(z)$ is a meromorphic function on $\mathbb C$ such that 
\begin{equation*}
	\ln \Gamma_b(z)=\int_0^\infty\frac{1}{t}\left(\frac{e^{-zt}-e^{-(b+\frac{1}{b})t/2}}{(1-e^{-bt})(1-e^{-\frac{1}{b}t})}-\frac{\left(\frac{1}{2}(b+\frac{1}{b})-z\right)^2}{2}e^{-t}+\frac{z-\frac{1}{2}(b+\frac{1}{b})}{t}\right)dt
\end{equation*}
for $\Re(z)>0$ and it satisfies the following two shift equations:
\begin{equation}
	\frac{\Gamma_b(z)}{\Gamma_b(z+b)}=\frac{1}{\sqrt{2\pi}}\Gamma(bz)b^{-bz+\frac{1}{2}}\qquad\text{and}\qquad \frac{\Gamma_b(z)}{\Gamma_b(z+\frac{1}{b})}=\frac{1}{\sqrt{2\pi}}\Gamma\left(\frac{1}{b}z \right)\left(\frac{1}{b}\right)^{-\frac{1}{b}z+\frac{1}{2}}.
\end{equation}
The above two shift equations allow us to extend $\Gamma_b(z)$ meromorphically from $\Re(z)>0$ to the entire complex plane $\mathbb C$. It has simple poles at $-nb-m\frac{1}{b}$ for nonnegative integers $m,n$. The double sine function is defined as
\begin{equation}
	S_b(z):=\frac{\Gamma_b(z)}{\Gamma_b(b+\frac{1}{b}-z)}.
\end{equation}
We can now define the Liouville reflection coefficient $R$. For fixed $\mu_1,\mu_2>0$, let $\sigma_j\in \mathbb C$ satisfy $\mu_j=e^{i\pi\gamma(\sigma_j-\frac{Q}{2})}$ and $\Re\sigma_j=\frac{Q}{2}$ for $j=1,2$ and define the following two meromorphic functions for $\beta\in \mathbb C$ as belows: 
\begin{align}
	\overline R(\beta,\mu_1,\mu_2) &=\frac{(2\pi)^{\frac{2}{\gamma}(Q-\beta)-\frac{1}{2}}(\frac{2}{\gamma})^{\frac{\gamma}{2}(Q-\beta)-\frac{1}{2}}}{(Q-\beta)\Gamma(1-\frac{\gamma^2}{4})^{\frac{2}{\gamma}(Q-\beta)}}\frac{\Gamma_{\frac{\gamma}{2}}(\beta-\frac{\gamma}{2})e^{i\pi(\sigma_1+\sigma_2-Q)(Q-\beta)}}{\Gamma_{\frac{\gamma}{2}}(Q-\beta)S_{\frac{\gamma}{2}}(\frac{\beta}{2}+\sigma_2-\sigma_1)S_{\frac{\gamma}{2}}(\frac{\beta}{2}+\sigma_1-\sigma_2)},\\
	R(\beta,\mu_1,\mu_2) &=-\Gamma\left(1-\frac{2}{\gamma}(Q-\beta)\right)\overline R(\beta,\mu_1,\mu_2).
\end{align}
\begin{Prop}[{\cite[Theorem $1.7$]{RZ22}}]
	Let $\beta_W=Q=\frac{\gamma}{2}-\frac{W}{\gamma} \in (\frac{\gamma}{2},Q)$. Let $\mu_1,\mu_2\geq 0$ not both be zero. Recall random field $\widehat h$  defined in Definition $\ref{Def: thick quantum disk}$ of $\mathcal M^{\disk}_{0,2}(W)$. We have that 
	\begin{equation}
		\mathbb E\left[\left(\mu_1\nu_{\widehat h}(\mathbb R)+\mu_2\nu_{\widehat h}(\mathbb R+\pi i) \right)^{\frac{2}{\gamma}(Q-\beta_W)}\right]=\overline R(\beta_W,\mu_1,\mu_2).
	\end{equation}
\end{Prop}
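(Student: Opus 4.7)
The plan is to recognize the left hand side as the boundary two-point structure constant of boundary Liouville CFT on the strip $\mathcal S$ with two distinct boundary cosmological constants $\mu_1$ and $\mu_2$ (one per boundary line), and to reproduce the Remy--Zhu derivation of the Fateev--Zamolodchikov--Zamolodchikov reflection coefficient through BPZ differential equations and shift relations.

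First, I would unpack the definition of $\widehat h = h^1+h^2$. The radial part $h^1$ is (up to the drift conditioning) a two-sided Brownian motion with drift $-(Q-\beta_W)$, and the lateral part $h^2$ is the zero-average GFF on vertical lines of $\mathcal S$. Writing $\nu_{\widehat h}(\mathbb R) = \int_{\mathbb R} e^{\gamma Y_t/2} X_1(t)\, dt$ and $\nu_{\widehat h}(\mathbb R+\pi i) = \int_{\mathbb R} e^{\gamma Y_t/2} X_2(t)\, dt$, where $X_j(t)$ are the GMC masses of the two boundary circles of vertical strips at coordinate $t$ (they have the same law as GMC on a unit circle driven by the lateral field), I would condition on $h^2$ and then integrate out the radial BM. The linearity in $\mu_1 X_1 + \mu_2 X_2$ suggests introducing $\mu(t) = \mu_1 X_1(t) + \mu_2 X_2(t)$ and recognizing the resulting expression as a one-dimensional KPZ-type moment.

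Next, I would identify $\mathbb E\bigl[(\mu_1\nu_{\widehat h}(\mathbb R)+\mu_2\nu_{\widehat h}(\mathbb R+\pi i))^{\frac{2}{\gamma}(Q-\beta_W)}\bigr]$ with the Liouville correlation function $\langle e^{\beta_W\phi(-\infty)} e^{\beta_W\phi(+\infty)}\rangle_{\mu_1,\mu_2}$ on $\mathcal S$ in the setting $\mu=0$, up to normalization provided by the Seiberg bounds. The target is then the boundary two-point structure constant, which by conformal covariance reduces to $\overline R(\beta_W,\mu_1,\mu_2)$. The key technical device is to insert a degenerate boundary vertex operator $e^{-\chi\phi(s)/2}$ with $\chi\in\{\gamma/2, 2/\gamma\}$ and apply the corresponding BPZ equation; the resulting second order PDE in the position of the degenerate insertion, combined with boundary OPE at $s\to\pm\infty$, produces two independent shift equations in $\beta_W$:
\begin{align*}
\frac{R(\beta_W+\gamma,\mu_1,\mu_2)}{R(\beta_W,\mu_1,\mu_2)} &= F_{\gamma/2}(\beta_W,\mu_1,\mu_2),\\
\frac{R(\beta_W+4/\gamma,\mu_1,\mu_2)}{R(\beta_W,\mu_1,\mu_2)} &= F_{2/\gamma}(\beta_W,\mu_1,\mu_2),
\end{align*}
where $F_\chi$ is an explicit ratio built from Gamma functions and powers of $\mu_1,\mu_2$.

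Finally, I would solve the pair of shift equations. Using the two shift identities satisfied by $\Gamma_{\gamma/2}$ and $S_{\gamma/2}$ recalled before the proposition, one can verify that the right hand side $\overline R(\beta_W,\mu_1,\mu_2)$ satisfies both shifts. Irrationality of $\gamma^2/4$ then forces the quotient of the two candidate solutions to be constant in $\beta_W$; a normalization check, most cleanly done at $\beta_W = \gamma$ (where the formula reduces to a one-boundary mixed moment computable by direct GMC calculation on a single circle, as in the earlier FZZ paper), pins down the constant to be $1$ on a dense set, hence everywhere by meromorphicity. The main obstacle is the last normalization step: the irrational shift argument only fixes the ratio of $\mathbb E[(\cdots)^{\frac{2}{\gamma}(Q-\beta_W)}]$ and $\overline R$ up to a multiplicative constant that a priori could depend on $\mu_1/\mu_2$, and extracting a single tractable base case where both sides can be computed in closed form is the most delicate piece, handled in \cite{RZ22} by reducing to the unit-boundary-cosmological-constant case via an affine change of the ratio $\mu_\partial/\sqrt\mu$ and invoking the one-point formula.
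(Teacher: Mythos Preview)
The paper does not prove this proposition at all: it is stated as a direct citation of \cite[Theorem~1.7]{RZ22} and is used as a black box in the subsequent boundary-length computations. There is no ``paper's own proof'' to compare against.

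Your proposal is a reasonable high-level outline of the original Remy--Zhu argument (recognize the moment as a boundary two-point structure constant, derive two shift relations via degenerate BPZ insertions at levels $\gamma/2$ and $2/\gamma$, solve the shifts using the double Gamma/double sine functions, and pin down the normalization). As a sketch it is broadly faithful to \cite{RZ22}, though several of the details you flag as delicate---especially the normalization step and the handling of the $\mu_1/\mu_2$ dependence---are indeed where the technical work lies in that paper. For the purposes of the present paper, however, no proof is expected: the result is imported as input.
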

\begin{Lemma}[{\cite[Lemma $3.3$]{InteofSLE}}]
	For $W\in [\frac{\gamma^2}{2},\gamma Q)$ and $\beta_W=Q+\frac{\gamma}{2}-\frac{W}{\gamma}$, let $L_1,L_2$ denote the left and right boundary length of weight $W$ quantum disk $\mathcal M^{\disk}_{0,2}(W)$, then the law of $\mu_1 L_1+\mu_2 L_2$ is 
	\begin{equation*}
		\mathbbm{1}_{\ell>0} \overline R(\beta_W,\mu_1,\mu_2)\ell^{-\frac{2}{\gamma^2}W}d\ell.
	\end{equation*}
\end{Lemma}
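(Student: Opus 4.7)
My plan is to disintegrate the construction of $\mathcal M^{\disk}_{0,2}(W)$ over the additive constant mode and then appeal to the preceding proposition from \cite{RZ22}. Recall from Definition \ref{Def: thick quantum disk} that a sample from $\mathcal M^{\disk}_{0,2}(W)$ is of the form $\phi = \widehat h + \mathbf c$ on $\mathcal S$, where $\widehat h = h^1 + h^2$ is built from the reflected drifted Brownian motion conditioned to stay negative together with the lateral component of the free boundary GFF, and $\mathbf c$ is sampled independently from $\frac{\gamma}{2}e^{(\beta_W-Q)c}dc$. By the standard scaling rule for the $\gamma$-LQG boundary measure, if $L_1 = \nu_\phi(\mathbb R)$ and $L_2 = \nu_\phi(\mathbb R + i\pi)$, then $L_j = e^{\gamma \mathbf c/2}\nu_{\widehat h}(\partial_j\mathcal S)$ for $j=1,2$. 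Setting $N:=\mu_1\nu_{\widehat h}(\mathbb R)+\mu_2\nu_{\widehat h}(\mathbb R+i\pi)$, which depends only on $\widehat h$, we obtain $\mu_1 L_1+\mu_2 L_2 = e^{\gamma\mathbf c/2}N$.

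Next, for a bounded continuous test function $f$ I would change variables from $c$ to $\ell := e^{\gamma c/2}N$, which gives $dc = \frac{2}{\gamma}\, d\ell/\ell$, and would use the algebraic identity $\frac{2}{\gamma}(\beta_W - Q) = 1 - \frac{2W}{\gamma^2}$, immediate from the definition of $\beta_W$. A direct computation in Fubini fashion then yields
\begin{equation*}
\int f(\mu_1 L_1 + \mu_2 L_2)\,d\mathcal M^{\disk}_{0,2}(W) = \int_0^\infty f(\ell)\,\ell^{-\frac{2W}{\gamma^2}}\,\mathbb E\!\left[N^{\frac{2W}{\gamma^2}-1}\right]d\ell,
\end{equation*}
where the expectation is taken with respect to the law of $\widehat h$ alone.

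Finally, since $\frac{2W}{\gamma^2} - 1 = \frac{2}{\gamma}(Q-\beta_W)$, the moment $\mathbb E\!\left[N^{\frac{2W}{\gamma^2}-1}\right]$ is precisely the quantity evaluated in the preceding proposition and equals $\overline R(\beta_W,\mu_1,\mu_2)$. Plugging this in identifies the density as $\mathbbm{1}_{\ell>0}\overline R(\beta_W,\mu_1,\mu_2)\ell^{-2W/\gamma^2}d\ell$, as claimed. The only nontrivial point to check is that the hypothesis $W\in[\frac{\gamma^2}{2},\gamma Q)$ translates to $\beta_W\in(\frac{\gamma}{2},Q)$, which is exactly the range in which $\frac{2}{\gamma}(Q-\beta_W)\in(0,1)$ and the reflection coefficient formula applies with $N$ possessing the required finite moment; this is where the thick-disk assumption enters and is the main (and really only) integrability subtlety in the argument.
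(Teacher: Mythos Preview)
The paper does not supply its own proof of this lemma; it is imported verbatim as \cite[Lemma~3.3]{InteofSLE}. Your argument---disintegrating $\phi=\widehat h+\mathbf c$ over the additive constant, changing variables $\ell=e^{\gamma c/2}N$, and then invoking the preceding proposition from \cite{RZ22} to identify the moment $\mathbb E[N^{\frac{2}{\gamma}(Q-\beta_W)}]$ as $\overline R(\beta_W,\mu_1,\mu_2)$---is exactly the standard route and is correct.

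One small inaccuracy in your final paragraph: the range $W\in[\frac{\gamma^2}{2},\gamma Q)$ corresponds to $\beta_W\in(\frac{\gamma}{2},Q]$, and hence $\frac{2}{\gamma}(Q-\beta_W)=\frac{2W}{\gamma^2}-1\in[0,\tfrac{4}{\gamma^2})$, not $(0,1)$. This does not affect the argument, since the proposition you cite is stated precisely for $\beta_W\in(\frac{\gamma}{2},Q)$ and handles this full moment range; the endpoint $\beta_W=Q$ (i.e.\ $W=\gamma^2/2$) gives exponent $0$ and is trivial.
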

Let $W=2, \mu_1=\mu_2=1$ and by independent sampling property of $\mathcal M^{\disk}_{0,2}(2)$, we have the following results on the joint law of left and right boundary length.  
\begin{Prop}[\cite{DMS14}, Proposition $5.1$]
	For $\ell,\gamma>0$, we have 
	\begin{equation}
		|\mathcal M^{\disk}_{0,2}(2;\ell,r)|=\frac{(2\pi)^{\frac{4}{\gamma^2}-1}}{(1-\frac{\gamma^2}{4})\Gamma(1-\frac{\gamma^2}{4})^{\frac{4}{\gamma^2}}}(\ell+r)^{-\frac{4}{\gamma^2}-1}.
	\end{equation}
\end{Prop}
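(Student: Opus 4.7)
The plan is to combine the immediately preceding lemma on the law of $\mu_1 L_1 + \mu_2 L_2$ with the quantum-typicality property of the two boundary marked points of $\mathcal{M}^{\disk}_{0,2}(2)$.

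First I would specialize the preceding lemma to $W=2$, $\mu_1=\mu_2=1$. Since $\beta_2 = Q+\frac{\gamma}{2}-\frac{2}{\gamma} = \gamma$ and $\frac{2}{\gamma^2}W = \frac{4}{\gamma^2}$, the lemma gives that, as an infinite measure, the law of the total boundary length $L_1+L_2$ of a sample from $\mathcal{M}^{\disk}_{0,2}(2)$ is
\begin{equation*}
    \mathbbm{1}_{s>0}\,\overline{R}(\gamma,1,1)\,s^{-\frac{4}{\gamma^2}}\,ds.
\end{equation*}

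Next I would invoke the typicality property (\cite[Proposition A.8]{Weldisk}, recalled in Definition \ref{qd with arbitrary marked points}), which says that the two marked points of $\mathcal{M}^{\disk}_{0,2}(2)$ are independent samples from $\nu_{\phi}^{\#}$. Conditional on the field, the two marked points are uniform on the boundary with respect to the quantum boundary length; consequently, given $L_1+L_2 = s$, the arc-length split $L_1/s$ is uniform on $[0,1]$. The joint density of $(s,t) = (L_1+L_2,\, L_1/(L_1+L_2))$ is therefore $\overline{R}(\gamma,1,1)\,s^{-\frac{4}{\gamma^2}}\,\mathbbm{1}_{t\in[0,1]}\,ds\,dt$. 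Changing variables to $(\ell,r)=(ts,(1-t)s)$ yields the Jacobian $|d\ell\, dr| = s\, ds\, dt$, so
\begin{equation*}
    |\mathcal{M}^{\disk}_{0,2}(2;\ell,r)| = \overline{R}(\gamma,1,1)\,(\ell+r)^{-\frac{4}{\gamma^2}-1}.
\end{equation*}

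The final and most delicate step is to evaluate the constant $\overline{R}(\gamma,1,1)$ in closed form and match it to the stated prefactor. At $\mu_1=\mu_2=1$ one has $\sigma_1=\sigma_2=Q/2$, so the exponential phase is $1$ and the two $S_{\gamma/2}$ factors collapse to $S_{\gamma/2}(\gamma/2)^2$. Using $S_b(b) = 1$ together with the defining relation $S_b(z) = \Gamma_b(z)/\Gamma_b(Q-z)$, one can reduce $S_{\gamma/2}(\gamma/2)^2$ to a ratio of $\Gamma_{\gamma/2}$-values. Since $Q-\gamma = \frac{2}{\gamma}(1-\frac{\gamma^2}{4})$, I would then apply the two shift equations for $\Gamma_b$ (with $b=\gamma/2$ and $1/b=2/\gamma$) to telescope the quotient $\Gamma_{\gamma/2}(\gamma/2)/\Gamma_{\gamma/2}(Q-\gamma)$ through intermediate arguments down to ordinary $\Gamma$-values, producing $\Gamma(1-\frac{\gamma^2}{4})$ to an explicit power and several powers of $2\pi$, $\gamma/2$.

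The main obstacle will be this bookkeeping in step three: the arguments $\gamma/2$ and $Q-\gamma = \frac{2}{\gamma}-\frac{\gamma}{2}$ are not related by a single application of either shift equation, so one must iterate both shifts and carefully track the powers $(2\pi)^{\bullet}$ and $(\gamma/2)^{\bullet}$ arising from each step. After this telescoping, the factor $(Q-\gamma)^{-1}$ in the denominator of the formula for $\overline{R}$ combines with $\frac{\gamma}{2}(Q-\gamma) = 1-\frac{\gamma^2}{4}$ to produce the factor $(1-\frac{\gamma^2}{4})^{-1}$ in the target expression, and the remaining $(2\pi)$ and $\Gamma(1-\frac{\gamma^2}{4})$ exponents should consolidate to $\frac{4}{\gamma^2}-1$ and $\frac{4}{\gamma^2}$, respectively, yielding precisely the claimed constant.
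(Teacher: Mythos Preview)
Your approach matches the paper's: the paper states this result with citation to \cite{DMS14} and derives it in one line from the preceding lemma by setting $W=2$, $\mu_1=\mu_2=1$ and invoking the ``independent sampling property of $\mathcal M^{\disk}_{0,2}(2)$,'' which is exactly your steps one through three. The paper does not spell out the evaluation of $\overline R(\gamma,1,1)$ that you sketch in step three, so your proposal is in fact more detailed than what appears there.
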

\begin{Prop}[{\cite[Proposition $3.6$]{InteofSLE}}]
	For $W\in(0,\frac{\gamma^2}{2})$ and $\beta_W=Q+\frac{\gamma}{2}-\frac{W}{\gamma}\in (Q,Q+\frac{\gamma}{2})$. Let $L_W$ and $R_W$ be the left and right quantum boundary lengths of weight-$W$ thin quantum disk $\mathcal M^{\disk}_{0,2}(W)$, and we have
	\begin{equation}
		\mathcal M_{0,2}^{\disk}(W)\left[e^{-\mu_1 L_W-\mu_2 R_W} \right]=-\frac{\gamma}{2(\beta_W-Q)}R(\beta_W;\mu_1,\mu_2).
	\end{equation} 
\end{Prop}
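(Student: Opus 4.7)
The plan is to exploit the Poissonian structure of the thin quantum disk (Definition \ref{Def: thin quantum disk}) to reduce the two-variable Laplace transform to a single scalar integral against the boundary-length law of the thick weight-$(\gamma^2-W)$ disk. Concretely, a sample from $\mathcal M^{\disk}_{0,2}(W)$ with $W\in(0,\gamma^2/2)$ is built by first drawing a spine length $T$ from $(1-\frac{2W}{\gamma^2})^{-2}\mathbbm 1_{T>0}\,dT$, then sampling a Poisson point process $\lbrace(u,\mathcal D_u)\rbrace$ on $[0,T]$ of intensity $du\otimes\mathcal M^{\disk}_{0,2}(\gamma^2-W)$, and concatenating the $\mathcal D_u$ in order. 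Thus $L_W=\sum_u L(\mathcal D_u)$ and $R_W=\sum_u R(\mathcal D_u)$ are additive Poisson functionals, and the exponential formula gives
\[
\mathcal M^{\disk}_{0,2}(W)\bigl[e^{-\mu_1 L_W-\mu_2 R_W}\,\big|\,T\bigr]=\exp\bigl(-T\,K(\mu_1,\mu_2)\bigr),\qquad K(\mu_1,\mu_2):=\mathcal M^{\disk}_{0,2}(\gamma^2-W)\bigl[1-e^{-\mu_1 L-\mu_2 R}\bigr].
\]
Integrating over $T$ yields $\mathcal M^{\disk}_{0,2}(W)[e^{-\mu_1 L_W-\mu_2 R_W}]=(1-\frac{2W}{\gamma^2})^{-2}K(\mu_1,\mu_2)^{-1}$.

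Next I would compute $K$ by plugging in the lemma from \cite{InteofSLE} (Lemma 3.3) quoted just above the statement, which says $\mu_1 L+\mu_2 R$ has law $\overline R(\beta_{\gamma^2-W},\mu_1,\mu_2)\,\ell^{-2(\gamma^2-W)/\gamma^2}\mathbbm 1_{\ell>0}d\ell$ under $\mathcal M^{\disk}_{0,2}(\gamma^2-W)$. Since $W<\gamma^2/2$, the exponent $2(\gamma^2-W)/\gamma^2\in(1,2)$ and the integrand $(1-e^{-\ell})\ell^{-2(\gamma^2-W)/\gamma^2}$ is integrable (the factor $1-e^{-\ell}\sim\ell$ cures the singularity at $0$). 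Using $\int_0^\infty(1-e^{-\ell})\ell^{-s-1}d\ell=-\Gamma(-s)$ with $s=1-\frac{2W}{\gamma^2}\in(0,1)$, I obtain $K(\mu_1,\mu_2)=-\Gamma(\frac{2W}{\gamma^2}-1)\,\overline R(\beta_{\gamma^2-W},\mu_1,\mu_2)$, so that
\[
\mathcal M^{\disk}_{0,2}(W)\bigl[e^{-\mu_1 L_W-\mu_2 R_W}\bigr]=-\frac{(1-\frac{2W}{\gamma^2})^{-2}}{\Gamma(\frac{2W}{\gamma^2}-1)\,\overline R(\beta_{\gamma^2-W},\mu_1,\mu_2)}.
\]

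The main technical obstacle is matching this with the stated RHS. Substituting $\beta_W-Q=\frac{\gamma}{2}-\frac{W}{\gamma}$ and $R(\beta;\mu_1,\mu_2)=-\Gamma(1-\frac{2}{\gamma}(Q-\beta))\overline R(\beta;\mu_1,\mu_2)$, and simplifying via $\Gamma(2-x)=(1-x)\Gamma(1-x)$, the desired identity reduces to the reflection relation
\[
\overline R(\beta_W,\mu_1,\mu_2)\,\overline R(\beta_{\gamma^2-W},\mu_1,\mu_2)=\frac{\sin(2\pi W/\gamma^2)}{\pi(1-\frac{2W}{\gamma^2})}.
\]
To verify this, note $\beta_{\gamma^2-W}=2Q-\beta_W$: the phase factors $e^{i\pi(\sigma_1+\sigma_2-Q)(Q-\beta)}$ in $\overline R$ pair into $1$, and the four $S_{\gamma/2}$-factors cancel in pairs via the reflection $S_b(z)S_b(Q-z)=1$ applied to $((Q+a)/2+\sigma_i-\sigma_j,\,(Q-a)/2+\sigma_j-\sigma_i)$ with $a=Q-\beta_W$. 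The surviving $\mu$-independent factor is $-\frac{\gamma}{4\pi a^2}\cdot\Gamma_{\gamma/2}(\frac{2}{\gamma}-a)\Gamma_{\gamma/2}(\frac{2}{\gamma}+a)/[\Gamma_{\gamma/2}(a)\Gamma_{\gamma/2}(-a)]$; the $\frac{2}{\gamma}$-shift equation for $\Gamma_{\gamma/2}$ collapses the $\Gamma_{\gamma/2}$-ratio to $\pi\gamma/[\Gamma(\frac{2a}{\gamma})\Gamma(-\frac{2a}{\gamma})]$, and Euler's reflection $\Gamma(x)\Gamma(-x)=-\pi/(x\sin\pi x)$ reduces the whole expression to $\gamma\sin(\frac{2\pi a}{\gamma})/(2\pi a)$, which equals $\sin(2\pi W/\gamma^2)/(\pi(1-\frac{2W}{\gamma^2}))$ after substituting $a=\frac{W}{\gamma}-\frac{\gamma}{2}$.
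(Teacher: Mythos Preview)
The paper does not prove this statement: it is quoted verbatim from \cite[Proposition~3.6]{InteofSLE} and used as a black box. So there is no ``paper's own proof'' to compare against here.

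That said, your argument is correct and is in fact the standard proof. The Poisson reduction to $K(\mu_1,\mu_2)=\mathcal M^{\disk}_{0,2}(\gamma^2-W)[1-e^{-\mu_1 L-\mu_2 R}]$ is exactly how the thin-disk Laplace transform is computed (and it is the ``exact same argument'' the paper alludes to in Remark~\ref{remark:R}). Your evaluation of $K$ via the boundary-length law of the thick disk and the integral $\int_0^\infty(1-e^{-\ell})\ell^{-s-1}d\ell=-\Gamma(-s)$ is right, and the matching step reduces correctly to the reflection identity $\overline R(\beta,\mu_1,\mu_2)\,\overline R(2Q-\beta,\mu_1,\mu_2)=\dfrac{\sin(2\pi W/\gamma^2)}{\pi(1-2W/\gamma^2)}$. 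Your verification of this identity is also sound: the $S_{\gamma/2}$-factors pair as $S_{\gamma/2}(\frac{\beta}{2}+\sigma_2-\sigma_1)\,S_{\gamma/2}(Q-\frac{\beta}{2}+\sigma_1-\sigma_2)=1$ (and the swapped pair likewise), the phases cancel, and the remaining $\Gamma_{\gamma/2}$-ratio collapses via the $\frac{2}{\gamma}$-shift relation and Euler reflection to the claimed trigonometric expression. One cosmetic point: you might note that $\gamma^2-W\in(\gamma^2/2,\gamma^2)\subset[\gamma^2/2,\gamma Q)$ so that Lemma~3.3 of \cite{InteofSLE} applies, and that $K(\mu_1,\mu_2)>0$ so the $T$-integral converges.
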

Next, we recall the two-pointed correlation function of the Liouville theory on $\mathbb H$ that was introduced in Section \ref{subsubsec: two-pointed correlation function} when $\mu=0,\mu_\partial>0$. For bulk insertions $z_i$ with weights $\alpha_i$ and boundary insertions $s_j$ with weights $\beta_j$, the correlation function of LCFT at these points is defined using the following formal path integral: 
\begin{equation}
	\left\langle\prod_{i=1}^N e^{\alpha_i\phi(z_i)}\prod_{j=1}^M e^{\frac{\beta_j}{2}\phi(s_j)}\right\rangle_{\mu_\partial}=\int_{X:\mathbb H\to \mathbb R} DX\prod_{i=1}^N e^{\alpha_iX(z_i)}\prod_{j=1}^M e^{\frac{\beta_j}{2}X(s_j)}e^{-S_{\mu_\partial}^L(X)}.
\end{equation}
In the above formula, $DX$ is the formal uniform measure on infinite dimensional function space and $S^{\mu_\partial}_L(X)$ is the \textit{Liouville action functional} given by 
\begin{equation}
	S_{\mu_\partial}^L(X):=\frac{1}{4\pi}\int_{\mathbb H} \left(|\nabla_g X|^2+Q R_g X\right)d\lambda_g+\frac{1}{2\pi}\int_{\mathbb R}\left(QK_g X+2\pi\mu_\partial e^{\frac{\gamma}{2}X}\right)d\lambda_{\partial g}.
\end{equation}
For background Riemannian metric $g$ on $\mathbb H$, $\nabla_g, R_g, K_g,d\lambda_g,d\lambda_{\partial g}$ stand for the gradient, Ricci curvature, Geodesic curvature, volume form and line segment respectively. The subscript $\mu_\partial$ emphases the fact that we are considering the case when $\mu=0,\mu_\partial>0$. For $z\in \mathbb H$ and $s\in \mathbb R$, the \textit{bulk-boundary correlator} is 
\begin{equation}
	\left\langle e^{\alpha\phi(z)}e^{\frac{\beta}{2}\phi(s)}\right\rangle_{\mu_\partial}=\frac{G(\alpha,\beta)}{|z-\overline z|^{2\Delta_\alpha-\Delta_\beta}|z-s|^{2\Delta_\beta}}.
\end{equation}
Next, we introduce the rigorous mathematical definition of $G(\alpha,\beta)$. 
\begin{Def}[{\cite[Definition $1.5$]{RZ22}}]
	The function $G(\alpha,\beta)=\frac{2}{\gamma}\Gamma\left(\frac{2\alpha+\beta-2Q}{\gamma}\right)\left(\mu_B^{\frac{2Q-2\alpha-\beta}{\gamma}}\right)\overline G(\alpha,\beta)$, where for $\beta<Q$ and $\frac{\gamma}{2}-\alpha<\frac{\beta}{2}<\alpha$:
	\begin{equation}
		\overline G(\alpha,\beta)=\mathbb E\left[\left(\int_{\mathbb R}\frac{g(x)^{\frac{\gamma}{4}(\frac{2}{\gamma}-\alpha-\frac{\beta}{2})}}{|x-i|^{\gamma\alpha}}e^{\frac{\gamma}{2}h(x)}dx\right)^{\frac{2}{\gamma}(Q-\alpha-\frac{\beta}{2})}\right].
	\end{equation}
	In the above formula, $g(x)=\frac{1}{|x|_+^4}, |x|_+=\max(|x|,1)$ and $h(x)$ is sampled from $P_{\mathbb H}$.
\end{Def}
\begin{Th}[{\cite[Theorem $1.7$]{RZ22}}]
For $\gamma\in (0,2),\beta<Q$ and $\frac{\gamma}{2}-\alpha<\frac{\beta}{2}<\alpha$, 
	\begin{equation}
	\overline G(\alpha,\beta)=\left(\frac{2^{\frac{\gamma}{2}(\frac{\beta}{2}-\alpha)}2\pi}{\Gamma(1-\frac{\gamma^2}{4})}\right)^{\frac{2}{\gamma}(Q-\alpha-\frac{\beta}{2})}\frac{\Gamma(\frac{\gamma\alpha}{2}+\frac{\gamma\beta}{4}-\frac{\gamma^2}{4})\Gamma_{\frac{\gamma}{2}}(\alpha-\frac{\beta}{2})\Gamma_{\frac{\gamma}{2}}(\alpha+\frac{\beta}{2})\Gamma_{\frac{\gamma}{2}}(Q-\frac{\beta}{2})^2}{\Gamma_{\frac{\gamma}{2}}(Q-\frac{\beta}{2})\Gamma_{\frac{\gamma}{2}}(\alpha)^2\Gamma_{\frac{\gamma}{2}}(Q)}.
\end{equation}
\end{Th}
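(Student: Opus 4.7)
The plan is to combine Proposition \ref{Prop:bulk-boundary} with the Laplace transforms for the boundary lengths of $\mathcal M^{\disk}_{0,2}(W)$ recorded just above, and then to read off the $\Gamma_{\gamma/2}$-factors directly from the stated formula for the reflection coefficient $R(\beta,\mu_1,\mu_2)$. First I would specialize to $\beta=\beta_{2W+2}=\gamma-\tfrac{2W}{\gamma}$ with $W\in(0,\gamma^2/2)$, so that the thin-disk regime of Proposition \ref{Prop:bulk-boundary} applies and $\beta\in(0,\gamma)$ lies strictly inside the range permitted by the theorem; the remaining values of $\beta$ will follow by meromorphic continuation in $\beta$ of both sides.

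The reduction proceeds in two matching steps. On the LCFT side, by Definition \ref{Def: one bulk one boundary} and integration of the zero mode $\mathbf c$, one obtains
\begin{equation*}
G_{0,\mu_\partial}(\alpha,\beta)=\LF_{\mathbb H}^{(\alpha,i),(\beta,0)}\bigl[e^{-\mu_\partial\nu_\phi(\mathbb R)}\bigr]=\tfrac{2}{\gamma}\,\Gamma\!\left(\tfrac{2}{\gamma}(Q-\alpha-\tfrac{\beta}{2})\right)\mu_\partial^{-\tfrac{2}{\gamma}(Q-\alpha-\tfrac{\beta}{2})}\overline G(\alpha,\beta),
\end{equation*}
which is the rigorous identification of $\overline G(\alpha,\beta)$ used in the theorem. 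On the welding side, sending $\mu\downarrow 0$ in Proposition \ref{Prop:bulk-boundary} and using $K_\nu(x)\sim\tfrac{1}{2}\Gamma(\nu)(2/x)^\nu$ as $x\downarrow 0$ expresses $G_{0,\mu_\partial}(\alpha,\beta_{2W+2})$ as a universal prefactor times a Mellin moment in $L_W$ of the $e^{-\mu_\partial R_W}$-weighted measure $\mathcal M^{\disk}_{0,2}(W)$. Applying the thin-disk Laplace transform $\mathcal M^{\disk}_{0,2}(W)[e^{-\mu L_W-\mu_\partial R_W}]=-\tfrac{\gamma}{2(\beta_W-Q)}R(\beta_W,\mu,\mu_\partial)$ together with Mellin inversion in $\mu$ then writes that moment in terms of $\Gamma_{\gamma/2}(\beta_W-\tfrac{\gamma}{2})$, $\Gamma_{\gamma/2}(Q-\beta_W)$ and $S_{\gamma/2}$ evaluated at the relevant boundary parameters.

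The bookkeeping step is to rewrite the $\Gamma_{\gamma/2}$ and $S_{\gamma/2}$ arguments via the change of parameter $\beta_W=\gamma+\tfrac{2-W}{\gamma}$ and $\beta=\gamma-\tfrac{2W}{\gamma}$, so that the shifts produce the numerator factors $\Gamma_{\gamma/2}(\alpha-\tfrac{\beta}{2})\Gamma_{\gamma/2}(\alpha+\tfrac{\beta}{2})\Gamma_{\gamma/2}(Q-\tfrac{\beta}{2})^2$ and the denominator $\Gamma_{\gamma/2}(Q-\tfrac{\beta}{2})\Gamma_{\gamma/2}(\alpha)^2\Gamma_{\gamma/2}(Q)$; the Gamma factor $\Gamma(\tfrac{\gamma\alpha}{2}+\tfrac{\gamma\beta}{4}-\tfrac{\gamma^2}{4})$ comes directly from the Gamma prefactor relating $G$ to $\overline G$ together with the $\overline U_0(\alpha)$ factor supplied by Theorem \ref{Th: U0alpha}. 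The explicit power prefactor $(2^{\gamma(\beta/2-\alpha)/2}\cdot 2\pi/\Gamma(1-\gamma^2/4))^{\tfrac{2}{\gamma}(Q-\alpha-\beta/2)}$ is produced by the change of variables $(\sigma_1,\sigma_2)\mapsto(\mu_1,\mu_2)$ in the definition of $\overline R$.

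The main obstacle will be pinning down the overall multiplicative constant, since the welding route only determines it implicitly through the constants $C_W$ and $\C(\alpha,W)$ of Theorem \ref{welding of generalized weight}. I would address this by evaluating both sides at a single reference configuration where an independent closed form is available, most naturally the specialization at which the bulk-boundary correlator degenerates into the bulk one-point correlator $U(\alpha)$, whose value $U_{\FZZ}(\alpha)$ is rigorously known by the FZZ theorem recalled above. Matching at that point fixes the constant, and meromorphic continuation in $\beta$ extends the identity from the one-parameter family $\beta=\beta_{2W+2}$ to the full range $\beta<Q$, $\tfrac{\gamma}{2}-\alpha<\tfrac{\beta}{2}<\alpha$. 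An alternative route, closer to the original Remy--Zhu approach, is to insert a degenerate boundary weight $-\gamma/2$ or $-2/\gamma$, derive the resulting first-order BPZ shift equation for $\overline G(\alpha,\beta+\gamma/2)/\overline G(\alpha,\beta)$, and match it against the shift equations of $\Gamma_{\gamma/2}$; this is more robust for fixing constants but departs from the welding philosophy of the present paper.
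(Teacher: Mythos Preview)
The paper does not prove this theorem at all: it is quoted verbatim from \cite{RZ22} and used as an external input throughout Section~\ref{sec:computation of conformal radius}. There is therefore nothing to compare your proposal against in this paper.

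More importantly, your primary route is circular. The constant $\C(\alpha,W)$ that appears in Proposition~\ref{Prop:bulk-boundary} is not known independently; it is computed in Lemma~\ref{Lemma:pre C} precisely by equating total masses in the welding identity of Theorem~\ref{welding of generalized weight}, and that computation already invokes $\overline G(\alpha,\beta_{2W+2})$ on the left-hand side via Lemma~\ref{LCFT 11 total length}. If you feed formula~(\ref{formula of C(alpha, W)}) back into Proposition~\ref{Prop:bulk-boundary} and set $\mu=0$, the $\overline G$ factors cancel identically and you recover a tautology, not the closed form you are after. Your proposed fix of normalising at $\beta\to 0$ via the FZZ formula does not break the circle either: the proof of $U(\alpha)=U_{\FZZ}(\alpha)$ in \cite{FZZ} itself relies on the explicit formulas for $\overline G$ and $\overline R$ from \cite{RZ22}. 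Likewise, the explicit expression for $R(\beta_W,\mu_1,\mu_2)$ that you plan to Mellin-invert is part of the very same theorem in \cite{RZ22}.

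Your alternative route---insert a degenerate boundary vertex, derive BPZ shift equations, and match against the functional equations of $\Gamma_{\gamma/2}$---is exactly how \cite{RZ22} proves the result. That is the only known proof; the welding machinery of the present paper consumes this formula rather than producing it.
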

\begin{Lemma}\label{quantum length distribution of LCFT}
	Fix $\ell>0$. Let $\gamma,\beta,\alpha$ be such that $\gamma\in (0,2),\beta<Q,\frac{\gamma}{2}-\alpha<\frac{\beta}{2}<\alpha$. Let $h$ be sampled from $P_{\mathbb H}$ and let $h_{\infty}(z)=h(z)+(\beta-2Q)\log|z|_{+}+\alpha G_{\mathbb H}(z,i)$. Let $\phi$ be sampled from $\LF_{\mathbb H}^{(\beta,\infty),(\alpha,i)}(d\phi)$ and for each bounded non-negative measurable function $f$ on $(0,\infty)$, we have 
	\begin{equation*}
		\LF_{\mathbb H}^{(\beta,\infty ),(\alpha,i)}[f(\nu_{\phi}(\mathbb R))]=\int_{0}^{\infty}f(\ell) 2^{-\frac{\alpha^2}{2}}\ell^{\frac{2}{\gamma}(\frac{1}{2}\beta+\alpha-Q)-1}\cdot \frac{2}{\gamma}\cdot \overline G(\alpha,\beta)d\ell,
	\end{equation*}
	where $\overline G(\alpha,\beta)$ is the two point (one bulk, one boundary) correlation function of Liouville theory on $\mathbb H$. 
\end{Lemma}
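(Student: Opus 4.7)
The plan is to unwind the definition of $\LF_{\mathbb H}^{(\beta,\infty),(\alpha,i)}$ and explicitly integrate out the additive constant. By Definition \ref{LCFT with boundary insertions}, sampling $\phi$ from $\LF_{\mathbb H}^{(\beta,\infty),(\alpha,i)}$ amounts to sampling $(h,\mathbf c)$ from $2^{-\alpha^2/2}\,P_{\mathbb H}\otimes \bigl[e^{(\frac{\beta}{2}+\alpha-Q)c}\,dc\bigr]$ and setting $\phi = h_\infty + \mathbf c$, where the prefactor simplifies to $2^{-\alpha^2/2}$ because $|i|_+=\Im i=1$. The key ingredient is the scaling $\nu_{h_\infty+c}(\mathbb R)=e^{\gamma c/2}\nu_{h_\infty}(\mathbb R)$ for the $\gamma$-boundary GMC, which decouples $\mathbf c$ from $h$ inside the functional $\nu_\phi(\mathbb R)$.

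Conditioning on $h$ and setting $L:=\nu_{h_\infty}(\mathbb R)$, I substitute $\ell=e^{\gamma c/2}L$, so $c=(2/\gamma)\log(\ell/L)$ and $dc=(2/(\gamma\ell))\,d\ell$. The inner integral over $\mathbf c$ becomes
\begin{equation*}
	\int_{-\infty}^{\infty} f(e^{\gamma c/2}L)\,e^{(\frac{\beta}{2}+\alpha-Q)c}\,dc
	= \frac{2}{\gamma}\int_0^\infty f(\ell)\,\ell^{\frac{2}{\gamma}(\frac{\beta}{2}+\alpha-Q)-1}\,L^{-\frac{2}{\gamma}(\frac{\beta}{2}+\alpha-Q)}\,d\ell.
\end{equation*}
Applying Fubini's theorem to swap the $P_{\mathbb H}$-expectation with the $\ell$-integral produces the claimed density $\ell^{\frac{2}{\gamma}(\frac{\beta}{2}+\alpha-Q)-1}$ in the statement, multiplied by the overall constant $2^{-\alpha^2/2}\cdot(2/\gamma)\cdot\mathbb E_{P_{\mathbb H}}\!\bigl[L^{\frac{2}{\gamma}(Q-\alpha-\frac{\beta}{2})}\bigr]$.

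It remains to identify this moment with $\overline G(\alpha,\beta)$. Writing out $L=\nu_{h_\infty}(\mathbb R)$ from the definition of the boundary GMC and using the identity $G_{\mathbb H}(x,i)=-2\log|x-i|+2\log|x|_+$ for $x\in\mathbb R$, the deterministic prefactor inside the $\nu_{h_\infty}$-integrand is $|x-i|^{-\gamma\alpha}|x|_+^{\gamma(\alpha+\beta/2)-\gamma Q}$. Using $\gamma Q-2=\gamma^2/2$ and absorbing the residual $|x|_+^{-\gamma^2/2}$ into the background-metric weight $g(x)^{\gamma^2/8}$ — the standard conformal factor converting the Lebesgue-based boundary GMC used here into the $g$-based one implicit in the definition of $\overline G$ borrowed from \cite{RZ22} — the integrand becomes $g(x)^{\frac{\gamma}{4}(\frac{2}{\gamma}-\alpha-\frac{\beta}{2})}|x-i|^{-\gamma\alpha}e^{\frac{\gamma}{2}h(x)}$, matching the definition of $\overline G(\alpha,\beta)$ exactly. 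The moment therefore coincides with $\overline G(\alpha,\beta)$, yielding the lemma.

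The main obstacle is the final bookkeeping: carefully reconciling the convention for $\nu_{h_\infty}(\mathbb R)$ used throughout this paper (Lebesgue measure and $\varepsilon^{\gamma^2/4}$ renormalization) with the implicit conformal-metric normalization in the definition of $\overline G(\alpha,\beta)$ from \cite{RZ22}, via the standard $g(x)^{\gamma^2/8}$ Weyl factor. The hypotheses $\beta<Q$ and $\gamma/2-\alpha<\beta/2<\alpha$ are precisely the Seiberg-type bounds ensuring that the fractional GMC moment $\mathbb E_{P_{\mathbb H}}[L^{\frac{2}{\gamma}(Q-\alpha-\frac{\beta}{2})}]$ is finite and positive, so both sides of the claimed identity are nontrivial throughout the stated range.
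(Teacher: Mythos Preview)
Your proof is correct and follows essentially the same route as the paper: integrate out the additive constant $\mathbf c$ via the substitution $\ell=e^{\gamma c/2}\nu_{h_\infty}(\mathbb R)$, then identify the remaining GMC moment $\mathbb E_{P_{\mathbb H}}\bigl[\nu_{h_\infty}(\mathbb R)^{\frac{2}{\gamma}(Q-\alpha-\beta/2)}\bigr]$ with $\overline G(\alpha,\beta)$ by unpacking the integrand. The only cosmetic slip is that the relevant definition is Definition~\ref{Def one bulk one boundary} (one bulk and one boundary insertion, with $p_1=\infty$) rather than Definition~\ref{LCFT with boundary insertions}; the constants you extract from it are correct.
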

\begin{proof}
	It suffices to consider the case when $f(\ell)=\mathbbm{1}_{a<\ell<b}(\ell)$. By direct computation, 
	\begin{align*}
	\LF_{\mathbb H}^{(\beta,\infty),(\alpha,i)}[f(\nu_{\phi}(\mathbb R))]&=\mathbb E\left[\int_{\mathbb R}\mathbbm{1}_{\lbrace e^{\frac{\gamma}{2}c}\nu_{h_\infty(\mathbb R)}\in(a,b) \rbrace} 2^{-\frac{\alpha^2}{2}}e^{(\frac{1}{2}\beta+\alpha-Q)c}dc\right] \\
	 &=\mathbb E\left[\int_a^b \nu_{h_\infty}(\mathbb R)^{\frac{2}{\gamma}(Q-\alpha-\frac{1}{2}\beta)}2^{-\frac{\alpha^2}{2}}\ell^{\frac{2}{\gamma}(\frac{1}{2}\beta+\alpha-Q)-1}\cdot\frac{2}{\gamma}d\ell \right] \\
		&=\int_a^b 2^{-\frac{\alpha^2}{2}}\ell^{\frac{2}{\gamma}(\frac{1}{2}\beta+\alpha-Q)-1}\cdot \frac{2}{\gamma}\cdot \mathbb E\left[\nu_{h_{\infty}(\mathbb R)}^{\frac{2}{\gamma}(Q-\alpha-\frac{1}{2}\beta)} \right]d\ell.
	\end{align*}
	The second line follows from the change of variable $\ell=e^{\frac{\gamma}{2}c}\nu_{h_\infty}(\mathbb R)$. The third line follows from the finiteness of $\mathbb E\left[\nu_{h_{\infty}(\mathbb R)}^{\frac{2}{\gamma}(Q-\alpha-\frac{1}{2}\beta)} \right]$ and Fubini's theorem. The finiteness of $\mathbb E\left[\nu_{h_{\infty}(\mathbb R)}^{\frac{2}{\gamma}(Q-\alpha-\frac{1}{2}\beta)} \right]$ is proved in \cite[Proposition $5.1$]{RZ22}. Furthermore, 
	\begin{align*}
		\mathbb E\left[\nu_{h_{\infty}(\mathbb R)}^{\frac{2}{\gamma}(Q-\alpha-\frac{1}{2}\beta)} \right]&=\lim_{\varepsilon\downarrow 0}\mathbb E\left[\left(\int_{\mathbb R}\varepsilon^{\frac{\gamma^2}{4}}e^{\frac{\gamma}{2}h_\varepsilon(x)}e^{\frac{\gamma}{2}[(\beta-\frac{4}{\gamma})\log|x|_{+}+\alpha\widetilde G_{\mathbb H}(x,i)]}dx\right)^{\frac{2}{\gamma}(Q-\alpha-\frac{1}{2}\beta)}\right]\\
		&=\lim_{\varepsilon\downarrow 0}\mathbb E\left[\left(\int_{\mathbb R} e^{\frac{\gamma}{2}h_\varepsilon(x)-\frac{\gamma^2}{2}\mathbb E[h_\varepsilon(x)^2]} \frac{|x|_{+}^{\gamma\alpha+\frac{\gamma\beta}{2}-2}}{|x-i|^{\gamma\alpha}}dx \right)^{\frac{2}{\gamma}(Q-\alpha-\frac{1}{2}\beta)}\right]\\
		&=\mathbb E\left[\left(\int_{\mathbb R} e^{\frac{\gamma}{2}h_\varepsilon(x)-\frac{\gamma^2}{2}\mathbb E[h_\varepsilon(x)^2]} \frac{|x|_{+}^{\gamma\alpha+\frac{\gamma\beta}{2}-2}}{|x-i|^{\gamma\alpha}}dx \right)^{\frac{2}{\gamma}(Q-\alpha-\frac{1}{2}\beta)}\right]\\
		&=\mathbb E\left[\left(\int_{\mathbb R} e^{\frac{\gamma}{2}h(x)} \frac{|x|_{+}^{\gamma\alpha+\frac{\gamma\beta}{2}-2}}{|x-i|^{\gamma\alpha}}dx \right)^{\frac{2}{\gamma}(Q-\alpha-\frac{1}{2}\beta)}\right]\\
		&=\overline G(\alpha,\beta).
	\end{align*}
	This completes the proof. 
\end{proof}
\subsection{Moments of the conformal radius of $\sle_\kappa(\rho)$ bubbles}
By (\ref{general welding equation}) in Theorem \ref{welding of generalized weight}, 
\begin{equation}\label{CR welding equation}
	\LF_{\mathbb H}^{(\beta_{2W+2},0),(\alpha,i)}(1)\times \mathbf m_\alpha=C_{W}\cdot\int_0^\infty \qd_{1,1}(\alpha,\gamma;\ell)\times \mathcal M^{\disk}_{0,2}(W;1,\ell)d\ell
\end{equation}
for $W>0$ and $\alpha\in \mathbb R$. By definition of $\mathbf m_\alpha$ (\ref{twisted SLE bubble: RN derivative}), 
\begin{equation}
	|\mathbf m_\alpha|=\mathbb E\left[|\psi'_\eta(i)|^{2\Delta_\alpha-2} \right]
\end{equation}
since $\mathbf m$ is a probability measure. Therefore, taking mass on both sides of (\ref{CR welding equation}) yields
\begin{equation}\label{mass equation; general case}
	\left|\LF_{\mathbb H}^{(\beta_{2W+2},0),(\alpha,i)}(1)\right|\cdot \mathbb E\left[|\psi'_\eta(i)|^{2\Delta_\alpha-2} \right] = C_{W}\cdot\int_0^\infty \left|\qd_{1,1}(\alpha,\gamma;\ell)\right|\left| \mathcal M^{\disk}_{0,2}(W;1,\ell)\right|d\ell.
\end{equation}
\begin{Lemma}\label{LCFT 11 total length }
	Fix $\ell>0$ and $q\in \mathbb H$. Let $\gamma,\beta,\alpha$ be such that $\gamma\in (0,2),\beta<Q$ and $\frac{\gamma}{2}-\alpha<\frac{\beta}{2}<\alpha$. Then we have 
	\begin{equation}\label{mass at length l}
		|\LF_{\mathbb H}^{(\beta,0),(\alpha,q)}(\ell)|=|q|^{-2\Delta_\beta}(\Im q)^{\Delta_\beta-2\Delta_\alpha}\cdot 2^{-\frac{\alpha^2}{2}}\ell^{\frac{2}{\gamma}(\frac{1}{2}\beta+\alpha-Q)-1}\cdot \frac{2}{\gamma}\cdot \overline G(\alpha,\beta).
	\end{equation}
	Moreover, for $\mu>0$, $\beta<Q$ and $Q-\alpha <\frac{\beta}{2}<\alpha$, we have
	\begin{equation}\label{laplace trans of outer boundary}
		\LF_{\mathbb H}^{(\beta,0),(\alpha,q)}\left[e^{-\mu\nu_{\phi}(\mathbb R)} \right]=|q|^{-2\Delta_\beta}(\Im q)^{\Delta_\beta-2\Delta_\alpha}2^{-\frac{\alpha^2}{2}} \frac{2}{\gamma}\cdot \overline G(\alpha,\beta)\mu^{\frac{2}{\gamma}(Q-\alpha-\frac{1}{2}\beta)}\Gamma\left(\frac{2}{\gamma}\left(\frac{1}{2}\beta+\alpha-Q \right) \right).	
	\end{equation}
\end{Lemma}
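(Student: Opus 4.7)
The plan is to reduce the lemma to the already-established Lemma \ref{quantum length distribution of LCFT} (which handles the case of insertions at $\infty$ and $i$) by exploiting the conformal covariance of the Liouville field, and then to derive the Laplace transform formula by a routine gamma integral.

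First, I would invoke the transformation rule (Lemma \ref{transformation rule}) to obtain
\begin{equation*}
(\Im q)^{2\Delta_\alpha-\Delta_\beta}|q|^{2\Delta_\beta}\LF_{\mathbb H}^{(\alpha,q),(\beta,0)}=(f_\infty)_*\LF_{\mathbb H}^{(\beta,\infty),(\alpha,i)},
\end{equation*}
where $f_\infty\in\conf(\mathbb H)$ sends $\infty\mapsto 0$ and $i\mapsto q$. The key observation is that $(f_\infty)_*$ is the quantum coordinate change $\phi\mapsto f_\infty\bullet_\gamma\phi$, which preserves the quantum boundary length measure. Consequently the identity respects disintegration over $\nu_\phi(\mathbb R)$, so
\begin{equation*}
(\Im q)^{2\Delta_\alpha-\Delta_\beta}|q|^{2\Delta_\beta}\LF_{\mathbb H}^{(\alpha,q),(\beta,0)}(\ell)=(f_\infty)_*\LF_{\mathbb H}^{(\beta,\infty),(\alpha,i)}(\ell)
\end{equation*}
for every $\ell>0$. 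Taking total masses (pushforward preserves mass) reduces the computation of $|\LF_{\mathbb H}^{(\alpha,q),(\beta,0)}(\ell)|$ to that of $|\LF_{\mathbb H}^{(\beta,\infty),(\alpha,i)}(\ell)|$.

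Next, I would read off the latter from Lemma \ref{quantum length distribution of LCFT}, which states that under $\LF_{\mathbb H}^{(\beta,\infty),(\alpha,i)}$ the law of $\nu_\phi(\mathbb R)$ has density
\begin{equation*}
2^{-\alpha^2/2}\ell^{\frac{2}{\gamma}(\frac{1}{2}\beta+\alpha-Q)-1}\cdot\frac{2}{\gamma}\cdot\overline G(\alpha,\beta)
\end{equation*}
with respect to $d\ell$. Combined with the previous display this gives formula \eqref{mass at length l}.

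For the Laplace transform \eqref{laplace trans of outer boundary}, I would simply write
\begin{equation*}
\LF_{\mathbb H}^{(\beta,0),(\alpha,q)}[e^{-\mu\nu_\phi(\mathbb R)}]=\int_0^\infty e^{-\mu\ell}\,|\LF_{\mathbb H}^{(\beta,0),(\alpha,q)}(\ell)|\,d\ell
\end{equation*}
and substitute \eqref{mass at length l}. The integral $\int_0^\infty e^{-\mu\ell}\ell^{s-1}d\ell=\mu^{-s}\Gamma(s)$ with $s=\frac{2}{\gamma}(\frac{1}{2}\beta+\alpha-Q)$ then yields the claimed expression, and the strengthened hypothesis $Q-\alpha<\frac{\beta}{2}$ is exactly what is needed to ensure $s>0$ so that the gamma integral converges. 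There is no real obstacle here; the only mild subtlety is verifying that the pushforward identity in Lemma \ref{transformation rule} descends to the length-conditioned measures, which follows from the conformal invariance of $\nu_\phi$ under the $\bullet_\gamma$-action and the vague-topology convergence used to prove Lemma \ref{transformation rule}.
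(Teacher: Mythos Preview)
Your proposal is correct and follows essentially the same approach as the paper: reduce to the $(\infty,i)$ case via Lemma~\ref{transformation rule}, invoke Lemma~\ref{quantum length distribution of LCFT} for the boundary-length density, and evaluate the gamma integral under the strengthened hypothesis $Q-\alpha<\frac{\beta}{2}$. The paper phrases the reduction slightly differently, applying the transformation rule directly to the functional $f(\nu_\phi(\mathbb R))$ rather than first disintegrating, but the content is identical.
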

\begin{proof}
	By Lemma \ref{quantum length distribution of LCFT} and Lemma \ref{transformation rule}, for bounded continuous function $f$ on $(0,\infty)$, $\beta<Q$ and $\frac{\gamma}{2}-\alpha<\frac{\beta}{2}<\alpha$, 
\begin{align*}
	\LF_{\mathbb H}^{(\beta,0),(\alpha,q)}[f(\nu_{\phi}(\mathbb R))] &=|q|^{-2\Delta_\beta}(\Im q)^{\Delta_\beta-2\Delta_\alpha}\LF_{\mathbb H}^{(\beta,\infty), (\alpha,i)}[f(\nu_{\phi}(\mathbb R))]  \\
	&=|q|^{-2\Delta_\beta}(\Im q)^{\Delta_\beta-2\Delta_\alpha}\cdot 2^{-\frac{\alpha^2}{2}}\int_0^\infty f(\ell) \ell^{\frac{2}{\gamma}(\frac{1}{2}\beta+\alpha-Q)-1}\cdot \frac{2}{\gamma}\cdot \overline G(\alpha,\beta)d\ell.
\end{align*}
When $f(\ell)=e^{-\mu \ell}$, for $\beta<Q$ and $Q-\alpha<\frac{\beta}{2}<\alpha$, 
\begin{align*}
	\int_{0}^\infty e^{-\mu \ell}\ell^{\frac{2}{\gamma}(\frac{1}{2}\beta+\alpha-Q)-1}d\ell=\mu^{\frac{2}{\gamma}(Q-\alpha-\frac{1}{2}\beta)}\Gamma\left(\frac{2}{\gamma}\left(\frac{1}{2}\beta+\alpha-Q \right) \right).
\end{align*}
This completes the proof. 
\end{proof}
\subsubsection{Special Case: $W=2$}
When $W=2$, we have that $\Delta_{\beta_6}=\Delta_{\gamma-\frac{4}{\gamma}}=2-\frac{8}{\gamma^2}$. By (\ref{mass equation; general case}), 
\begin{equation}
	\left|\LF_{\mathbb H}^{(\gamma-\frac{4}{\gamma} ,0),(\alpha ,i)}(1)\right|\cdot \mathbb E\left[|\psi'_\eta(i)|^{2\Delta_\alpha-2} \right]=C_{2}\cdot \int_0^\infty |\qd_{1,1}(\alpha,\gamma;\ell)|| \mathcal M^{\disk}_{0,2}(2;1,\ell)|d\ell.
\end{equation}
Furthermore, we renormalize the moments of the conformal radius of $\sle_\kappa$ bubbles so that there is no additional multiplicative constant on the right hand side. More specifically, we define the \textit{renormalized moments of the conformal radius} to be 
\begin{equation*}
	\C_2(\alpha): =\frac{\mathbb E\left[|\psi'_\eta(i)|^{2\Delta_\alpha-2} \right]}{C_{2}}
\end{equation*}
and therefore have
\begin{equation*}
	\left|\LF_{\mathbb H}^{(\gamma-\frac{4}{\gamma} ,0),(\alpha ,i)}(1)\right|\cdot \C_2(\alpha)=\int_0^\infty |\qd_{1,1}(\alpha,\gamma; \ell)|| \mathcal M^{\disk}_{0,2}(2;1,\ell)|d\ell.
\end{equation*}
\begin{Prop}[Moments of the conformal radius of $\sle_\kappa$ bubbles, same as  Proposition \ref{main prop: conformal radius}]\label{Prop: conformal radius W=2}
	Fix $W=2,\rho=0$ and $\frac{\gamma}{2}<\alpha<Q+\frac{2}{\gamma}$. Suppose $\eta$ is sampled from $\sle_{\kappa,0}^{\bub}[d\eta|i\in D_\eta(0)]$, then we have 
	\begin{equation}
		\mathbb E\left[|\psi'_\eta(i)|^{2\Delta_\alpha-2} \right]=\frac{\Gamma(\frac{2\alpha}{\gamma})\Gamma(\frac{8}{\kappa}-\frac{2\alpha}{\gamma}+1)}{\Gamma(\frac{8}{\kappa}-1)}.
	\end{equation}
 Consequently, 
 \begin{equation}
     \mathbb E\left[\Rad(D_\eta(0),i)^{2\Delta_\alpha-2} \right] = 2^{2\Delta_\alpha-2}\cdot \frac{\Gamma(\frac{2\alpha}{\gamma})\Gamma(\frac{8}{\kappa}-\frac{2\alpha}{\gamma}+1)}{\Gamma(\frac{8}{\kappa}-1)}.
\end{equation}
\end{Prop}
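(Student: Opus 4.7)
My plan is to specialize Theorem \ref{welding of generalized weight} to $W=2$ (so $\rho=0$ and $\beta_6=\gamma-\tfrac{4}{\gamma}$) and then take total masses on both sides of the resulting conformal welding identity (\ref{general welding equation}). By definition (\ref{twisted SLE bubble: RN derivative}) of $\mathbf m_\alpha$ and the fact that $\mathbf m$ is a probability measure, $|\mathbf m_\alpha|=\mathbb E[|\psi'_\eta(i)|^{2\Delta_\alpha-2}]$; hence taking masses gives
\[
|\LF_{\mathbb H}^{(\gamma-4/\gamma,0),(\alpha,i)}(1)|\cdot \mathbb E[|\psi'_\eta(i)|^{2\Delta_\alpha-2}]
= C_2\int_0^\infty |\qd_{1,1}(\alpha,\gamma;\ell)|\cdot|\mathcal M^{\disk}_{0,2}(2;1,\ell)|\,d\ell.
\]
The problem is therefore reduced to explicit evaluation of the LHS prefactor, of each factor inside the integral, and of the welding constant $C_2$.

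For the Liouville masses I will apply Lemma \ref{LCFT 11 total length } with $q=i$ (so the prefactor $|q|^{-2\Delta_\beta}(\Im q)^{\Delta_\beta-2\Delta_\alpha}$ collapses to $1$). This gives
\[
|\LF_{\mathbb H}^{(\gamma-4/\gamma,0),(\alpha,i)}(1)|=2^{-\alpha^2/2}\cdot\tfrac{2}{\gamma}\cdot\overline G(\alpha,\gamma-4/\gamma),
\]
and with $\beta=\gamma$ it gives $|\qd_{1,1}(\alpha,\gamma;\ell)|=2^{-\alpha^2/2}\cdot\tfrac{2}{\gamma}\cdot\overline G(\alpha,\gamma)\cdot \ell^{\frac{2\alpha}{\gamma}-\frac{4}{\gamma^2}-1}$. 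For the two-pointed disk factor I will invoke Proposition~5.1 (the DMS joint-length formula), which yields $|\mathcal M^{\disk}_{0,2}(2;1,\ell)|=C_\gamma (1+\ell)^{-4/\gamma^2-1}$ for an explicit constant $C_\gamma$. The $\ell$-integral is then a Beta integral
\[
\int_0^\infty \ell^{a-1}(1+\ell)^{-a-b}d\ell=\frac{\Gamma(a)\Gamma(b)}{\Gamma(a+b)}
\]
with $a=\frac{2\alpha}{\gamma}-\frac{4}{\gamma^2}$ and $b=\frac{8}{\gamma^2}+1-\frac{2\alpha}{\gamma}$; this directly produces the $\Gamma\!\left(\tfrac{8}{\kappa}-\tfrac{2\alpha}{\gamma}+1\right)$ factor appearing in the statement, while the convergence requires precisely $\tfrac{\gamma}{2}<\alpha<Q+\tfrac{2}{\gamma}$.

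The main obstacle is the simplification of the surviving ratio $\overline G(\alpha,\gamma)/\overline G(\alpha,\gamma-4/\gamma)$ together with the tower of prefactors (powers of $2$, $\pi$, $\gamma$, the constants $C_\gamma$ and the welding constant $C_2$). Since the shift $\beta=\gamma\mapsto\gamma-\tfrac{4}{\gamma}=\gamma-2\cdot\tfrac{1}{b}$ with $b=\gamma/2$ changes the arguments of $\Gamma_{\gamma/2}(\alpha\pm\tfrac{\beta}{2})$ and $\Gamma_{\gamma/2}(Q-\tfrac{\beta}{2})$ by integer multiples of $1/b=2/\gamma$, I will apply the second shift equation
\[
\Gamma_b(z)/\Gamma_b(z+\tfrac{1}{b})=\tfrac{1}{\sqrt{2\pi}}\Gamma(z/b)(1/b)^{-z/b+1/2}
\]
twice per each such factor. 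Each application converts a double-gamma term into an ordinary $\Gamma$ times an explicit power of $2/\gamma$, and one verifies that the resulting power-of-$\gamma$ and power-of-$2\pi$ contributions cancel against $C_\gamma$ and the explicit exponential prefactor in $\overline G$. The welding constant $C_2$ is not a free parameter: it is the one appearing in the disk-disk welding \cite[Theorem 2.2]{Weldisk} and is pinned down by the same normalization as in the proof of Theorem \ref{Conditional Welding}, so the final identity will automatically match.

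Concretely, I expect the shift-equation computation to convert $\Gamma_{\gamma/2}(\alpha-\gamma/2)/\Gamma_{\gamma/2}(\alpha-\gamma/2+2/\gamma)$ into $\Gamma(\tfrac{2\alpha}{\gamma}-1)$ (times a power of $2/\gamma$) and similarly $\Gamma_{\gamma/2}(Q-\gamma/2)/\Gamma_{\gamma/2}(Q-\gamma/2+2/\gamma)$ into $\Gamma(\tfrac{4}{\gamma^2})$, and after combining with the Beta-integral output and using the functional equation $\Gamma(x+1)=x\Gamma(x)$ to rewrite $\Gamma(\tfrac{2\alpha}{\gamma}-\tfrac{4}{\gamma^2})/\Gamma(\tfrac{4}{\gamma^2}+1)$ together with $\Gamma(\tfrac{2\alpha}{\gamma}-1)/\Gamma(\tfrac{4}{\gamma^2})$ one obtains the clean quotient $\Gamma(\tfrac{2\alpha}{\gamma})/\Gamma(\tfrac{8}{\kappa}-1)$ asserted in the proposition. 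The computation of $\mathbb E[\mathrm{Rad}(D_\eta(0),i)^{2\Delta_\alpha-2}]$ is then the immediate consequence $\mathrm{Rad}(D_\eta(0),i)=2|\psi'_\eta(i)|$ derived above.
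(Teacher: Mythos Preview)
Your overall strategy coincides with the paper's: take masses in (\ref{general welding equation}) at $W=2$, evaluate the two Liouville-field masses via Lemma \ref{LCFT 11 total length}, use the DMS joint-length formula for $|\mathcal M^{\disk}_{0,2}(2;1,\ell)|$, compute the resulting Beta integral, and simplify $\overline G(\alpha,\gamma)/\overline G(\alpha,\gamma-4/\gamma)$ via the $\Gamma_{\gamma/2}$ shift equations (the paper cites this last step as \cite[(2.30)]{RZ22}, which is exactly the identity your shift-equation manipulation produces). However, there is one genuine gap.

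Your handling of the constant $C_2$ is not correct. You assert that $C_2$ is ``pinned down by the same normalization as in the proof of Theorem \ref{Conditional Welding}'', but that theorem only asserts the \emph{existence} of such a constant; nowhere in the chain from \cite[Theorem 2.2]{Weldisk} through the quantum-triangle welding and the induction of Section \ref{sec:field law} is the constant computed. The paper's device, which you are missing, is to form the renormalized quantity $\C_2(\alpha)=\mathbb E[|\psi'_\eta(i)|^{2\Delta_\alpha-2}]/C_2$, compute it explicitly as a function of $\alpha$ (exactly as you propose), and then evaluate at $\alpha=\gamma$: since $\Delta_\gamma=1$, the exponent $2\Delta_\gamma-2$ vanishes, so $\mathbb E[|\psi'_\eta(i)|^0]=1$ and $\C_2(\gamma)=1/C_2$. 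The desired moment is then the ratio $\C_2(\alpha)/\C_2(\gamma)$, in which the unknown $C_2$ cancels. Without this (or an independent evaluation of $C_2$), your computation determines the answer only up to an $\alpha$-independent multiplicative constant.

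A smaller point: the Beta integral and the application of Lemma \ref{LCFT 11 total length} with $\beta=\gamma-4/\gamma$ both require $\alpha>2/\gamma$, not $\alpha>\gamma/2$ as you state. The paper reaches the full range $\gamma/2<\alpha<Q+2/\gamma$ only after the simplification, by observing that the factor $\Gamma(\tfrac{2\alpha}{\gamma}-\tfrac{4}{\kappa})$ coming from the Beta integral cancels against the same factor in the $\overline G$-ratio, and then invoking analytic continuation of the resulting expression.
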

\begin{proof}
By Lemma \ref{LCFT 11 total length }, when $\alpha>\frac{2}{\gamma}$,
\begin{align}
	\left|\LF_{\mathbb H}^{(\gamma-\frac{4}{\gamma} ,0),(\alpha ,i)}(1)\right|=2^{-\frac{\alpha^2}{2}}\cdot \frac{2}{\gamma}\cdot\overline G(\alpha,\gamma-\frac{4}{\gamma})
\end{align}
and when $\alpha>\frac{\gamma}{2}$, 
\begin{align}
	|\qd_{1,1}(\alpha,\gamma; r)|=2^{-\frac{\alpha^2}{2}}r^{\frac{2}{\gamma}(\alpha-Q)}\frac{2}{\gamma}\overline G(\alpha,\gamma).
\end{align}
By \cite[Proposition $5.1$]{InteofSLE},
\begin{equation*}
	|\mathcal M^{\disk}_{0,2}(2;1,r)|=\frac{(2\pi)^{\frac{4}{\gamma^2}-1}}{(1-\frac{\gamma^2}{4})\Gamma(1-\frac{\gamma^2}{4})^{\frac{4}{\gamma^2}}}(1+r)^{-\frac{4}{\gamma^2}-1}.
\end{equation*} 
Notice that when $\frac{2}{\gamma}<\alpha<Q+\frac{2}{\gamma}$,  
\begin{align*}
	\int_0^\infty \frac{r^{\frac{2}{\gamma}\alpha}}{[(1+r)r]^{\frac{4}{\gamma^2}+1}}dr= B\left(\frac{2}{\gamma}(\alpha-Q)+1,\frac{8}{\gamma^2}-\frac{2\alpha}{\gamma}+1 \right)&=\frac{\Gamma(\frac{2}{\gamma}(\alpha-Q)+1)\Gamma(\frac{8}{\gamma^2}-\frac{2\alpha}{\gamma}+1)}{\Gamma(\frac{4}{\gamma^2}+1)}\\
	&=\frac{\Gamma(\frac{2\alpha}{\gamma}-\frac{4}{\kappa})\Gamma(\frac{8}{\kappa}-\frac{2\alpha}{\gamma}+1)}{\Gamma(\frac{4}{\kappa}+1)},
\end{align*}
where $B(x,y)$ is the Beta function with parameter $x,y$. Therefore, when $\max\lbrace\frac{\gamma}{2},\frac{2}{\gamma} \rbrace <\alpha<Q+\frac{2}{\gamma}$, we have
\begin{align}\label{pre conformal derivative}
	\C_2(\alpha)=\frac{\overline G(\alpha,\gamma)}{\overline G(\alpha,\gamma-\frac{4}{\gamma})}\cdot \frac{(2\pi)^{\frac{4}{\kappa}-1}}{(1-\frac{\kappa}{4})\Gamma(1-\frac{\kappa}{4})^{\frac{4}{\kappa}}}\frac{\Gamma(\frac{2\alpha}{\gamma}-\frac{4}{\kappa})\Gamma(\frac{8}{\kappa}-\frac{2\alpha}{\gamma}+1)}{\Gamma(\frac{4}{\kappa}+1)}.
\end{align}
By shifting relation $(2.30)$ in \cite{RZ22}, 
\begin{align*}
	\frac{\overline G(\alpha,\gamma)}{\overline G(\alpha,\gamma-\frac{4}{\gamma})}=\frac{\frac{\kappa}{4}\Gamma(1-\frac{\kappa}{4})^{\frac{4}{\kappa}}}{(2\pi)^{\frac{4}{\kappa}}2^{1-\frac{8}{\kappa}}}\cdot\frac{\Gamma(\frac{2\alpha}{\gamma}-1)\Gamma(\frac{4}{\kappa})^2}{\Gamma(\frac{8}{\kappa}-1)\Gamma(\frac{4}{\kappa}-1)\Gamma(\frac{2\alpha}{\gamma}-\frac{4}{\kappa})}.
\end{align*}
Therefore, when $\frac{2}{\gamma} <\alpha<Q+\frac{2}{\gamma}$, the renormalized moments of the conformal radius is equal to 
\begin{align}\label{comp of C2}
\begin{split}
	\C_2(\alpha) &=\frac{1}{\pi}\frac{\kappa}{4-\kappa}2^{\frac{8}{\kappa}-2} \frac{\Gamma(\frac{2\alpha}{\gamma}-1)\Gamma(\frac{4}{\kappa})^2}{\Gamma(\frac{8}{\kappa}-1)\Gamma(\frac{4}{\kappa}-1)\Gamma(\frac{2\alpha}{\gamma}-\frac{4}{\kappa})}\frac{\Gamma(\frac{2\alpha}{\gamma}-\frac{4}{\kappa})\Gamma(\frac{8}{\kappa}-\frac{2\alpha}{\gamma}+1)}{\Gamma(\frac{4}{\kappa}+1)}\\
	&= \frac{2^{\frac{8}{\kappa}-2}}{\pi}\frac{\Gamma(\frac{2\alpha}{\gamma}-1)\Gamma(\frac{4}{\kappa})}{\Gamma(\frac{8}{\kappa}-1)\Gamma(\frac{2\alpha}{\gamma}-\frac{4}{\kappa})}\frac{\Gamma(\frac{2\alpha}{\gamma}-\frac{4}{\kappa})\Gamma(\frac{8}{\kappa}-\frac{2\alpha}{\gamma}+1)}{\Gamma(\frac{4}{\kappa}+1)}\\
	&= \frac{2^{\frac{8}{\kappa}-2}}{\pi}\cdot\frac{\kappa}{4}\cdot \frac{\Gamma(\frac{2\alpha}{\gamma}-1)\Gamma(\frac{8}{\kappa}-\frac{2\alpha}{\gamma}+1)}{\Gamma(\frac{8}{\kappa}-1)} \\
	&= \frac{\kappa}{4\sqrt{\pi}}\cdot\frac{\Gamma(\frac{2\alpha}{\gamma}-1)\Gamma(\frac{8}{\kappa}-\frac{2\alpha}{\gamma}+1)}{\Gamma(\frac{4}{\kappa})\Gamma(\frac{4}{\kappa}-\frac{1}{2})}.\\
\end{split}
\end{align}
Notice that the lower bound $\alpha>\frac{2}{\gamma}$ comes from $\Gamma(\frac{2\alpha}{\gamma}-\frac{4}{\kappa})$. However, this term is transitory and will be canceled with a term in $\frac{\overline G(\alpha,\gamma)}{\overline G(\alpha,\gamma-\frac{4}{\gamma})}$. Therefore, by analytic continuation of Gamma function, (\ref{comp of C2}) holds when $\frac{\gamma}{2}<\alpha<Q+\frac{2}{\gamma}$. Therefore, when $\alpha=\gamma$,
\begin{equation*}
	\C_2(\gamma)=\frac{1}{C_{2}}=\frac{\kappa}{4\sqrt{\pi}}\cdot\frac{\Gamma(\frac{8}{\kappa}-1)}{\Gamma(\frac{4}{\kappa})\Gamma(\frac{4}{\kappa}-\frac{1}{2})}.
\end{equation*}
Hence, when $\frac{\gamma}{2}<\alpha<Q+\frac{2}{\gamma}$, 
\begin{equation}
	\mathbb E\left[|\psi'_\eta(i)|^{2\Delta_\alpha-2} \right]=\frac{\C_2(\alpha)}{\C_2(\gamma)}=\frac{\Gamma(\frac{2\alpha}{\gamma})\Gamma(\frac{8}{\kappa}-\frac{2\alpha}{\gamma}+1)}{\Gamma(\frac{8}{\kappa}-1)}. 
\end{equation}
\end{proof}
Next, we verify the Proposition \ref{Prop: conformal radius W=2} by using the Laplace transform of total boundary length $\nu_\phi(\mathbb R)$. As we will see, it will produce the exact same formula. We mention this computation to motivate our calculation of general weight-$W$ case. From now on, let $L_W$ and $R_W$ denote the left and right quantum boundary length of $\mathcal M^{\disk}_{0,2}(W)$ respectively. 
\begin{Lemma}\label{welding lemma laplace}
	Let $\mu>0$ and we have  
\begin{align*}
	\LF_{\mathbb H}^{(\gamma-\frac{4}{\gamma},0), (\alpha,i)}[e^{-\mu\nu_{\phi}(\mathbb R)}]\cdot \C_2(\alpha)= \mathcal M^{\disk}_{0,2}(2)[e^{-\mu R_2}|\qd_{1,1}(\alpha,\gamma;L_2)|]
\end{align*}
\end{Lemma}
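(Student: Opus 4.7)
The plan is to upgrade Theorem~\ref{welding of generalized weight} (with $W=2$) from the slice $\{L_2=1\}$ to arbitrary outer boundary length $L>0$, and then to extract the Laplace transform in $L$.

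First, I would observe that Theorem~\ref{welding of generalized weight} can be disintegrated over the outer (here: the non-welded) boundary length of $\mathcal M^{\disk}_{0,2}(2)$. Either by rerunning the proof of Theorem~\ref{welding of generalized weight} without the normalization to total length $1$, or by applying the scaling covariance $\phi\mapsto \phi+\tfrac{2}{\gamma}\log c$ (which multiplies all quantum boundary lengths by $c$ and leaves $\mathbf m_\alpha$ unchanged since the bubble law is scale invariant after conditioning on surrounding $i$, with $|\psi'_\eta(i)|$ transforming in exactly the way matching $\Delta_\alpha$), I obtain for every $L>0$
\begin{equation}\label{eqn:generalized welding at L}
\LF_{\mathbb H}^{(\gamma-\frac{4}{\gamma},0),(\alpha,i)}(L)\times \mathbf m_\alpha \;=\; C_2\int_0^\infty \qd_{1,1}(\alpha,\gamma;\ell)\times \mathcal M^{\disk}_{0,2}(2;\ell,L)\,d\ell,
\end{equation}
where on the right the first coordinate of $\mathcal M^{\disk}_{0,2}(2;\ell,L)$ is the welded boundary and the second coordinate $L$ is the outer boundary, matching the total length $\nu_\phi(\mathbb R)=L$ on the left.

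Next, I take total masses on both sides of \eqref{eqn:generalized welding at L}. By the definition \eqref{twisted SLE bubble: RN derivative} and \eqref{Def:renormalized CR intro}, and since $\mathbf m$ is a probability measure,
\begin{equation*}
|\mathbf m_\alpha|=\mathbb E\!\left[|\psi'_\eta(i)|^{2\Delta_\alpha-2}\right]=C_2\cdot \C_2(\alpha),
\end{equation*}
so \eqref{eqn:generalized welding at L} yields, after canceling $C_2$,
\begin{equation}\label{eqn:mass at L}
|\LF_{\mathbb H}^{(\gamma-\frac{4}{\gamma},0),(\alpha,i)}(L)|\cdot \C_2(\alpha)\;=\;\int_0^\infty |\qd_{1,1}(\alpha,\gamma;\ell)|\cdot|\mathcal M^{\disk}_{0,2}(2;\ell,L)|\,d\ell.
\end{equation}

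Finally, I would multiply both sides of \eqref{eqn:mass at L} by $e^{-\mu L}$ and integrate over $L\in(0,\infty)$. On the left, the disintegration identity $\LF_{\mathbb H}^{(\gamma-\frac{4}{\gamma},0),(\alpha,i)}=\int_0^\infty\LF_{\mathbb H}^{(\gamma-\frac{4}{\gamma},0),(\alpha,i)}(L)\,dL$ gives $\LF_{\mathbb H}^{(\gamma-\frac{4}{\gamma},0),(\alpha,i)}[e^{-\mu\nu_\phi(\mathbb R)}]\cdot \C_2(\alpha)$. On the right, Fubini's theorem together with the disintegration $\mathcal M^{\disk}_{0,2}(2)=\iint \mathcal M^{\disk}_{0,2}(2;\ell,L)\,d\ell\,dL$ identifies the result as $\mathcal M^{\disk}_{0,2}(2)[e^{-\mu R_2}\cdot|\qd_{1,1}(\alpha,\gamma;L_2)|]$, where $L_2,R_2$ denote the welded and outer (i.e.\ surviving) boundary lengths. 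Putting these together yields the stated identity.

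The main (essentially the only) obstacle is the first step: justifying the upgrade from $L_2=1$ to arbitrary $L_2=L$ in Theorem~\ref{welding of generalized weight}. The cleanest route is probably to observe that both sides of Theorem~\ref{welding of generalized weight}, viewed as measures on triples of (field, curve, outer-boundary-length), possess a common scaling symmetry (simultaneous $c$-rescaling of all quantum lengths) under which the normalized slice $L_2=1$ determines the full joint law; everything else is routine disintegration and Fubini.
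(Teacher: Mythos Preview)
Your proof is correct and follows essentially the same route as the paper's: extend the welding identity of Theorem~\ref{welding of generalized weight} from the slice $\{\text{outer length}=1\}$ to all outer lengths, take masses, then integrate against $e^{-\mu L}\,dL$. The paper simply asserts the unrestricted welding equation and evaluates both sides against $e^{-\mu\nu_\phi(\mathbb R)}$ in one line, whereas you spell out the scaling and disintegration steps explicitly; the content is the same. One small remark: in your parenthetical about the scaling step, the field shift $\phi\mapsto\phi+\tfrac{2}{\gamma}\log c$ leaves $\eta$ (hence $|\psi'_\eta(i)|$) untouched, so $\mathbf m_\alpha$ is trivially unchanged and no matching of $\Delta_\alpha$-exponents is needed there---but this does not affect the argument.
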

	\begin{proof}
	By definition of the conformal welding, the $L_2$ is also equal to outer boundary of $\qd_{1,1}(\alpha,\gamma)$. Therefore, we have
	\begin{align}\label{7.2.12}
	\begin{split}
		\LF_{\mathbb H}^{(\gamma-\frac{4}{\gamma},0),(\alpha,i)}\left[e^{-\mu\nu_{\phi}(\mathbb R)}\right]\cdot \mathbb E\left[|\psi'_\eta(i)|^{2\Delta_\alpha-2} \right] &=C_{2}\cdot \int_0^\infty \mathcal M^{\disk}_{0,2}(2;\ell)[e^{-\mu R_2}]\qd_{1,1}(\alpha,\gamma ;\ell)d\ell\\
		&=C_{2}\cdot \int_0^\infty \mathcal M^{\disk}_{0,2}(2;\ell)\left[e^{-\mu R_2}|\qd_{1,1}(\alpha,\gamma;\ell)|\right]d\ell\\
		&=C_{2}\cdot \mathcal M^{\disk}_{0,2}(2)\left[e^{-\mu R_2}\cdot |\qd_{1,1}(\alpha,\gamma;L_2)| \right]. 
	\end{split}
	\end{align} 
\end{proof}
\begin{proof}[Proof of Proposition \ref{Prop: conformal radius W=2} using Laplace transform]
	We first simplify last line of (\ref{7.2.12}). By (\ref{laplace trans of outer boundary}), when $Q-\alpha<\frac{\gamma}{2}<\alpha$ and $\gamma<Q$, i.e.,  $\alpha>\frac{2}{\gamma}$, 
\begin{align*}
	\mathcal M^{\disk}_{0,2}(2)\left[e^{-\mu R_2}|\qd_{1,1}(\alpha,\gamma ;L_2)|\right]&=2^{-\frac{\alpha^2}{2}}\frac{2}{\gamma}\overline G(\alpha,\gamma)\mathcal M^{\disk}_{0,2}[e^{-\mu R_2}L_2^{\frac{2}{\gamma}(\alpha-Q)}]\\
	&=2^{-\frac{\alpha^2}{2}}\frac{2}{\gamma}\overline G(\alpha,\gamma)\frac{(2\pi)^{\frac{4}{\gamma^2}-1}}{(1-\frac{\gamma^2}{4})\Gamma(1-\frac{\gamma^2}{4})^{\frac{4}{\gamma^2}}}\iint_{0}^{\infty}e^{-\mu \ell}r^{\frac{2}{\gamma}(\alpha-Q)}(\ell+r)^{-\frac{4}{\gamma^2}-1}d\ell dr.
\end{align*}
Let $r=\ell\cdot t$ and $dr=dt\cdot\ell$. We have 
\begin{align*}
	\iint_{0}^{\infty}e^{-\mu \ell}r^{\frac{2}{\gamma}(\alpha-Q)}(\ell+r)^{-\frac{4}{\gamma^2}-1}d\ell dr &=\iint_0^\infty e^{-\mu\ell}(\ell t)^{\frac{2}{\gamma}(\alpha-Q)}(\ell+\ell\cdot t)^{-\frac{4}{\gamma^2}-1} \ell dtd\ell\\
	&=\left(\int_0^\infty \frac{\ell^{\frac{2}{\gamma}(\alpha-Q)} e^{-\mu\ell}}{\ell^{\frac{4}{\gamma^2}}}d\ell\right)\left(\int_0^\infty \frac{t^{\frac{2}{\gamma}(\alpha-Q)}}{(1+t)^{\frac{4}{\gamma^2}+1}}dt\right).
\end{align*}
When $\frac{2}{\gamma}(\alpha-Q)-\frac{4}{\gamma^2}>-1$,  i.e., $\alpha>\frac{4}{\gamma}$, 
\begin{align*}
	\int_0^\infty \frac{\ell^{\frac{2}{\gamma}(\alpha-Q)} e^{-\mu\ell}}{\ell^{\frac{4}{\gamma^2}}}d\ell=\mu^{\frac{8}{\gamma^2}-\frac{2\alpha}{\gamma}}\Gamma\left(\frac{2\alpha}{\gamma}-\frac{8}{\gamma^2} \right).
\end{align*}
Furthermore, when $\frac{2}{\gamma}<\alpha<Q+\frac{2}{\gamma}$,  
\begin{align*}
	\int_0^\infty \frac{t^{\frac{2}{\gamma}(\alpha-Q)}}{(1+t)^{\frac{4}{\gamma^2}+1}}dr = B\left(\frac{2}{\gamma}(\alpha-Q)+1,\frac{8}{\gamma^2}-\frac{2\alpha}{\gamma}+1 \right) &=\frac{\Gamma(\frac{2}{\gamma}(\alpha-Q)+1)\Gamma(\frac{8}{\gamma^2}-\frac{2\alpha}{\gamma}+1)}{\Gamma(\frac{4}{\gamma^2}+1)}\\
	&=\frac{\Gamma(\frac{2\alpha}{\gamma}-\frac{4}{\kappa})\Gamma(\frac{8}{\kappa}-\frac{2\alpha}{\gamma}+1)}{\Gamma(\frac{4}{\kappa}+1)},
\end{align*}
where $B(x,y)$ is the Beta function with parameter $x,y$. To conclude, when $\frac{4}{\gamma} <\alpha<Q+\frac{2}{\gamma}$, 
\begin{align*}
	&\mathcal M^{\disk}_{0,2}(2)[e^{-\mu R_2}|\qd_{1,1}(\alpha,\gamma;L_2)|]\\
	&=2^{-\frac{\alpha^2}{2}}\frac{2}{\gamma}\overline G(\alpha,\gamma)\frac{(2\pi)^{\frac{4}{\kappa}-1}}{(1-\frac{\kappa}{4})\Gamma(1-\frac{\kappa}{4})^{\frac{4}{\kappa}}}\mu^{\frac{8}{\kappa}-\frac{2\alpha}{\gamma}}\Gamma\left(\frac{2\alpha}{\gamma}-\frac{8}{\kappa} \right)\frac{\Gamma(\frac{2\alpha}{\gamma}-\frac{4}{\kappa})\Gamma(\frac{8}{\kappa}-\frac{2\alpha}{\gamma}+1)}{\Gamma(\frac{4}{\kappa}+1)}.
\end{align*}
On the other hand, when $\gamma-\frac{4}{\gamma}<Q$ and $Q-\alpha<\frac{\gamma}{2}-\frac{2}{\gamma}<\alpha$, i.e., $\alpha>\frac{4}{\gamma}$, 
\begin{align*}
	\LF_{\mathbb H}^{(\gamma-\frac{4}{\gamma},0), (\alpha,i)}[e^{-\mu\nu_{\phi}(\mathbb R)}]=2^{-\frac{\alpha^2}{2}}\frac{2}{\gamma}\overline G(\alpha ,\gamma-\frac{4}{\gamma})\mu^{\frac{8}{\kappa}-\frac{2\alpha}{\gamma}}\Gamma\left(\frac{2\alpha}{\gamma}-\frac{8}{\kappa} \right).
\end{align*}
Therefore, when $\frac{4}{\gamma}<\alpha<Q+\frac{2}{\gamma}$, we have 
\begin{align*}
	\C(\alpha)=\frac{\overline G(\alpha,\gamma)}{\overline G(\alpha,\gamma-\frac{4}{\gamma})}\frac{(2\pi)^{\frac{4}{\kappa}-1}}{(1-\frac{\kappa}{4})\Gamma(1-\frac{\kappa}{4})^{\frac{4}{\kappa}}}\frac{\Gamma(\frac{2\alpha}{\gamma}-\frac{4}{\kappa})\Gamma(\frac{8}{\kappa}-\frac{2\alpha}{\gamma}+1)}{\Gamma(\frac{4}{\kappa}+1)},
\end{align*}
which is identical to our previous calculation $(\ref{pre conformal derivative})$. Notice that by analytic continuation, we can again extend the range of $\alpha$ to $(\frac{\gamma}{2},Q+\frac{2}{\gamma})$ in the end.
\end{proof}
\subsubsection{General weight-$W$ case}\label{subsubsec: general weight CR}
In this section, we compute the moments of the conformal radius of $\sle^{\bub}_{\kappa,0}(W-2)[d\eta|i\in D_\eta(0)]$ for general $W>0$. 
\begin{Lemma}
	Let $\mu>0$ and we have  
	\begin{equation}\label{laplace welding}
		\LF_{\mathbb H}^{(\beta_{2W+2},0),(\alpha,i)}\left[e^{-\mu\nu_{\phi}(\mathbb R)}\right]\mathbb E\left[|\psi'_\eta(i)|^{2\Delta_\alpha-2} \right] =C_{W}\cdot\mathcal M^{\disk}_{0,2}(W)\left[e^{-\mu R_W}\cdot |\qd_{1,1}(\alpha,\gamma;L_W)|\right].
	\end{equation}
\end{Lemma}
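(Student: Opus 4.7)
Identity (\ref{laplace welding}) is the Laplace-transformed version of Theorem \ref{welding of generalized weight}, and the argument mirrors that of Lemma \ref{welding lemma laplace} for the special case $W=2$. The plan is to first lift Theorem \ref{welding of generalized weight} from its length-$1$ slice to the unrestricted field, then weight both sides by $e^{-\mu r}$ (where $r$ denotes the outer boundary length), and finally take total masses.

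\emph{Step 1 (Lifting to all boundary lengths).} Theorem \ref{welding of generalized weight} is stated under the constraint $\nu_\phi(\mathbb R)=1$. By the same scaling/disintegration argument used at the end of the proof of Proposition \ref{Curve Law Prop}---also used to extend Theorem \ref{Conditional Welding} to arbitrary boundary lengths---one obtains the unrestricted identity
\begin{equation*}
C_W\cdot\LF_{\mathbb H}^{(\beta_{2W+2},0),(\alpha,i)}(d\phi)\times\mathbf m_\alpha(d\eta)=\iint_{(0,\infty)^2}\qd_{1,1}(\alpha,\gamma;\ell)\times\mathcal M^{\disk}_{0,2}(W;r,\ell)\,dr\,d\ell,
\end{equation*}
in which $\ell$ is the welding interface length (equal to the total boundary of the $\qd_{1,1}$ piece), and $r$ is the outer boundary length of $\mathcal M^{\disk}_{0,2}(W)$, which coincides with $\nu_\phi(\mathbb R)$ under the welding coupling.

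\emph{Step 2 (Weight by $e^{-\mu r}$ and take total masses).} Multiplying both sides by $e^{-\mu\nu_\phi(\mathbb R)}=e^{-\mu r}$ and taking total masses, the LHS becomes $C_W\cdot\LF_{\mathbb H}^{(\beta_{2W+2},0),(\alpha,i)}[e^{-\mu\nu_\phi(\mathbb R)}]\cdot|\mathbf m_\alpha|$; since $\mathbf m$ is a probability measure, the definition (\ref{twisted SLE bubble: RN derivative}) gives $|\mathbf m_\alpha|=\mathbb E[|\psi'_\eta(i)|^{2\Delta_\alpha-2}]$. On the RHS, Fubini together with the disintegration $\mathcal M^{\disk}_{0,2}(W)=\iint\mathcal M^{\disk}_{0,2}(W;r,\ell)\,dr\,d\ell$ gives
\begin{equation*}
C_W\iint_{(0,\infty)^2} e^{-\mu r}\,|\qd_{1,1}(\alpha,\gamma;\ell)|\cdot|\mathcal M^{\disk}_{0,2}(W;r,\ell)|\,dr\,d\ell = C_W\cdot\mathcal M^{\disk}_{0,2}(W)\bigl[e^{-\mu R_W}\cdot|\qd_{1,1}(\alpha,\gamma;L_W)|\bigr],
\end{equation*}
with $R_W=r$ (outer boundary) and $L_W=\ell$ (interface) matching the conventions of Lemma \ref{welding lemma laplace}. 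Equating the two masses yields (\ref{laplace welding}). This is essentially bookkeeping once Theorem \ref{welding of generalized weight} is in hand; the only non-automatic point is the lift in Step 1, which uses the deterministic scaling of $\LF$ under $\phi\mapsto\phi+c$ (rescaling boundary lengths by $e^{\gamma c/2}$).
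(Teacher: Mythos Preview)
Your approach is correct and essentially identical to the paper's: the paper simply says ``the proof is identical to that of Lemma \ref{welding lemma laplace},'' which amounts to exactly your Step 1 (lift the length-$1$ welding identity of Theorem \ref{welding of generalized weight} to all outer boundary lengths by scaling/disintegration) followed by Step 2 (weight by $e^{-\mu r}$ and take masses). One bookkeeping slip: in your Step 1 displayed equation the constant $C_W$ should sit on the right-hand side (as in Theorem \ref{welding of generalized weight}), not the left; with that correction your Step 2 yields the stated identity directly, and the stray $C_W$ on the RHS of your Step 2 display becomes correct while the one on your Step 2 LHS disappears.
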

\begin{proof}
The proof is identical to that of Lemma \ref{welding lemma laplace}. 
\end{proof}
Similarly as before, define the \textit{generalized renormalized moments of the conformal radius} $\C(\alpha,W)$ to be the following: 
\begin{equation}\label{def:renormalized CR}
	\C(\alpha,W):=\frac{\mathbb E\left[|\psi'_\eta(i)|^{2\Delta_\alpha-2} \right]}{C_{W}}.
\end{equation}
Therefore, we have 
\begin{equation}
    \LF_{\mathbb H}^{(\beta_{2W+2},0),(\alpha,i)}\left[e^{-\mu\nu_{\phi}(\mathbb R)}\right]\cdot\C(\alpha,W) = \mathcal M^{\disk}_{0,2}(W)\left[e^{-\mu R_W}\cdot |\qd_{1,1}(\alpha,\gamma;L_W)|\right].
\end{equation}

\begin{Prop}\label{conformal radius: general case}
	Fix $\gamma\in (0,2)$. When $\beta_{2W+2}$ and $\alpha$ satisfy $0<\beta_{2W+2}<\gamma$ and $Q-\frac{\beta_{2W+2}}{2}<\alpha<Q+\frac{\gamma}{2}$,  we have
	\begin{align}
 \begin{split}
 	&\mathbb E\left[|\psi'_\eta(i)|^{2\Delta_\alpha-2} \right]\\
 	 &=\frac{\overline G(\alpha,\gamma)}{\overline G(\alpha, \gamma-\frac{2W}{\gamma})}\frac{\overline G(\gamma,\gamma-\frac{2W}{\gamma})}{\overline G(\gamma , \gamma)}\frac{\int_0^\infty\mu_1^{\frac{2}{\gamma}(Q-\alpha)}\left(\frac{\partial}{\partial \mu_1} R(\beta_W;\mu_1,1)\right)d\mu_1}{\Gamma(\frac{2}{\gamma}(Q-\alpha)+1)\Gamma(\frac{2}{\gamma}(\alpha-\frac{W+2}{\gamma}))}\cdot \frac{\Gamma(\frac{2}{\gamma}(Q-\gamma)+1)\Gamma(\frac{2}{\gamma}(\gamma-\frac{W+2}{\gamma}))}{\int_0^\infty\mu_1^{\frac{2}{\gamma}(Q-\gamma)}\left(\frac{\partial}{\partial \mu_1} R(\beta_W;\mu_1,1)\right)d\mu_1}.
 \end{split}
\end{align}
\end{Prop}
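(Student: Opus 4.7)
The plan is to compute $\mathbb E[|\psi'_\eta(i)|^{2\Delta_\alpha - 2}]$ by taking the total mass of the Laplace-transformed welding identity (\ref{laplace welding}), which after a Gamma function trick and an integration by parts will produce the integrals of $\partial_{\mu_1} R(\beta_W;\mu_1,1)$ appearing in the claim. Throughout, I will use that $\Delta_\gamma = 1$, so setting $\alpha = \gamma$ in (\ref{def:renormalized CR}) yields $\C(\gamma,W) = 1/C_W$, and hence $\mathbb E[|\psi'_\eta(i)|^{2\Delta_\alpha - 2}] = \C(\alpha,W)/\C(\gamma,W)$; this converts the problem into computing $\C(\alpha,W)$ as a function of $\alpha$ up to an overall constant.

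First, I would evaluate the left-hand side of (\ref{laplace welding}). Lemma \ref{LCFT 11 total length } (with $\beta = \beta_{2W+2} = \gamma - 2W/\gamma$ and using $Q - \alpha - \tfrac{1}{2}\beta_{2W+2} = \tfrac{W+2}{\gamma} - \alpha$) gives an explicit formula of the form $2^{-\alpha^2/2}(2/\gamma)\overline G(\alpha, \gamma - 2W/\gamma)\mu^{\frac{2}{\gamma}(\frac{W+2}{\gamma} - \alpha)}\Gamma(\tfrac{2}{\gamma}(\alpha - \tfrac{W+2}{\gamma}))$, valid under the Seiberg-type bounds $Q - \beta_{2W+2}/2 < \alpha$ and $\beta_{2W+2} < Q$. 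For the right-hand side, the same lemma applied at $\beta = \gamma$, $q = i$ yields $|\qd_{1,1}(\alpha,\gamma;\ell)| = 2^{-\alpha^2/2}(2/\gamma)\overline G(\alpha,\gamma)\ell^{2(\alpha-Q)/\gamma}$; since $\alpha < Q$, I would write $\ell^{-s} = \tfrac{1}{\Gamma(s)}\int_0^\infty \mu_1^{s-1} e^{-\mu_1 \ell}d\mu_1$ with $s := 2(Q-\alpha)/\gamma > 0$ and then apply Proposition \ref{conformal radius: general case}'s thin-disk Laplace-transform formula $\mathcal M^{\disk}_{0,2}(W)[e^{-\mu_1 L_W - \mu R_W}] = -\tfrac{\gamma}{2(\beta_W - Q)}R(\beta_W;\mu_1,\mu)$ to convert the right-hand side into a single integral of $R(\beta_W;\mu_1,\mu)$ in $\mu_1$.

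Next, I would use the homogeneity of $R$. From the definition $\mu_j = e^{i\pi\gamma(\sigma_j - Q/2)}$ and the mass formula for thick disks (or its analytic continuation), one reads off the scaling $\overline R(\beta; c\mu_1, c\mu_2) = c^{\frac{2}{\gamma}(Q-\beta)}\overline R(\beta;\mu_1,\mu_2)$, hence the same for $R$. Writing $R(\beta_W;\mu_1,\mu) = \mu^{\frac{2}{\gamma}(Q-\beta_W)} R(\beta_W;\mu_1/\mu,1)$ and changing variables $u = \mu_1/\mu$, the right-hand side becomes $\mu^{\frac{2}{\gamma}(\frac{W+2}{\gamma} - \alpha)}$ times an explicit constant times $\int_0^\infty u^{s-1} R(\beta_W;u,1)du$. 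Matching the $\mu$-powers on both sides (they should agree) and dividing then gives
\begin{equation*}
\C(\alpha,W) = \frac{\overline G(\alpha,\gamma)}{\overline G(\alpha,\gamma - 2W/\gamma)} \cdot \frac{-\gamma/[2(\beta_W-Q)]}{\Gamma(s)\,\Gamma(\tfrac{2}{\gamma}(\alpha - \tfrac{W+2}{\gamma}))}\int_0^\infty u^{s-1} R(\beta_W;u,1)du.
\end{equation*}

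The last step is an integration by parts: $\int_0^\infty u^{s-1} R(\beta_W;u,1)du = -\tfrac{1}{s}\int_0^\infty u^s \partial_u R(\beta_W;u,1)du$, which converts $\Gamma(s)$ into $\Gamma(s+1) = \Gamma(\tfrac{2}{\gamma}(Q-\alpha)+1)$ and yields exactly the integrand $\mu_1^{\frac{2}{\gamma}(Q-\alpha)}\partial_{\mu_1} R(\beta_W;\mu_1,1)$ in the statement. Taking the ratio $\C(\alpha,W)/\C(\gamma,W)$ then produces the desired formula. The main obstacle will be justifying the integration by parts: one must verify that the boundary terms $[u^s R(\beta_W;u,1)]_0^\infty$ vanish in the range $0 < \beta_{2W+2} < \gamma$ and $Q - \beta_{2W+2}/2 < \alpha < Q + \gamma/2$. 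At $u \to \infty$ the scaling $R(\beta_W;u,1) = u^{\frac{2}{\gamma}(Q-\beta_W)}R(\beta_W;1,1/u)$ together with $\beta_W > Q$ (thin disk regime) gives polynomial decay; at $u \to 0$, one needs that $R(\beta_W;0,1)$ is finite and that $s > 0$, which is ensured by $\alpha < Q$. A careful analysis of the double gamma functions in the definition of $\overline R$ should close this argument, and simultaneously confirm the stated constraints on $\alpha$ and $\beta_{2W+2}$ are the sharp ones for absolute convergence of every integral involved.
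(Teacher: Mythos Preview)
Your proposal is correct and follows essentially the same route as the paper: compute $\C(\alpha,W)$ from the mass of the Laplace-transformed welding identity, then take the ratio $\C(\alpha,W)/\C(\gamma,W)$. The only tactical difference is that the paper (Lemma~\ref{Lemma: laplace of M02W}) first differentiates $\mathcal M^{\disk}_{0,2}(W)[e^{-\mu_1 L_W-R_W}]$ in $\mu_1$ and then integrates against $\mu_1^{a}$, whereas you first represent $L_W^{-s}$ by the Gamma integral and then integrate by parts; these are equivalent manipulations, but note that your intermediate integral $\int_0^\infty u^{s-1}R(\beta_W;u,1)\,du$ and the Gamma trick both require $s=\tfrac{2}{\gamma}(Q-\alpha)>0$, so your argument as written only covers $\alpha<Q$ directly and needs analytic continuation to reach $\alpha\in[Q,Q+\tfrac{\gamma}{2})$, while the paper's order of operations gives the derivative integral on the full range $\alpha<Q+\tfrac{\gamma}{2}$ immediately.
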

\begin{Cor}\label{intermediate conformal derivative equation}
	Let $\gamma,\alpha$ be such that $\alpha>\frac{\gamma}{2}$, and we have
	\begin{align*}
		\C(\alpha,W)\cdot \LF_{\mathbb H}^{(\beta_{2W+2},0),(\alpha,i)}\left[e^{-\nu_{\phi}(\mathbb R)}\right]=2^{-\frac{\alpha^2}{2}}\frac{2}{\gamma}\overline G(\alpha,\gamma)\mathcal M^{\disk}_{0,2}(W)\left[e^{- R_W}\cdot L_W^{\frac{2}{\gamma}(\alpha-Q)}\right].
    \end{align*}
\end{Cor}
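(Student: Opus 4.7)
The plan is to specialize the Laplace-transformed welding identity (\ref{laplace welding}) at $\mu = 1$, divide by the constant $C_W$, and then substitute an explicit formula for the total mass $|\qd_{1,1}(\alpha,\gamma;\ell)|$ of the length-$\ell$ disintegration.

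First I would apply Lemma \ref{LCFT 11 total length } with the data $q = i$ (so $|q| = \Im q = 1$, which kills the factors $|q|^{-2\Delta_{\beta}}$ and $(\Im q)^{\Delta_\beta-2\Delta_\alpha}$) and $\beta = \gamma$. Since $\qd_{1,1}(\alpha,\gamma;\ell)$ is by definition the law of $\LF_{\mathbb H}^{(\alpha,i),(\gamma,0)}(\ell)$ viewed as a quantum surface embedded on $(\mathbb H,0,i)$, formula (\ref{mass at length l}) gives
\begin{equation*}
    |\qd_{1,1}(\alpha,\gamma;\ell)| = 2^{-\alpha^2/2}\,\frac{2}{\gamma}\,\overline G(\alpha,\gamma)\,\ell^{\frac{2}{\gamma}(\alpha-Q)},
\end{equation*}
where the exponent comes from simplifying $\frac{2}{\gamma}(\tfrac{\gamma}{2}+\alpha-Q)-1 = \frac{2}{\gamma}(\alpha-Q)$. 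The hypothesis $\alpha > \gamma/2$ is exactly what is needed for Lemma \ref{LCFT 11 total length } to apply at $\beta=\gamma$: the constraint $\frac{\gamma}{2}-\alpha < \frac{\beta}{2} < \alpha$ with $\beta=\gamma$ reduces to $\alpha>\gamma/2$, and $\beta = \gamma < Q$ holds automatically for $\gamma \in (0,2)$.

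Next I would start from (\ref{laplace welding}) with $\mu = 1$, divide both sides by $C_W$, and use the definition (\ref{def:renormalized CR}) of the renormalized moments of conformal radius to rewrite the left-hand side as $\C(\alpha,W)\cdot \LF_{\mathbb H}^{(\beta_{2W+2},0),(\alpha,i)}[e^{-\nu_{\phi}(\mathbb R)}]$. Under the conformal welding of $\mathcal M^{\disk}_{0,2}(W;\cdot,\ell)$ with $\qd_{1,1}(\alpha,\gamma;\ell)$, the parameter $L_W$ (the left boundary length of the weight-$W$ disk, which is glued to the total boundary of the disk with one bulk and one boundary marked point) is identified with the total quantum boundary length of $\qd_{1,1}(\alpha,\gamma)$. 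Hence substituting the explicit formula above inside the expectation yields
\begin{equation*}
    \mathcal M^{\disk}_{0,2}(W)\bigl[e^{-R_W}\,|\qd_{1,1}(\alpha,\gamma;L_W)|\bigr] = 2^{-\alpha^2/2}\,\frac{2}{\gamma}\,\overline G(\alpha,\gamma)\,\mathcal M^{\disk}_{0,2}(W)\bigl[e^{-R_W}\,L_W^{\frac{2}{\gamma}(\alpha-Q)}\bigr],
\end{equation*}
where I have pulled the (deterministic) prefactors out of the expectation. Combining this with (\ref{laplace welding}) at $\mu = 1$, divided by $C_W$, produces the claimed identity.

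The argument is essentially bookkeeping, and I do not expect a substantive obstacle. The only mildly delicate points are (i) checking the range of $\alpha$ in Lemma \ref{LCFT 11 total length } at $\beta=\gamma$, which as noted reduces to the hypothesis $\alpha>\gamma/2$ in the corollary, and (ii) observing that the right-hand side of (\ref{laplace welding}) is an absolutely convergent integral against $d\ell$ so that the two ways of writing the expectation---first integrating the Laplace transform of $R_W$ against $\qd_{1,1}(\alpha,\gamma;\ell)d\ell$, then substituting the explicit density---agree by Fubini. The only reason (\ref{laplace welding}) itself is stated in the preceding lemma is to legitimize the exchange, so the corollary follows immediately.
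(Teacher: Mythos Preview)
Your proposal is correct and follows essentially the same approach as the paper: apply Lemma \ref{LCFT 11 total length } at $\beta=\gamma$, $q=i$ to compute $|\qd_{1,1}(\alpha,\gamma;\ell)|$, note that the parameter constraint reduces to $\alpha>\gamma/2$, and substitute into the Laplace-transformed welding identity (\ref{laplace welding}) at $\mu=1$ after dividing by $C_W$. The paper's proof is a two-line version of exactly this.
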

\begin{proof}
	By $(\ref{mass at length l})$ and definition of $\qd_{1,1}(\alpha,\gamma;\ell)$, when $\frac{\gamma}{2}-\alpha<\frac{\gamma}{2}<\alpha$ and $\gamma<Q$, i.e., $\alpha>\frac{\gamma}{2}$, we have
	\begin{align*}
		|\qd_{1,1}(\alpha,\gamma;L_W)|=2^{-\frac{\alpha^2}{2}}L_W^{\frac{2}{\gamma}(\frac{1}{2}\gamma +\alpha-Q)-1}\frac{2}{\gamma}\overline G(\alpha,\gamma).
	\end{align*}
	The statement then follows directly from $(\ref{laplace welding})$.  
\end{proof}
\begin{Lemma}\label{Lemma: laplace of M02W}
	When $W\in (0,\frac{\gamma^2}{2})$ and $\alpha<Q+\frac{\gamma}{2}$, we have 
	\begin{align*}
		\mathcal M^{\disk}_{0,2}(W)\left[L_W^{\frac{2}{\gamma}(\alpha- Q)}e^{-R_W} \right]=\frac{\gamma}{2(\beta_W-Q) \Gamma\left(\frac{2}{\gamma}(Q-\alpha)+1\right)}\int_0^\infty\mu_1^{\frac{2}{\gamma}(Q-\alpha)}\left( \frac{\partial}{\partial \mu_1} R(\beta_W;\mu_1,1)\right)d\mu_1. 
	\end{align*}
\end{Lemma}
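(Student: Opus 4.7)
The plan is to reduce the lemma to the known Laplace transform identity
\begin{equation*}
\mathcal M_{0,2}^{\disk}(W)\left[e^{-\mu_1 L_W-\mu_2 R_W}\right]=-\frac{\gamma}{2(\beta_W-Q)}R(\beta_W;\mu_1,\mu_2)
\end{equation*}
(the Proposition from \cite{InteofSLE} quoted just above Lemma \ref{Lemma: laplace of M02W}). Taking a $\mu_1$-derivative inside the measure yields
\begin{equation*}
\mathcal M_{0,2}^{\disk}(W)\left[L_W\,e^{-\mu_1 L_W-\mu_2 R_W}\right]=\frac{\gamma}{2(\beta_W-Q)}\,\frac{\partial}{\partial\mu_1}R(\beta_W;\mu_1,\mu_2),
\end{equation*}
and I would then recover $L_W^{\frac{2}{\gamma}(\alpha-Q)}$ by integrating this against an appropriate power of $\mu_1$.

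The key identity that makes the proposal work is the Gamma-function representation
\begin{equation*}
\ell^{s}=\frac{1}{\Gamma(1-s)}\int_0^\infty \mu^{-s}\,\ell\, e^{-\mu \ell}\,d\mu,\qquad s<1,\ \ell>0,
\end{equation*}
which follows from the substitution $u=\mu\ell$. With $s=\frac{2}{\gamma}(\alpha-Q)$, the hypothesis $\alpha<Q+\frac{\gamma}{2}$ is exactly what ensures $s<1$, so the identity is available on the relevant range. Setting $\mu_2=1$ and applying Fubini to swap the $\mu_1$-integral with $\mathcal M_{0,2}^{\disk}(W)$, one gets
\begin{equation*}
\mathcal M_{0,2}^{\disk}(W)\!\left[L_W^{\frac{2}{\gamma}(\alpha-Q)}e^{-R_W}\right]=\frac{\gamma}{2(\beta_W-Q)\,\Gamma\!\left(\frac{2}{\gamma}(Q-\alpha)+1\right)}\int_0^\infty \mu_1^{\frac{2}{\gamma}(Q-\alpha)}\frac{\partial}{\partial\mu_1}R(\beta_W;\mu_1,1)\,d\mu_1,
\end{equation*}
which is the desired formula.

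The main technical step is justifying the differentiation under the measure sign and the subsequent Fubini interchange. For the first, I would use that $e^{-\mu_1 L_W-\mu_2 R_W}$ is jointly smooth in $(\mu_1,\mu_2)$ with derivative $-L_W e^{-\mu_1 L_W-\mu_2 R_W}$ uniformly dominated in a neighbourhood of each $\mu_1>0$ by an integrable function against $\mathcal M_{0,2}^{\disk}(W)$ (the Laplace transform itself is smooth on $\mathbb R_+^2$). For the second, the non-negativity of the integrand $\mu_1^{\frac{2}{\gamma}(Q-\alpha)}L_W e^{-\mu_1 L_W-R_W}$ lets me apply Tonelli's theorem directly; convergence of the resulting $\mu_1$-integral is then encoded in the analytic behaviour of $\frac{\partial}{\partial \mu_1}R(\beta_W;\mu_1,1)$ at $0$ and $\infty$, which by the shift equations for $\Gamma_{\gamma/2}$ and $S_{\gamma/2}$ exhibits the polynomial decay/growth needed for the range $W\in(0,\gamma^2/2)$, $\alpha<Q+\frac{\gamma}{2}$.

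The main obstacle I anticipate is not conceptual but analytic bookkeeping: verifying that the $\mu_1\to 0^+$ and $\mu_1\to\infty$ tails of $\mu_1^{\frac{2}{\gamma}(Q-\alpha)}\partial_{\mu_1}R(\beta_W;\mu_1,1)$ are integrable in the claimed parameter range. If a direct check from the explicit formula for $R$ becomes unwieldy, I would instead perform the whole computation first for a strict subrange of $\alpha$ where all integrals are absolutely convergent by inspection, and then extend by analytic continuation in $\alpha$ using the meromorphicity of both sides, exactly as in the analogous arguments in Subsections \ref{subsubsec: general weight CR} above.
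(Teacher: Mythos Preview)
Your proposal is correct and follows essentially the same route as the paper: differentiate the Laplace transform identity $\mathcal M^{\disk}_{0,2}(W)[e^{-\mu_1 L_W-R_W}]=-\frac{\gamma}{2(\beta_W-Q)}R(\beta_W;\mu_1,1)$ in $\mu_1$, then integrate against $\mu_1^{\frac{2}{\gamma}(Q-\alpha)}$ and use the Gamma-function identity $\int_0^\infty \mu_1^a e^{-\mu_1 L_W}d\mu_1=\Gamma(a+1)L_W^{-a-1}$ (with $a=\frac{2}{\gamma}(Q-\alpha)>-1$) together with Tonelli to recover the power $L_W^{\frac{2}{\gamma}(\alpha-Q)}$. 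The paper's argument is the same computation presented in the reverse order (integrate first, then recognize the Gamma integral), and it does not carry out the analytic tail checks you mention either.
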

\begin{proof}
	By \cite[Proposition $3.6$]{InteofSLE}, when $W\in (0,\frac{\gamma^2}{2})$ and $\beta_W=Q+\frac{\gamma}{2}-\frac{W}{\gamma}\in (Q,Q+\frac{\gamma}{2})$, 
	\begin{equation*}
		\mathcal M^{\disk}_{0,2}(W)\left[e^{-\mu_1 L_W-R_W} \right]=-\frac{\gamma}{2(\beta_W-Q)}R(\beta_W;\mu_1,1). 
	\end{equation*}
	Taking partial derivatives on both sides with respect to $\mu_1$ and we have  
	\begin{align*}
		\mathcal M^{\disk}_{0,2}(W)\left[L e^{-\mu_1 L_W-R_W}\right]=\frac{\gamma}{2(\beta_W-Q)}\left(\frac{\partial}{\partial \mu_1} R(\beta_W;\mu_1,1)\right). 
	\end{align*}
	Next, for fixed real number $a>-1$, we integrate the above equation against $\mu_1^a$ on both sides. By Fubini's theorem, 
	\begin{align*}
		\int_0^\infty \mu_1^a \mathcal M^{\disk}_{0,2}(W)\left[L_W e^{-\mu_1 L_W- R_W}\right]d\mu_1 &=\mathcal M^{\disk}_{0,2}(W)\left[L_W e^{-R_W}\int_0^\infty \mu_1^a e^{-\mu_1 L_W}d\mu_1 \right]\\
		&=\Gamma(a+1)\mathcal M^{\disk}_{0,2}(W)\left[L_W^{-a}e^{-R_W} \right]. 
	\end{align*}
	Let $a=\frac{2}{\gamma}(\alpha-Q)$. When  $\alpha<Q+\frac{\gamma}{2}$, i.e., $\frac{2}{\gamma}(Q-\alpha)>-1$, we have 
	\begin{align*}
		\int_0^\infty \mu_1^{\frac{2}{\gamma}(Q-\alpha)} \mathcal M^{\disk}_{0,2}(W)\left[L e^{-\mu_1 L_W-R_W}\right]d\mu_1=\Gamma\left(\frac{2}{\gamma}(Q-\alpha)+1\right)\mathcal M^{\disk}_{0,2}(W)\left[L_W^{\frac{2}{\gamma}(\alpha- Q)}e^{-R_W} \right]. 
	\end{align*}
	Therefore, when $\alpha<Q+\frac{\gamma}{2}$ and $0<W<\frac{\gamma^2}{2}$, we have
	\begin{align*}
		\mathcal M^{\disk}_{0,2}(W)\left[L_W^{\frac{2}{\gamma}(\alpha- Q)}e^{-R_W} \right]=\frac{\gamma}{2(\beta_W-Q) \Gamma\left(\frac{2}{\gamma}(Q-\alpha)+1\right)}\int_0^\infty\mu_1^{\frac{2}{\gamma}(Q-\alpha)}\left( \frac{\partial}{\partial \mu_1} R(\beta_W;\mu_1,1)\right)d\mu_1. 
	\end{align*}
\end{proof}
\begin{Lemma}\label{Lemma:pre C}
	Fix $\gamma\in (0,2)$. When $\beta_{2W+2}$ and $\alpha$ satisfy $0<\beta_{2W+2}<\gamma$ and $Q-\frac{\beta_{2W+2}}{2}<\alpha<Q+\frac{\gamma}{2}$, we have 
\begin{align}\label{formula of C(alpha, W)}
	\C(\alpha ,W)=\frac{\overline G(\alpha,\gamma)}{\overline G(\alpha, \gamma-\frac{2W}{\gamma})}\frac{\gamma\int_0^\infty\mu_1^{\frac{2}{\gamma}(Q-\alpha)}\left( \frac{\partial}{\partial \mu_1} R(\beta_W;\mu_1,1)\right)d\mu_1}{(\gamma-\frac{2W}{\gamma})\Gamma(\frac{2}{\gamma}(Q-\alpha)+1)\Gamma(\frac{2}{\gamma}(\alpha-\frac{W+2}{\gamma}))}.
\end{align}
\end{Lemma}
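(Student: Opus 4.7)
The proof is essentially a three-ingredient algebraic computation rather than a deep argument, so I will lay out the plan succinctly. The identity to be established is an equality between $\C(\alpha,W)$ and an explicit ratio of $\overline G$-functions, $\Gamma$-functions, and a $\mu_1$-integral of $\partial_{\mu_1} R(\beta_W;\mu_1,1)$. The strategy is to pick a single value of $\mu>0$ (the choice $\mu=1$ is already encoded in Corollary~\ref{intermediate conformal derivative equation}) and then evaluate both sides of that Corollary explicitly.

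The plan is as follows. First, I would apply Corollary \ref{intermediate conformal derivative equation}, which gives
\begin{equation*}
    \C(\alpha,W)\cdot \LF_{\mathbb H}^{(\beta_{2W+2},0),(\alpha,i)}\left[e^{-\nu_{\phi}(\mathbb R)}\right]=2^{-\frac{\alpha^2}{2}}\frac{2}{\gamma}\overline G(\alpha,\gamma)\,\mathcal M^{\disk}_{0,2}(W)\left[e^{- R_W}\cdot L_W^{\frac{2}{\gamma}(\alpha-Q)}\right].
\end{equation*}
Next, I would evaluate the left-hand Laplace transform using equation (\ref{laplace trans of outer boundary}) of Lemma \ref{LCFT 11 total length } with $\beta=\beta_{2W+2}=\gamma-\tfrac{2W}{\gamma}$, $q=i$ (so $|q|=\Im q=1$) and $\mu=1$. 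A direct substitution, together with the identity $\tfrac{1}{2}\beta_{2W+2}+\alpha-Q=\alpha-\tfrac{W+2}{\gamma}$, yields
\begin{equation*}
    \LF_{\mathbb H}^{(\beta_{2W+2},0),(\alpha,i)}\left[e^{-\nu_{\phi}(\mathbb R)}\right]=2^{-\frac{\alpha^2}{2}}\frac{2}{\gamma}\overline G\!\left(\alpha,\gamma-\tfrac{2W}{\gamma}\right)\Gamma\!\left(\tfrac{2}{\gamma}\big(\alpha-\tfrac{W+2}{\gamma}\big)\right).
\end{equation*}
The constraints $0<\beta_{2W+2}<\gamma$ and $Q-\tfrac{\beta_{2W+2}}{2}<\alpha$ precisely reproduce the hypotheses $\beta<Q$ and $Q-\alpha<\beta/2<\alpha$ required by Lemma \ref{LCFT 11 total length }.

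Third, I would invoke Lemma \ref{Lemma: laplace of M02W}, valid for $W\in(0,\gamma^2/2)$ and $\alpha<Q+\gamma/2$, to rewrite the right-hand quantum-disk expectation as
\begin{equation*}
    \mathcal M^{\disk}_{0,2}(W)\left[L_W^{\frac{2}{\gamma}(\alpha-Q)} e^{-R_W}\right]=\frac{\gamma}{2(\beta_W-Q)\,\Gamma\!\left(\tfrac{2}{\gamma}(Q-\alpha)+1\right)}\int_0^{\infty}\mu_1^{\frac{2}{\gamma}(Q-\alpha)}\,\partial_{\mu_1}R(\beta_W;\mu_1,1)\,d\mu_1.
\end{equation*}
Note that the condition $0<\beta_{2W+2}<\gamma$ is equivalent to $W\in(0,\gamma^2/2)$, so this range matches. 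Finally, substituting these two evaluations back into Corollary \ref{intermediate conformal derivative equation} and simplifying via the elementary identity $2(\beta_W-Q)=\gamma-\tfrac{2W}{\gamma}$, the prefactors $2^{-\alpha^2/2}\cdot(2/\gamma)$ cancel, leaving exactly the claimed formula (\ref{formula of C(alpha, W)}).

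I expect no significant obstacle here: all three ingredients are already proved in the excerpt, and the computation is purely bookkeeping. The only subtlety worth double-checking is the compatibility of the hypothesis ranges — specifically, verifying that $Q-\tfrac{\beta_{2W+2}}{2}<\alpha<Q+\tfrac{\gamma}{2}$ implies simultaneously $\alpha>\gamma/2$ (needed for Corollary \ref{intermediate conformal derivative equation}), $\alpha<Q+\gamma/2$ (needed for Lemma \ref{Lemma: laplace of M02W}), and $Q-\alpha<\tfrac{\beta_{2W+2}}{2}<\alpha$ (needed for the Laplace transform of Lemma \ref{LCFT 11 total length }). A brief case-check confirms this, and the proof is complete.
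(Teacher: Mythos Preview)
Your proposal is correct and follows essentially the same approach as the paper: combine Corollary~\ref{intermediate conformal derivative equation}, the Laplace-transform identity (\ref{laplace trans of outer boundary}) from Lemma~\ref{LCFT 11 total length}, and Lemma~\ref{Lemma: laplace of M02W}, then simplify using $2(\beta_W-Q)=\gamma-\tfrac{2W}{\gamma}$. The paper additionally remarks on analytic continuation to justify the stated range of $\alpha$, but your direct parameter check accomplishes the same thing.
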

\begin{proof}
	By Lemma \ref{LCFT 11 total length }, Corollary \ref{intermediate conformal derivative equation} and Lemma \ref{Lemma: laplace of M02W}, when
\begin{equation}\label{cases:cr}
\begin{cases}
			 \frac{\gamma}{2}<\alpha<Q+\frac{\gamma}{2},\\
			 0<W<\frac{\gamma^2}{2}, i.e., 0<\beta_{2W+2}<\gamma,\\
            \beta_{2W+2}<Q,\\
			Q-\alpha<\frac{\beta_{2W+2}}{2}<\alpha,
		\end{cases}
\end{equation}
we have
	\begin{align}
	\begin{split}
		\C(\alpha ,W)&=\frac{\overline G(\alpha,\gamma)}{\overline G(\alpha,\beta_{2W+2})}\frac{\gamma }{2(\beta_W-Q)\Gamma(\frac{2}{\gamma}(Q-\alpha)+1)\Gamma(\frac{2}{\gamma}(\frac{1}{2}\beta_{2W+2}+\alpha-Q))}\cdot\int_0^\infty\mu_1^{\frac{2}{\gamma}(Q-\alpha)}\left( \frac{\partial}{\partial \mu_1} R(\beta_W;\mu_1,1)\right)d\mu_1\\
		&=\frac{\overline G(\alpha,\gamma)}{\overline G(\alpha, \gamma-\frac{2W}{\gamma})}\frac{\gamma\int_0^\infty\mu_1^{\frac{2}{\gamma}(Q-\alpha)}\left( \frac{\partial}{\partial \mu_1} R(\beta_W;\mu_1,1)\right)d\mu_1}{(\gamma-\frac{2W}{\gamma})\Gamma(\frac{2}{\gamma}(Q-\alpha)+1)\Gamma(\frac{2}{\gamma}(\alpha-\frac{W+2}{\gamma}))}.
	\end{split}
	\end{align}
 Notice that (\ref{cases:cr}) implies $0<\beta_{2W+2}<\gamma$ and $\frac{Q}{2}<\alpha<Q+\frac{\gamma}{2}$. Since $\frac{W+2}{\gamma}=Q-\frac{
 \beta_{2W+2}}{2}$, by analytic continuation of $\Gamma(\frac{2}{\gamma}(\alpha-\frac{W+2}{\gamma}))$, the lower bound of $\alpha$ can be extended to $\alpha>Q-\frac{\beta_{2W+2}}{2}$. Therefore, the statement is proved.
\end{proof}
\begin{proof}[Proof of Proposition \ref{conformal radius: general case}]
By analytic continuation of $\Gamma(\frac{2}{\gamma}(\alpha-\frac{W+2}{\gamma}))$ , we can further relax the range of $\alpha$ and $\beta_{2W+2}$ to $\alpha\in (\frac{\gamma}{2}, Q+\frac{\gamma}{2})$ and $\beta_{2W+2}\in (0,\gamma)$ as long as $\frac{2}{\gamma}(\alpha-\frac{W+2}{\gamma})\in\bigcup_{n\geq 0,n\in \mathbb Z} (-2n-2,-2n-1)$. Here, we extend to the range of $\alpha$ so that it contains the point $\gamma$. Therefore, by simple computation, 
\begin{align}
 \begin{split}
 	&\mathbb E\left[|\psi'_\eta(i)|^{2\Delta_\alpha-2} \right]\\ &=\frac{\C(\alpha ,W)}{\C(\gamma ,W)}\\
  &=\frac{\overline G(\alpha,\gamma)}{\overline G(\alpha, \gamma-\frac{2W}{\gamma})}\frac{\overline G(\gamma,\gamma-\frac{2W}{\gamma})}{\overline G(\gamma , \gamma)}\frac{\int_0^\infty\mu_1^{\frac{2}{\gamma}(Q-\alpha)}\left(\frac{\partial}{\partial \mu_1} R(\beta_W;\mu_1,1)\right)d\mu_1}{\Gamma(\frac{2}{\gamma}(Q-\alpha)+1)\Gamma(\frac{2}{\gamma}(\alpha-\frac{W+2}{\gamma}))}\cdot \frac{\Gamma(\frac{2}{\gamma}(Q-\gamma)+1)\Gamma(\frac{2}{\gamma}(\gamma-\frac{W+2}{\gamma}))}{\int_0^\infty\mu_1^{\frac{2}{\gamma}(Q-\gamma)}\left(\frac{\partial}{\partial \mu_1} R(\beta_W;\mu_1,1)\right)d\mu_1}.
 \end{split}
\end{align}
Again, by analytic continuation of Gamma function, we see that the above equation holds as long as  $0<\beta_{2W+2}<\gamma$ and $Q-\frac{\beta_{2W+2}}{2}<\alpha<Q+\frac{\gamma}{2}$.
\end{proof}
\subsection{The bulk-boundary correlation function in the LCFT}
In this section, we derive a formula linking the two-pointed correlation function in the LCFT to the joint law of left, right quantum boundary length and total quantum area of $\mathcal M^{\disk}_{0,2}(W)$. First, we recall the definition of the quantum disk with only one bulk insertion point.  
\begin{Def}[{\cite[Definition 4.2]{FZZ}}]
	For $\alpha\in \mathbb R$, let $\phi$ be sampled from $\LF_{\mathbb H}^{(\alpha,i)}$. We denote $\mathcal M^{\disk}_{1,0}(\alpha)$ as the infinite measure described the law of quantum surface $(\mathbb H,\phi,i)$. 
\end{Def}
\begin{Th}[{\cite[Proposition 2.8]{FZZ},\cite{Rem20}}]\label{Th: U0alpha}
	For $\alpha>\frac{\gamma}{2}$, let $h$ be sampled from $P_{\mathbb H}$ and let $\widetilde{\phi}(z)=h(z)-2Q\log|z|_{+}+\alpha G_{\mathbb H}(z,i)$. Let $\overline U_0(\alpha):=\mathbb E\left[\nu_{\widetilde \phi}(\mathbb R)^{\frac{2}{\gamma}(Q-\alpha)}\right]$ where the expectation is taken over $P_{\mathbb H}$. Then we have 
	\begin{align}
		\overline U_0(\alpha)=\left(\frac{2^{-\frac{\gamma\alpha}{2}}2\pi}{\Gamma(1-\frac{\gamma^2}{4})}\right)^{\frac{2}{\gamma}(Q-\alpha)}\Gamma\left(\frac{\gamma\alpha}{2}-\frac{\gamma^2}{4}\right)\qquad\text{for all $\alpha>\frac{\gamma}{2}$.}
	\end{align} 
\end{Th}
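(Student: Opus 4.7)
The plan is to reduce the identity to the Fyodorov--Bouchaud formula proved by Remy in \cite{Rem20}. Set $s := \frac{2}{\gamma}(Q-\alpha)$; the range $\alpha > \gamma/2$ corresponds to $s < 4/\gamma^2$, the Seiberg-type integrability threshold required below. Using $G_{\mathbb H}(x,i) = -\log(x^2+1) + 2\log|x|_+$ for $x\in\mathbb R$, the field $\widetilde\phi$ restricted to the real line reads $\widetilde\phi(x) = h(x) + 2(\alpha-Q)\log|x|_+ - \alpha\log(x^2+1)$, so after the usual circle-average regularization,
\begin{align*}
\overline U_0(\alpha) = \mathbb E\left[\left(\int_{\mathbb R} e^{\frac{\gamma}{2} h(x)-\frac{\gamma^2}{8}\mathbb E[h_\varepsilon(x)^2]}\, |x|_+^{\gamma(\alpha-Q)}(x^2+1)^{-\gamma\alpha/2}\,dx\right)^{s}\right].
\end{align*}
This is a specific moment of a boundary GMC against an explicit deterministic weight, and the goal is to evaluate it in closed form.

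Next, I would push the problem to the unit disk via the M\"obius map $\psi:\mathbb H\to\mathbb D$, $\psi(z) = (z-i)/(z+i)$, which sends $i\mapsto 0$ and parametrizes $\partial\mathbb D\setminus\{1\}$ by $\mathbb R$ with $|\psi'(x)| = 2/(x^2+1)$ on the boundary and $|\psi'(i)|=1/2$ at the insertion. Under conformal covariance of the free boundary GFF (up to an explicit additive constant absorbed in the normalization), the pushforward of $h$ is the zero-average free boundary GFF on $\mathbb D$; after absorbing the $\alpha$-log singularity at $0\in\mathbb D$ into a deterministic radial contribution and cancelling the Jacobian with the $|x|_+$ and $(x^2+1)$ weights above, the boundary integral transforms into
\begin{align*}
\int_{\mathbb R} e^{\frac{\gamma}{2} h(x)-\mathrm{reg}}\,|x|_+^{\gamma(\alpha-Q)}(x^2+1)^{-\gamma\alpha/2}\,dx \;=\; 2^{-\gamma\alpha/2}\int_0^{2\pi} e^{\frac{\gamma}{2} H(\theta)-\mathrm{reg}}\,d\theta,
\end{align*}
where $H$ denotes the standard log-correlated Gaussian on the unit circle; the precise prefactor $2^{-\gamma\alpha/2}$ is the scaling contribution at the insertion, $|\psi'(i)|^{-\gamma\alpha/2}$.

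The main step is then Remy's theorem \cite{Rem20}, the Fyodorov--Bouchaud identity
\begin{align*}
\mathbb E\left[\left(\int_0^{2\pi} e^{\frac{\gamma}{2} H(\theta)}\,d\theta\right)^{s}\right] = \frac{(2\pi)^{s}\,\Gamma(1 - s\gamma^2/4)}{\Gamma(1-\gamma^2/4)^{s}}, \qquad s<4/\gamma^2.
\end{align*}
A direct computation using $Q = \tfrac{2}{\gamma}+\tfrac{\gamma}{2}$ and $s = \tfrac{2}{\gamma}(Q-\alpha)$ gives the arithmetic identity $1 - s\gamma^2/4 = \gamma\alpha/2 - \gamma^2/4$. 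Substituting into the previous display and gathering the $2\pi$ and $2^{-\gamma\alpha/2}$ prefactors then yields exactly
\begin{align*}
\overline U_0(\alpha) = \left(\frac{2^{-\gamma\alpha/2}\cdot 2\pi}{\Gamma(1-\gamma^2/4)}\right)^{\frac{2}{\gamma}(Q-\alpha)} \Gamma\left(\frac{\gamma\alpha}{2}-\frac{\gamma^2}{4}\right),
\end{align*}
which is the claimed formula.

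The main obstacle is twofold. First, the Fyodorov--Bouchaud formula invoked in the third paragraph is the deep input; its proof in \cite{Rem20} proceeds via BPZ-type shift equations and Seiberg-bound tail analysis of GMC, and is far from elementary. Granting it as a black box, the remaining difficulty is careful bookkeeping of the conformal change of variables: the normalization of the free boundary GFF on $\mathbb H$ (average zero on $\partial\mathbb D\cap\mathbb H$) differs from that of the free boundary GFF on $\mathbb D$ (average zero on $\partial\mathbb D$) by an explicit Gaussian additive constant that must be tracked, and the regularizations on either side must be matched so that the exact prefactor $2^{-\gamma\alpha/2}$, rather than some incorrect power of $2$, is obtained. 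Once the constants are pinned down, and noting that the identity first established for $s$ in the restricted range can be extended to the claimed range of $\alpha$ by analytic continuation of both sides, the proof concludes.
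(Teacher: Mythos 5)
The paper does not prove this statement; it is quoted directly from \cite[Proposition 2.8]{FZZ} and \cite{Rem20}, and your reduction to Remy's Fyodorov--Bouchaud formula via the M\"obius map $\mathbb H\to\mathbb D$ sending $i\mapsto 0$ is precisely the argument used in those references, with the correct identification $1-s\gamma^2/4=\frac{\gamma\alpha}{2}-\frac{\gamma^2}{4}$ and the correct prefactor $2^{-\gamma\alpha/2}$ coming from $|\psi'(i)|=1/2$. One minor point: the closing appeal to analytic continuation is unnecessary, since $\alpha>\frac{\gamma}{2}$ corresponds exactly to $s<\frac{4}{\gamma^2}$, which is the full range of validity of the Fyodorov--Bouchaud identity (including negative $s$).
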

\begin{Prop}[Same as Proposition \ref{Prop:bulk-boundary}]\label{Prop:two-pointed cor}
	Fix $\gamma\in (0,2)$ and $\mu,\mu_\partial>0$. When $\beta_{2W+2}$ and $\alpha$ satisfy $0<\beta_{2W+2}<\gamma$ and $Q-\frac{\beta_{2W+2}}{2}<\alpha<Q$, we have
	\begin{align}
	\begin{split}
     G_{\mu,\mu_\partial}(\alpha,\beta_{2W+2})&=\LF_{\mathbb H}^{(\beta_{2W+2},0),(\alpha,i)}\left[e^{-\mu_\partial \nu_{\phi}(\mathbb R) -\mu \mu_{\phi}(\mathbb H)}\right]\\ &=\C(\alpha,W)^{-1}\frac{2}{\gamma}2^{-\frac{\alpha^2}{2}}\overline U_0(\alpha)\frac{2}{\Gamma(\frac{2}{\gamma}(Q-\alpha))}\left(\frac{1}{2}\sqrt{\frac{\mu}{\sin(\pi\gamma^2/4)}}\right)^{\frac{2}{\gamma}(Q-\alpha)}\times\\
	&\mathcal M_{0,2}^{\disk}(W)\left[e^{-\mu_\partial R_W-\mu A_W}\cdot K_{\frac{2}{\gamma}(Q-\alpha)}\left(L_W\sqrt{\frac{\mu}{\sin(\pi\gamma^2/4)}} \right) \right],
	\end{split}
\end{align}
where $L_W,R_W$ and $A_W$ denote the left, right (quantum) boundary length and total quantum area of $\mathcal M^{\disk}_{0,2}(W)$ respectively and $\C(\alpha,W)$ is the renormalized moments of the conformal radius taking formula (\ref{formula of C(alpha, W)}).   
\end{Prop}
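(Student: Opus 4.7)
The plan is to Laplace-transform the conformal welding identity of Theorem~\ref{welding of generalized weight}, then compute the resulting Laplace transform of $\qd_{1,1}(\alpha,\gamma;\ell)$ via a Girsanov/conformal-invariance reduction to the one-bulk-insertion Liouville field $\LF_{\mathbb H}^{(\alpha,i)}$, for which the Bessel structure emerges.

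I would start by extending Theorem~\ref{welding of generalized weight} by scaling to arbitrary total boundary length $L$:
\begin{equation*}
\LF_{\mathbb H}^{(\beta_{2W+2},0),(\alpha,i)}(L)\times\mathbf m_\alpha \;=\; C_W\int_0^\infty \qd_{1,1}(\alpha,\gamma;\ell)\times \mathcal M^{\disk}_{0,2}(W;L,\ell)\,d\ell.
\end{equation*}
In the welded surface the quantum area splits additively between the two disks, the total boundary length equals the outer boundary $R_W$ of $\mathcal M^{\disk}_{0,2}(W)$, and the welded interface length equals $L_W$. Integrating against $e^{-\mu\mu_\phi(\mathbb H)-\mu_\partial\nu_\phi(\mathbb R)}$ and using the independence of $\eta\sim\mathbf m_\alpha$ from $\phi$ (with $|\mathbf m_\alpha|=C_W\,\C(\alpha,W)$ by~(\ref{def:renormalized CR})) reduces the proposition to the claim
\begin{equation*}
|\qd_{1,1}(\alpha,\gamma;\ell)[e^{-\mu\mu_\phi(\mathbb H)}]| \;=\; \tfrac{2}{\gamma}\,2^{-\alpha^2/2}\,\overline U_0(\alpha)\,\tfrac{2}{\Gamma(\frac{2}{\gamma}(Q-\alpha))}\!\left(\tfrac{1}{2}\sqrt{\mu/\sin(\pi\gamma^2/4)}\right)^{\!\frac{2}{\gamma}(Q-\alpha)}\!K_{\frac{2}{\gamma}(Q-\alpha)}\!\left(\ell\sqrt{\mu/\sin(\pi\gamma^2/4)}\right).
\end{equation*}

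To establish this identity, I would first reduce to the purely bulk-insertion case. The boundary analogue of the Girsanov identity in Lemma~\ref{Girsanov on bulk insertion} asserts $\LF_{\mathbb H}^{(\alpha,i)}[F(\phi)\int g(u)\nu_\phi(du)]=\int g(u)\LF_{\mathbb H}^{(\alpha,i),(\gamma,u)}[F(\phi)]\,du$, and the pushforward rule Proposition~\ref{pushforward of LCFT} applied to the conformal automorphism $\psi_u(w)=(w-u)/(uw+1)\in\conf(\mathbb H)$ --- which fixes $i$, sends $u\to 0$, and satisfies $|\psi_u'(i)|=1$ and $|\psi_u'(u)|=(1+u^2)^{-1}$ --- converts $\LF_{\mathbb H}^{(\alpha,i),(\gamma,u)}[F(L,A)]$ into $(1+u^2)^{-\Delta_\gamma}\LF_{\mathbb H}^{(\alpha,i),(\gamma,0)}[F(L,A)]$ for conformally invariant functionals of $(L,A)$. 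Combining these identifies $\LF_{\mathbb H}^{(\alpha,i),(\gamma,0)}(\ell)[e^{-\mu A}]$ with a fixed multiple of $\ell\cdot\LF_{\mathbb H}^{(\alpha,i)}(\ell)[e^{-\mu A}]$, up to an overall normalization factor.

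For the remaining one-bulk computation, I would invert the FZZ formula $\LF_{\mathbb H}^{(\alpha,i)}[e^{-\mu A-\mu_\partial L}]=U_{\FZZ}(\alpha)$ in the variable $\mu_\partial$. The dependence of $U_{\FZZ}(\alpha)$ on $\mu_\partial$ enters only through $\cos((\alpha-Q)\pi s)$ with $\cos(\pi\gamma s/2)=\mu_\partial\sqrt{\sin(\pi\gamma^2/4)/\mu}$; this is precisely the Laplace transform in $\ell$ of $K_{\frac{2}{\gamma}(Q-\alpha)}(\ell\sqrt{\mu/\sin(\pi\gamma^2/4)})$, via the identity $\int_0^\infty K_\nu(\ell b)e^{-\mu_\partial\ell}d\ell=\int_0^\infty \cosh(\nu\theta)/(\mu_\partial+b\cosh\theta)\,d\theta$ (after the substitution $\mu_\partial=b\cosh\theta$), and the factor $\sin(\pi\gamma^2/4)$ emerges from the reflection-coefficient analysis in the LCFT. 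The overall multiplicative constant involving $\overline U_0(\alpha)$ is pinned down by matching the $\mu\to 0$ asymptotic $K_\nu(x)\sim\tfrac{1}{2}\Gamma(\nu)(2/x)^\nu$ against the explicit marginal $|\LF_{\mathbb H}^{(\alpha,i)}(\ell)|=2^{-\alpha^2/2}(2/\gamma)\overline U_0(\alpha)\ell^{\frac{2}{\gamma}(\alpha-Q)-1}$, which follows directly from Theorem~\ref{Th: U0alpha}.

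The main obstacle is the inversion of the FZZ formula. Isolating the $\mu_\partial$-dependence within $U_{\FZZ}(\alpha)$ and matching it term-by-term to the Laplace transform of a single $K_\nu$ requires delicate analytic continuation, since the parameter $s$ crosses from real (for $\mu_\partial^2\sin(\pi\gamma^2/4)/\mu\le 1$) to imaginary (for $\mu_\partial^2\sin(\pi\gamma^2/4)/\mu\ge 1$); the Bessel formula is adapted to the imaginary-$s$ regime via $\cos(i\theta)=\cosh\theta$, which is where the Bessel structure naturally manifests. An alternative route via a direct GMC moment computation (using the disintegration $\LF_{\mathbb H}^{(\alpha,i)}(\ell)\propto\ell^{\frac{2}{\gamma}(\alpha-Q)-1}L_0^{-\frac{2}{\gamma}(\alpha-Q)}P_{\mathbb H}$ together with the classical Bessel integral $\int_0^\infty u^{\nu-1}e^{-au-b/u}du=2(b/a)^{\nu/2}K_\nu(2\sqrt{ab})$) is also possible but requires a clever integral representation linking the scaled GMC moment to the Bessel integral.
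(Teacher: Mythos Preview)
Your overall architecture matches the paper's: Laplace-transform the welding identity of Theorem~\ref{welding of generalized weight} (extended by scaling to all total boundary lengths), use $|\mathbf m_\alpha|=C_W\,\C(\alpha,W)$, and reduce everything to computing $\qd_{1,1}(\alpha,\gamma;\ell)[e^{-\mu A}]$. Your Girsanov/conformal-covariance reduction of this last quantity to the single-bulk field $\LF_{\mathbb H}^{(\alpha,i)}(\ell)$ is also what the paper does, though phrased more cleanly there as the quantum-surface statement that $\qd_{1,1}(\alpha,\gamma;\ell)^{\#}$ and $\mathcal M^{\disk}_{1,0}(\alpha;\ell)^{\#}$ agree once the boundary marked point is forgotten, together with $|\qd_{1,1}(\alpha,\gamma;\ell)|=\ell\cdot|\mathcal M^{\disk}_{1,0}(\alpha;\ell)|$.

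The only substantive divergence is how you obtain the Bessel identity for $\mathcal M^{\disk}_{1,0}(\alpha;\ell)[e^{-\mu A}]$. The paper does not invert the FZZ formula at all; it simply quotes \cite[Proposition~4.20]{FZZ}, which already gives
\[
\mathcal M^{\disk}_{1,0}(\alpha;\ell)\!\left[e^{-\mu A}\right]=\tfrac{2}{\gamma}2^{-\alpha^2/2}\overline U_0(\alpha)\,\ell^{-1}\tfrac{2}{\Gamma(\frac{2}{\gamma}(Q-\alpha))}\Big(\tfrac{1}{2}\sqrt{\mu/\sin(\pi\gamma^2/4)}\Big)^{\frac{2}{\gamma}(Q-\alpha)}K_{\frac{2}{\gamma}(Q-\alpha)}\!\Big(\ell\sqrt{\mu/\sin(\pi\gamma^2/4)}\Big).
\]
That proposition is established in \cite{FZZ} precisely via what you call your ``alternative route'': disintegrate $\LF_{\mathbb H}^{(\alpha,i)}$ over boundary length and apply the classical Bessel integral $\int_0^\infty u^{\nu-1}e^{-au-b/u}\,du=2(b/a)^{\nu/2}K_\nu(2\sqrt{ab})$ to the resulting GMC moment. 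So your main proposal (inverting $U_{\FZZ}$ in $\mu_\partial$, with the real/imaginary-$s$ analytic continuation you flag as the obstacle) is unnecessary and somewhat backwards---the Bessel formula at fixed $\ell$ is logically upstream of the FZZ formula, not downstream. Your alternative is the right one, and once you invoke it the proof is a two-line citation plus substitution, exactly as the paper does.
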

\begin{proof}
	For $\mu_\partial,\mu>0$, we have that
	\begin{align}
		\begin{split}
			\LF_{\mathbb H}^{(\beta_{2W+2},0),(\alpha,i)}[e^{-\mu_\partial \nu_{\phi}(\mathbb R)-\mu\mu_{\phi}(\mathbb H)}]\cdot \C(\alpha,W)&=\int_0^\infty \mathcal M_{0,2}^{\disk}(W;\ell)\left[e^{-\mu_\partial R_W-\mu A_W}\right]\cdot \qd_{1,1}(\alpha,\gamma ;\ell)\left[e^{-\mu A_{1,1}} \right]d\ell\\
			&=\int_0^\infty \mathcal M_{0,2}^{\disk}(W;\ell)\left[e^{-\mu_\partial R_W-\mu A_W}\cdot \qd_{1,1}(\alpha,\gamma;\ell)\left[e^{-\mu A_{1,1}} \right] \right]d\ell\\
			&= \mathcal M_{0,2}^{\disk}(W)\left[e^{-\mu_\partial R_W-\mu A_W}\cdot\qd_{1,1}(\alpha,\gamma ;L_W)\left[e^{-\mu A_{1,1}} \right] \right],
		\end{split}
	\end{align}
	where $A_{1,1}$ is the total quantum area of $\qd_{1,1}(\alpha,\gamma,\ell)$. Next, notice that 
	\begin{align}\label{eqn: qd11 calc}
		\begin{split}
			\qd_{1,1}(\alpha,\gamma;\ell)\left[e^{-\mu A_{1,1}} \right] &=\left|\qd_{1,1}(\alpha,\gamma ;\ell)\right|\cdot \qd_{1,1}(\alpha,\gamma ;\ell)^{\#}\left[e^{-\mu A_{1,1}} \right]\\
	&=\ell\cdot \left|\mathcal M^{\disk}_{1,0}(\alpha ;\ell)\right|\cdot \mathcal M^{\disk}_{1,0}(\alpha;\ell)^{\#}\left[e^{-\mu A_{1,0}} \right]\\
	&=\ell\cdot \mathcal M^{\disk}_{1,0}(\alpha ;\ell)\left[e^{-\mu A_{1,0}}\right],\\
		\end{split}
	\end{align}
	where $A_{1,0}$ is the total quantum area of $\mathcal M^{\disk}_{1,0}(\alpha;\ell)$. The (\ref{eqn: qd11 calc}) follows from the fact that $\qd_{1,1}(\alpha,\gamma ;\ell)^{\#}$ and $\mathcal M^{\disk}_{1,0}(\alpha ;\ell)^{\#}$ are the same probability measure if we ignore the boundary marked point. By \cite[Proposition 4.20]{FZZ}, when $\alpha\in \left(\frac{\gamma}{2},Q \right)$,
	\begin{align}
		\begin{split}
			\mathcal M^{\disk}_{1,0}(\gamma,\alpha;\ell)\left[e^{-\mu A_{1,0}}\right]=\frac{2}{\gamma}2^{-\frac{\alpha^2}{2}}\overline U_0(\alpha)\ell^{-1}\frac{2}{\Gamma(\frac{2}{\gamma}(Q-\alpha))}\left(\frac{1}{2}\sqrt{\frac{\mu}{\sin(\pi\gamma^2/4)}}\right)^{\frac{2}{\gamma}(Q-\alpha)}K_{\frac{2}{\gamma}(Q-\alpha)}\left(\ell\sqrt{\frac{\mu}{\sin(\pi\gamma^2/4)}} \right),
		\end{split}
	\end{align}
	where $K_\nu(x)$ is the modified Bessel function of second kind. Precisely, 
\begin{equation}\label{eqn:Bessel of second kind}
	K_\nu(x):=\int_0^\infty e^{-x\cosh t}\cosh(\nu t)dt\qquad\text{for $x>0$ and $\nu\in \mathbb R$}.
\end{equation}
Therefore, when $\alpha\in \left(\frac{\gamma}{2},Q \right)$ and $\mu>0$,
\begin{align}
	\qd_{1,1}(\alpha,\gamma;\ell)\left[e^{-\mu A_{1,1}}\right]=\frac{2}{\gamma}2^{-\frac{\alpha^2}{2}}\overline U_0(\alpha)\frac{2}{\Gamma(\frac{2}{\gamma}(Q-\alpha))}\left(\frac{1}{2}\sqrt{\frac{\mu}{\sin(\pi\gamma^2/4)}}\right)^{\frac{2}{\gamma}(Q-\alpha)}K_{\frac{2}{\gamma}(Q-\alpha)}\left(\ell\sqrt{\frac{\mu}{\sin(\pi\gamma^2/4)}} \right).
\end{align}
Finally, together with Corollary \ref{conformal radius: general case}, we see that when $\beta_{2W+2}$ and $\alpha$ satisfy $0<\beta_{2W+2}<\gamma$ and $Q-\frac{\beta_{2W+2}}{2}<\alpha<Q$, 
\begin{align}
	\begin{split}
		\LF_{\mathbb H}^{(\beta_{2W+2},0),(\alpha,i)}\left[e^{-\mu_\partial \nu_{\phi}(\mathbb R) -\mu \mu_{\phi}(\mathbb H)}\right]&=\C(\alpha,W)^{-1}\frac{2}{\gamma}2^{-\frac{\alpha^2}{2}}\overline U_0(\alpha)\frac{2}{\Gamma(\frac{2}{\gamma}(Q-\alpha))}\left(\frac{1}{2}\sqrt{\frac{\mu}{\sin(\pi\gamma^2/4)}}\right)^{\frac{2}{\gamma}(Q-\alpha)}\times\\
	&\mathcal M_{0,2}^{\disk}(W)\left[e^{-\mu_\partial R_W-\mu A_{W}}K_{\frac{2}{\gamma}(Q-\alpha)}\left(L\sqrt{\frac{\mu}{\sin(\pi\gamma^2/4)}} \right) \right]. 
	\end{split}
\end{align}
This finishes the proof. 
\end{proof}
\begin{rem}\label{remark:R} 
For $\beta_W\in (\frac{2}{\gamma},Q)$ and $W=\gamma(Q+\frac{\gamma}{2}-\beta_W)$, with $A_W$, $L_W$ and $R_W$ being the total area, left boundary and right boundary of the corresponding weight-$W$, two-pointed quantum disk $\mathcal{M}^{\disk}_{0,2}(W)$ respectively, define
\begin{equation*}
R_{\mathrm{bulk}}(\beta_W;\mu_1,\mu_2):=\frac{2(Q-\beta_W)}{\gamma}\mathcal M^{\disk}_{0,2}(W)\left[e^{-A_W-\mu_1 L_W-\mu_2 R_W} -1 \right],
	\end{equation*}
which is the same as \cite[(1.14)]{ARSZ23}. Using the exact same argument as in \cite[Proposition $3.6$]{InteofSLE}, when $W\in (0,\frac{\gamma^2}{2})$ and $\beta_W = Q+\frac{\gamma}{2}-\frac{W}{\gamma}\in (Q,Q+\frac{\gamma}{2})$, 
we have 
\[\mathcal M^{\disk}_{0,2}(\gamma^2-W)\left[e^{-A_{\gamma^2-W}-\mu_1 L_{\gamma^2-W}-\mu_2 R_{\gamma^2-W}} -1 \right] \mathcal M^{\disk}_{0,2}(W)\left[e^{-A_W -\mu_1 L_W-\mu_2 R_W} \right] =-\frac{\gamma^2}{4(\beta_W-Q)^2}.\]
Therefore, when $W\in (0,\frac{\gamma^2}{2})$ and $\beta_W\in (Q,Q+\frac{\gamma}{2})$, we have 
\begin{equation*}  
		\mathcal M^{\disk}_{0,2}(W)\left[e^{-A_W-\mu_1 L_W-\mu_2 R_W} \right]=\frac{\gamma}{2(Q-\beta_W)}
  R_{\mathrm {bulk} }(2Q-\beta_W;\mu_1,\mu_2)^{-1}. 
	\end{equation*}
Notice that $2Q-\beta_W = \beta_{\gamma^2-W}$. The exact formula of $R_{\mathrm {bulk} }$ is obtained in~\cite[Theorem 1.3]{ARSZ23}, which in turn yields the exact formula for $G_{\mu,\mu_\partial}(\alpha,\beta_{2W+2})$ in ~\cite[Section 4.3]{ARSZ23}.
\end{rem}
\section{Outlook and Future Research}\label{sec:open problems and future research}
In the last section, we discuss several conjectures that arise naturally from the contexts of this paper. 
\subsection{Generalized $\sle$ bubbles on $\mathbb H$: single case}
As natural generalizations of Theorem \ref{Conditional Welding} and Theorem \ref{Main welding theorem}, we can consider the case when $\qd_{0,1}$ has one general boundary insertion, i.e., $\qd_{0,1}(\gamma,\alpha)$ in Definition \ref{def:general qd}. For the sake of completeness, we provide two conjectures: one with the bulk insertion and one without. Although our discussion will be centered around the Conjecture \ref{conj:gen_SLE_bubble}.
\begin{Conj}\label{conj:cond_gen_SLE_bubble}
	Fix $W_1\geq \frac{\gamma^2}{2}$ and $W>2$. There exist a $\sigma$-finite infinite measure $\sle^{\bub}_{\kappa,0}(W,W_1)$ on $\bubble_{\mathbb H}(0)$ and some constant $C\in (0,\infty)$ such that suppose $\phi\times \eta_{W,W_1}$ is sampled from
	\begin{equation}
		C\cdot\LF_{\mathbb H}^{(\beta_{2W_1+W},0)}(d\phi) \times \sle^{\bub}_{\kappa,0}(W,W_1)[d\eta_{W,W_1}|i\in D_{\eta_{W,W_1}(0)}],
	\end{equation}
	then the law of $(D_{\eta_{W,W_1}}(0),\phi, 0)$ and $(D_{\eta_{W,W_1}}(\infty),\phi, 0^-,0^+)$ viewed as a pair of marked quantum surface is equal to 
	\begin{equation}
		\int_0^\infty \mathcal M^{\disk}_{0,2}(W_1;\cdot,\ell)\times\qd_{1,1}(\gamma,\beta_{W};\ell)d\ell. 
	\end{equation} 
\end{Conj}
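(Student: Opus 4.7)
The plan is to adapt the two-part strategy of Theorem \ref{Conditional Welding}: first identify the curve law via a limiting procedure, then identify the field law via thin-disk decomposition and induction. The new ingredient is handling the generic boundary insertion $\beta_W$ at the root of the bubble, which changes the behavior of the welding interface near $0$ compared to the typical case $\beta_W=\gamma$ treated by Theorem \ref{Conditional Welding}. The first step is to construct $\sle^{\bub}_{\kappa,0}(W,W_1)$ itself. Following the construction of Zhan in \cite{SLEbubble}, I would define it as the weak limit (in the distance \eqref{metric on curve}), after suitable rescaling by a power of $\varepsilon$ depending on both $W$ and $W_1$, of the chordal $\sle^{\mathbb H}_{\kappa,(\varepsilon;\varepsilon^-,\varepsilon^+)\to 0}(W_1-2,W-2)$ with two force points. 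The conditional probability measure $\sle^{\bub}_{\kappa,0}(W,W_1)[\cdot \mid i\in D_\eta(0)]$ should then be constructed as in Corollary \ref{conditional convergence lemma}, using the domain Markov property of the generalized bubble together with positivity of the surrounding probability for chordal two-force-point SLE.

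For the curve law, I would first establish an analog of Lemma \ref{Lem:02+12}: welding $\mathcal{M}^{\disk}_{0,2}(W_1)$ with $\qd_{1,2}(\gamma,\beta_W)$ (the generalization of $\qd_{1,2}$ with a generic boundary insertion of weight $\beta_W$ at the would-be root and a typical insertion at the second marked point) produces chordal $\sle_{\kappa}(W_1-2;W-2)$ conditional on passing to the left of $i$. This should follow from a suitable Girsanov deformation of the case $\beta_W=\gamma$ (which reduces to Lemma \ref{Lem:02+12}) combined with a two-step application of Theorem \ref{qt+qd,v2}. Once this chordal welding is established, I would run the coupling argument of Lemma \ref{Key coupling lemma} verbatim: couple a surface sampled from the conditional welding against a surface sampled from the restriction $M_\delta$ where $\nu_\phi(\mathbf{x},\infty)\in(\delta,2\delta)$, use Lemmas \ref{Coupling of $1,1$} and \ref{Coupling of $0,2$} to match the two sides with probability $1-o_\delta(1)$, then pass to the limit using the conditional convergence from the previous paragraph. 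This identifies the welding interface with the generalized SLE bubble conditioned on surrounding $i$, and shows the curve is independent of the underlying field.

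For the field law, I would follow Section \ref{sec:field law}. In the regime $W_1\in(0,\gamma^2/2)$ (thin disk), apply the thin disk decomposition (Lemma \ref{Lemma decomp of thin disk}) to split the weld into welds along $\qd_{0,3}(\gamma,\beta_W,\gamma)$, and apply Theorem \ref{qt+qd,v2} three times to identify the resulting field as (up to a constant) $\LF_{\mathbb H}^{(\beta_{2W_1+W},0),(\gamma,i)}$, since the total boundary weight at the root is $W_1+W_1+W=2W_1+W$ by linear welding of insertions. Then de-weight the two auxiliary boundary marked points and resample the bulk marked point, mirroring Proposition \ref{thin+qd11: field law}. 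For the regime $W_1\geq\gamma^2/2$, use Theorem \ref{Disk welding theorem} to express $\mathcal{M}^{\disk}_{0,2}(W_1)$ as the iterated weld of $n$ thin disks of weight $\tilde{W}_1=W_1/n\in(0,\gamma^2/2)$ and apply the thin case inductively from the innermost bracket outward, exactly as in the proof of Theorem \ref{Conditional Welding}.

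The main obstacle I expect is the field-law step, specifically the generalization of Theorem \ref{qt+qd,v2} or Proposition \ref{thin+qd11: field law} to the case where one of the boundary insertions of the quantum triangle is generic (weight $\beta_W$ rather than $\gamma$). The welding theorems of \cite{QT22} are stated for specific weight combinations, and one may need either an analytic-continuation argument in $W$ or a Girsanov/Cameron-Martin deformation analogous to the one used in the proof of Theorem \ref{welding of generalized weight} (where a generic bulk insertion was introduced by reweighting from the canonical $\gamma$-insertion case). A secondary subtlety is verifying that the generalized SLE bubble $\sle^{\bub}_{\kappa,0}(W,W_1)$ inherits a domain Markov property strong enough to apply the coupling argument; this should follow from the corresponding property of two-force-point chordal SLE, but must be checked for the limiting bubble measure.
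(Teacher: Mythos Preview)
This statement is a \emph{conjecture} in the paper, not a proved result; the paper gives no proof and in fact explains in the surrounding discussion why the approach you propose is expected to fail at the curve-law step.

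Your field-law argument is fine, and the paper says as much: the quantum-triangle welding and induction of Section~\ref{sec:field law} do show that the underlying field is $C\cdot\LF_{\mathbb H}^{(\beta_{2W_1+W},0),(\gamma,i)}$ and that the interface is independent of the field. The genuine gap is in your construction and identification of the curve law. You propose to define $\sle^{\bub}_{\kappa,0}(W,W_1)$ as a rescaled weak limit of the two-force-point process $\sle^{\mathbb H}_{\kappa,(\varepsilon;\varepsilon^-,\varepsilon^+)\to 0}(W_1-2,W-2)$ and then run the coupling of Lemma~\ref{Key coupling lemma}. The paper argues that neither half of this works. On the Euclidean side, the author conjectures that the rescaled weak limit of the two-force-point chordal process collapses to the \emph{one}-parameter bubble $\sle^{\bub}_{\kappa,0}(\rho_+)$ (or $\rho_-$, depending on orientation): the inner force point is expected to vanish in the limit, so you do not obtain a genuinely two-parameter bubble measure this way. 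On the quantum side, the coupling argument relies on shrinking one boundary arc of $\mathcal M^{\disk}_{1,2}(W)$ to zero quantum length, but the paper points out (citing \cite[Appendix A]{MSW21}) that conditioning any $\mathcal M^{\disk}_{1,2}(W)$ on one boundary arc tending to zero always produces a quantum-\emph{typical} boundary point in the limit, i.e.\ one recovers $\qd_{1,1}$ regardless of $W$. Hence the coupling step, run verbatim, would identify the interface with $\sle^{\bub}_{\kappa,0}(W_1-2)$ rather than with a new object depending on $W$.

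In short, the obstacle is not the quantum-triangle technology (your secondary worry), but the more basic fact that there is no known limiting construction, Euclidean or quantum, that produces a two-parameter bubble; the paper poses exactly this as an open problem.
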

\begin{figure}[H]\
	\includegraphics[scale=0.9]{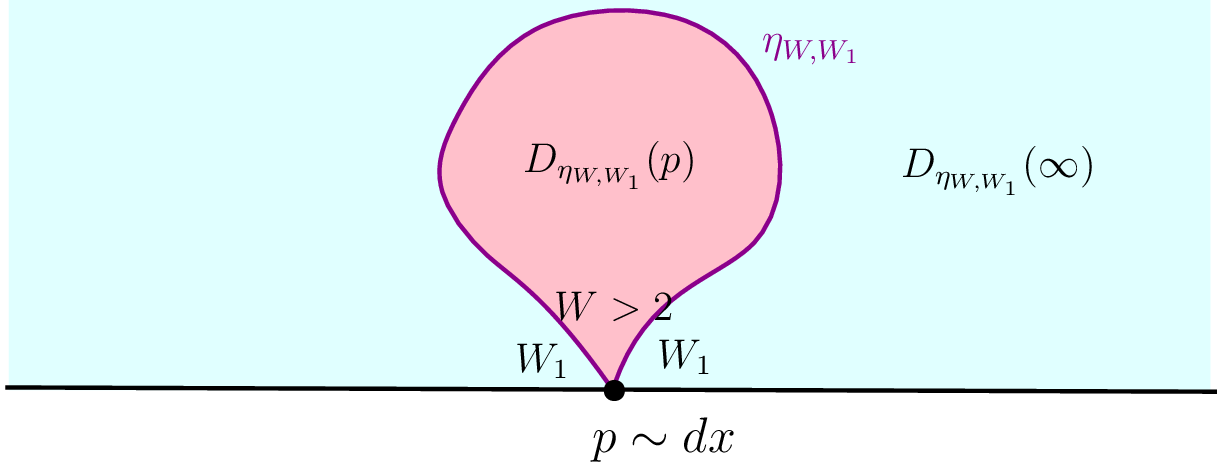}
	\centering
	\caption{Illustration of welding equation (\ref{eqn:ue in conj}) in Conjecture \ref{conj:gen_SLE_bubble}: first sample a root point $p$ according to Lebesgue measure $dx$ on $\mathbb R$, then sample $(\phi,\eta)$ according to the product measure $\LF^{(\beta_{2W_1+W},p)}_{\mathbb H}(d\phi)\times\sle_{\kappa,p}^{\bub}(W,W_1)(d\eta)$. The resulting quantum surface $(\mathbb H,\phi,\eta,p)/\sim_\gamma$ has the law of $C\int_0^\infty\mathcal{M}^{\disk}_{0,2}(W_1;\cdot,\ell)\times \qd_{0,1}(\gamma,\beta_W;\ell)d\ell$ after uniform embedding.}
	\label{fig:Conj_gen_weight}
\end{figure}
\begin{Conj}\label{conj:gen_SLE_bubble}
	Fix $W_1\geq \frac{\gamma^2}{2}$ and $W>2$. There exist a $\sigma$-finite infinite measure $\sle^{\bub}_{\kappa,p}(W,W_1)$ on $\bubble_{\mathbb H}(p)$ and some constant $C\in (0,\infty)$ such that
\begin{equation}\label{eqn:ue in conj}
    \mathbf{m}_{\mathbb H}\ltimes \left(\int_0^\infty\mathcal{M}^{\disk}_{0,2}(W_1;\cdot,\ell)\times \qd_{0,1}(\gamma,\beta_W;\ell)d\ell\right) = C\cdot\LF^{(\beta_{2W_1+W},p)}_{\mathbb H}\times\sle_{\kappa,p}^{\bub}(W,W_1)dp.
 \end{equation}
 Furthermore, there exists some constant $C\in (0,\infty)$ such that 
\begin{equation}\label{eqn:ue in conj fixed root}
    \mathbf{m}_{\mathbb H,0}\ltimes\left(\int_0^\infty \mathcal M^{\disk}_{0,2}(W_1;\cdot,\ell)\times \qd_{0,1}(\gamma,\beta_W;\ell) d\ell\right) = C\cdot \LF_{\mathbb H}^{(\beta_{2W_1+W},0)}(d\phi)\times\sle_{\kappa,0}^{\bub}(W,W_1),
\end{equation}
where $\mathbf{m}_{\mathbb H,0}$ is a Haar measure on $\conf(\mathbb H,0)$, i.e., the group of conformal automorphisms of $\mathbb H$ fixing $0$.
\end{Conj}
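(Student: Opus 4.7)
The plan is to follow the two-step architecture by which Theorem \ref{Main welding theorem} was derived from Theorem \ref{Conditional Welding}. First, I would establish the conditional version Conjecture \ref{conj:cond_gen_SLE_bubble} (interpreted, in analogy with Theorem \ref{Conditional Welding}, with a $(\gamma, i)$ bulk insertion appended to the Liouville field, i.e., $\LF_{\mathbb H}^{(\gamma, i),(\beta_{2W_1+W},0)}$ on the left and $(D_{\eta_{W,W_1}}(0), \phi, i, 0)$ on the right). Conjecture \ref{conj:gen_SLE_bubble} would then be obtained by de-weighting the bulk marked point (converting $\qd_{1,1}(\gamma, \beta_W)$ to $\qd_{0,1}(\gamma, \beta_W)$) and applying a Haar-measure uniform embedding of $\conf(\mathbb H)$.

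For the conditional step, I would mimic Sections \ref{sec: curve law}--\ref{sec:field law}. For the curve law, I would weld $\mathcal M^{\disk}_{0,2}(W_1)$ with an augmented disk $\qd_{1,2}(\gamma,\beta_W)$ (a sample of $\qd_{1,1}(\gamma,\beta_W)$ with one extra typical boundary marked point), showing via Theorem \ref{qt+qd,v2} that the welding interface is a chordal $\sle_\kappa$ with force weights $W_1-2$ and $W-2$, decorated by the $\widetilde{\sle_\kappa}$ tilt tracking the interior point $i$. Shrinking the boundary length of the extra interval to zero via a coupling as in Lemma \ref{Key coupling lemma}, I would \emph{define} $\sle^{\bub}_{\kappa,0}(W,W_1)[\,\cdot\mid i\in D_\eta(0)]$ as the resulting limiting probability measure. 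For the field law, the thin regime $W_1\in(0,\gamma^2/2)$ follows by combining Lemma \ref{Lemma decomp of thin disk}, Lemma \ref{connection between QT and QD alpha}, and three applications of Theorem \ref{qt+qd,v2} to the welding of $\qd_{0,3}(\gamma,\beta_W)$ with $\mathcal M^{\disk}_{0,2}(W_1)\times \mathcal M^{\disk}_{0,2}(\gamma^2-W_1)\times \mathcal M^{\disk}_{0,2}(W_1)$, followed by Lemma \ref{Girsanov on bulk insertion} to de-weight the three auxiliary boundary marked points; this is a direct adaptation of Proposition \ref{thin+qd11: field law}. The thick regime $W_1\geq \gamma^2/2$ then follows by iterating Theorem \ref{Disk welding theorem} to decompose $\mathcal M^{\disk}_{0,2}(W_1)$ into $n$ thin disks of weight $W_1/n\in (0,\gamma^2/2)$ and applying the thin case inductively from the inside out.

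Passage from the conditional statement to Conjecture \ref{conj:gen_SLE_bubble} proceeds exactly as in Section \ref{sec:proof of main based on conditional welding}. Using Lemmas \ref{Joint law}, \ref{derivative}, and Proposition \ref{pushforward of LCFT}, the pushforward of the Liouville field under $\mathbf{m}_{\mathbb H}$ produces
\begin{equation*}
\mathfrak{f}_*\LF_{\mathbb H}^{(\gamma,i),(\beta_{2W_1+W},0)} \,=\, \left(\frac{|q-p|^2}{\Im q}\right)^{\Delta_{\beta_{2W_1+W}}} (\Im q)^{2\Delta_\gamma}\, \LF_{\mathbb H}^{(\gamma,q),(\beta_{2W_1+W},p)}.
\end{equation*}
Multiplying by the Haar density $\frac{1}{\Im q\cdot |p-q|^2}\,dp\,d^2q$ from Lemma \ref{Joint law} yields explicit weights $|q-p|^{a}(\Im q)^{b}$, and one \emph{defines} the new bubble by the analogue of (\ref{eqn:decomp of bubble in the main}),
\begin{equation*}
\sle^{\bub}_{\kappa,p}(W,W_1)(d\eta) := \frac{C}{|D_\eta(p)|}\int_{\mathbb H} |q-p|^{a}(\Im q)^{b}\, \sle^{\bub}_{\kappa,p}(W,W_1)[d\eta\mid q\in D_\eta(p)]\, d^2q,
\end{equation*}
with $a,b$ determined by the scaling dimensions $\Delta_{\beta_{2W_1+W}}$ and $\Delta_\gamma$ and translated to root $p$ via $f_p$. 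De-weighting the bulk marked point by the quantum area of $D_\eta(p)$ (which converts $\qd_{1,1}(\gamma,\beta_W)$ to $\qd_{0,1}(\gamma,\beta_W)$) and integrating over $p$ then yields (\ref{eqn:ue in conj}); (\ref{eqn:ue in conj fixed root}) is the disintegration of (\ref{eqn:ue in conj}) over $p$.

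The principal obstacle is that no intrinsic, SLE-based characterization of $\sle^{\bub}_{\kappa,p}(W,W_1)$ is available a priori. In \cite{SLEbubble}, $\sle^{\bub}_{\kappa,p}(\rho)$ is constructed via radial Bessel processes with a single force parameter and characterized as a rescaled weak limit of chordal $\sle_\kappa(\rho)$ (Theorem \ref{Thm on weak limit}); this in turn underlies the Domain Markov Property and the conformal covariance used throughout Section \ref{sec: curve law}, in particular in the coupling step corresponding to Corollary \ref{conditional convergence lemma}. For the two-parameter object $\sle^{\bub}_{\kappa,p}(W,W_1)$ one would need an analogous rescaled-limit construction from chordal $\sle_\kappa(W_1-2;W-2)$ with a suitable second force point, or alternatively to take the welding identity as the definition and bootstrap $\sigma$-finiteness and Domain Markov Property from the welding. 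The interplay between the root point and the $W$-parameter insertion (which, in the conditional setup, is tied to the interior point $i$ through the $\widetilde{\sle_\kappa}$ tilt) is what makes a direct radial-Bessel analysis delicate, and handling this interaction cleanly is where the substantive technical work would lie.
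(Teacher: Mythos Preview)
The statement is a \emph{conjecture}, not a theorem: the paper does not prove it and explicitly presents it as an open problem in Section~\ref{sec:open problems and future research}. So there is no ``paper's own proof'' to compare against. What the paper does provide is a discussion of exactly why the natural adaptation of the Theorem~\ref{Conditional Welding}/\ref{Main welding theorem} argument breaks down, and that discussion identifies a genuine gap in your curve-law step.

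Your field-law argument and your passage from the conditional statement to the uniform-embedding statement are fine; the paper agrees that the quantum-triangle welding plus induction of Section~\ref{sec:field law} yields the correct Liouville field $\LF_{\mathbb H}^{(\beta_{2W_1+W},0)}$ and independence of the interface from the field. The problem is entirely in your proposed identification of the curve law via the coupling/shrinking procedure of Lemma~\ref{Key coupling lemma}. You propose to add a quantum-typical boundary point to $\qd_{1,1}(\gamma,\beta_W)$, weld, and then shrink the quantum length of the boundary arc between the $\beta_W$-insertion and the extra $\gamma$-insertion to zero. The paper points out (citing \cite[Appendix~A]{MSW21}) that conditioning a two-boundary-marked quantum disk on one boundary arc having vanishing quantum length always forces the surviving merged boundary point to be quantum typical, regardless of the original insertion weights. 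In other words, the limit of your shrinking procedure does \emph{not} retain the $\beta_W$-insertion; it collapses to the $W=2$ picture, and the limiting curve law is $\sle_{\kappa,0}^{\bub}(W_1-2)$ rather than a new object $\sle_{\kappa,0}^{\bub}(W,W_1)$. This is precisely why the paper states that ``shrinking (one-side) quantum boundary length and coupling will only work for $\mathcal M^{\disk}_{1,2}(2)$.'' Your final paragraph gestures at this difficulty, but your main proposal still relies on the shrinking step as if it would produce the desired limit; it will not.
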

In Conjecture \ref{conj:gen_SLE_bubble}, by the quantum triangle welding and the induction techniques developed in Section \ref{sec:field law}, we can show that 1) $\phi$ has the law of $C\cdot\LF_{\mathbb H}^{(\beta_{2W_1+W},0)}$, and 2) the welding interface $\eta_{W,W_1}$ is independent of $\phi$. 
\par
However, we have almost zero understanding on the law of $\eta_{W,W_1}$, i.e., $\sle^{\bub}_{\kappa,0}(W,W_1)$. Recall that in Zhan's limiting constructions of $\sle_\kappa(\rho)$ bubbles, one takes the weak limit of chordal $\sle_\kappa(\rho)$ under suitable rescaling. Therefore, in LQG frameworks, we take ``quantum version'' of the limit by 1) conditioning on the (one-side) quantum boundary length of $\mathcal M^{\disk}_{1,2}(2)$ goes to zero 2) constructing a coupling with the limiting picture so that, with high probability, the random domains match. 
\par
Nonetheless, this technique will not work in the case of Conjecture \ref{conj:gen_SLE_bubble}, or in a more straightforward way, $\eta_{W,W_1}$ is not the weak limit of chordal $\sle_\kappa(W-2,W_1-2)$ under suitable rescaling. Suppose one takes $\mathcal M^{\disk}_{1,2}(W)$ and then conditioning on the (one-side) quantum boundary length goes to zero, the limiting quantum surface will always be the same; the boundary marked point is always quantum typical (cf. \cite[Appendix A]{MSW21}). In other words, we will always get $\sle_{\kappa,0}^{\bub}(W_1-2)$. Therefore, shrinking (one-side) quantum boundary length and coupling will only work for $\mathcal M^{\disk}_{1,2}(2)$. 
\begin{figure}
	\includegraphics[scale=0.8]{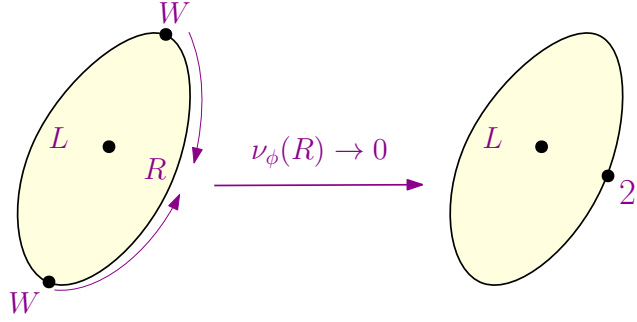}
	\centering
	\caption{On the LHS, we have $\mathcal M^{\disk}_{1,2}(W)$. Conditioning on the quantum boundary length of the right arc $R$ shrinks to zero, we will get a $\mathcal M^{\disk}_{1,1}(2)$, i.e., $\qd_{1,1}$. Notice that in \cite[Appendix A]{MSW21}, the weight $W$ is in the restricted range. However, we believe that this is only a technical barrier and will not affect the overall outcome.}
	\label{fig:shrink_left_boundary}
\end{figure}
Hence, one interesting question is that \emph{how to describe the law of $\eta_{W,W_1}$ in Conjecture \ref{conj:gen_SLE_bubble}?} If better, \emph{what is its corresponding Lowener evolution (driving function)?} 
\par
Also, going back to the Euclidean settings, in Zhan's constructions of $\sle_\kappa(\rho)$ bubbles, one takes the weak limit of $\sle^{\mathbb H}_{\kappa,(\varepsilon;\varepsilon^+)\to 0}(\rho)$ or $\sle^{\mathbb H}_{\kappa,(0;0^-)\to \varepsilon}(\rho)$ under suitable rescaling. Either way, that single force point of $\sle_\kappa$ is on the outside (see Figure \ref{fig:SLEbubble_convergence}).
\par
Hence, \emph{what if you have two force points?} In other words, what if we take the weak limit of $\sle^{\mathbb H}_{\kappa,(0;0^-,0^+)\to \varepsilon}(\rho_-,\rho_+)?$ I conjecture that it is the $\sle^{\bub}_{\kappa,0}(\rho_-)$. Similarly, if we take the weak limit of $\sle^{\mathbb H}_{\kappa,(\varepsilon;\varepsilon^-,\varepsilon^+)\to 0}(\rho_-,\rho_+)$, then it is $\sle^{\bub}_{\kappa,0}(\rho_+)$.  
\par 
A somewhat similar question as above is \emph{what happens to the inner force point after collapsing the $\varepsilon$ with $0$. Do they vanish?} I conjecture that yes, the inner force point vanishes once collapsed. 
\subsection{Generalized $\sle$ bubbles on $\mathbb H$: multiple case}
Going one step further, motivated by the induction procedure described in Figure \ref{fig:Induction}, we are also interested in understanding the multiple $\sle$ bubbles on $\mathbb H$. Specifically, consider welding of three quantum disks 
\begin{equation}\label{eqn:weld three disks}
	\int_0^\infty \int_0^\infty \qd_{0,1}(\gamma,\beta_W;\ell)\times \mathcal M^{\disk}_{0,2}(W_1,\ell,r)\times\mathcal M^{\disk}_{0,2}(W_2;r,\cdot)drd\ell
\end{equation}
for $W\geq 2,W_1>0$ and $W_2>0$. 
\begin{figure}
	\includegraphics[scale=0.8]{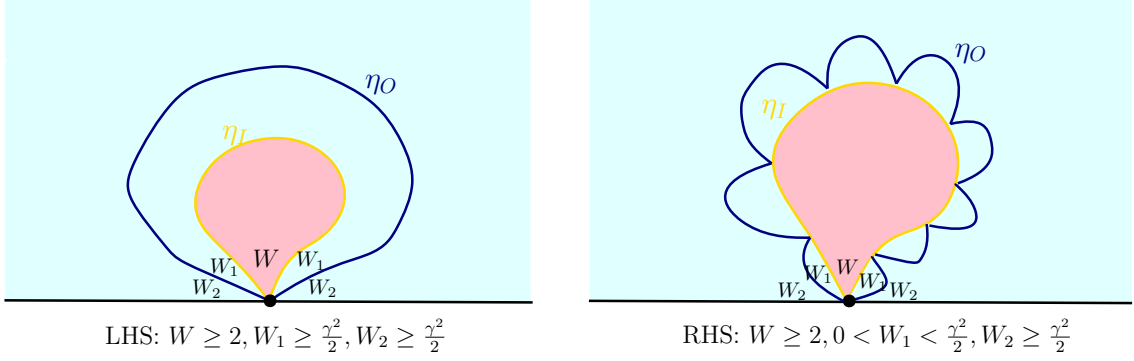}
	\centering
	\caption{Illustration of quantum surface (\ref{eqn:weld three disks}) when embedded in $(\mathbb H,\phi,\eta_I,\eta_O)$.}
	\label{fig:Conj_multi_bubble}
\end{figure}
Let $(\mathbb H,\phi,0,\eta_I,\eta_O)$ be an particular embedding of (\ref{eqn:weld three disks}) (see Figure \ref{fig:Conj_multi_bubble}), then it is not hard to show that the joint law of $(\eta_I,\eta_O)$ is independent of $\phi$. Moreover, the condition law $(\eta_O|\eta_I)$ should equal to $\sle_{\kappa,0}^{\bub}(W_1,W_2)$ and the law of $(\eta_I|\eta_O)$ should equal to $\sle_{\kappa,0}^{\bub}(W,W_1)$. Recall that $\sle_{\kappa,0}^{\bub}(\cdot,\cdot)$ is the welding interface in Conjecture \ref{conj:gen_SLE_bubble}. 
\par
The interesting questions to the SLE research communities are \emph{what is the marginal law of $\eta_{\bullet},\bullet\in \lbrace I,O\rbrace$. Moreover, what is the Loewner evolution  (driving function) of $\eta_{\bullet},\bullet\in \lbrace I,O\rbrace$?}
\subsection{Scaling limits of bubble-decorated quadrangulated disks}\label{sec:conj on scaling limits}
Recall that in the $\sle$ loop case \cite{SLEloopviawelding}, $\ms^n\otimes\saw^n$ is the measure on pairs $(M,\eta)$, where $M$ is a quadrangulation, $\eta$ is a self-avoiding loop on $M$, and each $(M,\eta)$ has weight $n^{5/2}12^{-\#\mathcal F(M)}54^{-\#\eta}$, where $\#\mathcal F(M)$ denotes the number of faces of $M$ and $\#\eta$ is the number of edges of $\eta$. It is proved that the following convergence result holds. 
	\begin{Th}[{\cite[Theorem 1.2]{SLEloopviawelding}}]
		There exists constant $c_0>0$ and for all $c\in (0,1)$, 
		\begin{equation}
			\ms^n\otimes\saw^n|_{A(c)} \xrightarrow{w} c_0\cdot \qs\otimes \sle_{8/3}^{\lo}|_{A(c)},
		\end{equation}
		where $A(c)$ is the event that the length of the loop is in $[c,c^{-1}]$.
	\end{Th}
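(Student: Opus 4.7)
The plan is to deduce this scaling limit from a decomposition-and-welding strategy that mirrors, on the discrete side, the continuum conformal welding developed in this paper (Theorem \ref{Main welding theorem} for the bubble) and in \cite{SLEloopviawelding} for the loop. First I would use a combinatorial decomposition, in the spirit of Bernardi--Bousquet-M\'elou, to identify a sample from $\ms^n\otimes\saw^n$ with a pair of Boltzmann quadrangulated disks glued along a common boundary loop, the two disks being conditionally independent given the boundary length. On the event $A(c)$ the boundary length is macroscopic of order $n$, and the conditional law of the pair factors as two independent Boltzmann quadrangulated disks of prescribed perimeter, once the $12^{-\#\mathcal F(M)}54^{-\#\eta}$ weights are absorbed into the Boltzmann weights for disks with one marked boundary edge.

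Next I would invoke the known GHPU scaling limit of a Boltzmann quadrangulated disk of perimeter of order $n$ (in the framework developed by Bettinelli--Miermont and Gwynne--Miller, and used in \cite{Weldisk,SLEloopviawelding}) to a quantum disk sampled from $\qd_{0,1}(\ell)$. Joint convergence of the two conditionally independent disks together with the boundary length then follows, with limiting law proportional to $\int_0^\infty \qd_{0,1}(\ell)\times \qd_{0,1}(\ell)\,d\ell$; the explicit constant $c_0$ arises by a standard saddle-point / generating function analysis matching the $n^{5/2}$ prefactor with the partition function of the limiting measure.

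With this joint convergence in hand, I would invoke the continuum conformal welding statement that two independent samples of $\qd$ welded along their boundaries form a quantum sphere decorated by an independent $\sle_{8/3}$ loop; this is precisely the identification
\begin{equation*}
    \qs\otimes \sle_{8/3}^{\lo}\;\cong\;\int_0^\infty \qd_{0,1}(\ell)\times \qd_{0,1}(\ell)\,d\ell
\end{equation*}
whose $\kappa=\gamma^2$ analogue for the bubble is the one-marked-point version of Theorem \ref{Main welding theorem}. Together with the previous step this yields weak convergence on $A(c)$.

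The hard part will be promoting joint convergence of the two discrete disks (with a joint parametrization of their common boundary by quantum length) to convergence of the \emph{glued} object in GHPU. Discrete gluing is a purely combinatorial identification, whereas the continuum limit involves a conformal welding of two random boundary-length measures; one must show that the discrete boundary identification respects the continuum one. This requires uniform alignment of discrete and continuum boundary-length parametrizations along the interface, uniform distortion estimates for the welding map in a neighbourhood of the loop, and a tightness argument ruling out pinch points of the glued metric in the limit. This is precisely the step where the bulk of the technical effort in \cite{SLEloopviawelding} is concentrated, and adapting it to the bubble setting is what the conjectures of Subsection \ref{sec:conj on scaling limits} envision.
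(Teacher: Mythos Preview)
This theorem is not proved in the present paper. It is quoted verbatim from \cite[Theorem 1.2]{SLEloopviawelding} in Subsection \ref{sec:conj on scaling limits} purely as motivation for the conjectures on bubble-decorated disks; the paper offers no argument for it and simply refers the reader to \cite{SLEloopviawelding}. There is therefore no ``paper's own proof'' to compare your proposal against.

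That said, your outline is a reasonable caricature of the strategy carried out in \cite{SLEloopviawelding}: decompose the loop-decorated map into two Boltzmann disks glued along the loop, invoke GHPU convergence of each disk to a quantum disk, and then upgrade joint convergence of the pieces to convergence of the welded object using the continuum welding identity $\qs\otimes\sle_{8/3}^{\lo}\cong\int_0^\infty \qd_{0,1}(\ell)\times\qd_{0,1}(\ell)\,d\ell$. You correctly identify that the nontrivial step is the gluing/welding compatibility in the GHPU topology. One point of imprecision: the loop length in $\ms^n\otimes\saw^n$ is of order $n^{1/2}$ (not $n$) under the scaling used here, since edges on the loop are given length $2^{-1}n^{-1/2}$; this matters for matching the discrete perimeter to the continuum boundary-length parameter $\ell$. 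But since the present paper does not attempt a proof, these are comments on your sketch rather than a comparison.
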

	In the disk case, we say a planar map $D$ is a \emph{quadrangulated disk} if it is a planar map where all faces have four edges except for the exterior face, which has arbitrary degree and simple boundary. Let $\partial D$ denote the edges on the boundary of the exterior face, and we denote $\#\partial D$ the boundary length of $D$. Let $\md^n$ be the measure on the quadrangulated disks such that each disk $D$ has weight $n^{5/2}12^{-\#\mathcal F(D)}54^{-\#\partial D}$, which has the same scaling as $\ms^n$ above. Note that here if $D$ is sampled from $\md^n$, then $D$ is viewed as a metric measure space by considering the graph metric rescaled by $2^{-1/2}n^{-1/4}$ and giving each vertex mass $2(9n)^{-1}$. 
	\par 
	If $D$ is a quadrangulated disk, then we say $\eta$ is a \emph{self-avoiding bubble} on $D$ rooted at $e_r\in \partial D$ if $\eta$ is an orderer set of edges $e_1,\ldots,e_{2k}\in \mathcal E(D)$ with $r\in \lbrace 1,\ldots,2k\rbrace$ and $e_j$ and $e_i$ share an end-point if and only if $|i-j|\leq 1$  or $(i,j)\in \lbrace (1,2k),(2k,1)\rbrace$. 
	\par
	Let $\md^n\otimes \partial \md^n\otimes\sab^n$ denote the measure on pairs $(D,e,\eta)$ where $\eta$ is a self-avoiding bubble on $D$ rooted at edge $e\in \partial D$ and the pair $(D,\eta)$ has weight $\#\partial D^{-1}\cdot n^{5/2}12^{-\#\mathcal F(D)}54^{-\#\eta}$. For $(D,e,\eta)$ sampled from $\md^n\otimes \partial \md^n\otimes\sab^n$, we view $D$ as a metric measure space and view $\eta$ as a bubble on this metric measure space rooted at edge $e$ so that the time it takes to traverse each edge on the loop is $2^{-1}n^{-1/2}$. 
	\begin{Conj}
		There exists some $c_0>0$ such that for all $c\in (0,1)$, 
		\begin{equation}
			\md^n\otimes \partial \md^n\otimes\sab^n|_{A(c)} \xrightarrow{w} c_0 \cdot \LF_{\mathbb H}^{(\beta_6,p)}\times \sle_{8/3,p}^{\bub}\times dp|_{A(c)},
		\end{equation}
		in Gromov-Hausdorff-Prokhorov-uniform topology, where $A(c)$ is event that the length of the bubble is in $[c,c^{-1}]$. 
	\end{Conj}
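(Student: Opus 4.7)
The proof plan follows the strategy of Ang--Holden--Sun \cite{SLEloopviawelding} for self-avoiding loops on the sphere, combined with the conformal welding identity of Theorem \ref{Main welding theorem}. With $\gamma = \sqrt{8/3}$ so that $\kappa=\gamma^2=8/3$, specializing Theorem \ref{Main welding theorem} to $W=2$ (hence $\rho=0$ and $\beta_{2W+2}=\beta_6$) gives
\begin{equation*}
    \mathbf{m}_{\mathbb H}\ltimes \left(\int_0^\infty\mathcal{M}^{\disk}_{0,2}(2;\cdot,\ell)\times \qd_{0,1}(\ell)\,d\ell\right) = C\cdot\LF^{(\beta_6,p)}_{\mathbb H}(d\phi)\times\sle_{8/3,p}^{\bub}(d\eta)\,dp
\end{equation*}
for some explicit $C\in(0,\infty)$. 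Setting $c_0=1/C$, it suffices to prove that the rescaled discrete measure $\md^n\otimes\partial\md^n\otimes\sab^n|_{A(c)}$, viewed as a measure on curve-decorated quantum surfaces, converges in the GHPU topology to the left-hand side of this identity restricted to $\{\ell\in[c,c^{-1}]\}$.

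The first step will be a bijective combinatorial decomposition on the discrete side: given $(D,e,\eta)$, cut $D$ along the bubble to obtain $D_{\rm in}$ (the disk enclosed by $\eta$, rooted at $e$, with boundary $\eta$) and $D_{\rm out}$ (the complementary disk, whose boundary is the concatenation of $\partial D$ with a copy of $\eta$ meeting at $e$). Up to explicit combinatorial prefactors, the Boltzmann weight of $(D,e,\eta)$ factorizes as a product of Boltzmann weights on $D_{\rm in}$ and $D_{\rm out}$, subject to the matching constraint that the common boundary has length $\#\eta$. Restriction to $A(c)$ keeps the matching boundary length in a compact interval.

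Next, I would invoke the known scaling limits of Boltzmann-weighted quadrangulated disks to LQG disks. The classical Bettinelli--Miermont Brownian disk convergence, recast in the LQG framework via \cite{Weldisk}, yields that rescaled $D_{\rm in}$ converges in GHPU to $\qd_{0,1}(\ell)$ and rescaled $D_{\rm out}$ converges to $\mathcal{M}^{\disk}_{0,2}(2;\cdot,\ell)$, jointly with their boundary length parametrizations. Combined with tightness on $A(c)$ and disintegration over $\ell$, this produces joint marginal convergence of the pair to $\int_0^\infty \mathcal{M}^{\disk}_{0,2}(2;\cdot,\ell)\times\qd_{0,1}(\ell)\,d\ell$ restricted to $\ell\in[c,c^{-1}]$.

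The main obstacle, and the technical heart of the argument, will be showing that the conformal welding operation is continuous in GHPU, so that joint convergence of the two marginals $(D_{\rm in}^n, D_{\rm out}^n)$ lifts to convergence of the welded curve-decorated quantum surface with the bubble $\eta^n$ converging uniformly to the $\sle_{8/3}$ bubble. This is the analog of the central estimate in \cite{SLEloopviawelding}, where percolation-interface estimates and mating-of-trees machinery are used to upgrade marginal GHPU convergence to joint convergence together with the interface curve. Two additional subtleties specific to the bubble-on-disk setting are: (i) the root vertex $e$ is a pinch point shared by $D_{\rm in}$ and $D_{\rm out}$ and must converge to the boundary root $p\in\mathbb{R}$ of the SLE bubble, which is precisely what introduces the Haar measure $\mathbf{m}_{\mathbb H}$ (equivalently the $dp$ factor) on the continuum side; and (ii) uniform tightness of the bubble length scale under $A(c)$ is required to justify the disintegration over the welding length $\ell$, in analogy with the technical preliminaries of \cite[Section 2.6]{SLEloopviawelding}.
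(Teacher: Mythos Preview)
The statement you are attempting to prove is labeled a \textbf{Conjecture} in the paper and appears in Section~\ref{sec:open problems and future research} (Outlook and Future Research); the paper does not supply a proof. So there is no ``paper's own proof'' against which to compare your proposal.

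Your outline is a reasonable strategy, and indeed the paper itself motivates the conjecture precisely by analogy with the sphere result of \cite{SLEloopviawelding} together with Theorem~\ref{Main welding theorem} at $W=2$, $\gamma=\sqrt{8/3}$. The combinatorial cut-along-the-bubble decomposition, the invocation of Brownian disk convergence for the two pieces, and the identification of the limit via the welding identity are all the natural steps. You also correctly isolate the hard part: upgrading marginal GHPU convergence of $(D_{\rm in}^n,D_{\rm out}^n)$ to joint convergence of the glued, curve-decorated surface. In the sphere case this relied on delicate inputs (Gwynne--Miller metric gluing, peeling/percolation estimates), and no off-the-shelf statement covers the disk-with-pinch-point geometry here; this is presumably why the paper leaves it as a conjecture rather than a theorem.

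One point to be careful about: your handling of the Haar measure $\mathbf m_{\mathbb H}$ is not quite right. The discrete object $(D,e,\eta)$ has a single boundary marked root, so its GHPU limit is naturally a curve-decorated quantum \emph{surface} with one boundary point, i.e.\ an element of the equivalence class on the left-hand side of \eqref{eqn:ue in main}, \emph{before} applying $\mathbf m_{\mathbb H}$. The $dp$ on the right of the conjecture is not produced by the discrete root ``introducing'' a Haar measure; rather, by Theorem~\ref{Main welding theorem} the welded quantum surface $\int_0^\infty \mathcal M^{\disk}_{0,2}(2;\cdot,\ell)\times\qd_{0,1}(\ell)\,d\ell$, once uniformly embedded, \emph{equals} $C\cdot\LF_{\mathbb H}^{(\beta_6,p)}\times\sle^{\bub}_{8/3,p}\,dp$. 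The conjecture should therefore be read as convergence of curve-decorated metric measure spaces, with the right-hand side interpreted as that quantum surface; the pinch point does not generate any extra degree of freedom.
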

	We can also understand the measure $\md^n\otimes \partial \md^n\otimes\sab^n$ from the welding perspective. Suppose $\overline{\md^n}$ is a measure on qudrangulated disks such that each disk $\overline D$ has weight $n^{5/2}12^{-\#\mathcal F(\overline D)}54^{-2\#\partial \overline D}$ and $\underline{\md^n}$ is a measure on qudrangulated disks with each disk $\underline{D}$ has weight $n^{5/2}12^{-\#\mathcal F(\underline D)}54^{-\#\partial \underline D}$. Let $\overline{\md_{0,2}^n}$ be the measure on $(\overline D, e_1, e_2)$ such that we first sample $\overline D$ from reweighted measure $(\#\partial \overline{\md^n})^2\overline{\md^n}$ and then sample two edges $e_1,e_2$ uniformly on $\partial \overline D$.
	Similarly, let $\underline{\md^n_{0,1}}$ be the measure on $(\underline{D},e)$ such that we first sample $\underline D$ from reweighted measure $(\#\partial \underline{\md^n})\cdot \underline{\md^n}$ and then sample an edge $e$ from $\partial\underline D$ uniformly.
	\par
	For $k\in \mathbb N$, let $\overline{\md_{0,2}^n}(\cdot,k)$ denote the restriction of $\overline{\md_{0,2}^n}$ to the event that right boundary has length $2k$ and let $\underline{\md_{0,1}^n}(k)$ denote the restriction of $\underline{\md_{0,1}^n}$ to the event that the total boundary has length $2k$. Let $\overline{\md_{0,2}^n}(\cdot,k)^\#$ and $\underline{\md_{0,1}^n}(k)^\#$ denote the corresponding probability measure respectively. 
	\par
	Suppose $(\overline D, e_1,e_2)$ is sampled from $\overline{\md_{0,2}^n}(\cdot,k)^\#$ and $(\underline D,e)$ is sampled from $\underline{\md_{0,1}^n}(k)^\#$, then we can do the ``discrete conformal welding'' by identifying the right boundary of $\overline D$ to the total boundary of $\underline D$ such that $e_1,e_2$ and $e$ are identified. The self-avoiding bubble on the discrete disk represents the welding interface of $\overline D$ and $\underline D$. We parametrize the bubble so that each edge on the bubble has length $2^{-1}n^{-1/2}$ just like the sphere case. Suppose $(\overline D, e_1,e_2)$ is sampled from $\overline{\md_{0,2}^n}(\cdot,k)^\#$ and $(\underline D,e)$ is sampled from $\underline{\md_{0,1}^n}(k)^\#$, then we denote the measure on the disks decorated with a self-avoiding bubble sampled in this way by $\weld_d^{\bub}(\overline{\md_{0,2}^n}(\cdot,k)^\#, \underline{\md_{0,1}^n}(k)^\#)$. Similarly, let $\weld_c^{\bub}(\qd_{0,2}(\cdot,\ell)^\#,\qd_{0,1}(\ell)^\#)$ denote the measure on bubble-decorated quantum disk obtained by identifying the right boundary of the disk sampled from $\qd_{0,2}(\cdot,\ell)^{\#}$ and the total boundary of the disk sampled from $\qd_{0,1}(\ell)^{\#}$.
	\begin{Conj}
		For any $\ell>0$, we have 
		\begin{equation}
			\weld_d^{\bub}(\overline{\md_{0,2}^n}(\cdot,[\ell n^{1/2}] )^\#, \underline{\md_{0,1}^n}([\ell n^{1/2}])^\#)\xrightarrow{w} \weld_c^{\bub}(\qd_{0,2}(\cdot,\ell)^{\#},\qd_{0,1}(\ell)^{\#})
		\end{equation}
		in Gromov-Hausdorff-Prokhorov-uniform topology. 
	\end{Conj}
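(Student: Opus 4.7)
The plan is to adapt the strategy of \cite[Theorem 1.2]{SLEloopviawelding} for the $\sle$ loop on the sphere to the disk-with-bubble setting. First I would establish the marginal scaling limits of each building block. The two-pointed quadrangulated disk with fixed right boundary length, $\overline{\md_{0,2}^n}(\cdot,[\ell n^{1/2}])^{\#}$, should converge in a suitable GHPU-type topology to $\qd_{0,2}(\cdot,\ell)^{\#}$; analogously $\underline{\md_{0,1}^n}([\ell n^{1/2}])^{\#} \xrightarrow{w} \qd_{0,1}(\ell)^{\#}$. Both marginal limits are expected to follow from the Bettinelli--Miermont-type convergence of Boltzmann-weighted quadrangulations with simple boundary to the Brownian disk, combined with the Gwynne--Miller identification of the Brownian disk with the LQG quantum disk of weight $W=2$.

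Next I would address continuity of the welding operation. The discrete welding identifies the two marked boundaries edge-by-edge, so the bubble-decorated glued map naturally lives in a space of curve-decorated metric measure spaces with the GHPU topology. On the continuum side, \cite[Theorem 2.2]{Weldisk} ensures that $\weld_c^{\bub}(\qd_{0,2}(\cdot,\ell)^{\#}, \qd_{0,1}(\ell)^{\#})$ is a well-defined measure on bubble-decorated quantum surfaces, and Theorem \ref{Main welding theorem} (specialized to $W=2$) identifies the welding interface as the $\sle_{8/3,0}^{\bub}$ curve modulo the appropriate Liouville field with insertion $\beta_6 = \gamma - \tfrac{4}{\gamma}$. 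Granted that discrete welding is continuous under the GHPU limit of its two inputs, this identification would yield the desired joint convergence.

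The main obstacle, mirroring the difficulty in \cite{SLEloopviawelding}, will be the joint convergence of the self-avoiding bubble curve to $\sle_{8/3}^{\bub}$ in the uniform topology on curves. My approach would be a mating-of-trees encoding: represent the bubble-decorated pair via two discrete boundary-length processes tracking the quantum left/right lengths on each side of the bubble, show that the suitably rescaled pair converges in Skorokhod topology to the $3/2$-stable pair encoding the continuum welded surface, and then lift this encoding convergence to uniform convergence of the curve by inverting the continuum/discrete tree-pair construction. A subtle point specific to the bubble (absent in the loop case) is the role of the root edge $e$: one must verify that the uniformly chosen marked edge on $\partial \underline{D}$ converges in law to a $\nu_\phi$-typical point on $\partial \qd_{0,1}$, matching the boundary marked point of $\qd_{0,1}(\ell)^{\#}$. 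Once these three ingredients, namely the marginal disk convergences, the welding continuity, and the curve convergence, are in place, they combine to give $\weld_d^{\bub} \xrightarrow{w} \weld_c^{\bub}$ in GHPU, proving the conjecture.
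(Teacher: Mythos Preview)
The statement you are attempting to prove is labeled as a \emph{Conjecture} in the paper (it appears in Section~\ref{sec:conj on scaling limits}, ``Scaling limits of bubble-decorated quadrangulated disks''), and the paper provides \emph{no proof} for it. There is therefore nothing to compare your proposal against: the author explicitly leaves this as an open problem motivated by the analogous sphere result \cite[Theorem~1.2]{SLEloopviawelding}.

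Your outline is a reasonable heuristic strategy, and indeed mirrors the structure of the sphere proof you cite. But you should be aware that what you have written is a \emph{plan}, not a proof: each of your three ingredients (marginal GHPU convergence of the two disk models, continuity of the welding map under GHPU limits, and curve convergence via a mating-of-trees encoding) is itself a substantial technical result that, to my knowledge, has not been established in the literature for this precise setting. In particular, the ``continuity of discrete welding under GHPU limits'' step is exactly the kind of statement that is typically false without additional input, and in \cite{SLEloopviawelding} the joint convergence is obtained by a much more delicate argument than simply invoking continuity. So while nothing in your sketch is obviously wrong as a roadmap, you have not closed any of the gaps that make this a conjecture rather than a theorem.
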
 

\bibliographystyle{alpha}
\bibliography{SLEbubble}

\end{document}